\def\Yint#1{\mathchoice
    {\YYint\displaystyle\textstyle{#1}}%
    {\YYint\textstyle\scriptstyle{#1}}%
    {\YYint\scriptstyle\scriptscriptstyle{#1}}%
    {\YYint\scriptscriptstyle\scriptscriptstyle{#1}}%
      \!\iint}
\def\YYint#1#2#3{{\setbox0=\hbox{$#1{#2#3}{\iint}$}
    \vcenter{\hbox{$#2#3$}}\kern-.50\wd0}}
\def\longdash{-\mkern-9.5mu-} 
\def\tiltlongdash{\rotatebox[origin=c]{18}{$\longdash$}}
\def\fiint{\Yint\tiltlongdash}
\def\Xint#1{\mathchoice
    {\XXint\displaystyle\textstyle{#1}}%
    {\XXint\textstyle\scriptstyle{#1}}%
    {\XXint\scriptstyle\scriptscriptstyle{#1}}%
    {\XXint\scriptscriptstyle\scriptscriptstyle{#1}}%
      \!\int}
\def\XXint#1#2#3{{\setbox0=\hbox{$#1{#2#3}{\int}$}
    \vcenter{\hbox{$#2#3$}}\kern-.50\wd0}}
\def\hlongdash{-\mkern-13.5mu-}
\def\tilthlongdash{\rotatebox[origin=c]{18}{$\hlongdash$}}
\def\hint{\Xint\tilthlongdash}
\def\namedlabel#1#2{\begingroup
  \def\@currentlabel{#2}%
  \label{#1}\endgroup
}
\def\ps@pprintTitle{%
\let\@oddhead\@empty
\let\@evenhead\@empty
\def\@oddfoot{}%
\let\@evenfoot\@oddfoot}
\newcommand{\rmh}[1]{\mathpalette{\raisem@th{#1}}}
\newcommand{\raisem@th}[3]{\hspace*{-1pt}\raisebox{#1}{$#2#3$}}
\newcommand{\lsb}[2]{#1_{\rmh{-3pt}{#2}}}
\newcommand{\lsbo}[2]{#1_{\rmh{-1pt}{#2}}}
\newcommand{\lsbt}[2]{#1_{\rmh{-2pt}{#2}}}
\newcommand{\descitem}[1]{\item[{(#1)}] \label{#1}}
\newcommand{\descref}[1]{\hyperref[#1]{\textnormal{\textcolor{black}{(}\textcolor{blue}{\bf #1}\textcolor{black}{)}}}}
\newcommand{\lomt}{\lsb{\chi}{\Om_T}}
\newcommand{\qomt}{16Q \cap \Om_T}
\newcommand{\omtz}{\Om \times \{t=0\}}
\newcommand{\steplabel}[2]{\item[{#1}] \label{#2}}
\newcommand{\stepref}[2]{\hyperref[#1]{\textnormal{\textcolor{blue}{\bf #2}}}}
\crefname{section}{Section}{Sections}
\crefname{subsection}{Subsection}{Subsections}
\crefname{condition}{Condition}{Conditions}
\crefname{hypothesis}{Hypothesis}{Conditions}
\crefname{assumption}{Assumption}{Assumptions}
\crefname{lemma}{Lemma}{Lemmas}
\newcommand{\vo}{\vec{o}\@ifnextchar{^}{\,}{}}
\numberwithin{equation}{section}
\newtheorem{theorem}{Theorem}[section]
\newtheorem{lemma}[theorem]{Lemma}
\newtheorem{corollary}[theorem]{Corollary}
	\newtheorem{assumption}[theorem]{Assumption}
\newtheorem{definition}[theorem]{Definition}
\newtheorem{remark}[theorem]{Remark}        
\numberwithin{equation}{section}
\def\al{\alpha}
\def\be{\beta}
\def\ga{\gamma}
\def\de{\delta}
\def\ve{\varepsilon}
\def\vt{\vartheta}
\def\th{\theta}
\def\la{\lambda}
\def\ep{\epsilon}
\def\ka{\kappa}
\def\pa{\partial}
\def\Th{\Theta}
\def\Om{\Omega}
\def\La{\Lambda}
\def\aa{\mathcal{A}}
\newcommand{\tO}{\tilde{\Om}}
\newcommand{\tQ}{\tilde{Q}}
\newcommand{\tm}{I}
\newcommand{\mm}{\mathcal{M}}
\newcommand{\mmn}[1]{\mathcal{P}^{-1,#1}}
\newcommand{\RR}{\mathbb{R}}
\newcommand{\NN}{\mathbb{N}}
\newcommand{\redref}[2]{\texorpdfstring{\protect\hyperlink{#1}{\textcolor{black}{(}\textcolor{red}{#2}\textcolor{black}{)}}}{}}
\newcommand{\redlabel}[2]{\hypertarget{#1}{\textcolor{black}{(}\textcolor{red}{#2}\textcolor{black}{)}}}
\newcommand{\iprod}[2]{\left\langle #1 ,  #2\right\rangle}
\newcommand{\abs}[1]{\left| #1 \right|}
\newcommand{\lbr}[1][(]{\left#1}
\newcommand{\rbr}[1][)]{\right#1}
\newcommand{\cpt}[1][p]{\capacity_{1,#1}}
\newcommand{\txt}[1]{\qquad \text{#1} \quad}
\newcommand{\zv}{\zeta_{\ve}}
\DeclareMathOperator{\dv}{div}
\DeclareMathOperator{\capacity}{cap}
\DeclareMathOperator{\diam}{diam}
\DeclareMathOperator{\spt}{spt}
\DeclareMathOperator{\loc}{loc}
\newcommand{\tp}{\tilde{p}}
\newcommand{\tk}{\tilde{k}}
\newcommand{\tz}{\tilde{z}}
\newcommand{\mfz}{\mathfrak{z}}
\newcommand{\mch}{\mathcal{H}}
\newcommand{\mcc}{\mathcal{C}}
\newcommand{\avgs}[2]{\lsbo{\lbr {#1} \rbr}{#2}}
\newcommand{\ddt}[1]{\frac{d#1}{dt}}
\newcommand{\dds}[1]{\frac{d#1}{ds}}
\newcommand{\vlh}{\lsbt{v}{\la,h}}
\newcommand{\vl}{\lsbt{v}{\la}}
\newcommand{\vh}{\lsbt{v}{h}}
\newcommand{\elam}{\lsbo{E}{\lambda}}
\newcommand{\wwspace}{L^{p-\be}(0,T; W^{1,p-\be}(\Om))}
\newcommand{\lamot}{\La_0,\La_1}
\newcommand{\tTh}{{\Upsilon}}
\newcommand{\pbo}{\frac{p-\be}{1-\be}}
\newcommand{\pbp}{\frac{p-\be}{p-1}}
\begin{document}

\begin{frontmatter}

\title{Partial existence result for Homogeneous Quasilinear parabolic problems beyond the duality pairing}

\author[myaddress]{Karthik Adimurthi\corref{mycorrespondingauthor}\tnoteref{thanksfirstauthor}}
\cortext[mycorrespondingauthor]{Corresponding author}
\ead{karthikaditi@gmail.com and kadimurthi@snu.ac.kr}
\tnotetext[thanksfirstauthor]{Supported by the National Research Foundation of Korea grant NRF-2015R1A2A1A15053024.}

\author[myaddress,myaddresstwo]{Sun-Sig Byun\tnoteref{thankssecondauthor}}
\ead{byun@snu.ac.kr}
\tnotetext[thankssecondauthor]{Supported by the National Research Foundation of Korea grant  NRF-2015R1A4A1041675. }

\author[myaddress]{Wontae Kim\tnoteref{thanksthirdauthor}}
\ead{m20258@snu.ac.kr}
\tnotetext[thanksthirdauthor]{}

\address[myaddress]{Department of Mathematical Sciences, Seoul National University, Seoul 08826, Korea.}
\address[myaddresstwo]{Research Institute of Mathematics, Seoul National University, Seoul 08826, Korea.}

\begin{abstract}
In this paper, we study the existence of distributional solutions solving \cref{main-3}
on a bounded domain $\Om$ satisfying a uniform capacity density condition where the nonlinear structure $\aa(x,t,\nabla u)$ is modelled after the standard parabolic $p$-Laplace operator. In this regard, we need to prove a priori estimates for the gradient of the solution below the natural exponent and a higher integrability result for very weak solutions at the initial boundary.  The elliptic counterpart to these two estimates are fairly well developed over the past few decades, but no analogous theory exists in the quasilinear parabolic setting. 

Two  important features of the estimates proved here are that they are non-perturbative in nature and we are able to take non-zero boundary data. \emph{As a consequence, our estimates are new even for the heat equation on bounded domains.} This partial existence result is a nontrivial extension of the existence theory of very weak solutions from the elliptic setting to the quasilinear parabolic setting. Even though we only prove partial existence result, nevertheless we establish the necessary framework that when proved would lead to  obtaining the full result for the homogeneous problem.

%
\end{abstract}

\begin{keyword}
quasilinear parabolic equations,very weak solutions, initial boundary higher integrability, a priori estimates, existence.
\MSC[2010] 35D99 \sep 35K59\sep 35K61\sep 35K92
\end{keyword}

\end{frontmatter}

\tableofcontents

\section{Introduction}
\label{section1}

In this paper, we are mainly interested in obtaining a priori estimates for \cref{main-1} and  \cref{main-2} which will then be used to obtain the existence of \emph{very weak solutions} to equations of the form  \cref{main-3}. Here the nonlinearity  $\aa(x,t,\zeta)$  is modelled after the well known $p$-Laplace operator and $\Om \subset \RR^n$ denotes a bounded domain with potentially nonsmooth boundary.  The elliptic analogue of the estimates and existence theory studied in this paper are quite well understood. The first result for the elliptic homogeneous problem was proved in \cite{iwaniec1994weak} with the sharp version of the a priori estimate and existence obtained recently in \cite{AP1}. The parabolic counterpart to these questions have remained open for a long time and in this paper, we obtain some partial answers in this direction.

Weak solutions to \eqref{main-1} are in the space $u \in L^2(0,T; L^2(\Om)) \cap L^p(0,T; W_0^{1,p}(\Om))$ which allows one to use $u$ as a test function. But from the definition of weak solution (see \cref{def_weak_solution}), we see that the expression makes sense if we only assume $u \in L^2(0,T; L^2(\Om)) \cap L^{s}(0,T; W_0^{1,s}(\Om))$ for some $s > \max\{p-1,1\}$. But under this milder notion of solution called \emph{very weak solution}, we lose the ability to use $u$ as a test function.  This difficulty was overcome in \cite{KL} where the method of Lipschitz truncation was developed to construct a suitable test function, which was then used to obtain interior higher integrability result below the natural exponent. This technique was subsequently extended in \cite{AdiByun2} to obtain analogous estimates upto the lateral  boundary with zero boundary data. The extension of the higher integrability result for very weak solutions at the initial boundary seems to be nontrivial, mainly because of a lack of certain suitable cancellations and the lack of time derivative for the solutions.

In order to obtain the existence of \emph{very weak solution} to \cref{main-3}, there are three main ingredients:  first we need to obtain  suitable a priori estimates that control the gradient of the solution in terms of the boundary data and  secondly, we need to obtain  higher integrability result for very weak solutions at the initial boundary and finally, we can combine the previous two estimates with standard compactness arguments to prove the existence result. In the subsequent three subsections, we shall discuss the main questions and the new tools that are going to be used in this paper.

In order to prove the main results of this paper, we need to develop new ideas which include bounds for a suitably modified maximal function on negative Sobolev spaces, careful construction of the Lipschitz truncation function at the initial boundary which preserves the necessary boundary values and the ability to handle non-zero boundary data which are complicated by the lack of a time derivative.

\subsection{Discussion about the a priori estimate}
In this subsection, we shall discuss the  a priori estimates for \emph{very weak solutions} of 
\begin{equation}
\label{main-1}
\left\{
\begin{array}{ll}
u_t - \dv  \aa(x,t,\nabla u) = 0 & \text{on} \ \Om \times (0,T), \\
u = w & \text{on} \ \pa_p(\Om \times (0,T)).\\
\end{array}
\right.
\end{equation}
Since the notion of a solution does not a priori have any regularity in the time variable, the term $u_t$ (and hence $w_t)$ is to be understood in the distributional sense.  This complicates the construction of Lipschitz truncation due to \cite{KL} (see the boundary extension in \cite{AdiByun2}) which must be able to handle the boundary data as well as the distributional time derivative of $u$ and $w$.  Since we are interested in controlling $|\nabla u|$ in terms of $|\nabla w|$ in suitable norms (whose exponents are below $p$), we are invariably forced to estimate $w_t$ in suitable negative Sobolev spaces (see \cref{main_theorem_1}).

We employ two main ideas to achieve this goal; the first is that the extension is very carefully obtained to preserve boundary data (see also \cite{adimurthi2018gradient} for more on this) and secondly, we define a new Maximal function (based on \cite{bernicot2009maximal}) on appropriate negative Sobolev spaces. Combining these two ideas, we can develop the method  Lipschitz truncation to handle non-zero boundary data and obtain a priori estimates below the natural exponent. \emph{These a priori estimates are new even for the heat equation on bounded domains}.

In the process, we encounter a natural difficulty that arises due to the definition of the \emph{very weak solution}.  Since we make use of Steklov averages to define \emph{very weak solutions} of \eqref{main-2}, we are also forced to understand the relation between $\ddt{[w]_h}$ and $\ddt{w}$. It is well known that $\ddt{[w]_h}$ is a function whereas $\ddt{w}$ is a distribution and in general, the following does not hold:
\[
\ddt{[w]_h} \nrightarrow \ddt{w} \txt{as} h \searrow 0.
\]
To overcome this difficulty, we  make an additional assumption regarding $\ddt{w}$ (see \cref{imp_rmk}) which enables us to obtain \cref{lemma_crucial_2_app}. While this assumption seems restrictive, in applications, the boundary data $w$ generally solves an analogous parabolic equation and thus, this difficulty goes away.

\subsection{Discussion about the higher integrability estimate}
In this subsection, we shall discuss the problem of obtaining  higher integrability of \emph{very weak solutions} to
\begin{equation}
\label{main-2}
\left\{
\begin{array}{ll}
u_t - \dv  \aa(x,t,\nabla u) = 0 & \text{on} \ \Om \times (0,T), \\
u=w & \text{on} \ \Om \times \{t=0\}.
\end{array}
\right.
\end{equation}
In the interior, the result was proved in the seminal paper of \cite{KL} which laid the Foundation for the method of parabolic Lipschitz truncation. This was subsequently extended to the lateral boundary with zero boundary data in \cite{AdiByun2}. Following the idea of the proof in \cite{KL,AdiByun2}, it becomes apparent that the method fails while handling non-zero boundary data at the lateral boundary or at the initial boundary, the main obstruction being a lack of time derivative in general for the boundary data.

The second theorem we prove is the higher integrability for \emph{very weak solutions} to \cref{main-2} at the initial boundary with non-zero initial data. There are several comments to be made regarding this result and to understand the remarks, let us look at the higher integrability for \emph{weak solutions} proved in \cite{Mik1,Par1} at the initial boundary. In those papers, they were able to take a suitable test function which exhibited a very crucial cancellation (see \cite[Equation (5.6)]{Par}. In order to prove the higher integrability for \emph{very weak solutions}, this cancellation needs to be preserved for the test function constructed through the method of Lipschitz truncation. In the interior case or at the lateral boundary with zero boundary data, such a cancellation trivially holds whereas it fails in general when handing non-zero boundary data at both the initial boundary and the lateral boundary.

In main theorem of this subsection \cref{main_theorem_2}, we obtain the higher integrability for \emph{very weak solutions} at the initial boundary with non-zero initial data noting that the same estimate holds at the lateral boundary with non-zero boundary data and is optimal.  In our construction, we are unable to recover the crucial cancellation of \cite[Equation (5.6)]{Par} and hence at the initial boundary with non-zero data, our result is not optimal. Nevertheless, at least for zero initial data, the estimate is sharp. It would be interesting to obtain a modified construction of the Lipschitz test function that can preserve the necessary cancellations, which would then provide an optimal result at the initial boundary.

Let us now highlight some of the new ideas that are developed to obtain the result. Firstly, due to the presence of Steklov average in the \cref{very_weak_solution}, the initial boundary value is not always preserved. Secondly, the problem with the lack of time derivative for the initial data is still present and to overcome this, we use the ideas developed in the proof of \cref{main_theorem_1}. It is interesting to note the unusual choice of the function (see \cref{def-v-ini}) used to perform the Lipschitz truncation upon. In particular, we handle the initial boundary problem as a problem at the lateral boundary, which leads to difficulties while applying the standard parabolic Poincar\'e's inequality, and this is where we crucially exploit the fact that the extension constructed in \cref{lipschitz-extension-ini} is zero on the \emph{bad} part of the initial boundary. This is a very subtle technicality which originated from \cite{AdiByun2}. Once we have the modified construction, along with the bounds from \cref{max_neg_bound}, we can obtain a time localized version of the Caccioppoli type inequality followed by a  reverse H\"older type inequality. Finally applying the parabolic Gehring's lemma gives the desired higher integrability at the initial boundary.  \emph{This result is new even for the heat equation on bounded domains.}

\subsection{Discussion about existence}
In this subsection, we shall discuss the existence of \emph{very weak solutions} to 
\begin{equation}
\label{main-3}
\left\{
\begin{array}{ll}
u_t - \dv  \aa(x,t,\nabla u) = 0 & \text{on} \ \Om \times (0,T), \\
u = u_0 & \text{on} \ \pa \Om \times (0,T),\\
u = 0 & \text{on} \ \Om \times \{t=0\}.
\end{array}
\right.
\end{equation}
Before we explain the main result, let us discuss the main ideas behind the elliptic counterpart of the existence theory developed in \cite[Theorem 2]{iwaniec1994weak}. In that paper, the authors needed two main ingredients, first is an a priori estimate controlling the solution in terms of the boundary data and the second is an interior higher integrability result. Once both these estimates exist, then one can perform a standard approximation argument to get a sequence of weak solutions uniformly bounded in the right function spaces and then compactness methods  can be used to deduce the converge of the approximate solution to the desired very weak solution.

In the parabolic setting, we also follow the same strategy. The desired a priori estimate is obtained in \cref{main_theorem_1}, but we now need higher integrability for \emph{very weak solutions} in the interior (proved in \cite{KL} and at the initial boundary (see \cref{main_corollary_2}). Note that we can only use zero initial data to obtain the existence mainly because our higher integrability result at the initial boundary is not sharp for non-zero initial data.

We now  need to first consider a suitable approximating sequence of solutions and show that this sequence converges to the desired \emph{very weak solution}. To construct such a approximating sequence of solutions, the standard idea is to smoothen the given data (say by mollifying), but unfortunately, since $\ddt{u_0}$ is only a distribution, mollifying does not work. To overcome this difficulty, we assume either $\ddt{u_0}$ is an $L^1$ function, or more generally, we assume the existence of an approximating sequence satisfying \cref{assumption1} and \cref{assumption2} (see \cref{rmk_3.8} for more about the necessity of assuming the existence of an approximating sequence).  Given such a sequence, we can then follow standard compactness arguments to obtain the desired \emph{very weak solutions} to \cref{main-3} which is \cref{main_existence}.

\subsection{Outline of the paper}

In \cref{section2}, we collect all the preliminary information along with structural assumptions regarding the nonlinearity and domain. In \cref{section3}, we describe the main theorems and in \cref{section4}, we shall recall and in some cases prove some well known lemmas that will be needed in the subsequent sections. In \cref{section5}, we will obtain the proof of the a priori estimate from \cref{main_theorem_1}, in \cref{section6}, we will obtain the proof of the higher integrability at the initial boundary as stated in \cref{main_theorem_2} and finally in \cref{section7}, we shall give the proof of the existence result from \cref{main_existence}.

\section{Preliminaries}
\label{section2}

\subsection{Variational \texorpdfstring{$p$}.-Capacity}
Let $1<p<\infty$, then the \emph{variational $p$-capacity} of a compact set $K \Subset \RR^n$ is defined to be 
\begin{equation*}
\label{var_cap_def}
\cpt(K, \RR^n) = \inf \left\{ \int_{\RR^n} |\nabla \phi|^p \ dx \ : \ \phi \in C_c^{\infty}(\RR^n),\ \lsb{\chi}{K}(x) \leq \phi(x) \leq 1 \right\},
\end{equation*}
where $\lsb{\chi}{K} (x) = 1 $ for $x \in K$ and $\lsb{\chi}{K}(x) = 0$ for $x \notin K$. To define the variational $p$-capacity of an open set $O \subset \RR^n$, we take the supremum over the capacities of the compact sets contained in $O$. The variational $p$-capacity of an arbitrary set $E \subset \RR^n$ is defined by taking the infimum over the capacities of the open sets containing $E$. For further details, see \cite{AH,HKM}.


Let us now introduce the capacity density conditions which we later impose on the complement of the domain.
\begin{definition}[Uniform $p$-thickness] \label{p_thick}
Let $\tO\subset\RR^n$ be a bounded domain and $b_0,r_0$ be any two given positive constants. We say that the complement $\tO^c:=\RR^n\setminus \tO$ is 
uniformly $p$-thick for some $1< p \leq n$ with constants  $ b_0,r_0>0$, if  the inequality  
$$ \cpt ( \overline{B_r(y_0)} \cap \tO^c, B_{2r}(y_0)) \geq  b_0\, \cpt (\overline{B_{r}(y_0)}, B_{2r}(y_0)),$$ 
holds for any $y_0 \in \partial \tO$ and $r\in(0, r_0]$. 
\end{definition}

It is well-known that the class of  domains with uniform $p$-thick complements
is very large. They include all domains with Lipschitz boundaries or even those that satisfy a uniform exterior corkscrew condition, where the latter means that there exist constants $c_0, r_0>0$ such that for all $0<t\leq r_0$ and all $x\in \RR^n\setminus \Om$, there is $y\in B_t(x)$
such that $B_{t/c_0}(y)\subset \RR^n\setminus \Om$.

If we replace the capacity by the Lebesgue measure in \cref{p_thick}, then we obtain a measure density condition. A set $E$ satisfying the measure density condition is uniformly $p$-thick for all $p>1$. If $p>n$, then every non-empty set is uniformly $p$-thick. The following lemma from \cite[Lemma 3.8]{Par} extends the capacity estimate in \cref{p_thick} to make precise the notion of being \emph{uniformly $p$-thick}:
\begin{lemma}[\cite{Par}]
Let $\tO$ be a bounded open set, and suppose that $\RR^n \setminus \tO$ is uniformly $p$-thick with constant $b_0,r_0$. Choose any $y \in \tO$ such that $B_{\frac34 r}(y)\setminus \tO \neq \emptyset$, then there exists a constant $b_1 = b_1(b_0,r_0,n,p)>0$ such that 
\[
  \cpt(\overline{B_{2r}(y)}\setminus \tO, B_{4r}(y) ) \geq b_1 \cpt( \overline{B_{2r}(y)}, B_{4r}(y) ). 
\]
\end{lemma}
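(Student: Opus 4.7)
The plan is to transfer the hypothesised $p$-thickness bound at a nearby boundary point $y_0 \in \partial \tO$ to a capacity estimate on the ball pair $(\overline{B_{2r}(y)}, B_{4r}(y))$, using monotonicity of $p$-capacity in the compact set together with a standard comparison estimate that controls how the $p$-capacity changes when the outer ball is (non-concentrically) enlarged. The output constant $b_1$ will collect the loss from each of these three steps, none of which depends on $\tO$ beyond the thickness data $(b_0, r_0)$.

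First I would produce the boundary point $y_0$: since $y \in \tO$ (open) and there exists $z \in B_{3r/4}(y) \setminus \tO$, the line segment $[y,z] \subset \overline{B_{3r/4}(y)}$ connects $\tO$ to $\tO^c$ and hence meets $\partial \tO$ at some $y_0$ with $|y_0 - y| \leq 3r/4$. After reducing to $r \leq r_0$ (the range $r > r_0$ is handled by a routine scale-reduction absorbed into the final constant via $r_0/r$), uniform $p$-thickness at $y_0$ with radius $r$ gives
\[
\cpt\!\left(\overline{B_r(y_0)} \cap \tO^c,\, B_{2r}(y_0)\right) \geq b_0\, \cpt\!\left(\overline{B_r(y_0)},\, B_{2r}(y_0)\right).
\]
Because $|y_0 - y| \leq 3r/4$, one has $\overline{B_r(y_0)} \subset \overline{B_{7r/4}(y)} \subset \overline{B_{2r}(y)}$ and $B_{2r}(y_0) \subset B_{11r/4}(y) \subset B_{4r}(y)$. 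Monotonicity of capacity in the compact set yields
\[
\cpt\!\left(\overline{B_{2r}(y)} \setminus \tO,\, B_{4r}(y)\right) \geq \cpt\!\left(\overline{B_r(y_0)} \cap \tO^c,\, B_{4r}(y)\right),
\]
and then the outer-ball comparison $\cpt(K, B_{4r}(y)) \geq c(n,p)\, \cpt(K, B_{2r}(y_0))$ for every compact $K \subset \overline{B_r(y_0)}$ completes the chain; the latter is proved by taking a capacitary extremal $v \in W_0^{1,p}(B_{4r}(y))$ for the left-hand side, multiplying by a Lipschitz cutoff $\eta$ supported in $B_{2r}(y_0)$ with $\eta \equiv 1$ on $\overline{B_r(y_0)}$ and $|\nabla \eta| \leq C/r$, and absorbing the $|\nabla \eta|\cdot v$ term via Poincar\'e's inequality in $B_{4r}(y)$. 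Closing up with the scale- and translation-invariant identity $\cpt(\overline{B_r(y_0)}, B_{2r}(y_0)) = 2^{p-n}\cpt(\overline{B_{2r}(y)}, B_{4r}(y))$ produces the claimed $b_1 = b_1(b_0, r_0, n, p)$.

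The delicate step is the outer-ball comparison, since $B_{2r}(y_0)$ and $B_{4r}(y)$ are not concentric and the natural extend-by-zero argument only supplies the opposite inequality. What makes the cutoff-plus-Poincar\'e argument close is the precise geometric fact that $K \subset \overline{B_r(y_0)}$ stays at distance at least $r$ from $\partial B_{2r}(y_0)$ and at distance at least $9r/4$ from $\partial B_{4r}(y)$, so that the cutoff $\eta$ can be built with $|\nabla \eta| \leq C/r$ and the Poincar\'e constant in $B_{4r}(y)$ contributes only a factor of $r^p$, producing a constant depending only on $n$ and $p$. I would also need to treat the borderline case $p = n$ (where $\cpt(\overline{B_r}, B_{2r})$ still scales like a dimensional constant) and note that for $p > n$ the assertion is automatic, so the whole argument works in the full range $1 < p \leq n$ stated in \cref{p_thick}.
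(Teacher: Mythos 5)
The paper does not include its own proof of this statement but simply cites \cite{Par} (Lemma~3.8), so there is nothing to compare line by line; your reconstruction reproduces the standard argument correctly and in the same spirit. The chain of reductions you use --- locate a boundary point $y_0\in\partial\tO$ within $3r/4$ of $y$ by connecting $y$ to a point of $B_{3r/4}(y)\setminus\tO$; apply uniform $p$-thickness at $y_0$ at scale $r$; pass from $\cpt\bigl(\overline{B_r(y_0)}\cap\tO^c, B_{2r}(y_0)\bigr)$ to $\cpt\bigl(\overline{B_{2r}(y)}\setminus\tO, B_{4r}(y)\bigr)$ by monotonicity in the compact set plus a non-concentric outer-ball comparison; close with the scaling identity $\cpt(\overline{B_r},B_{2r})=2^{p-n}\cpt(\overline{B_{2r}},B_{4r})$ --- is exactly right. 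You also correctly isolated the one nontrivial step: since $B_{2r}(y_0)\subset B_{4r}(y)$, the extend-by-zero monotonicity only gives $\cpt(K,B_{4r}(y))\le\cpt(K,B_{2r}(y_0))$, and the reverse inequality must be proved by hand. Your cutoff-plus-Poincar\'e argument (take a near-extremal $\phi$ for $\cpt(K,B_{4r}(y))$, multiply by $\eta$ with $\eta\equiv 1$ on $\overline{B_r(y_0)}$, $\operatorname{spt}\eta\subset B_{2r}(y_0)$, $|\nabla\eta|\lesssim 1/r$, and absorb $\int|\nabla\eta|^p\phi^p$ via the Poincar\'e inequality $\|\phi\|_{L^p(B_{4r})}^p\lesssim r^p\|\nabla\phi\|_{L^p(B_{4r})}^p$) is the correct and self-contained way to obtain $\cpt(K,B_{4r}(y))\ge c(n,p)\,\cpt(K,B_{2r}(y_0))$. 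Your remarks on the $p=n$ case and on $p>n$ being trivial are also right, although the paper only uses $1<p\le n$.

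The one place the argument does not close as written is the range $r>r_0$. You say this is ``handled by a routine scale-reduction absorbed into the final constant via $r_0/r$,'' but that factor degenerates as $r\to\infty$; applying the thickness hypothesis at scale $r_0$ instead of $r$ and then trying to recover $\cpt(\overline{B_{2r}(y)},B_{4r}(y))\approx r^{n-p}$ from $\cpt(\overline{B_{r_0}(y_0)}\cap\tO^c,B_{2r_0}(y_0))\gtrsim b_0 r_0^{n-p}$ loses a factor of at least $(r_0/r)^{n-p}$ (and further factors from the cutoff at mismatched scales), so the constant would not be uniform in $r$. If you truly need all $r>0$, the fix must use the boundedness of $\tO$ (for $r$ much larger than $\diam\tO$ the set $\overline{B_{2r}(y)}\setminus\tO$ already contains a thick annulus, and for intermediate $r$ the constant will then pick up a dependence on $\diam\tO/r_0$), which goes beyond what you wrote; otherwise one should acknowledge that the lemma, like the thickness hypothesis it rests on, is implicitly restricted to $r\le r_0$. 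This is a minor point for the applications in the paper, which use the lemma at small scales, but it should not be waved away as routine.
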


Following the definition of $p$-thickness, a simple consequence of H\"older's and Young's inequality gives the following result (for example, see \cite[Lemma 3.13]{Par} for the proof):
\begin{lemma}
Let $1 < p \leq n$ be given and suppose  a set $E \subset \RR^n$ is uniformly $p$-thick with constants $b_0,r_0$. Then $E$ is uniformly $q$-thick for all $q \geq p$ with constants $b_1, r_1$.
\end{lemma}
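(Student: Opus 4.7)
The goal is to transfer thickness from exponent $p$ to $q \geq p$. The strategy is to compare the two capacities of the \emph{same} compact set $K := \overline{B_r(y_0)} \cap E$ by means of H\"older's inequality applied to admissible test functions, and then to exploit the explicit scaling of the capacity of a ball. The case $q > n$ is anyway immediate from the remark in the paper that every nonempty set is uniformly $q$-thick in that range, so the interesting range is $p \leq q \leq n$.

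\textbf{Step 1: Pass from $q$- to $p$-capacity via H\"older.} Fix $y_0 \in \partial(\RR^n \setminus E)$ and $r \in (0, r_0]$. Any $\phi \in C_c^\infty(B_{2r}(y_0))$ with $\chi_K \leq \phi \leq 1$ is simultaneously admissible for the $p$- and $q$-capacity of $K$. Since $q/p \geq 1$, H\"older's inequality gives
\begin{equation*}
\int_{B_{2r}(y_0)} |\nabla \phi|^p \, dx \leq |B_{2r}(y_0)|^{1 - p/q} \left( \int_{B_{2r}(y_0)} |\nabla \phi|^q \, dx \right)^{p/q}.
\end{equation*}
Taking the infimum over all such $\phi$ on the right (which is a valid subclass for the $p$-capacity on the left) and rearranging yields
\begin{equation*}
\cpt_q(K, B_{2r}(y_0)) \geq |B_{2r}(y_0)|^{1 - q/p} \, \bigl[\cpt_p(K, B_{2r}(y_0))\bigr]^{q/p}.
\end{equation*}

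\textbf{Step 2: Invoke $p$-thickness and scale.} By the $p$-thickness of $E$, the right hand side is bounded below by
\begin{equation*}
|B_{2r}(y_0)|^{1 - q/p} \, b_0^{q/p} \, \bigl[\cpt_p(\overline{B_r(y_0)}, B_{2r}(y_0))\bigr]^{q/p}.
\end{equation*}
The standard dilation $\phi(x) = \psi\bigl((x-y_0)/r\bigr)$ gives, for every $1 < s \leq n$, the identity $\cpt_s(\overline{B_r(y_0)}, B_{2r}(y_0)) = r^{n-s} \cpt_s(\overline{B_1}, B_2)$, and $|B_{2r}(y_0)| = \omega_n (2r)^n$. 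Substituting these into the previous estimate, the exponents of $r$ combine as $n(1 - q/p) + (n-p)q/p = n - q$, so
\begin{equation*}
\cpt_q(K, B_{2r}(y_0)) \geq c(n,p,q) \, b_0^{q/p} \, r^{n-q} = b_1 \, \cpt_q(\overline{B_r(y_0)}, B_{2r}(y_0)),
\end{equation*}
with $b_1 := c(n,p,q) \, b_0^{q/p} / \cpt_q(\overline{B_1}, B_2)$ and $r_1 := r_0$. This is exactly the uniform $q$-thickness of $E$.

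\textbf{Expected obstacle.} The only mild subtlety is the borderline $q = n$, where the classical explicit formula for the capacity of the ball is logarithmic rather than a pure power. However, the dilation identity above is a direct change of variables and holds uniformly for all $1 < s \leq n$ (the constant $\cpt_n(\overline{B_1}, B_2)$ is simply a fixed positive number), so the bookkeeping of $r$-powers still produces the correct exponent $n - q$. The cases $q > n$ require no argument at all by the remark following \cref{p_thick}. Hence no genuinely difficult step arises beyond being careful with constants.
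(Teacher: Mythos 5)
Your proof is correct and follows the standard route (Hölder's inequality between the $p$- and $q$-energies of a common admissible function, followed by the dilation identity $\cpt_{1,s}(\overline{B_r},B_{2r})=r^{n-s}\cpt_{1,s}(\overline{B_1},B_2)$), which is essentially the argument the paper delegates to \cite[Lemma 3.13]{Par}. Your ``expected obstacle'' at $q=n$ is in fact a non-issue, as you correctly conclude: the logarithmic behaviour concerns independent radii, whereas here the ratio is fixed at $2$, so the change of variables gives a genuine positive constant $\cpt_{1,n}(\overline{B_1},B_2)$ and the exponent bookkeeping closes without modification.
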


A very important result regarding the uniform $p$-thickness condition is that it has the self improving property (see \cite{Le88} or  \cite{Anc,Mik} for the details):
\begin{theorem}[\cite{Le88}]
\label{self_improv_cap}
Let $1 < p \leq n$ be given and suppose a set $E \subset \RR^n$ is uniformly $p$-thick with constants $b_0,r_0$. Then there exists an exponent $q = q(n,p,b_0)$ with $1<q<p$ for which $E$ is uniformly $q$-thick with constants $b_1, r_1$. 
\end{theorem}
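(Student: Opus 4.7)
This is the classical self-improvement of uniform $p$-thickness due to Lewis, and my approach would follow his: work with the variational $p$-capacitary potential of $E$ near a boundary point, establish a reverse H\"older inequality for its gradient, apply Gehring's lemma, and then translate the resulting integrability gain back into a capacity bound at a nearby lower exponent.

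Fix $y_0 \in \partial E$ and $0 < r \leq r_0$, and set $K := \overline{B_r(y_0)} \cap E$. Let $u \in W^{1,p}_0(B_{2r}(y_0))$ be the unique minimizer of $\int |\nabla \varphi|^p \, dx$ among admissible $\varphi$ with $\varphi \geq 1$ quasi-everywhere on $K$. Standard variational theory gives $0 \leq u \leq 1$, that $u$ is $p$-harmonic in $B_{2r}(y_0) \setminus K$, and that $\int_{B_{2r}(y_0)} |\nabla u|^p \, dx = \cpt(K, B_{2r}(y_0))$. Extending $u$ by $1$ on $K$ and by $0$ on $\RR^n \setminus B_{2r}(y_0)$ yields a function on $\RR^n$ whose non-triviality on every scale $\rho \lesssim r$ is guaranteed by the uniform $p$-thickness of $E$.

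Testing the Euler--Lagrange equation of $u$ against $u \eta^p$ (with $\eta$ a standard cutoff) yields a Caccioppoli inequality on concentric balls. Combined with a Sobolev--Poincar\'e inequality anchored on either the zero set of $u$ (which is large outside $B_{2r}(y_0)$) or on $K$ (through the $p$-thickness of $E$), this produces an estimate of the form
\begin{equation*}
\left( \hint_{B_{\rho}(x)} |\nabla u|^p \, dy \right)^{1/p} \leq C \left( \hint_{B_{2\rho}(x)} |\nabla u|^s \, dy \right)^{1/s}
\end{equation*}
for some $s = s(n,p,b_0) \in [1,p)$ and all balls $B_{2\rho}(x) \Subset B_{2r}(y_0)$. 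Gehring's lemma then produces $\sigma = \sigma(n,p,b_0) > 0$ for which $|\nabla u| \in L^{p+\sigma}_{\loc}$ and a matching reverse H\"older inequality holds at exponent $p+\sigma$. Uniform $q$-thickness for some $q = q(n,p,b_0) \in (1,p)$ would then follow by transferring this gradient integrability gain into a lower bound for $\cpt[q](K, B_{2r}(y_0))$, via a covering argument on the level sets of $u$ combined with the comparability of Wolff potentials at nearby exponents.

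\textbf{Main obstacle.} The difficulty is twofold. First, one must produce a reverse H\"older inequality with right-hand exponent strictly below $p$: a naive Caccioppoli plus standard Sobolev--Poincar\'e closes up at $s = p$, which Gehring cannot exploit, and strict improvement requires a Maz'ya-type capacitary Sobolev--Poincar\'e inequality in which the uniform $p$-thickness of $E$ enters quantitatively. Second, converting the gradient integrability of $u$ into a $q$-capacity lower bound is \emph{not} a direct application of H\"older (which would give an upper bound in the wrong direction) but requires the aforementioned covering and Wolff-potential comparison to extract capacitary information at a nearby exponent from the self-improved integrability of the $p$-capacitary potential. These two points form the technical heart of Lewis's proof; everything else is comparatively routine.
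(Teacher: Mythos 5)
The paper does not prove this theorem; it is quoted verbatim from Lewis (\emph{Uniformly fat sets}, 1988) with a pointer to the literature, so there is no in-paper argument to compare against. Your sketch correctly names the skeleton of Lewis's proof: construct the $p$-capacitary potential $u$ of $\overline{B_r(y_0)}\cap E$ in $B_{2r}(y_0)$, derive a reverse H\"older inequality for $|\nabla u|$ in which the uniform $p$-thickness of $E$ enters through a Maz'ya-type Sobolev--Poincar\'e inequality, self-improve via Gehring, and then translate the integrability gain into a capacity estimate at a nearby lower exponent. That much is accurate.

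But you have flagged the two hardest steps as ``obstacles'' and then left them both unresolved, so as written this is a plan rather than a proof. The first gap (getting a right-hand exponent strictly below $p$ in the reverse H\"older inequality) is real but is exactly where the $p$-thickness hypothesis is used quantitatively via the Maz'ya inequality, so it can be closed by citing or reproducing that inequality; you should actually do so rather than only naming it. The second gap is the one you are more at risk of mis-describing: you attribute the passage from $|\nabla u|\in L^{p(1+\delta)}$ to a lower bound on $\cpt[q]$ to a ``covering argument on level sets combined with comparability of Wolff potentials at nearby exponents.'' Lewis's 1988 argument does not invoke Wolff potentials; it is a more direct comparison in which the higher integrability of the $p$-capacitary potential, together with a scaling argument and the normalization $\int_{B_{2r}}|\nabla u|^p\,dx=\cpt(K,B_{2r})\gtrsim r^{n-p}$, yields that $u$ itself nearly witnesses a $q$-capacity lower bound for $q$ slightly below $p$. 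You correctly observe that $\cpt[q](K,B_{2r})\le\int_{B_{2r}}|\nabla u|^q\,dx$ goes the wrong way, but the proposal offers no replacement argument, only a name for one, and the name is probably not the right mechanism. Until that step is written out, the proof is incomplete.
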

We next state a generalized Sobolev-Poincar\'e's inequality which was originally obtained by V. Maz'ya \cite[Sec. 10.1.2]{Maz} (see also    \cite[Sec. 3.1]{KK94} and \cite[Corollary 8.2.7]{AH}). 
\begin{theorem} \label{sobolev-poincare} Let $B$ be a ball and $\phi \in W^{1,p}(B)$ for some $p>1$. Let $\kappa\in [1,n/(n-p)]$ if $1<p<n$ and $\kappa\in[1,2]$ if 
$p=n$. Then there exists a constant $c = c(n, p) > 0$ such that 
\begin{equation*}
\label{sob_poin_est}
\left( \hint_B |\phi|^{\kappa p} \, dx\right)^{\frac{1}{\kappa p}} \leq c \left( 
\frac{1}{{\rm cap}_{1,p} (\overline{N(\phi)}, 2B ) } \int_B |\nabla \phi|^p \, dx\right)^{\frac{1}{p}},
\end{equation*}
where $N(\phi)=\{x \in B: \phi (x) = 0 \}$.
\end{theorem}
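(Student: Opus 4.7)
The proof I have in mind is the classical capacitary argument of Maz'ya, which couples a standard Sobolev-Poincar\'e estimate with a capacity-based control on the mean value of $\phi$. By the natural scaling of both sides of the inequality, I would first reduce matters to a fixed ball, so write $B = B_r(x_0)$ throughout.

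The core decomposition is $\phi = (\phi - \phi_B) + \phi_B$ with $\phi_B := \hint_B \phi\,dx$. The oscillation term is handled by the usual Sobolev-Poincar\'e inequality in $W^{1,p}(B)$, which for exactly the prescribed range of $\kappa$ yields
\begin{equation*}
\left(\hint_B |\phi - \phi_B|^{\kappa p}\,dx\right)^{1/(\kappa p)} \leq C\,r \left(\hint_B |\nabla \phi|^p\,dx\right)^{1/p}.
\end{equation*}
The substantive step is to bound the constant $|\phi_B|$ in terms of $\cpt(\overline{N(\phi)}, 2B)$.

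Assuming first $\phi_B \neq 0$, I would extend $\phi$ to $\tilde\phi \in W^{1,p}(2B)$ by reflection across $\pa B$, with $\|\nabla \tilde\phi\|_{L^p(2B)} \leq C \|\nabla \phi\|_{L^p(B)}$. Taking a cutoff $\zeta \in C_c^\infty(2B)$ with $\zeta \equiv 1$ on $B$ and $|\nabla \zeta| \leq C/r$, the function
\begin{equation*}
u := \zeta \,\frac{\phi_B - \tilde\phi}{\phi_B}
\end{equation*}
equals $1$ on $\overline{N(\phi)} \subset B$, and, after truncation to $[0,1]$ and standard smoothing, is admissible in the definition of $\cpt(\overline{N(\phi)}, 2B)$. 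A direct product-rule computation together with Poincar\'e's inequality on $2B$ gives
\begin{equation*}
\cpt(\overline{N(\phi)}, 2B) \leq \int_{2B} |\nabla u|^p\,dx \leq \frac{C}{|\phi_B|^p}\int_B |\nabla \phi|^p\,dx,
\end{equation*}
which rearranges to the capacitary bound $|\phi_B|^p \leq C\,\cpt(\overline{N(\phi)}, 2B)^{-1} \int_B |\nabla \phi|^p\,dx$.

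Combining the two pieces and using the trivial upper bound $\cpt(\overline{N(\phi)}, 2B) \leq \cpt(\overline{B}, 2B) \leq C\,r^{n-p}$ to absorb the oscillation contribution into the capacitary term yields the asserted inequality. The main obstacle I anticipate is the degenerate case $\phi_B = 0$, which I would treat by perturbation (replace $\phi_B$ by $\phi_B + \ve$ in the construction of $u$ and let $\ve \to 0$), together with careful bookkeeping in the borderline case $p = n$, where the oscillation step uses higher integrability of $W^{1,n}$ functions up to exponent $\kappa n \leq 2n$ in place of the embedding $W^{1,p} \hookrightarrow L^{np/(n-p)}$. With these details handled, the constant $C$ depends only on $n$ and $p$ as claimed.
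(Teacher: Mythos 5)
The paper does not prove this statement; it is quoted from Maz'ya, with pointers to Kilpel\"ainen--Koskela and Adams--Hedberg, and your sketch reproduces the classical capacitary argument found there. The structure is sound: split $\phi$ into oscillation and mean, handle the oscillation by the ordinary Sobolev--Poincar\'e inequality together with the trivial monotonicity bound $\cpt(\overline{N(\phi)},2B)\leq\cpt(\overline{B},2B)\lesssim r^{n-p}$, and control the mean $|\phi_B|$ by testing the capacity against the normalized, reflected, cut-off competitor $u$. One small correction: the perturbation $\phi_B\mapsto\phi_B+\ve$ does not actually rescue the degenerate case, because the capacity bound that the competitor $u_\ve$ delivers behaves like $\ve^{-p}\int_B|\nabla\phi|^p$ and blows up as $\ve\to 0$. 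But that case is in fact trivial: if $\phi_B=0$ the mean term disappears and the oscillation estimate together with the trivial capacity upper bound already give the conclusion, so no capacitary test function is needed at all. The remaining gaps are routine technicalities that should be recorded but do not affect the argument: take $\zeta\equiv 1$ on a neighbourhood of $\overline{B}$ rather than only on $B$, since $\overline{N(\phi)}$ may meet $\partial B$; control $\tfrac{1}{r^p}\int_{2B}|\tilde\phi-\phi_B|^p$ by first comparing $\phi_B$ with $\tilde\phi_{2B}$ and then applying Poincar\'e on $2B$; and use the standard dilate-then-mollify device so that the truncated $u$ exceeds $1$ on an open neighbourhood of $\overline{N(\phi)}$ before it is inserted as a capacitary competitor.
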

\subsection{Structural assumptions} 
In this subsection, we will mention all the assumptions we make on the operator $\aa(x,t,\zeta)$ as well as on the domain $\Om$. 
\subsubsection{Assumptions on \texorpdfstring{$\aa(x,t,\zeta)$}.}
We shall now collect the assumptions on the nonlinear structure $\aa(\cdot,\cdot,\cdot)$. Let $T>0$ be a fixed number, we then assume that the nonlinearity $\aa(x,t,\zeta) : \Om \times [0,T] \times \RR^n \mapsto \RR^n$ is an Carath\'eodory function, i.e.,  $(x,t) \mapsto \aa(x,t,\zeta)$  is measurable for every $\zeta \in \RR^n$ and 
$\zeta \mapsto \aa(x,t,\zeta)$ is continuous for almost every  $(x,t) \in \Om \times [0,T]$.

We further assume that for a.e. $(x,t) \in \Om \times [0,T]$ and for any $\zeta \in \RR^n$, there exist two  positive constants $\lamot$ such that  the following bounds are satisfied   by the nonlinear structures :
\begin{gather}
\iprod{\aa(x,t,\zeta)}{\zeta} \geq \La_0 |\zeta |^p  - h_1  \txt{and} |\aa(x,t,\zeta)| \leq \La_1  |\zeta|^{p-1} + h_2, \label{abounded}
\end{gather}
where, the functions $h_1, h_2: \Om \times [0,T] \mapsto \RR$ are assumed to be  measurable  with bounded norm 
\begin{equation}
\label{bound_b}
h_0^p:=|h_1| + |h_2|^{\frac{p}{p-1}} \txt{and} \| h_0\|_{L^{\hat{q}}(\Om \times [0,T])} < \infty \quad \text{ for some  } \hat{q} \geq p.
\end{equation}
\emph{An important aspect of the estimates obtained in this paper is that we do not make any  assumptions regarding the smoothness of $\aa(x,t,\zeta)$ with respect to $x,t,\zeta$.}

As the basic sets for our estimates, we will use parabolic cylinders where the radii in space and time are coupled. This is due to the fact that in the case that $p \neq 2$, the size of the cylinders intrinsically depends on the solution itself. This difficulty extends to the problems dealing with \emph{very weak} solutions also. 

In what follows, we will always assume the following restriction on the exponent $p$:
\begin{equation}
\label{restriction_p}
\frac{2n}{n+2} < p < \infty.
\end{equation}

\begin{remark}The restriction in \cref{restriction_p} is necessary when dealing with parabolic problems because of the compact embedding $W^{1,p} \hookrightarrow L^2$. Since solutions to parabolic problems require us to deal with $L^2$-norm of the solution which comes from the time-derivative, this restriction is natural. \end{remark}

%

\subsubsection{Assumptions on \texorpdfstring{$\pa \Om$}.}

\begin{definition}
\label{p_thick_domain}
In this paper, we shall assume that the domain $\Om$ is bounded and that it's complement $\Om^c$ is uniformly $p$-thick with constants $b_0,r_0$ in the sense of  \cref{p_thick}.

Applying \cref{self_improv_cap}, we will henceforth fix the exponent $\ve_0= \ve_0(n,p,b_0,r_0)$ to denote the self improvement property associated to $\pa\Om$.
\end{definition}

\subsection{Function Spaces}

Let $1\leq \vt < \infty$, then $W_0^{1,\vt}(\Om)$ denotes the standard Sobolev space which is the completion of $C_c^{\infty}(\Om)$ under the $\|\cdot\|_{W^{1,\vt}}$ norm. 

The parabolic space $L^{\vt}(0,T; W^{1,\vt}(\Om))$ for any $\vt \in (1,\infty)$ is the collection of measurable functions $f(x,t)$ such that for almost every $t \in (0,T)$, the function $x \mapsto f(x,t)$ belongs to $W^{1,\vt}(\Om)$ with the following norm being finite:
\[
 \| f\|_{L^{\vt}(0,T; W^{1,\vt}(\Om)} := \lbr \int_{0}^T \| u(\cdot, t) \|_{W^{1,\vt}(\Om)}^{\vt} \ dt \rbr^{\frac{1}{\vt}} < \infty.
\]

Analogously, the parabolic space $L^{\vt}(0,T; W_0^{1,\vt}(\Om))$ is the collection of measurable functions $f(x,t)$ such that for almost every $t \in (0,T)$, the function $x \mapsto f(x,t)$ belongs to $W_0^{1,\vt}(\Om)$.

\subsubsection{Negative Sobolev spaces}
We denote $W^{-1,\vt'}(\Om) := \lbr W_0^{1,\vt}(\Om)\rbr^*$ to  be the usual dual space. Then we have the following well known lemma (see \cite[Proposition 9.20]{brezis} for the proof).
\begin{lemma}
\label{lemma_lihe_wang}	
Let $\Om$ be any bounded domain, a function $ \varphi \in W^{-1,\vt'}(\Om)$ if and only if there exists functions $\{\phi_0, \phi_1,\phi_2,\ldots,\phi_n\}\in L^{\vt'}(\Om)$ such that 
\begin{equation}
\label{2.5_lihe}
\iprod{\varphi}{v} = \int_{\Om} \phi_0 v \ dx + \int_{\Om} \sum_{i=1}^n \phi_i \frac{\pa v}{\pa x_i} \ dx \qquad \forall\  v \in W_0^{1,\vt}(\Om).
\end{equation}
Moreover, there are $\{\phi_0,\phi_1,\phi_2,\ldots,\phi_n\}\in L^{\vt'}(\Om)$ such that  
\begin{equation}
\label{neg_sob_norm}
\|\varphi\|_{W^{-1,\vt'}(\Om)}^{\vt'} = \|\phi_0\|_{L^{\vt'}(\Om)}^{\vt'} + \sum_{i=1}^n \|\phi_i\|_{L^{\vt'}(\Om)}^{\vt'}.
\end{equation}
Here we can formally integrate by parts \cref{2.5_lihe} to get the representation
\[
\varphi = \phi_0 - \sum_{i=1}^n \frac{\pa \phi_i}{\pa x_i} = \phi_0-\dv (\phi_1,\phi_2,\ldots,\phi_n) = \phi_0 - \dv \vec{\phi}.
\]
Since $\Om$ is a bounded domain, we can take $\phi_0=0$.
\end{lemma}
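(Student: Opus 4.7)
The plan is to prove this by the standard Hahn--Banach plus Riesz representation argument, combined with Poincaré's inequality to handle the $\phi_0=0$ claim for bounded domains.

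\textbf{Sufficiency.} First I would dispatch the easy direction: if such functions $\phi_0,\phi_1,\ldots,\phi_n\in L^{\vt'}(\Om)$ exist, then H\"older's inequality applied to the right-hand side of \cref{2.5_lihe} bounds $|\langle \varphi,v\rangle|$ by $\left(\sum_{i=0}^n \|\phi_i\|_{L^{\vt'}}^{\vt'}\right)^{1/\vt'} \|v\|_{W^{1,\vt}}$ for every $v\in W_0^{1,\vt}(\Om)$. This shows $\varphi \in W^{-1,\vt'}(\Om)$ and gives one inequality in \cref{neg_sob_norm}.

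\textbf{Necessity via Hahn--Banach and Riesz.} For the converse, the key idea is to linearise $W_0^{1,\vt}(\Om)$ inside the product space $E:=[L^{\vt}(\Om)]^{n+1}$ equipped with the norm $\|(g_0,\ldots,g_n)\|_E := \left(\sum_{i=0}^n \|g_i\|_{L^\vt}^{\vt}\right)^{1/\vt}$. Define the isometric embedding
\[
T:W_0^{1,\vt}(\Om)\to E,\qquad Tv=(v,\partial_1 v,\ldots,\partial_n v).
\]
On the subspace $F:=T(W_0^{1,\vt}(\Om))\subset E$ define the linear functional $\Phi(Tv):=\langle\varphi,v\rangle$; by construction $\|\Phi\|_{F^*}=\|\varphi\|_{W^{-1,\vt'}}$. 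By the Hahn--Banach theorem extend $\Phi$ to $\tilde\Phi\in E^*$ without increase of norm. Since the dual of $E$ is naturally identified with $[L^{\vt'}(\Om)]^{n+1}$ (with the $\ell^{\vt'}$ product norm) via the Riesz representation theorem for $L^\vt$, there exist $\phi_0,\ldots,\phi_n\in L^{\vt'}(\Om)$ with
\[
\tilde\Phi(g_0,\ldots,g_n)=\int_\Om\phi_0 g_0\,dx+\sum_{i=1}^n\int_\Om\phi_i g_i\,dx,\qquad \Bigl(\sum_{i=0}^n\|\phi_i\|_{L^{\vt'}}^{\vt'}\Bigr)^{1/\vt'}=\|\tilde\Phi\|_{E^*}.
\]
Specialising $(g_0,\ldots,g_n)=Tv$ recovers \cref{2.5_lihe}, and the norm equality combined with the sufficiency bound yields \cref{neg_sob_norm}.

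\textbf{Removing $\phi_0$ on bounded domains.} For the last assertion, since $\Om$ is bounded, the Poincaré inequality gives $\|v\|_{L^\vt(\Om)}\le C(\Om,\vt)\|\nabla v\|_{L^\vt(\Om)}$ for all $v\in W_0^{1,\vt}(\Om)$. Consequently $\||\nabla v|\|_{L^\vt}$ is an equivalent norm on $W_0^{1,\vt}(\Om)$, so I repeat the previous argument replacing the embedding by $T'v:=(\partial_1 v,\ldots,\partial_n v)\in [L^\vt(\Om)]^n$, which is still an isometry onto its image in the gradient norm. Hahn--Banach together with Riesz for $L^\vt$ then produces $\phi_1,\ldots,\phi_n\in L^{\vt'}(\Om)$ alone realising $\varphi$, i.e.\ $\phi_0=0$ is admissible.

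\textbf{Main obstacle.} The only nontrivial point is verifying that $F=T(W_0^{1,\vt}(\Om))$ is indeed a closed subspace of $E$ (needed for the identification of $\Phi$ as a bounded functional on $F$ before extension), which follows directly from the completeness of $W_0^{1,\vt}(\Om)$ under the norm pulled back from $E$ via the isometry $T$. Everything else is bookkeeping with standard dualities.
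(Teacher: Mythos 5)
Your Hahn--Banach plus Riesz argument is the standard one and agrees with the source the paper cites for this lemma (Brezis, Proposition 9.20); the paper itself gives no proof, deferring to that reference, so your blind reconstruction matches. One small correction to your final remark: closedness of $F=T(W_0^{1,\vt}(\Om))$ in $E$ is not actually required --- the Hahn--Banach theorem extends a bounded linear functional from \emph{any} subspace, closed or not, and boundedness of $\Phi$ on $F$ follows directly from $|\Phi(Tv)|=|\iprod{\varphi}{v}|\le\|\varphi\|_{W^{-1,\vt'}(\Om)}\|Tv\|_E$ via the isometry, so no completeness argument enters; also note that the representation with $\phi_0=0$ produced by the Poincar\'e step realises the norm equality in \eqref{neg_sob_norm} only relative to the gradient norm on $W_0^{1,\vt}(\Om)$, which is equivalent but not identical to the full Sobolev norm --- harmless here, since the paper only invokes equivalent norms (cf.\ \eqref{neg_sob_norm_quiv}).
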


An equivalent definition of the norm defined in \cref{neg_sob_norm} is given by
\begin{equation}\label{neg_sob_norm_quiv}
\|\phi\|_{W^{-1,\vt'}(\Om)} := \inf_{\phi = \varphi - \dv \psi} \|\varphi\|_{L^{\vt'}(\Om)} + \|\psi\|_{L^{\vt'}(\Om,\RR^n)},
\end{equation}
where the infimum is taken over all representations of the form  $\phi = \varphi - \dv \psi$ with $\varphi\in L^{\vt'}(\Om)$ and $\psi \in L^{\vt'}(\Om,\RR^n)$.

\subsection{Notion of Solution}

There is a well known difficulty in defining the notion of solution for \cref{main-1}, \cref{main-2} or \cref{main-3} due to a lack of time derivative of $u$. To overcome this, one can either use Steklov average or convolution in time. 
In this paper, we shall use the former approach (see also \cite[Page 20, Equation (2.5)]{DiB1} for further details). 

We will use two \emph{equivalent notions of solutions} depending on which equation we are handling. 

\subsubsection{Definition of Solution for \texorpdfstring{\cref{main-1}}. and \texorpdfstring{\cref{main-2}}.}

Let us first define Steklov average as follows: let $h \in (0, 2T)$ be any positive number, then we define
\begin{equation}\label{stek1}
  u_{h}(\cdot,t) := \left\{ \begin{array}{ll}
			      \hint_t^{t+h} u(\cdot, \tau) \ d\tau \quad & t\in (0,T-h), \\
			      0 & \text{else}.
			    \end{array}\right.
\end{equation}

We shall recall the following well known lemma regarding integral averages (for a proof in this setting, see for example \cite[Chapter 8.2]{bogelein2007thesis} for the details).
\begin{lemma}
\label{time_average}
Let $\psi:\RR^{n+1}\to\RR$ be an integrable function, $\la >0$ be any fixed number and suppose  $[\psi]_h(x,t) : = \hint_{t-\la h^2}^{t+\la h^2} \psi(x,\tau) \ d\tau$. Then we have the following properties:
\begin{enumerate}[(i)]
 \item $[\psi]_h \rightarrow \psi$ a.e $(x,t) \in \RR^{n+1}$ as $h \searrow 0$, 
 \item $[\psi]_h(x,\cdot)$ is continuous and  bounded  in time for a.e. $x \in \RR^n$.
 \item For any cylinder $Q_{r, \la r^2} \subset \RR^{n+1}$ with $r >0$, there holds
\[
  \fiint_{Q_{r,\ga r^2}} [\psi]_h(x,t) \ dx \ dt \apprle_n \fiint_{Q_{r,\la(r+h)^2}}\psi(x,t) \ dx \ dt. 
 \]
 \item The function $[\psi]_h(x,t)$ is differentiable with respect to $t \in \RR$, moreover $[\psi]_h(x, \cdot) \in C^{1}(\RR)$ for a.e. $x \in \RR^n$. 
 \end{enumerate}
\end{lemma}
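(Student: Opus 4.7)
The plan is to reduce each item to a classical one-dimensional fact applied slicewise, using that $\psi \in L^1(\RR^{n+1})$ together with Fubini gives $\psi(x,\cdot) \in L^1_{\mathrm{loc}}(\RR)$ for a.e.\ $x \in \RR^n$. I would work throughout with such $x$.

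For (i), the Lebesgue differentiation theorem in one variable yields $[\psi]_h(x,t) \to \psi(x,t)$ as $h \searrow 0$ at every Lebesgue point $t$ of $\psi(x,\cdot)$, hence for a.e.\ $t \in \RR$, and Fubini lifts this to a.e.\ convergence on $\RR^{n+1}$. For (ii) and (iv), I introduce the antiderivative $F(x,t) := \int_0^t \psi(x,\tau)\,d\tau$, which is absolutely continuous in $t$ for a.e.\ $x$, and write
\[
[\psi]_h(x,t) \;=\; \frac{F(x,t+\la h^2)-F(x,t-\la h^2)}{2\la h^2}.
\]
Continuity in $t$ is then immediate from continuity of $F(x,\cdot)$; the pointwise bound $|[\psi]_h(x,t)| \leq (2\la h^2)^{-1}\|\psi(x,\cdot)\|_{L^1(\RR)}$ delivers boundedness in $t$; and the a.e.\ differentiability of $F(x,\cdot)$ gives the explicit formula $\partial_t [\psi]_h(x,t) = (2\la h^2)^{-1}(\psi(x,t+\la h^2) - \psi(x,t-\la h^2))$, which is the sense in which Steklov averages are treated as $C^1$ in time throughout the rest of the paper.

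The main content is (iii), a Fubini argument on the parabolic cylinder. Placing the cylinder at the origin for notational convenience, unwinding the Steklov average, and performing the change of variables $\tau = t+s$ with $s \in (-\la h^2, \la h^2)$,
\[
\iint_{Q_{r,\la r^2}}[\psi]_h(x,t)\,dx\,dt \;=\; \frac{1}{2\la h^2}\int_{B_r}\int_{-\la h^2}^{\la h^2}\int_{-\la r^2}^{\la r^2}\psi(x,t+s)\,dt\,ds\,dx.
\]
For each fixed $s$ the inner $t$-integral, after a trivial translation, lives on a subinterval of $(-\la r^2 - \la h^2,\, \la r^2 + \la h^2)$, which is itself contained in the time slice of $Q_{r,\la(r+h)^2}$ because $r^2 + h^2 \leq (r+h)^2$. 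Bounding the integrand by $|\psi|$ to absorb sign and carrying out the trivial $s$-integration yields $\iint_{Q_{r,\la r^2}} [\psi]_h\,dx\,dt \leq \iint_{Q_{r,\la(r+h)^2}} |\psi|\,dx\,dt$, and normalizing by $|Q_{r,\la r^2}|$ produces the volume ratio $((r+h)/r)^2$, which is bounded by a dimensional constant in the regime $h \leq r$ in which the estimate is invoked, accounting for the $\apprle_n$ in the statement.

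The only mildly technical point I anticipate is the careful bookkeeping in (iii) of where the shifted $t$-supports land, so that no $\la$- or $h$-dependent blowup leaks into the final constant; beyond this the proof is just a sequence of routine invocations of Fubini, Lebesgue differentiation, and absolute continuity in the spirit of the Bögelein thesis reference, so no genuinely new ideas are required.
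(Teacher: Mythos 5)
Your argument is correct and is the standard one; the paper does not actually give a proof, only a reference to B\"ogelein's thesis, and the Fubini plus one-dimensional Lebesgue differentiation approach you outline is exactly what that reference carries out, so there is no competing ``paper method'' to contrast with.

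Two small points of precision worth recording, both concerning imprecisions in the lemma statement rather than your argument. First, in (iii) the integrand on the right-hand side should really be $|\psi|$ (or one should presume $\psi \geq 0$, which is the case in every application in this paper); you correctly pass to absolute values, and without that the claimed inequality can fail for sign-changing $\psi$. Second, for a generic $\psi \in L^1(\RR^{n+1})$ the claim that $[\psi]_h(x,\cdot) \in C^1(\RR)$ is too strong: writing $[\psi]_h(x,t) = \frac{F(x,t+\la h^2)-F(x,t-\la h^2)}{2\la h^2}$ with $F(x,\cdot)$ absolutely continuous shows that $[\psi]_h(x,\cdot)$ is absolutely continuous with $\partial_t[\psi]_h(x,t) = \frac{\psi(x,t+\la h^2)-\psi(x,t-\la h^2)}{2\la h^2}$ for a.e.\ $t$, but this derivative need not be continuous unless $\psi(x,\cdot)$ itself is. Your reading that this a.e.\ derivative formula is ``the sense in which Steklov averages are treated as $C^1$'' is exactly right: absolute continuity in $t$, together with the pointwise derivative formula, is all that the integration-by-parts manipulations in \cref{lemma_crucial_2_app} and elsewhere actually require. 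Finally, your observation that the constant in (iii) is only universal in the regime $h \leq r$ (where the volume ratio $((r+h)/r)^2 \leq 4$) is correct and is a tacit assumption made whenever the lemma is invoked.
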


We shall now define the notion of very weak solution:
\begin{definition}[Very weak solution] 
\label{very_weak_solution}
Let $ \be \in (0,1)$ and $h \in (0,2T)$ be given and suppose $p-\be > 1$. We  then say $u \in L^2(0,T; L^2(\Om)) \cap L^{p-\be}(0,T; u_0 + W_0^{1,p-\be}(\Om))$ is a very weak solution of 
\begin{equation*}
\left\{
\begin{array}{ll}
u_t - \dv  \aa(x,t,\nabla u) = 0 & \text{on} \ \Om \times (0,T), \\
u = u_0 & \text{on} \ \pa \Om \times (0,T),\\
u = u_1 & \text{on} \ \Om \times \{t=0\},
\end{array}
\right.
\end{equation*}
if for any $\phi \in W_0^{1,\frac{p-\be}{1-\be}}(\Om) \cap L^{\infty}(\Om)$, the following holds:
\begin{equation*}
\label{def_weak_solution}
  \int_{\Om \times \{t\}} \frac{d [u]_{h}}{dt} \phi + \iprod{[\aa(x,t,\nabla u)]_{h}}{\nabla \phi} \ dx = 0 \txt{for any}0 < t < T-h.
\end{equation*}
The initial condition is taken in the sense of $L^2(\Om)$, i.e.,
\begin{equation*}
\int_{B}\abs{u_h(x,0) - u_1(x)}^2 \ dx \xrightarrow{h \searrow 0} 0 \txt{for every} B \Subset \Om.
\end{equation*}

\end{definition}

\subsubsection{An equivalent Definition of very weak solution}
\begin{definition}[Very weak solution]
\label{very_weak_distribution}
Let $ \be \in (0,1)$ and $h \in (0,2T)$ be given and suppose $p-\be > 1$. We  then say $u \in L^2(0,T; L^2(\Om)) \cap L^{p-\be}(0,T; u_0 + W_0^{1,p-\be}(\Om))$ is a very weak solution of 
\begin{equation*}
\left\{
\begin{array}{ll}
u_t - \dv  \aa(x,t,\nabla u) = 0 & \text{on} \ \Om \times (0,T), \\
u = u_0 & \text{on} \ \pa \Om \times (0,T),\\
u = u_1 & \text{on} \ \Om \times \{t=0\},
\end{array}
\right.
\end{equation*}
if for any $\varphi \in C_0^\infty(\Om_T)$, the following holds:
\begin{equation*}
\iint_{\Om \times (0,t)}  -u \varphi_t + \iprod{\aa(x,t,\nabla u)}{\nabla \varphi}  \ dz = 0,
\end{equation*}
\end{definition}
and
\begin{equation*}
\int_{B}\abs{u_h(x,0) - u_1(x)}^2 \ dx \xrightarrow{h \searrow 0} 0 \txt{for every} B \Subset \Om.
\end{equation*}

\subsection{Some results about Maximal functions}

For any $f \in L^1(\RR^{n+1})$, let us now define the strong maximal function in $\RR^{n+1}$ as follows:
\begin{equation}
 \label{par_max}
 \mm(|f|)(x,t) := \sup_{\tQ \ni(x,t)} \fiint_{\tQ} |f(y,s)| \ dy \ ds,
\end{equation}
where the supremum is  taken over all parabolic cylinders $\tQ_{a,b}$ with $a,b \in \RR^+$ such that $(x,t)\in \tQ_{a,b}$. An application of the Hardy-Littlewood maximal theorem in $x-$ and $t-$ directions shows that the Hardy-Littlewood maximal theorem still holds for this type of maximal function (see \cite[Lemma 7.9]{Gary} for details):
\begin{lemma}
\label{max_bound}
 If $f \in L^1(\RR^{n+1})$, then for any $\al >0 $, there holds
 \[
  |\{ z \in \RR^{n+1} : \mm(|f|)(z) > \al\}| \leq \frac{5^{n+2}}{\al} \|f\|_{L^1(\RR^{n+1})},
 \]
 and if $f \in L^{\vartheta}(\RR^{n+1})$ for some $1 < \vartheta \leq \infty$, then there holds
 \[
  \| \mm(|f|) \|_{L^{\vartheta}(\RR^{n+1})} \leq C_{(n,\vartheta)} \| f \|_{L^{\vartheta}(\RR^{n+1})}.
 \]

\end{lemma}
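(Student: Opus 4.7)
The plan is to reduce the bound on the parabolic strong maximal function $\mm$ to the classical one-dimensional and $n$-dimensional Hardy--Littlewood maximal theorems, exploiting the product structure of the cylinders $\tQ_{a,b}(x,t)=B_a(x)\times(t-b,t+b)$. Introduce the spatial and temporal maximal operators
\[
M_x f(x,t):=\sup_{a>0}\fint_{B_a(x)}|f(y,t)|\,dy,
\qquad
M_t f(x,t):=\sup_{b>0}\frac{1}{2b}\int_{t-b}^{t+b}|f(x,s)|\,ds.
\]
Since the integral over $\tQ_{a,b}$ factors via Fubini, for any $(x,t)\in\RR^{n+1}$ and any cylinder $\tQ_{a,b}\ni(x,t)$,
\[
\fiint_{\tQ_{a,b}}|f(y,s)|\,dy\,ds
\leq \frac{1}{2b}\int_{t-b}^{t+b}M_x|f|(x,s)\,ds
\leq M_t(M_x|f|)(x,t),
\]
so that $\mm|f|\leq M_t\circ M_x|f|$ pointwise. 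This is the only structural observation needed.

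For the strong-type bound with $1<\vartheta\leq\infty$, I would first apply the $n$-dimensional Hardy--Littlewood theorem in the $x$-variable slice by slice, then Fubini, then the one-dimensional Hardy--Littlewood theorem in $t$:
\[
\|\mm|f|\|_{L^\vartheta(\RR^{n+1})}\leq \|M_t(M_x|f|)\|_{L^\vartheta(\RR^{n+1})}\leq C(\vartheta)\|M_x|f|\|_{L^\vartheta(\RR^{n+1})}\leq C(n,\vartheta)\|f\|_{L^\vartheta(\RR^{n+1})}.
\]
For $\vartheta=\infty$, the bound by $\|f\|_{L^\infty}$ is immediate from the definition.

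For the weak $(1,1)$ bound, iteration of weak type inequalities is not available, so I would argue directly with a Vitali covering in $\RR^{n+1}$. Fix $\alpha>0$ and, for each point $z$ of the level set $E_\alpha:=\{z\in\RR^{n+1}:\mm|f|(z)>\alpha\}$, select a cylinder $\tQ^z\ni z$ with $\fiint_{\tQ^z}|f|>\alpha$, i.e.\ $|\tQ^z|<\alpha^{-1}\int_{\tQ^z}|f|$. On any compact subset of $E_\alpha$ the diameters of these cylinders are bounded, so Vitali's covering lemma applied in $\RR^{n+1}$ extracts a disjoint subfamily $\{\tQ^{z_k}\}$ whose $5$-fold enlargements $5\tQ^{z_k}$ (each factor of $5$ applied in every coordinate direction) still cover the compact subset. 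Since $|5\tQ^{z_k}|=5^{n+2}|\tQ^{z_k}|$, summing yields
\[
|E_\alpha|\leq \sum_k|5\tQ^{z_k}|\leq 5^{n+2}\sum_k|\tQ^{z_k}|\leq \frac{5^{n+2}}{\alpha}\sum_k\int_{\tQ^{z_k}}|f|\,dy\,ds\leq \frac{5^{n+2}}{\alpha}\|f\|_{L^1(\RR^{n+1})},
\]
where in the last step we used disjointness. Exhausting $E_\alpha$ by compact subsets concludes the estimate.

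The main obstacle is the weak $(1,1)$ inequality, since the pointwise bound $\mm|f|\leq M_t(M_x|f|)$ cannot be combined with iterated weak-type bounds (that route would produce only an $L\log L$ estimate). The direct Vitali-covering argument in $\RR^{n+1}$ is what allows the clean constant $5^{n+2}$; controlling the radii in both space and time simultaneously inside the covering selection is the delicate point, and is what justifies the factor $5^{n+2}$ rather than $5^{n+1}$.
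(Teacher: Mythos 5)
Your proof of the strong type bound for $1<\vartheta\leq\infty$ is correct and is exactly the route the paper indicates: dominate $\mm|f|$ pointwise by $M_t(M_x|f|)$ and apply the classical Hardy--Littlewood theorem in each factor. This is also the only part of the lemma the paper actually invokes elsewhere.

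The weak $(1,1)$ argument, however, has a genuine gap. The cylinders $\tQ_{a,b}=B_a(x)\times(t-b,t+b)$ have independent spatial radius $a$ and temporal half-length $b$, so they form a family of axis-parallel boxes of arbitrary aspect ratio, not metric balls. Vitali's covering lemma needs that whenever $\tQ_1\cap\tQ_2\neq\emptyset$ and $\tQ_1$ is ``no smaller'' than $\tQ_2$, one has $\tQ_2\subset 5\tQ_1$; but the sizes here are only \emph{partially} ordered, and one can have $\tQ_1$ spatially wider while $\tQ_2$ is temporally longer, in which case neither is contained in any fixed dilation of the other. The disjoint subfamily with covering $5$-fold enlargements cannot be extracted. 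This is not a repairable technicality: the strong maximal operator over axis-parallel rectangles is in fact \emph{not} of weak type $(1,1)$ in dimension $\geq 2$ (Jessen--Marcinkiewicz--Zygmund give only an $L\log L$-type endpoint, and a single small bump supported near the origin already produces a level set of measure of order $\alpha^{-1}\log(1/\alpha)$). The ``delicate point'' you flag at the end is in fact the obstruction. The cited Lemma 7.9 of Lieberman treats parabolic cylinders with a single, coupled dilation parameter, for which Vitali does apply; that argument does not transfer to $\mm$ with independent $a,b$ as defined here.
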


Let us define the following new Maximal function defined in the dual Sobolev space: Let  $1<\vartheta<\infty$, then for any $f\in W^{-1,\vartheta}(\RR^n)$, we define
\begin{equation}
\label{def_max_neg}
\mm^{-1,\vt}(f)(x) := \sup_{B \ \text{ball}, B\ni x} \frac{1}{|B|^{\frac{1}{\vt}}} \|f\|_{W^{-1,\vt}(B)}.
\end{equation}

We now have the following important boundedness result for \cref{def_max_neg} obtained in \cite[Proposition 2.5]{bernicot2009maximal}. For the sake of completeness, we provide the proof.
\begin{lemma}
\label{max_neg_bound}
Let $1<\vt<\infty$ be given and let $q>\vt$ be fixed. Then for any $f \in W^{-1,q}(\RR^n)$, there holds
\[
\| \mm^{-1,\vt}(f) \|_{L^q(\RR^n)} \apprle_{(\vt,q,n)} \|f\|_{W^{-1,q}(\RR^n)}.
\]
\end{lemma}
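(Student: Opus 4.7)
The plan is to reduce the problem to boundedness of the ordinary Hardy--Littlewood maximal function on $\RR^n$ by exploiting the equivalent norm \cref{neg_sob_norm_quiv} and the fact that $q>\vt$.

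First I would fix $f\in W^{-1,q}(\RR^n)$ and, using an analogue of \cref{lemma_lihe_wang} over $\RR^n$, write
\[
f = \varphi - \dv \psi \qquad \text{with } \varphi\in L^q(\RR^n),\ \psi\in L^q(\RR^n,\RR^n),
\]
where the representation is chosen so that $\|\varphi\|_{L^q(\RR^n)}+\|\psi\|_{L^q(\RR^n)}$ is within a factor $2$ of $\|f\|_{W^{-1,q}(\RR^n)}$. For any ball $B\subset\RR^n$, the restricted distribution $f|_B$ admits the representation $f|_B = \varphi|_B - \dv(\psi|_B)$; since test functions in $W^{1,\vt'}_0(B)$ extend by zero, pairing with such a test function yields
\[
\|f\|_{W^{-1,\vt}(B)} \leq \|\varphi\|_{L^\vt(B)} + \|\psi\|_{L^\vt(B,\RR^n)}
\]
via H\"older's inequality, which is precisely the local version of \cref{neg_sob_norm_quiv}.

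Dividing by $|B|^{1/\vt}$ converts the right-hand side into $L^\vt$-averages over $B$, so
\[
\frac{1}{|B|^{1/\vt}}\|f\|_{W^{-1,\vt}(B)} \leq \bigl(\mm(|\varphi|^\vt)(x)\bigr)^{1/\vt} + \bigl(\mm(|\psi|^\vt)(x)\bigr)^{1/\vt}
\]
for every $x\in B$, where $\mm$ denotes the standard Hardy--Littlewood maximal operator on $\RR^n$ (just the spatial version, taken over balls). Taking the supremum over balls $B\ni x$ gives the pointwise bound
\[
\mm^{-1,\vt}(f)(x) \leq \bigl(\mm(|\varphi|^\vt)(x)\bigr)^{1/\vt} + \bigl(\mm(|\psi|^\vt)(x)\bigr)^{1/\vt}.
\]

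Because $q>\vt$, the exponent $q/\vt>1$, so the Hardy--Littlewood maximal theorem applied to $|\varphi|^\vt,|\psi|^\vt\in L^{q/\vt}(\RR^n)$ yields
\[
\bigl\|\bigl(\mm(|\varphi|^\vt)\bigr)^{1/\vt}\bigr\|_{L^q(\RR^n)}^q = \bigl\|\mm(|\varphi|^\vt)\bigr\|_{L^{q/\vt}(\RR^n)}^{q/\vt} \apprle_{(\vt,q,n)} \|\varphi\|_{L^q(\RR^n)}^q,
\]
and likewise for $\psi$. Combining these two estimates and then taking the infimum over admissible representations using \cref{neg_sob_norm_quiv} (or simply invoking the optimality of our chosen decomposition) delivers
\[
\|\mm^{-1,\vt}(f)\|_{L^q(\RR^n)} \apprle_{(\vt,q,n)} \|\varphi\|_{L^q(\RR^n)} + \|\psi\|_{L^q(\RR^n)} \apprle \|f\|_{W^{-1,q}(\RR^n)},
\]
which is the desired conclusion.

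There is no serious obstacle in the argument; the entire proof hinges on two observations. The first is that the $W^{-1,\vt}(B)$ norm of the restriction is controlled by the $L^\vt(B)$ norms of any global representation, which is where the equivalent norm \cref{neg_sob_norm_quiv} is essential. The second is the strict inequality $q>\vt$, which is exactly what allows one to apply Hardy--Littlewood to $|\varphi|^\vt$ in the super-unital space $L^{q/\vt}(\RR^n)$; the endpoint case $q=\vt$ would require a weak-type replacement and is genuinely false in general.
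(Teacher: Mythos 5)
Your proof is correct and follows essentially the same path as the paper's own argument: fix a near-optimal global representation $f=\varphi-\dv\psi$ via \cref{lemma_lihe_wang}, control the local $W^{-1,\vt}(B)$ norm by the $L^\vt(B)$ norms of the restricted representatives, pass to the spatial Hardy--Littlewood maximal function, and invoke its $L^{q/\vt}$-boundedness (available precisely because $q>\vt$). The only cosmetic difference is that you spell out the restriction/zero-extension argument justifying the local bound, which the paper leaves implicit.
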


\begin{proof}
Applying \cref{lemma_lihe_wang}, there exists $\phi_0 \in L^{q}(\RR^n)$ and $\psi_0\in L^{q}(\RR^n,\RR^n)$ such that $f=\phi_0-\dv\psi_0$. Let $B \subset \RR^n$ be any given ball, then from \cref{neg_sob_norm_quiv}, we see that
\[
\|f\|_{W^{-1,q}(B)}:= \inf_{f = \phi - \dv \psi} \|\phi\|_{L^q(B)} + \|\psi\|_{L^q(B,\RR^n)}.
\]
Using this, we get the following sequence of estimates:
\begin{equation*}
\begin{array}{rcl}
\frac{1}{|B|^{\frac{1}{\vt}}} \|f\|_{W^{-1,\vt}(B)}(x) & = & \inf_{f = \phi - \dv \psi} \frac{1}{|B|^{\frac{1}{\vt}}} \lbr \|\phi\|_{L^{\vt}(B)} + \|\psi\|_{L^{\vt}(B,\RR^n)}\rbr \\
&\le&  \frac{1}{|B|^{\frac{1}{\vt}}} \lbr \|\phi_0\|_{L^{\vt}(B)} + \|\psi_0\|_{L^{\vt}(B,\RR^n)}\rbr \\
& = &  \lbr \fint_B |\phi_0|^{\vt} \ dx \rbr^{\frac{1}{\vt}}+ \lbr \fint_B |\psi_0|^{\vt} \ dx \rbr^{\frac{1}{\vt}} \\
& \leq &  \mm(|\phi_0|^{\vt})^{\frac{1}{\vt}}(x) + \mm(|\psi_0|^{\vt})^{\frac{1}{\vt}}(x).
\end{array}
\end{equation*}
Here $x$ is any point in the ball $B$, since we have used uncentered Hardy-Littlewood maximal function as defined in \cref{par_max}. Now taking supremum over all balls $B\ni x$  followed by taking the norm in $L^q$, we get
\begin{equation*}
\begin{array}{rcl}
\left\|\sup_{B\ni x} \frac{1}{|B|^{\frac{1}{\vt}}} \|f\|_{W^{-1,\vt}(B)}(x) \right\|_{L^q(\RR^n)} &\apprle & \|\mm(|\phi_0|^{\vt})^{\frac{1}{\vt}}\|_{L^q(\RR^n)} + \|\mm(|\psi_0|^{\vt})^{\frac{1}{\vt}}\|_{L^q(\RR^n)}\\
& \apprle &  \|\phi_0\|_{L^q(\RR^n)} + \|\psi_0\|_{L^q(\RR^n)}\\
& = & C(\vt,q,n) \|f\|_{W^{-1,q}(\RR^n)}.
\end{array}
\end{equation*}
This completes the proof of the lemma.
\end{proof}

\subsection{Notations}
We shall clarify the notation that will be used throughout the paper: 
\begin{enumerate}[(i)]
    
\item\label{not1} We shall use $\nabla$ or $\dv$ to denote derivatives only with respect the space variable $x$.
\item\label{not2} We shall sometimes alternate between using $\ddt{f}$, $\pa_t f$ and $f'$ to denote the time derivative of a function $f$.
\item\label{not3} We shall use $D$ to denote the derivative with respect to both the space variable $x$ and time variable $t$ in $\RR^{n+1}$. 
\item\label{not4}  Let $z_0 = (x_0,t_0) \in \RR^{n+1}$ be a point and $\rho, s >0$ be two given parameters and let $\al \in (0,\infty)$. We shall use the following symbols to denote the following regions:
\begin{equation*}
\label{notation_space_time}
\def\arraystretch{1.5}
\begin{array}{ll}
\tm_s(t_0) := (t_0 - s, t_0+s) \subset \RR,
& \qquad Q_{\rho,s}(z_0) := B_{\rho}(x_0) \times \tm_{s}(t_0) \subset \RR^{n+1},\\ 
\al Q_{\rho,s}(z_0) := B_{\al \rho}(x_0) \times \tm_{\al^2s}(t_0)  \subset \RR^{n+1},
&\qquad \mch_s(t_0) := \RR^n \times \tm_s(t_0) \subset \RR^{n+1},\\ 
\alpha \mch_s(t_0) := \RR^n \times \tm_{\alpha^2 s}(t_0) \subset \RR^{n+1},
& \qquad\mcc_{\rho}(x_0) := \Om \cap B_{\rho}(x_0) \times \RR \subset \RR^{n+1},\\
\Om_{\rho,s}(z_0) := \Om \cap B_{\rho}(x_0) \times \tm_s(t_0) \subset \RR^{n+1}, 
& \qquad\Om_{\rho}(x_0) := \Om \cap B_{\rho}(x_0) \subset \RR^{n}.
\end{array}
\end{equation*}

\item We shall use the notation $\{t\leq 0\}$ to denote the region $\RR^n \times (-\infty, 0]$. The region $\{t\geq 0\}$ is analogously defined. 

\item\label{not5} We shall use $\int$ to denote the integral with respect to either space variable or time variable and use $\iint$ to denote the integral with respect to both space and time variables simultaneously. 

Analogously, we will use $\fint$ and $\fiint$ to denote the average integrals as defined below: for any set $A \times B \subset \RR^n \times \RR$, we define
\begin{gather*}
\avgs{f}{A}:= \fint_A f(x) \ dx = \frac{1}{|A|} \int_A f(x) \ dx,\\
\avgs{f}{A\times B}:=\fiint_{A\times B} f(x,t) \ dx \ dt = \frac{1}{|A\times B|} \iint_{A\times B} f(x,t) \ dx \ dt.
\end{gather*}

\item\label{not6} Given any positive function $\mu$, we shall denote $\avgs{f}{\mu} := \int f\frac{\mu}{\|\mu\|_{L^1}}dm$ where the domain of integration is the domain of definition of $\mu$ and $dm$ denotes the associated measure.

\item\label{not7} Given any $\la> 0$, we can convert $\RR^{n+1}$ into a metric space where the parabolic cylinders correspond to \emph{balls} under the parabolic metric given by:
\begin{equation}
\label{parabolic_metric}
d_{\la} (z_1,z_2) := \max \left\{ |x_2-x_1|, \sqrt{\la^{p-2}|t_2-t_1|} \right\}.
\end{equation}

\item\label{not8} In what follows, $r_0$ and $b_0$ will denote the constants arising from the assumption that  $\Om^c$ is uniformly $p$-thick and denote $\ve_0=\ve_0(n,p,b_0,r_0)$ to be the self improvement exponent (see \cref{p_thick_domain}).
\end{enumerate}

\section{Main Theorems}
\label{section3}

In this section, we will describe the main theorems that will be proved. Note that \cref{restriction_p} is always in force. The first theorem is an a priori estimate that controls the gradient of the solution in terms of the boundary data.
\begin{theorem}
\label{main_theorem_1}
Let $\Om$ be a bounded domain whose complement is uniformly $p$-thick with constants $(b_0,r_0)$ as in \cref{p_thick}. There exists $\be_1 = \be_1(p,n,\lamot,b_0,r_0)$ such that for any $\be \in (0,\be_1)$, suppose 
\begin{equation}
\label{thm3.1-hyp} 
w \in \wwspace,  \quad \ddt{w} \in L^1_{\loc}(\Om\times [0,T])\quad  \text{and} \quad \ddt{w} \in L^{\pbp} (0,T; W^{-1,\pbp}(\Om)), 
\end{equation}
be any given function. Then for any very weak solution $u \in L^2(0,T; L^2(\Om)) \cap L^{p-\be}(0,T;W^{1,p-\be}(\Om))$ solving \cref{main-1}, the following a priori estimate holds:
\begin{equation*}
\label{a_priori_hom}
\iint_{\Om_T} |\nabla u|^{p-\be} \ dz \apprle_{(n,p,\be,\lamot)} \iint_{\Om_T}  |\nabla w|^{p-\be} \ dz +\iint_{\Om_T}|h_0|^{p-\be}\ dz + \left\|\ddt{w} \right\|_{L^{\pbp}(0,T; W^{-1,\pbp}(\Om))}^{\pbp}.
\end{equation*}
\end{theorem}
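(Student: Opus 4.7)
The plan is to implement the method of parabolic Lipschitz truncation in the spirit of \cite{KL,AdiByun2}, adapted to nonzero parabolic data $w$ by means of the dual-Sobolev maximal function $\mm^{-1,\pbp}$ of \cref{def_max_neg,max_neg_bound}. Writing $v := u - w$, we have $v \in L^{p-\be}(0,T; W_0^{1,p-\be}(\Om))$ with zero parabolic boundary values, but $v$ is not an admissible test function since $p-\be<p$. The first step is to extend the relevant functions by zero outside $\Om_T$ and introduce, at each level $\la > \la_0$, the bad set
\[
E_\la := \bigl\{\mm(|\nabla u|^{p-\be}) + \mm(|\nabla w|^{p-\be}) + \mm(|h_0|^{p-\be}) + (\mm^{-1,\pbp}(\ddt{w}))^{\pbp} > \la^{p-\be}\bigr\},
\]
with $\mm$ the parabolic maximal function of \cref{par_max} and $\mm^{-1,\pbp}$ applied slice-wise in time. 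The last term is the novel ingredient: it is the mechanism converting the distributional hypothesis from \cref{thm3.1-hyp} into a pointwise bound, and \cref{max_neg_bound} gives exactly the $L^{p-\be}$ control on it needed later.

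The second step is to construct a Lipschitz truncation $v_\la$ by a Whitney decomposition of $E_\la$ with respect to the parabolic metric $d_\la$ of \cref{parabolic_metric}, together with a partition of unity. The delicate point is the behaviour of boundary Whitney cubes: because $\Om^c$ is uniformly $p$-thick and hence, by \cref{self_improv_cap}, uniformly $(p-\be)$-thick for $\be < \ve_0$, the Sobolev--Poincar\'e inequality \cref{sobolev-poincare} forces local averages of $v$ to be small on such cubes, so the construction, carried out as in the boundary extension of \cite{AdiByun2,adimurthi2018gradient}, returns a $d_\la$-Lipschitz $v_\la$ that equals $v$ on $\Om_T \setminus E_\la$, vanishes on $\pa_p(\Om \times (0,T))$, and satisfies $|v_\la| \apprle \la$ and $|\nabla v_\la| \apprle \la$. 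Testing the Steklov-averaged equation from \cref{very_weak_solution} by $v_\la$, the coercivity in \cref{abounded} produces the good term $\La_0 \iint_{\Om_T \setminus E_\la} |\nabla u|^p\,dz$; the parabolic contribution coming from $\ddt{[u-w]_h}\,v_\la$ is handled by the usual $L^2$-boundary-term argument (using the vanishing of $v_\la$ on $\pa_p\Om_T$), while the contribution from $\ddt{[w]_h}\,v_\la$ is dualised as $\|\ddt{w}\|_{W^{-1,\pbp}(B)} \cdot \|v_\la\|_{W^{1,\pbo}(B)}$ on each Whitney base $B$, which on $E_\la^c$ is bounded by $\la^{p-\be}|B|$ by definition of $E_\la$. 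The assumption $\ddt{w} \in L^1_{\loc}(\Om_T)$ in \cref{thm3.1-hyp} is precisely what is needed to pass $\ddt{[w]_h} \to \ddt{w}$ as $h \searrow 0$ in the sense required by this duality step.

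The final step is a layer-cake conversion: the testing step yields, for every $\la \geq \la_0$, an inequality of the form $\iint_{\Om_T \setminus E_\la} |\nabla u|^p \, dz \apprle \la^p |E_\la|$; multiplying by $\la^{-\be-1}$, integrating over $\la \in (\la_0,\infty)$ and applying Fubini together with the maximal-function bounds of \cref{max_bound,max_neg_bound} produces
\[
\iint_{\Om_T} |\nabla u|^{p-\be}\,dz \apprle \iint_{\Om_T}\bigl(|\nabla w|^{p-\be}+|h_0|^{p-\be}\bigr)\,dz + \bigl\|\ddt{w}\bigr\|_{L^{\pbp}(0,T;W^{-1,\pbp}(\Om))}^{\pbp} + \eta(\be)\iint_{\Om_T}|\nabla u|^{p-\be}\,dz,
\]
with $\eta(\be) \to 0$ as $\be \to 0$; choosing $\be_1 = \be_1(n,p,\lamot,b_0,r_0)$ so small that $\eta(\be_1) < 1/2$ permits absorption of the last term on the left. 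The main obstacle is the simultaneous construction in the second step of a truncation that (i) respects the full parabolic boundary of $\Om_T$, (ii) is compatible with Steklov averaging so that $\ddt{w}$ enters only through its $W^{-1,\pbp}$-norm, and (iii) produces a dependence $\eta(\be)$ uniform in $u$ so the residual term can be absorbed. Items (i) and (iii) require the $p$-thick boundary extension of \cite{AdiByun2,adimurthi2018gradient}, while (ii) is the very reason for introducing the dual-Sobolev maximal function of \cref{max_neg_bound}.
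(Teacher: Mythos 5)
Your plan follows the paper's proof in Section \ref{section5} in structure and in its key new ideas: the dual-Sobolev maximal function of \cref{max_neg_bound}, the boundary-preserving Whitney extension, the level-set/Fubini conversion, and the role of $\ddt{w}\in L^1_{\loc}$ via \cref{imp_rmk} and \cref{lemma_crucial_2_app}. There is, however, a concrete gap in the exponent you use to build the bad set. You define
\[
E_\la:=\Bigl\{\mm\bigl(|\nabla u|^{p-\be}\bigr)+\mm\bigl(|\nabla w|^{p-\be}\bigr)+\mm\bigl(|h_0|^{p-\be}\bigr)+\bigl(\mm^{-1,\pbp}(\ddt{w})\bigr)^{\pbp}>\la^{p-\be}\Bigr\},
\]
and in the final layer-cake step you must control $\int_{\la_0}^\infty\la^{p-\be-1}|E_\la|\,d\la$, which is a constant times $\iint\lbr\mm(|\nabla u|^{p-\be})+\cdots\rbr\,dz$. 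Bounding this by $\iint|\nabla u|^{p-\be}\,dz$ would require the strong maximal function to be bounded $L^1\to L^1$, which is false; \cref{max_bound} gives only the weak $(1,1)$ estimate. The paper avoids this by fixing an intermediate exponent $q$ with $p-\ve_0<q\le p-2\be$ as in \cref{def_q_1} and constructing $g$ in \cref{def_g_app} from $\mm([\cdots]^q\lomt)^{1/q}$ and $\mmn{q/(p-1)}(w'\lomt)^{1/(p-1)}$, so that $\|g\|_{L^{p-\be}(\RR^{n+1})}^{p-\be}$ is controlled via the strong maximal estimate at exponent $(p-\be)/q>1$ together with \cref{est5.3}, giving precisely \cref{max_g_fde.3}. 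The lower constraint $q>p-\ve_0$ is simultaneously what keeps $\Om^c$ uniformly $q$-thick so that \cref{sobolev-poincare} applies on lateral Whitney cubes. This intermediate $q$ is therefore load-bearing, not a cosmetic choice, and without it the third step of your plan does not close.

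A secondary point worth unpacking: the phrase ``the usual $L^2$-boundary-term argument'' for $\iint\ddt{[u-w]_h}\,\vlh$ conceals the hardest estimate in the section. Since $\vlh\ne\vh$ on $\elam^c$, the integrand is not a pure time derivative of $\tfrac12\vh^2$, and one must show the resulting error is still of order $\la^p|\RR^{n+1}\setminus\elam|$. This is the content of the slicewise Caccioppoli-type estimates \cref{pre_crucial_lemma} and \cref{crucial_lemma}, in particular the lower bound $\int_{\Om\setminus\elam^t}\bigl(|v|^2-|v-\vl|^2\bigr)\,dx\ge -C\la^p|\RR^{n+1}\setminus\elam|$, which in turn rests on the difference estimates \cref{lemma3.8} and \cref{lemma3.10.1}; only then can the absorption in \cref{fully_combined.3} be carried out. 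Your sketch correctly flags this as ``delicate,'' but it is more than a routine extension of the classical energy identity and should be stated as a separate lemma.
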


\begin{remark}
\label{imp_rmk}
The additional assumption $\ddt{w} \in L^1_{\loc}(\Om\times [0,T])$ in \cref{thm3.1-hyp} can be replaced by the following weaker assumption: Let $t_1, t_2 \in [0,T]$, $\phi\in C_c^{\infty}(\Om)$ and $\varphi \in C^{\infty}[t_1,t_2]$, then assume the following holds
\begin{equation}
\label{est3.3}
\int_{t_1}^{t_2} \iprod{\ddt{[w]_h}}{\phi}_{(W^{-1,\pbp}(\Om),W_0^{1,\pbo}(\Om))} (t) \varphi(t)\ dt = \int_{t_1}^{t_2} \left[ \iprod{\ddt{w}}{\phi}_{(W^{-1,\pbp}(\Om),W_0^{1,\pbo}(\Om))}\right]_h (t) \varphi(t)\ dt.
\end{equation}
This is necessary to obtain the estimate in \cref{lemma_crucial_2_app}. As a consequence, all the estimates in \cref{section4} are applicable provided \cref{est3.3} holds. 

Heuristically speaking, the equality in \cref{est3.3} asks for a general form of the  fundamental theorem of calculus to  hold for distributions of the form $\ddt{w}$. In particular, we would need the following to hold for all $\phi \in C_c^{\infty}(\Om)$ and $\varphi \in C_c^{\infty}(\RR)$:
\[
{\int_{\Om} \lbr (w\varphi)(x,b) - (w\varphi)(x,a) \rbr \phi(x)\ dx} = \int_a^b \int_{\Om}\ddt{(w\varphi)}(x,t) \phi(x) \ dx \ dt.
\]
If the distribution $\ddt{w} \in L^1_{\loc}(\Om_T)$, then the above equality holds,  which implies the equality in \cref{est3.3}, see \cite{heikkila2013distributions} for the details.
\end{remark}

In the special case of the initial boundary value being zero, we get an analogous result stated below.
\begin{corollary}
\label{corollary_main_theorem_1}
Let $\Om$ be a bounded domain whose complement is uniformly $p$-thick with constants $(b_0,r_0)$ as in \cref{p_thick}. Let
\begin{equation*}
\label{cor3.2-hyp}
w \in \wwspace,  \quad \ddt{w} \in L^1_{\loc}(\Om\times [0,T])\quad  \text{and} \quad \ddt{w} \in L^{\pbp} (0,T; W^{-1,\pbp}(\Om)), 
\end{equation*}
be any given function. Then there exists a $\be_1 = \be_1(p,n,\lamot,b_0,r_0)$ such that for any $\be \in (0,\be_1)$ and any very weak solution $u \in L^2(0,T; L^2(\Om)) \cap L^{p-\be}(0,T;W^{1,p-\be}(\Om))$ solving
\begin{equation*}
\left\{
\begin{array}{ll}
u_t - \dv  \aa(x,t,\nabla u) = 0 & \text{on} \ \Om \times (0,T), \\
u = w & \text{on} \ \pa \Om \times (0,T),\\
u = 0 & \text{on} \ \Om \times \{t=0\},
\end{array}
\right.
\end{equation*}
the following a priori estimate holds:
\begin{equation*}
\label{corollary_main_theorem_1_estimate}
\iint_{\Om_T} |\nabla u|^{p-\be} \ dz \apprle_{(n,p,\be,\lamot)} \iint_{\Om_T} \lbr |\nabla w|+|h_0|\rbr^{p-\be}\ dz + \left\|\ddt{w} \right\|_{L^{\pbp}(0,T; W^{-1,\pbp}(\Om))}^{\pbp}.
\end{equation*}
Here the initial condition is taken to hold in the sense
\[
\int_B \lbr |w_h(x,0)|^2  +  |u_h(x,0)|^2 \rbr\ dx \xrightarrow{h \searrow 0} 0 \txt{for all} B \subset \Om.
\]

\end{corollary}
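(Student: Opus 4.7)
The plan is to derive \cref{corollary_main_theorem_1} as an immediate consequence of \cref{main_theorem_1} by checking that the parabolic-boundary condition $u=w$ on $\pa_p(\Om\times(0,T))$ required by \cref{main_theorem_1} is already satisfied under the hypotheses of the corollary.

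On the lateral part $\pa\Om\times(0,T)$, the identity $u=w$ is built into the function space $u\in L^{p-\be}(0,T;\,w+W_0^{1,p-\be}(\Om))$ stipulated in the corollary. On the initial face $\Om\times\{t=0\}$, the Steklov-average condition stated with the corollary,
\[
\int_B \lbr |w_h(x,0)|^2+|u_h(x,0)|^2\rbr dx \xrightarrow{h\searrow 0} 0 \quad\text{for every }B\subset\Om,
\]
combined with the triangle inequality, gives
\[
\int_B \abs{u_h(x,0)-w_h(x,0)}^2 dx \xrightarrow{h\searrow 0} 0,
\]
which is exactly the Steklov-sense initial condition of \cref{very_weak_solution} with initial trace taken to be that of $w$. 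Putting the two pieces together yields $u=w$ on the full parabolic boundary.

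All remaining hypotheses of \cref{main_theorem_1}, namely the regularity of $w$ in $\vmp[p-\be]$, the distributional assumptions $\pa_t w\in L^1_{\loc}(\Om\times[0,T])$ and $\pa_t w\in L^{\pbp}(0,T;W^{-1,\pbp}(\Om))$, and the uniform $p$-thickness of $\Om^c$, coincide with those of the corollary, so the threshold $\be_1$ and the implicit constant may be inherited verbatim. Applying \cref{main_theorem_1} then gives
\[
\iint_{\Om_T}|\nabla u|^{p-\be}\,dz \apprle \iint_{\Om_T}|\nabla w|^{p-\be}\,dz + \iint_{\Om_T}|h_0|^{p-\be}\,dz + \norm{\pa_t w}_{L^{\pbp}(0,T;\,W^{-1,\pbp}(\Om))}^{\pbp}.
\]
Since $p-\be>1$, the elementary bound $|\nabla w|^{p-\be}+|h_0|^{p-\be}\leq (|\nabla w|+|h_0|)^{p-\be}$ allows the first two integrals on the right to be combined into $\iint_{\Om_T}(|\nabla w|+|h_0|)^{p-\be}\,dz$, which is precisely the form claimed in the corollary.

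I anticipate no genuine obstacle in this reduction; the entire analytic content is carried by \cref{main_theorem_1}, whose proof must deploy the Lipschitz truncation preserving non-zero lateral traces, the maximal-function bound of \cref{max_neg_bound} on negative Sobolev spaces, and the distributional handling of $\pa_t w$ made precise in \cref{imp_rmk}. Once those tools are in place for \cref{main_theorem_1}, the corollary follows by the bookkeeping above.
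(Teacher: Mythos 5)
Your proposal is correct and matches the intended reading: the paper states this as a corollary of \cref{main_theorem_1} and does not provide a separate proof, since the zero-initial-data assertion $\int_B(|w_h(x,0)|^2+|u_h(x,0)|^2)\,dx\to 0$ supplies precisely the parabolic boundary condition $u=w$ on $\pa_p(\Om\times(0,T))$ in the Steklov sense required by the theorem (lateral agreement via the function space, initial agreement via the triangle inequality), after which the claimed estimate follows by the superadditivity inequality $a^{p-\be}+b^{p-\be}\le(a+b)^{p-\be}$ for $p-\be\ge 1$.
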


The second theorem that we will prove is a higher integrability result for very weak solutions at the initial boundary. 
\begin{theorem}
\label{main_theorem_2}
Let $w$ be such that
\begin{equation*}
\label{w_ini_data}
w \in L^p(0,T; W^{1,p}(\Om)) \txt{and} \ddt{w} \in L^{\frac{p}{p-1}}(0,T; W^{-1,\frac{p}{p-1}}(\Om))\cap L^1_{\loc}(\Om_T),
\end{equation*}
then there exists $\be_2 = \be_2(n,p,\lamot) \in (0,1)$ such that the following holds: for any $\be \in (0,\be_2)$ and any very weak solution $u \in L^2(0,T; L^2_{\loc}(\Om)) \cap L^{p-\be}(0,T; W_{\loc}^{1,p-\be}(\Om))$ solving \cref{main-2} in $\Om_T$,  we have the following improved integrability $u \in L^2(0,T; L^2_{\loc}(\Om)) \cap L^{p}(0,T; W_{\loc}^{1,p}(\Om))$. In particular, for any fixed $\rho \in (0,\infty)$ and $s \in (0,T/2)$, the following quantitative estimate holds: let   $Q_{2\rho,2s}\Subset \Om\times \RR$ be any parabolic cylinder, then there holds
\begin{equation*}
\begin{array}{rcl}
 \fiint_{Q_{\rho,s}} |\nabla u|^p \lsb{\chi}{[0,T]}\ dz & \apprle &  \lbr \fiint_{Q_{2\rho,2s}} \lbr |\nabla u| + |h_0| \rbr^{p-\be}\lsb{\chi}{[0,T]} \ dz \rbr^{1+\frac{\be}{d}} + \fiint_{Q_{2\rho,2s}} (1+h_0^p)\lsb{\chi}{[0,T]} \ dz \\
 && + \lbr \fiint_{Q_{2\rho,2s}} |\nabla w|^{p-\be}  \lsb{\chi}{[0,T]} \ dz \rbr^{1 + \frac{\be}{d}} + \lbr \fiint_{Q_{2\rho,2s}} |\vec{w}|^{\frac{p-\be}{p-1}} \lsb{\chi}{[0,T]}  \ dz \rbr^{1 + \frac{\be}{d}}\\
 &&  + \fiint_{Q_{2\rho,2s}} |\nabla w|^p \lsb{\chi}{[0,T]} \ dz+ \fiint_{Q_{2\rho,2s}} |\vec{w}|^{\frac{p}{p-1}} \lsb{\chi}{[0,T]}  \ dz, 
 \end{array}
\end{equation*}
where $h_0$ is from \cref{bound_b}, $\vec{w}$ is as obtained in \cref{def_vec_w},  $C = C(n,p,\lamot)$  and 
\begin{equation}
 \label{def_d-ini}
 d:= \left\{ \begin{array}{ll}
              2-\be & \text{if} \ p \geq 2, \\
              p-\be - \frac{(2-p)n}{2} & \text{if} \ p<2.
             \end{array}\right.
\end{equation}

\end{theorem}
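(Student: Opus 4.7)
The plan is to follow the familiar reverse Hölder plus Gehring's lemma route for higher integrability, but adapted for very weak solutions at the initial boundary. The overall skeleton is: (i) fix a parabolic cylinder $Q_{\rho,s}$ near $\{t=0\}$ and pass to an intrinsic geometry governed by a level $\lambda$ comparable to $\fiint |\nabla u|^{p-\beta}$ (with the scaling exponent $d$ of \cref{def_d-ini}); (ii) construct a Lipschitz truncation that is legitimate as a test function in the very weak formulation and that also respects the initial datum; (iii) use it to derive a Caccioppoli-type inequality on the $\lambda$-bad set, convert it into a reverse Hölder inequality, and conclude via the parabolic Gehring lemma. Since $u$ lies only in $L^{p-\beta}(0,T;W^{1,p-\beta})$, $u$ itself cannot be tested; the whole game is to replace $u-w$ by a Lipschitz surrogate without losing the initial condition.

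The construction of the test function is the central step and is where the new ideas enter. Following the remark about \cref{def-v-ini}, the plan is to reflect/extend $u-w$ across $\{t=0\}$ by setting $v$ to be zero for $t\le 0$ (so that the problem at the initial face is rewritten as a problem at a \emph{lateral} face of a cylinder in $\mathbb{R}^{n+1}$); this reduces initial boundary regularity to a form of lateral boundary regularity, at the price of introducing parabolic Poincaré difficulties at the zero-set in time. These are handled exactly as in the reduction from \cite{AdiByun2}, using the fact that the extension vanishes on the bad part of the initial boundary (\cref{lipschitz-extension-ini}), which supplies a zero set large enough for the capacity-based Sobolev–Poincaré inequality (\cref{sobolev-poincare}) to apply. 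The time derivative of $w$ is only a distribution, so the key technical point is to estimate the time-derivative terms in the negative norm $W^{-1,\frac{p-\beta}{p-1}}$; this is where \cref{max_neg_bound} is invoked to bound the relevant negative-Sobolev maximal function of $\partial_t w$, yielding the $|\vec{w}|^{(p-\beta)/(p-1)}$ terms in the final estimate.

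With the truncation $u_\lambda$ in hand, I would plug $\varphi = (u-w)_\lambda$ (mollified in time via Steklov averages) into \cref{very_weak_solution}, use the monotonicity-type bound \cref{abounded}, and split the resulting integrals into the good set $\{u_\lambda = u-w\}$ (where one recovers the standard parabolic Caccioppoli expression) and the bad set (where the gradient of the truncation is controlled by $\lambda$ times the measure of the bad set, which in turn is controlled by the maximal function weak-type estimates in \cref{max_bound} and \cref{max_neg_bound}). This yields a time-localized Caccioppoli inequality of the form $\sup_t \int \tfrac12 (u-w)^2 + \iint |\nabla u|^p \lesssim \iint |\nabla w|^p + \iint |\vec w|^{p/(p-1)} + h_0^p$ on a smaller cylinder, combined with a bad-set error term controlled by $\lambda^{p-\beta}|E_\lambda|$ that can be reabsorbed after integrating $\lambda$ over dyadic levels. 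The careful bookkeeping of the intrinsic scaling — subquadratic ($p<2$) versus superquadratic ($p\geq 2$) — is what produces the parameter $d$ from \cref{def_d-ini}.

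The final step is to turn this Caccioppoli inequality, combined with parabolic Sobolev–Poincaré (\cref{sobolev-poincare}, using the $p$-thick hypothesis to get a self-improved capacity exponent via \cref{self_improv_cap}), into a reverse Hölder inequality
\[
\fiint_{Q_{\rho,s}} |\nabla u|^p \,\lsb{\chi}{[0,T]}\,dz \;\lesssim\; \Bigl(\fiint_{Q_{2\rho,2s}} |\nabla u|^{p-\beta}\,\lsb{\chi}{[0,T]}\,dz\Bigr)^{1+\beta/d} + \text{(data)},
\]
and then to apply the parabolic Gehring lemma on the stopping-time cylinders to upgrade $|\nabla u|^{p-\beta}$ integrability to $|\nabla u|^p$ integrability. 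The main obstacle, as flagged in the introduction, is that the Lipschitz truncation does not reproduce the cancellation of \cite[Eq.\,(5.6)]{Par}, so the $w$-data terms appear with exponent $(p-\beta)/(p-1)$ rather than a lower one; this is why the stated estimate is sharp for zero initial data but not for nonzero initial data. Threading the Steklov regularization and the assumption \cref{est3.3} through the whole argument, so that the passage $h\searrow 0$ recovers the distributional identity on the bad set, is the fiddly part of the computation.
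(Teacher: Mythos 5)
Your proposal follows essentially the same route as the paper: extend $v := (u-w)\eta\zeta$ by zero across $\{t=0\}$ so the initial face becomes a face where a zero-set argument applies; build a Whitney/Lipschitz truncation $\vlh$ that vanishes on the bad part of $\{t=0\}$; control $\partial_t w$ through the negative-Sobolev maximal function of \cref{max_neg_bound}; derive a time-localized Caccioppoli inequality, then a reverse H\"older inequality, and conclude with parabolic Gehring. The split of the Whitney indices into those fully inside the slab and those crossing the top, the $\mathcal{M}=\max\{g,\alpha_0\}$ weight, and the $d_p^\alpha$-weighted function in the Gehring step are the concrete devices the paper uses to carry this out, and your sketch is consistent with them.

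One inaccuracy worth correcting: you invoke the uniform $p$-thickness of $\Om^c$ and the self-improved capacity exponent from \cref{self_improv_cap} in the reverse H\"older step. For \cref{main_theorem_2} the cylinder satisfies $Q_{2\rho,2s}\Subset \Om\times\RR$, so the capacity density of $\pa\Om$ never enters; this is reflected in the dependence $\be_2=\be_2(n,p,\lamot)$ in the statement, which does not involve $(b_0,r_0)$. The paper's exponent $q$ in \cref{section6} is fixed simply as $1<q\le p-2\be$ with no reference to the self-improving exponent $\ve_0$, and the Sobolev--Poincar\'e inequality that appears in \cref{lemma6.2-ini} is applied inside the ball $8B$ (whose complement is trivially thick), not at $\pa\Om$. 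The zero set that makes the parabolic Poincar\'e inequality work at the initial face comes from the temporal slab $\{t\le 0\}$, not from a spatial capacity estimate. Dropping the $p$-thick/self-improvement reasoning from your sketch leaves the argument intact and aligns it with the correct dependence of the constants.
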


Note that \cref{main_theorem_2} holds under the weaker assumption from \cref{imp_rmk} instead of the stronger assumption $\ddt{w} \in L^1_{\loc}(\Om_T)$. In the special case of zero initial data, we have the following important corollary:
\begin{corollary}
\label{main_corollary_2}
There exists $\be_2 = \be_2(n,p,\lamot) \in (0,1)$ such that the following holds: for any $\be \in (0,\be_2)$ and any very weak solution $u \in L^2(0,T; L^2_{\loc}(\Om)) \cap L^{p-\be}(0,T; W_{\loc}^{1,p-\be}(\Om))$ solving
\begin{equation*}
\left\{
\begin{array}{ll}
u_t - \dv  \aa(x,t,\nabla u) = 0 & \text{in} \ \Om \times (0,T), \\
u=0 & \text{on} \ \Om \times \{t=0\},
\end{array}
\right.
\end{equation*}
we have the following improved integrability $u \in L^2(0,T; L^2_{\loc}(\Om)) \cap L^{p}(0,T; W_{\loc}^{1,p}(\Om))$. In particular, for any fixed $\rho \in (0,\infty)$ and $s \in (0,T/2)$, the following quantitative estimate holds: let   $Q_{2\rho,2s}\Subset \Om\times \RR$ be any parabolic cylinder, then there holds
\begin{equation*}
\label{main_est_corollary_2}
  \fiint_{Q_{\rho,s}} |\nabla u|^{p} \lsb{\chi}{[0,T]} \ dz \leq C\left[ \lbr \fiint_{Q_{2\rho,2s}} \lbr |\nabla u| + h_0\rbr^{p-\be} \lsb{\chi}{[0,T]} \ dz\rbr^{1+\frac{\be}{d}}  + \fiint_{Q_{2\rho,2s}} (1 + h_0^p)\lsb{\chi}{[0,T]} \ dz \right],
 \end{equation*}
where $h_0$ is from \cref{bound_b}, $C = C(n,p,\lamot)$  and $d$ is from \cref{def_d-ini}.
\end{corollary}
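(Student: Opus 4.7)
The plan is to derive this corollary as an immediate specialization of \cref{main_theorem_2} applied with the choice $w \equiv 0$. First I would verify that the hypotheses of \cref{main_theorem_2} are trivially satisfied by $w \equiv 0$: one has $w \in L^p(0,T;W^{1,p}(\Om))$, the distributional time derivative $\partial_t w = 0$ lies in every space $L^{p/(p-1)}(0,T;W^{-1,p/(p-1)}(\Om))$ and in $L^1_{\loc}(\Om_T)$, and the equation in \cref{main_corollary_2} together with the zero initial trace matches \cref{main-2} with boundary data $w \equiv 0$ in the sense of \cref{very_weak_solution}.

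Having confirmed applicability, the $\be_2 = \be_2(n,p,\Lambda_0,\Lambda_1) \in (0,1)$ furnished by \cref{main_theorem_2} is the same constant used in the statement of the corollary, and the improved integrability $u \in L^2(0,T;L^2_{\loc}(\Om)) \cap L^p(0,T;W^{1,p}_{\loc}(\Om))$ is inherited verbatim. The quantitative estimate from \cref{main_theorem_2} reduces to the one asserted here since every term involving $w$ vanishes:
\begin{equation*}
|\nabla w|^{p-\be} = |\nabla w|^p = 0 \txt{and} |\vec{w}|^{\frac{p-\be}{p-1}} = |\vec{w}|^{\frac{p}{p-1}} = 0,
\end{equation*}
where $\vec{w}$ is the vector field from \cref{def_vec_w} associated with $w \equiv 0$, which is identically zero by construction. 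Substituting into the estimate of \cref{main_theorem_2} then yields exactly
\begin{equation*}
\fiint_{Q_{\rho,s}} |\nabla u|^p \lsb{\chi}{[0,T]} \ dz \leq C \left[ \lbr \fiint_{Q_{2\rho,2s}} \lbr |\nabla u| + h_0 \rbr^{p-\be} \lsb{\chi}{[0,T]} \ dz \rbr^{1+\frac{\be}{d}} + \fiint_{Q_{2\rho,2s}}(1+h_0^p)\lsb{\chi}{[0,T]} \ dz \right]
\end{equation*}
with the same $d$ as in \cref{def_d-ini} and $C = C(n,p,\Lambda_0,\Lambda_1)$, which is precisely the claim. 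Since this is a pure specialization, there is no genuine obstacle in the argument; the only thing to double-check is the cosmetic point that the vector potential $\vec{w}$ arising in the hypotheses (through the negative Sobolev norm of $\partial_t w$) is well-defined and vanishes when $w\equiv 0$, which is clear from its construction referenced by \cref{def_vec_w}.
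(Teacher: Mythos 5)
Your proof is correct and matches the paper's intent exactly: \cref{main_corollary_2} is simply \cref{main_theorem_2} specialized to $w \equiv 0$, under which all $w$-dependent terms and the vector potential $\vec{w}$ vanish, leaving precisely the asserted estimate. The paper presents this as an immediate corollary without further argument, which is consistent with your reasoning.
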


In the above theorem, it is easy to see that the choice of the parabolic cylinder $Q_{\rho,s}$ is made such that it crosses the initial boundary at $\{t=0\}$. If the cylinder is completely contained in $\Om \times (0,T)$, then there is nothing to prove as this is the main result obtained in \cite{KL}. Hence, the new contribution is only when the cylinder crosses the initial boundary. 

\begin{remark}
\label{rmk_be_0}
In what follows, we will define $\be_0 := \min\{ \be_1, \be_2, {\be}_{int}\}$ where $\be_1$ is from \cref{main_theorem_1}, $\be_2$ is from \cref{main_corollary_2} and ${\be}_{int}$ is the interior higher integrability exponent such that \cite[Theorem 2.8]{KL} holds for all $\be \in (0,{\be}_{int}]$. All the subsequent results will hold for any $\be \in (0,\be_0)$.
\end{remark}

Finally, we are ready to state the main existence theorem that will be proved in this paper.
\begin{theorem}
\label{main_existence}
Let $\Om$ be a bounded domain whose complement is uniformly $p$-thick with constants $(b_0,r_0)$ as in \cref{p_thick}, let $\be \in (0,\be_0)$ be given with $\be_0= \be_0(n,p,\lamot,b_0,r_0)$ as in \cref{rmk_be_0} and \cref{bound_b} holds. Moreover, let the nonlinearity $\mathcal{A}$ satisfy \cref{abounded} and
\begin{equation}\label{3.7-1}
\left\langle \mathcal{A}(x,t,\eta)-\mathcal{A}(x,t,\zeta),\eta-\zeta \right\rangle \ge \Lambda_0(\lvert \eta\rvert^2+\lvert \zeta\rvert^2)^\frac{p-2}{2}\lvert\eta-\zeta\rvert^2\qquad \forall \ (x,t)\in \Om_T\text{ and   }\ \  \forall \ \eta,\zeta\in\mathbb{R}^{n}.
\end{equation}
Suppose that the boundary condition satisfies
\[
u_0 \in  \wwspace \quad \text{with} \quad \ddt{u_0} \in L^{\pbp} (0,T; W^{-1,\pbp}(\Om))\cap L^1_{\loc}(\Om_T),
\]
(or the weaker assumption from \cref{imp_rmk} holds instead of $\ddt{u_0} \in L^1_{\loc}(\Om_T)$). Furthermore, we assume that there exists an approximating sequence $u_0^k \in L^p(0,T;W^{1,p}(\Om))$ with $\ddt{u_0^k} \in L^{\frac{p}{p-1}}(0,T; W^{-1,\frac{p}{p-1}}(\Om))$ such that
\begin{equation}\label{assumption1}
u_0^k \rightarrow u_0 \ \  \text{in}\ \wwspace \txt{and}  \ddt{u_0^k} \rightarrow \ddt{u_0}\ \  \text{in}\  L^{\pbp} (0,T; W^{-1,\pbp}(\Om)),
\end{equation}
satisfying the bound
\begin{equation}
\label{assumption2}
\begin{array}{c}
\iint_{\Om_T} |\nabla u_0^k|^{p-\be} \ dz \leq C_{app}\iint_{\Om_T} |\nabla u_0|^{p-\be} \ dz, \\
\left\|\ddt{u_0^k} \right\|_{L^{\pbp}(0,T; W^{-1,\pbp}(\Om))}^{\pbp} \leq C_{app} \left\|\ddt{u_0} \right\|_{L^{\pbp}(0,T; W^{-1,\pbp}(\Om))}^{\pbp}.
\end{array}
\end{equation}
 Then there exists a very weak solution $u \in C^0(0,T;L^2(\Om)) \cap L^{p-\be} (0,T; u_0 + W_0^{1,p-\be}(\Om))$ solving \cref{main-3} in the sense of \cref{very_weak_distribution}. 
 \end{theorem}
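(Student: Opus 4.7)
My plan is a compactness argument driven by the approximating data $u_0^k$ and the a priori estimates proved earlier in this paper. First, using the enhanced regularity $u_0^k \in L^p(0,T;W^{1,p}(\Om))$ with $\pa_t u_0^k \in L^{p/(p-1)}(0,T;W^{-1,p/(p-1)}(\Om))$ from \cref{assumption1}, I would invoke the classical Lions--Minty--Browder theory for monotone parabolic operators, using \cref{abounded} and \cref{3.7-1}, to produce weak solutions $u^k \in L^p(0,T;u_0^k + W_0^{1,p}(\Om)) \cap C^0([0,T];L^2(\Om))$ with $u^k(\cdot,0)=0$. Concretely, one solves for $v^k := u^k - u_0^k$, which satisfies a monotone parabolic equation with zero lateral trace, $L^2(\Om)$ initial datum $-u_0^k(\cdot,0)$, and source $-\pa_t u_0^k$ in $L^{p/(p-1)}(0,T;W^{-1,p/(p-1)}(\Om))$.

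Next, applying \cref{corollary_main_theorem_1} with $w = u_0^k$ together with the uniformity in \cref{assumption2} yields a $k$-uniform bound on $u^k$ in $L^{p-\be}(0,T;W^{1,p-\be}(\Om))$. Both \cref{main_corollary_2} (applicable because $u^k$ has zero initial datum) and the interior higher integrability of \cite{KL} then promote $\nabla u^k$ to a uniform $L^p_{\loc}$ bound on any cylinder $Q\Subset\Om\times[0,T)$, so that the growth in \cref{abounded} bounds $\aa(\cdot,\cdot,\nabla u^k)$ uniformly in $L^{p/(p-1)}_{\loc}(\Om_T)$. Rewriting the equation gives $\pa_t u^k$ in an appropriate negative Sobolev space, and the Aubin--Lions lemma then delivers, along a subsequence,
\[
u^k \rightharpoonup u \text{ in } L^{p-\be}(0,T;W^{1,p-\be}(\Om)),\quad u^k \to u \text{ in } L^p_{\loc}(\Om_T),\quad \aa(\cdot,\cdot,\nabla u^k) \rightharpoonup \chi \text{ in } L^{p/(p-1)}_{\loc}(\Om_T).
\]

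The crux is identifying $\chi = \aa(x,t,\nabla u)$ through a Minty-type monotonicity argument at the approximating level. For any $\varphi \in C_c^\infty(\Om_T)$ the function $\varphi(u^k - u^j)$ belongs to $L^p(0,T;W_0^{1,p}(\Om))$ (irrespective of the mismatch between the lateral traces of $u_0^k$ and $u_0^j$), and is therefore admissible in the weak formulation for $u^k$ and $u^j$. Testing the difference of the two equations with $\varphi(u^k - u^j)$, integrating the time-derivative term by parts (the contributions at $t=0,T$ vanish since $\varphi$ is compactly supported in $\Om_T$), and invoking the monotonicity \cref{3.7-1} yields
\begin{gather*}
\La_0\iint_{\Om_T}\varphi\lbr|\nabla u^k|^2+|\nabla u^j|^2\rbr^{\frac{p-2}{2}}|\nabla u^k - \nabla u^j|^2\,dz \\
\leq \tfrac{1}{2}\iint_{\Om_T} |\pa_t\varphi|\,(u^k-u^j)^2\,dz + \iint_{\spt\varphi}|\aa(\nabla u^k)-\aa(\nabla u^j)|\,|\nabla\varphi|\,|u^k-u^j|\,dz.
\end{gather*}
Both terms on the right vanish as $k,j\to\infty$ by the strong $L^p_{\loc}$ convergence of $u^k$ combined with the uniform $L^{p/(p-1)}_{\loc}$ bound on $\aa$. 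Consequently $\nabla u^k$ is Cauchy in measure on $\spt\varphi$; exhausting $\Om_T$ by such supports and using continuity of $\aa$ in the gradient variable give $\nabla u^k \to \nabla u$ a.e.\ in $\Om_T$ and hence $\chi = \aa(x,t,\nabla u)$.

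Passing to the limit in the distributional formulation $\iint(-u^k\varphi_t + \iprod{\aa(\nabla u^k)}{\nabla\varphi})\,dz = 0$ for $\varphi \in C_c^\infty(\Om_T)$ is then immediate, producing \cref{main-3} in the sense of \cref{very_weak_distribution}. The inclusion $u - u_0 \in L^{p-\be}(0,T;W_0^{1,p-\be}(\Om))$ follows since $u^k - u_0^k \in L^{p-\be}(0,T;W_0^{1,p-\be}(\Om))$, this subspace is weakly closed, and $u_0^k \to u_0$ in $L^{p-\be}(0,T;W^{1,p-\be}(\Om))$ by \cref{assumption1}. The zero initial condition in the Steklov-average sense is inherited from $u^k(\cdot,0)=0$ via the $C^0([0,T];L^2(\Om))$ regularity at the approximating level together with a diagonal extraction, and the $C^0(0,T;L^2(\Om))$ regularity of $u$ follows from the negative-Sobolev bound on $\pa_t u$ inherited from the equation and the limiting integrability of $\aa$. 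The principal obstacle I anticipate is precisely the identification step above: its success rests on both a priori results of this paper working in tandem---\cref{corollary_main_theorem_1} for the global $L^{p-\be}$ control of $\nabla u^k$ and the resulting negative-Sobolev bound on $\pa_t u^k$, and \cref{main_corollary_2} (whose sharpness under vanishing initial data is what makes it available here) for the locally uniform $L^p$ integrability that simultaneously keeps $\aa(\nabla u^k)$ equi-integrable and furnishes the strong $L^p_{\loc}$ convergence of $u^k$ needed for the right-hand side above to vanish.
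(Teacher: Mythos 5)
Your proposal follows the same overall strategy as the paper: approximate $u_0$ by $u_0^k$, solve the regularized problems via monotone operator theory, obtain $k$-uniform control of $\nabla u^k$ in $L^{p-\be}(\Om_T)$ from \cref{corollary_main_theorem_1} together with \cref{assumption2}, upgrade this to a uniform $L^p_{\loc}$ bound by combining \cref{main_corollary_2} with the interior higher integrability of \cite{KL}, and pass to the limit via Aubin--Lions compactness plus a monotonicity argument on differences of solutions. The only place you genuinely diverge is the choice of test function in the Cauchy-in-measure step: you use a cutoff $\varphi$ compactly supported in \emph{both} space and time, which, after integrating the time derivative by parts, leaves the term $\iint|\pa_t\varphi|\,(u^k-u^j)^2\,dz$. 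You claim this vanishes by the strong $L^p_{\loc}$ convergence of $u^k$, which is immediate only when $p\geq 2$; for $\tfrac{2n}{n+2}<p<2$ you must additionally interpolate against a uniform bound in a higher Lebesgue exponent (for instance the parabolic Sobolev exponent $\tfrac{p(n+2)}{n}>2$, which is available since $\nabla u^k$ is bounded in $L^p_{\loc}$ and $u^k$ in $L^\infty(0,T;L^2_{\loc})$). The paper sidesteps this entirely by using a time-independent spatial cutoff $\varphi^p$ and integrating from $0$ to $s$: the boundary term at $t=0$ vanishes because $u^i(\cdot,0)=u^j(\cdot,0)=0$, the term at $t=s$ has a favorable sign and moves to the left-hand side as $\sup_t\int_B(u^i-u^j)^2$, and no $\pa_t\varphi$ term ever appears, so only the $L^p_{\loc}$ convergence of $u^k$ is needed. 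This is a small but clean advantage of the paper's cutoff that you should either adopt or patch with the interpolation remark above.
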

In the above theorem, the initial condition is assumed to be satisfied in the following sense:
\begin{equation*}
\label{ext_thm_ini}
\int_B \lbr |[u_0]_h(x,0)|^2 + |u_h(x,0)|^2 \rbr\ dx \xrightarrow{h \searrow 0 } 0 \txt{for all} B \subset \Om.
\end{equation*}

\begin{remark}
\label{rmk_3.8}
In \cref{main_existence}, the assumption $\ddt{u_0} \in L^1(\Om_T)$ can be used to construct an approximating sequence $u_0^k$ satisfying \cref{assumption1} and \cref{assumption2} through the process of mollification, i.e., if $\eta_k \in C_c^{\infty}(\RR^{n+1})$ is the standard mollifier, then letting $u_0^k : = u_0 \ast \eta_k$ would be an admissible approximating sequence (see \cite[Section 4.4]{brezis} for more details). 

On the other hand, the assumptions \cref{assumption1} and \cref{assumption2} are necessary if $\ddt{u_0}$ is a distribution and not a function.
\end{remark}

%
%

\section{Some well known lemmas}
\label{section4}

\subsection{Sobolev and Sobolev-Poincar\'e lemmas}
Let us first recall a  time localised version of the parabolic Poincar\'e inequality proved in \cite[Lemma 4.2]{adimurthi2018gradient}.
\begin{lemma}
 \label{lemma_crucial_1}
 Let $\psi \in L^{\vt} (0,T; W^{1,\vt}(\Om))$ with $\vt \in [1,\infty)$  and suppose that $B_{\rho} \Subset \Om$ be compactly contained ball of radius $\rho>0$. Let $I \subset (0,T)$ be a time interval  and $\xi(x,t) \in L^1(B_{\rho} \times I)$ be any positive function such that 
 $$
 \|\xi\|_{L^{\infty}(B_{\rho}\times I)} \apprle_n \frac{\|\xi\|_{L^1(B_{\rho}\times I)}}{|B_{\rho}\times I|}, 
 $$
 and $\mu(x) \in C_c^{\infty}(B_{\rho})$ be  such that $\int_{B_{\rho}} \mu(x) \ dx = 1$ with $|\mu| \apprle  \frac{1}{{\rho}^n}$ and $|\nabla \mu| \apprle  \frac{1}{{\rho}^{n+1}}$, then there holds:
 \begin{equation*}
 \begin{array}{ll}
  \fiint_{B_{\rho} \times I} \left|\frac{\psi(z)\lsb{\chi}{J} - \avgs{\psi\lsb{\chi}{J}}{\xi}}{{\rho}}\right|^{\vt} \ dz & \apprle_{(n,\vt)} \fiint_{B_{\rho}\times I} |\nabla \psi|^{\vt}\lsb{\chi}{J} \ dz + \sup_{t_1,t_2 \in I} \left| \frac{\avgs{\psi\lsb{\chi}{J}}{\mu}(t_2) - \avgs{\psi\lsb{\chi}{J}}{\mu}(t_1)}{{\rho}} \right|^{\vt},
  \end{array}
 \end{equation*}
where $\avgs{\psi}{\xi}:= \int_{B_{\rho}\times I} \psi(z) \frac{\xi(z)}{\|\xi\|_{L^1(B_{\rho}\times I)}}\lsb{\chi}{J} \ dz\ $, $\avgs{\psi\lsb{\chi}{J}}{\mu}(t_i) := \int_{B_{\rho}} \psi(x,t_i) \mu(x) \lsb{\chi}{J} \ dx$ and $J \Subset (-\infty,\infty)$ be some fixed time-interval. 
\end{lemma}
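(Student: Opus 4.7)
The plan is to reduce the assertion to the classical weighted spatial Poincar\'e inequality plus a controlled time-slice oscillation estimate. First I would introduce the intermediate quantity $A_\mu(t) := \avgs{\psi\lsb{\chi}{J}}{\mu}(t)$ and split
\[
\psi(z)\lsb{\chi}{J} - \avgs{\psi\lsb{\chi}{J}}{\xi} = \bigl[\psi(z)\lsb{\chi}{J} - A_\mu(t)\bigr] + \bigl[A_\mu(t) - \avgs{\psi\lsb{\chi}{J}}{\xi}\bigr],
\]
then apply $|a+b|^{\vt}\apprle |a|^{\vt}+|b|^{\vt}$ inside the double integral over $B_\rho\times I$. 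Note that on $I\setminus J$ both summands vanish, so the effective domain of integration is $B_\rho\times(I\cap J)$, which trivializes the bookkeeping with $\lsb{\chi}{J}$.

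For the first (spatial) piece I would apply, slice-by-slice in $t$, the standard weighted spatial Poincar\'e inequality: since $\mu$ is a $C^\infty_c(B_\rho)$ probability density with $|\mu|\apprle \rho^{-n}$, we have
\[
\int_{B_\rho}\bigl|\psi(x,t)-A_\mu(t)\bigr|^{\vt}\,dx \apprle_{n,\vt} \rho^{\vt}\int_{B_\rho}|\nabla\psi(x,t)|^{\vt}\,dx,
\]
for each fixed $t\in I\cap J$, and then integrate in $t$ and divide by $\rho^{\vt}$ to obtain exactly the first term on the right-hand side of the claim.

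For the second (time) piece, the idea is to interpose the slice-average $A_\mu(s)$ inside the $\xi$-averaging. Writing
\[
A_\mu(t) - \avgs{\psi\lsb{\chi}{J}}{\xi} = \frac{1}{\|\xi\|_{L^1}}\iint_{B_\rho\times I}\bigl[A_\mu(t)-A_\mu(s)\bigr]\xi(x,s)\,dx\,ds + \frac{1}{\|\xi\|_{L^1}}\iint_{B_\rho\times I}\bigl[A_\mu(s)-\psi(x,s)\lsb{\chi}{J}\bigr]\xi(x,s)\,dx\,ds,
\]
the first integrand is pointwise dominated by $\sup_{t_1,t_2\in I}|A_\mu(t_2)-A_\mu(t_1)|$, which after dividing by $\rho$ and raising to the $\vt$-th power produces the second term on the RHS of the claim. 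For the second integrand I would invoke the hypothesis $\|\xi\|_{L^\infty}\apprle_n \|\xi\|_{L^1}/|B_\rho\times I|$ to replace the $\xi$-weight by the uniform weight $1/|B_\rho\times I|$, converting the integral into a plain average of $|\psi(x,s)\lsb{\chi}{J}-A_\mu(s)|$ over $B_\rho\times I$. A Jensen/H\"older step in the $x$-variable followed by the same slice-wise spatial Poincar\'e estimate as above absorbs this contribution into the gradient term.

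The only real obstacle is the second summand in the time-piece decomposition, where one must be careful that replacing the $\xi$-weight by Lebesgue measure (using the $L^\infty$--$L^1$ comparison) does not introduce the wrong power of $\rho$ or the wrong exponent $\vt$. A direct application of Jensen's inequality with the probability weight $\xi/\|\xi\|_{L^1}$ before the $L^\infty$ bound is applied avoids this pitfall and keeps the final constants dependent only on $n$ and $\vt$, yielding the stated estimate.
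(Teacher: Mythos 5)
The paper recalls this lemma from \cite[Lemma 4.2]{adimurthi2018gradient} without supplying a proof, so there is no in-paper argument to compare against; on its own merits your proof is correct and follows the standard route for such time-localised parabolic Poincar\'e inequalities. The decomposition against the $\mu$-weighted slice average $A_\mu(t)$, with the spatial piece handled slice-wise by the weighted spatial Poincar\'e inequality (which only needs $\mu$ to be a probability density bounded by $\rho^{-n}$; the gradient bound on $\mu$ is not used) and the temporal piece split further by interposing $A_\mu(s)$ inside the $\xi$-average, lands exactly on the two terms of the stated right-hand side. The $L^\infty$-to-$L^1$ comparison for $\xi$ followed by Jensen is the right device to reduce the residual term back to the spatial Poincar\'e bound.

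One sentence in your write-up is incorrect as stated: on $I\setminus J$ only the spatial summand $\psi\lsb{\chi}{J}-A_\mu(t)$ vanishes. For $t\in I\setminus J$ one has $A_\mu(t)=0$, but $\avgs{\psi\lsb{\chi}{J}}{\xi}$ is a $t$-independent constant and is generically nonzero, so the temporal summand does not vanish there and the integration over $I\setminus J$ cannot be discarded. The slip is harmless, because your estimate on the temporal piece (pointwise domination of $A_\mu(t)-A_\mu(s)$ by the supremum over $t_1,t_2\in I$, together with the Jensen plus spatial-Poincar\'e bound on the residual) holds uniformly for every $t\in I$ without any restriction to $I\cap J$, and thus never actually relies on the false reduction; but the ``effective domain'' remark should be deleted or replaced by ``only the spatial summand vanishes on $I\setminus J$'' in a careful write-up.
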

In the above \cref{lemma_crucial_1}, we can take any bounded region $\tilde{\Om}$ instead of $B_r$ such that $\tilde{\Om}^c$ is uniformly $p$-thick as in \cref{p_thick}. In that case, the constants will subsequently depend on the $p$-thickness constants of $\tilde{\Om}^c$.

We will  need the following well known 
%
%
Gagliardo-Nirenberg's inequality (see \cite[Lemma 3.2]{bogelein2009very} for the details):
\begin{lemma}
\label{gagliardo_nirenberg_lemma}
Let $B_{\rho} \subset \RR^n$ with $\rho \in (0,1]$ and $f \in W^{1,\vt}(B_{\rho})$ and $1 \leq \sigma, \vt,r \leq \infty$ and $\delta \in (0,1)$ be given satisfying
\begin{equation}
\label{9.20-ini}
-\frac{n}{\sigma} \leq \delta\lbr 1 - \frac{n}{\vt}\rbr - (1-\delta) \frac{n}{r}.
\end{equation}
Then there exists a constant $C = C(n,\sigma, \vt)$ such that there holds
\begin{equation*}
\fint_{B_{\rho}} \abs{\frac{f}{\rho}}^{\sigma} \ dx \leq C \lbr \fint_{B_{\rho}} \abs{\frac{f}{\rho}}^{\vt} + |\nabla f|^{\vt} \ dx \rbr^{\frac{\delta \sigma}{\vt}} \lbr \fint_{B_{\rho}} \abs{\frac{f}{\rho}}^r \ dx \rbr^{\frac{(1-\delta)\sigma}{r}}.
\end{equation*}
\end{lemma}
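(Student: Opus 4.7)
\emph{Proof plan.} The statement is a standard scale-invariant Gagliardo--Nirenberg inequality on a ball; the natural approach is a two-step reduction: first pass to the unit ball by scaling, then use the classical Gagliardo--Nirenberg inequality on $B_1$, with H\"older's inequality absorbing the slack in \eqref{9.20-ini}.

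The first step will be to introduce the rescaled function $g: B_1 \to \RR$ defined by $g(y) := f(\rho y)/\rho$. A direct computation gives $\nabla_y g(y) = \nabla_x f(\rho y)$ and, by a change of variables, the identities
\begin{equation*}
\fint_{B_1} |g|^{\sigma} \, dy = \fint_{B_{\rho}} \abs{\tfrac{f}{\rho}}^{\sigma} dx, \qquad \fint_{B_1} |g|^{\vt} + |\nabla g|^{\vt} \, dy = \fint_{B_{\rho}} \abs{\tfrac{f}{\rho}}^{\vt} + |\nabla f|^{\vt} \, dx, \qquad \fint_{B_1} |g|^{r} \, dy = \fint_{B_{\rho}} \abs{\tfrac{f}{\rho}}^{r} dx.
\end{equation*}
Hence the inequality to be proved is equivalent to the same inequality for $g$ on $B_1$, with the $\rho$-factors absorbed. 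This reduction is crucial because the bound in \cref{gagliardo_nirenberg_lemma} is scale-invariant precisely when $f$ is replaced by $f/\rho$.

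Next, I would apply the classical Gagliardo--Nirenberg inequality on the unit ball $B_1$. Define the sharp Sobolev exponent $\sigma^{*}$ by
\begin{equation*}
\frac{1}{\sigma^{*}} = \delta\lbr \frac{1}{\vt} - \frac{1}{n}\rbr + (1-\delta)\frac{1}{r},
\end{equation*}
interpreting the right-hand side appropriately when $\vt \geq n$ so that $\sigma^* = \infty$ or any finite value is admissible. The standard (sharp) Gagliardo--Nirenberg inequality on $B_1$ then yields
\begin{equation*}
\|g\|_{L^{\sigma^{*}}(B_1)} \leq C(n,\vt,\delta) \bigl(\|g\|_{L^{\vt}(B_1)} + \|\nabla g\|_{L^{\vt}(B_1)}\bigr)^{\delta} \|g\|_{L^{r}(B_1)}^{1-\delta},
\end{equation*}
which, after raising to power $\sigma^{*}$ and dividing by $|B_1|$, matches the structure of the claimed inequality with $\sigma$ replaced by $\sigma^{*}$.

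The final step handles the inequality in \eqref{9.20-ini}, which, rearranged, reads $1/\sigma \geq 1/\sigma^{*}$, i.e.\ $\sigma \leq \sigma^{*}$. Since $|B_1|$ is finite, a straightforward application of H\"older's inequality gives $\|g\|_{L^{\sigma}(B_1)} \leq C(n,\sigma,\sigma^{*}) \|g\|_{L^{\sigma^{*}}(B_1)}$, and raising everything to the power $\sigma$ yields the desired bound for $g$ on $B_1$. Translating back via the rescaling identities above completes the proof. The only mildly delicate point --- and the step I would expect to be the main obstacle --- is the borderline analysis when $\vt \geq n$ or when one of $\sigma, r$ is infinite, where one must interpret $\sigma^{*}$ carefully and invoke the Sobolev embedding $W^{1,\vt}(B_1) \hookrightarrow L^{q}(B_1)$ for every finite $q$; this is handled by the same H\"older step, choosing $\sigma^{*}$ to be any finite exponent larger than $\sigma$ consistent with \eqref{9.20-ini}.
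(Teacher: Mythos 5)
Your proof is correct and takes the canonical route for this inequality: rescale via $g(y):=f(\rho y)/\rho$ so that the claimed bound becomes exactly the same bound on $B_1$ (the $f/\rho$ normalization is precisely what makes every average scale-invariant), apply the classical Gagliardo--Nirenberg interpolation inequality on $B_1$ at the sharp exponent $\sigma^*$, and then use H\"older and $|B_1|<\infty$ to drop from $\sigma^*$ down to the smaller exponent $\sigma$, which is what \eqref{9.20-ini} says. The paper does not give a proof of \cref{gagliardo_nirenberg_lemma}, only a citation to \cite[Lemma 3.2]{bogelein2009very}, and the argument there is the same scaling reduction, so there is no divergence in approach to report. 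Two small points worth tightening if you write this out in full: (i) the lemma asserts $C=C(n,\sigma,\vt)$, while the Gagliardo--Nirenberg constant a priori depends on $\delta$ and $r$ as well; one needs to note that for fixed $n,\sigma,\vt$ the admissible $(\delta,r)$ satisfying \eqref{9.20-ini} with $\delta\in(0,1)$ keep the relevant constants uniformly bounded (or simply accept the extra dependence, which is harmless in the paper's applications); and (ii) when $\vt\ge n$ one should be a bit more explicit about which version of the interpolation inequality is invoked, since the formal $\sigma^*$ may be negative or infinite --- your remark that one can simply choose any finite $\sigma^*>\sigma$ compatible with \eqref{9.20-ini} and use the embedding $W^{1,\vt}(B_1)\hookrightarrow L^{\sigma^*}(B_1)$ is the right fix.
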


\subsection{Whitney type decomposition lemma}
Let us first recall a well known Whitney type  decomposition Lemma proved in  \cite[Lemma 3.1]{diening2010existence} or \cite[Chapter 3]{bogelein2013regularity}:
\begin{lemma}
\label{whitney_decomposition}

Let $\mathbb{E}$ be any closed set and  $\la \in (0,\infty)$ be a fixed constant. Define $\gamma := \la^{2-p}$, then there exists a $\gamma$-parabolic Whitney covering  $\{Q_i(z_i)\}$ of $\mathbb{E}^c$ in the following sense:
\begin{description}
  \descitem{W1} $Q_j(z_j) = B_j(x_j) \times I_j(t_j)$ where $B_j(x_j) = B_{r_j}(x_j)$ and $I_j(t_j) = (t_j - \gamma r_j^2, t_j + \gamma r_j^2)$. 
  \descitem{W2}  $d_{\la}(z_j,\mathbb{E}) = 16r_j$.
  \descitem{W3} $\bigcup_j \frac12 Q_j(z_j) = \mathbb{E}^c$.
  \descitem{W4} for all $j \in \NN$, we have $8Q_j \subset \mathbb{E}^c$ and $16Q_j \cap \mathbb{E} \neq \emptyset$.
  \descitem{W5} if $Q_j \cap Q_k \neq \emptyset$, then $\frac12 r_k \leq r_j \leq 2r_k$.
  \descitem{W6} $\frac14 Q_j \cap \frac14Q_k = \emptyset$ for all $j \neq k$.
  \descitem{W7} $\sum_j \lsb{\chi}{4Q_j}(z) \leq c(n)$ for all $z \in \mathbb{E}^c$.
    \end{description}

    For a fixed $k \in \NN$, let us define $A_k := \left\{ j \in \NN: \frac34Q_k \cap \frac34Q_j \neq \emptyset\right\}$,  then we have
    \begin{description}
  \descitem{W8} For any $i \in \NN$, we have $\# A_i \leq c(n)$.
  \descitem{W9} Let $i \in \NN$ be given and let  $j \in A_i$, then $\max \{ |Q_j|, |Q_i|\} \leq C(n) |Q_j \cap Q_i|.$
  \descitem{W10}  Let $i \in \NN$ be given and let  $j \in A_i$, then $ \max \{ |Q_j|, |Q_i|\} \leq \left|\frac34Q_j \cap \frac34Q_i\right|.$
  \descitem{W11} Let $i \in \NN$ be given, then for any $j \in A_i$, we have $\frac34Q_j \subset 4Q_i$.
  \end{description}
  \end{lemma}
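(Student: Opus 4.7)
The plan is to interpret the lemma as a Whitney decomposition in the doubling metric measure space $(\RR^{n+1}, d_{\la}, dz)$, where $d_\la$ is the parabolic metric of \cref{parabolic_metric}. The key observation is that the open $d_\la$-ball of radius $r$ centered at $z_0 = (x_0, t_0)$ is precisely $Q_{r, \gamma r^2}(z_0) = B_r(x_0) \times (t_0 - \ga r^2, t_0 + \ga r^2)$ with $\ga = \la^{2-p}$, so the parabolic cylinders of the statement are exactly the balls of this metric, and Lebesgue measure is doubling with respect to $d_\la$ with doubling constant $2^{n+2}$. The classical Whitney/Vitali construction then carries over.

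Concretely, for each $z \in \mathbb{E}^c$ I assign the radius $r(z) := \tfrac{1}{16} d_\la(z, \mathbb{E}) > 0$ (which is positive since $\mathbb{E}$ is closed) and consider the family $\{\tfrac14 Q(z, r(z))\}_{z \in \mathbb{E}^c}$. By Vitali's covering lemma applied in this metric space, I extract a countable disjoint subfamily $\{\tfrac14 Q_j\}$ with $Q_j = Q(z_j, r_j)$ and $r_j = r(z_j)$. A standard argument then shows that the half-dilations $\{\tfrac12 Q_j\}$ cover $\mathbb{E}^c$: for any $z \in \mathbb{E}^c$, the ball $\tfrac14 Q(z, r(z))$ meets some $\tfrac14 Q_j$ with $r_j$ comparable to $r(z)$, and a triangle inequality in $d_\la$ forces $z \in \tfrac12 Q_j$. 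This immediately yields \descref{W1}, \descref{W2}, \descref{W3}, and \descref{W6}.

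Properties \descref{W4} and \descref{W5} come from the reverse triangle inequality in $d_\la$ combined with \descref{W2}: for $z \in 8Q_j$, one has $d_\la(z, \mathbb{E}) \geq 16 r_j - 8 r_j = 8 r_j > 0$, proving $8Q_j \subset \mathbb{E}^c$, while $16 Q_j$ meets $\mathbb{E}$ by the definition of distance. Similarly, if $Q_j \cap Q_k \neq \emptyset$ then $|16 r_j - 16 r_k| \leq d_\la(z_j, z_k) \leq r_j + r_k$, which rearranges to $\tfrac12 r_k \leq r_j \leq 2 r_k$. The finite overlap \descref{W7} is the standard counting argument combining the disjointness \descref{W6} with the doubling of the $d_\la$-volume: at any point of $\mathbb{E}^c$ only boundedly many indices can contribute to $4 Q_j$, since the $\tfrac14 Q_j$ are disjoint and $\la$-comparable whenever they overlap a fixed point. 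The neighbor properties \descref{W8}--\descref{W11} are then routine combinatorial consequences of \descref{W5} and \descref{W7}.

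The only subtle point, which I would flag as the \emph{main obstacle}, is keeping all implicit constants independent of $\la$. This is automatic here: $|Q(z,r)|$ equals a universal dimensional constant times $\gamma\, r^{n+2}$, so the $\la$-dependence is a single global rescaling factor, and the Vitali covering constant together with the Whitney overlap constants produced by the selection procedure depend only on the doubling exponent $n+2$, not on $\la$. Once this observation is made, the construction is a verbatim adaptation of the Euclidean Whitney decomposition to the anisotropic parabolic scaling $\gamma = \la^{2-p}$, matching the references \cite{diening2010existence, bogelein2013regularity}.
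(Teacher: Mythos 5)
This lemma is recalled, not proved, in the paper — the authors cite \cite{diening2010existence} (Lemma 3.1) and \cite{bogelein2013regularity} (Chapter 3) — so there is no ``paper's own proof'' to compare against. Your proposal is essentially the standard construction used in those references: identify the intrinsic parabolic cylinders $Q_{r,\ga r^2}(z_0)$ as the balls of the quasi-metric $d_\la$ from \cref{parabolic_metric}, note that Lebesgue measure is doubling with exponent $n+2$ and that the $\la$-dependence of the volume enters only as an overall factor $\ga$ which cancels in all ratios, then run a Vitali-type selection on the family $\{\tfrac14 Q(z, r(z))\}$ with $r(z) = \tfrac1{16}d_\la(z,\mathbb{E})$, and deduce \descref{W2}--\descref{W11} from the reverse triangle inequality and a bounded-overlap volume count. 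That is a correct account of the approach.

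One place where the sketch is a little loose and worth tightening: the naive $5r$-Vitali selection, where each rejected ball has radius at most twice that of a chosen ball it meets, only gives covering by $\tfrac34 Q_j$ (or $\tfrac54 Q_j$, depending on the bookkeeping), not by $\tfrac12 Q_j$ as in \descref{W3}. To obtain exactly $\tfrac12 Q_j$ one needs a genuinely greedy selection in which the blocking ball satisfies $r_j \ge r(z)$ — that is, $\tfrac14 Q_j$ is chosen with radius no smaller than the ball it obstructs — so that a point of $\tfrac14 Q(z,r(z))\cap\tfrac14 Q_j$ gives $d_\la(z,z_j) < \tfrac14(r(z)+r_j) \le \tfrac12 r_j$. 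This requires either a transfinite greedy with decreasing radii or a countable exhaustion of $\mathbb{E}^c$ by regions where $d_\la(\cdot,\mathbb{E})$ is bounded, since the radii $r(z)$ need not be bounded on all of $\mathbb{E}^c$. This is a standard refinement, but your appeal to ``Vitali's covering lemma'' as a black box does not by itself produce the exact dilation factors $\tfrac14$ and $\tfrac12$ in \descref{W6} and \descref{W3}. Apart from this constant-chasing, the rest of the argument (\descref{W4}, \descref{W5} from the triangle inequality and \descref{W2}; \descref{W7} from disjointness plus comparability of radii at a fixed point plus doubling; \descref{W8}--\descref{W11} as combinatorial corollaries) is sound and matches the cited sources.
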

  
  Subordinate to the above Whitney covering, we have an associated partition of unity which we recall in the following lemma.
  \begin{lemma}
  \label{partition_unity}
  Associated to the covering given in \cref{whitney_decomposition}, there exists functions $\{ \Psi_j\}_{j \in \NN} \in C_c^{\infty}\lbr\frac34Q_j\rbr$ such that the following holds:
  \begin{description}
  \descitem{W12} $\lsb{\chi}{\frac12Q_j} \leq \Psi_j \leq \lsb{\chi}{\frac34Q_j}$.
  \descitem{W13} $\|\Psi_j\|_{\infty} + r_j \| \nabla \Psi_j\|_{\infty} + r_j^2 \| \nabla^2 \Psi_j\|_{\infty} + \la^{2-p} r_j^2 \| \pa_t \Psi_j\|_{\infty} \leq C(n)$.
  \descitem{W14} Let $i \in \NN$ be given, then $\sum_{j \in A_i} \Psi_j(z) = 1$  for all $z \in \frac34Q_i$.
\end{description}
\end{lemma}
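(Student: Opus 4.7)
The plan is to follow the standard Whitney partition-of-unity construction adapted to the parabolic metric $d_\la$, so that the parabolic scaling in time is preserved. I would first construct unnormalized bumps supported on each $\tfrac34 Q_j$, and then normalize them to obtain an actual partition of unity. For each $j \in \NN$ I would take a tensor-product bump $\hat\Psi_j(x,t) := \vp_j(x)\,\tau_j(t)$, where $\vp_j \in C_c^\infty(\tfrac34 B_j)$ is obtained by rescaling a fixed radial cutoff on the unit ball so that $\vp_j \equiv 1$ on $\tfrac58 B_j$ with $\|\nabla^k \vp_j\|_\infty \leq C(n,k)\,r_j^{-k}$ for $k=0,1,2$, and $\tau_j \in C_c^\infty(\tfrac34 I_j)$ is the analogous one-dimensional cutoff with $\tau_j \equiv 1$ on $\tfrac58 I_j$ and $\|\tau_j^{(k)}\|_\infty \leq C(k)\,(\ga r_j^2)^{-k}$. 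The temporal factor $(\ga r_j^2)^{-1} = \la^{p-2} r_j^{-2}$ is where the parabolic scaling enters.

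Next, I would set $\Sigma(z) := \sum_{k \in \NN} \hat\Psi_k(z)$ and define $\Psi_j := \hat\Psi_j/\Sigma$. Here \descref{W7} ensures that the sum is locally finite and bounded above by $C(n)$, while \descref{W3} ensures that for every $z \in \mathbb{E}^c$ there is some $k$ with $z \in \tfrac12 Q_k$, forcing $\hat\Psi_k(z) = 1$ and hence $\Sigma(z) \geq 1$. Consequently $\Sigma$ is smooth and bounded between $1$ and $C(n)$, so $\Psi_j$ is smooth, supported in $\tfrac34 Q_j$, bounded between $0$ and $1$, and bounded below by a positive constant on $\tfrac12 Q_j$. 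This delivers \descref{W12} (in the sense that $\spt \Psi_j \subset \tfrac34 Q_j$ with the quantitative lower bound on $\tfrac12 Q_j$).

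For the derivative bounds \descref{W13}, I would apply the quotient rule and use three facts: only indices $k \in A_j$ contribute to $\nabla \Sigma$ and $\pa_t \Sigma$ on $\spt \hat\Psi_j$; by \descref{W5} every such $r_k$ is comparable to $r_j$ within a factor $2$; and by \descref{W8} the number of such indices is bounded by $C(n)$. Together, each spatial derivative costs at most $C(n)/r_j$ and each time derivative at most $C(n)\,\la^{p-2}/r_j^2$, which yields the claimed bounds on $\|\Psi_j\|_\infty$, $\|\nabla \Psi_j\|_\infty$, $\|\nabla^2 \Psi_j\|_\infty$, and $\|\pa_t \Psi_j\|_\infty$. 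Property \descref{W14} is then immediate: for $z \in \tfrac34 Q_i$ one has $\sum_j \Psi_j(z) = \Sigma(z)/\Sigma(z) = 1$, and if $\Psi_j(z) \neq 0$ then $z \in \tfrac34 Q_j$, so $\tfrac34 Q_i \cap \tfrac34 Q_j \neq \emptyset$ and $j \in A_i$ by definition.

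The only delicate point is the bookkeeping of parabolic scaling in the time derivative; everything else is a standard computation. The comparability of neighboring radii \descref{W5} and the bounded overlap \descref{W7}--\descref{W8} are exactly what is needed to make every derivative estimate uniform in $j$, so no ingredients beyond the Whitney decomposition recorded in \cref{whitney_decomposition} are required.
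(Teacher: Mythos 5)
Your construction is the standard normalized Whitney partition of unity, which is exactly what the paper relies on: \cref{partition_unity} is stated without proof as a recalled fact from the references for \cref{whitney_decomposition}, and those references build $\Psi_j = \hat\Psi_j/\Sigma$ precisely as you do, with \descref{W5}, \descref{W7} and \descref{W8} supplying the uniformity of the derivative bounds. Your bookkeeping of the parabolic scaling $\gamma = \la^{2-p}$ in the time derivative is correct and gives \descref{W13} as stated.

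The one point worth recording is the caveat you already flag at \descref{W12}: after normalization one only gets $0 \leq \Psi_j \leq \lsb{\chi}{\frac34Q_j}$ together with $\Psi_j \geq c(n) > 0$ on $\frac12Q_j$, not $\Psi_j \equiv 1$ there. This is unavoidable — since the half-cylinders cover $\mathbb{E}^c$ by \descref{W3} but are not disjoint, the literal reading of \descref{W12} would contradict \descref{W14} at any point lying in two half-cylinders — so the quantitative lower bound is the correct (and standard) interpretation, and it is all that is ever used in the paper (e.g.\ to get $\|\Psi_i\|_{L^1(\frac34Q_i)} \apprge |Q_i|$ in \cref{lipschitz_extension_one}). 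With that understanding your argument is complete.
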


\subsection{A few other well known lemmas}

Let us first recall the well known iteration lemma  (see \cite[Lemma 3.3]{bogelein2009very} for the details):
\begin{lemma}
\label{iteration_lemma}
Let $ \de \in (0,1)$, $B \geq 0$, $A \geq 0$, $\al >0$ and $0 < r < \rho < \infty$ and let $f \geq 0$ be a bounded, measurable function satisfying 
\[
f(t_1) \leq \de f(t_2) + A (t_2-t_1)^{-\al} + B \txt{for all} r \leq t_1 < t_2 \leq \rho,
\]
then there exists a constant $C = C(\al,\de)$ such that the following holds:
\[
f(r) \leq C \lbr A (\rho - r)^{-\al} + B \rbr.
\]

\end{lemma}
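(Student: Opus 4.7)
The plan is the standard geometric iteration that turns a self-improving bound with a loss of constant $\delta < 1$ into a closed estimate. I would start by choosing an auxiliary parameter $\tau\in(0,1)$ (to be fixed later in terms of $\delta$ and $\alpha$) and defining a monotone sequence $t_0,t_1,t_2,\ldots\in[r,\rho]$ via
\[
t_0 = r, \qquad t_i = r + (1-\tau^i)(\rho - r), \qquad i = 0,1,2,\ldots.
\]
Then $t_i \nearrow \rho$ and $t_{i+1}-t_i = (1-\tau)\tau^i(\rho-r)$, so
\[
(t_{i+1}-t_i)^{-\alpha} = (1-\tau)^{-\alpha}\,\tau^{-i\alpha}\,(\rho-r)^{-\alpha}.
\]

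Next, I would apply the hypothesis of the lemma with the pair $(t_1,t_2) = (t_i,t_{i+1})$ to get $f(t_i)\leq \delta f(t_{i+1}) + A(t_{i+1}-t_i)^{-\alpha} + B$, and iterate this $k$ times starting from $t_0 = r$. A direct induction yields
\[
f(r) \leq \delta^{k} f(t_k) + A(1-\tau)^{-\alpha}(\rho-r)^{-\alpha}\sum_{i=0}^{k-1}\bigl(\delta\,\tau^{-\alpha}\bigr)^{i} + B\sum_{i=0}^{k-1}\delta^{i}.
\]
At this point the only real choice in the proof appears: I would fix $\tau\in(\delta^{1/\alpha},1)$, so that the ratio $q := \delta\,\tau^{-\alpha}$ satisfies $q<1$ and both geometric series are summable. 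Since $f$ is bounded and $\delta^k \to 0$ as $k\to\infty$, the first term vanishes, and passing to the limit gives
\[
f(r) \leq \frac{A(1-\tau)^{-\alpha}}{1-q}(\rho-r)^{-\alpha} + \frac{B}{1-\delta},
\]
which is the desired conclusion with $C = C(\alpha,\delta) := \max\bigl\{(1-\tau)^{-\alpha}(1-q)^{-1},(1-\delta)^{-1}\bigr\}$.

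The only step that requires any care is the choice of $\tau$: one must pick it strictly between $\delta^{1/\alpha}$ and $1$ so that the accumulated loss factor $\delta\tau^{-\alpha}$ from iterating remains strictly less than $1$, which is exactly what allows the geometric series over the rescaled increments $(t_{i+1}-t_i)^{-\alpha}$ to converge. The boundedness of $f$ is used only to kill the tail term $\delta^k f(t_k)$ in the limit, so no additional regularity or structure on $f$ is needed.
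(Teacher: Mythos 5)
Your proof is correct and is the standard geometric iteration argument; the paper itself does not include a proof but cites \cite[Lemma 3.3]{bogelein2009very}, where the same construction $t_i = r + (1-\tau^i)(\rho-r)$ with $\tau \in (\delta^{1/\alpha},1)$ is used. No gaps: the choice of $\tau$ makes $\delta\tau^{-\alpha}<1$ so both series converge, and boundedness of $f$ kills the tail term.
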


We will use the following result  which can be found in \cite[Theorem 3.1]{PG} (see also \cite{Prato} where it was originally proved) for proving the Lipschitz regularity for the constructed test function. This very important simplification of the original technique from \cite{KL} first appeared in \cite[Chapter 3]{bogelein2013regularity}.
\begin{lemma}
\label{metric_lipschitz}
 Let $\ga >0$ and  $\mathcal{D} \subset \RR^{n+1}$ be given. For any $z \in \mathcal{D}$ and $r>0$, let $Q_{r,\ga r^2}(z)$ be the parabolic cylinder centred  at $z$ with radius $r$.  Suppose  there exists a constant $C>0$ independent of $z$ and $r$ such that the following bound holds:
 \[
  \frac{1}{|Q_{r,\ga r^2}(z)\cap \mathcal{D}|} \iint_{Q_{r,\ga r^2}(z)\cap \mathcal{D}} \left|\frac{f(x,t) - \avgs{f}{Q_{r,\ga r^2}(z)\cap \mathcal{D}}}{r}\right| \ dx \ dt \leq C \qquad \forall\  z \in \mathcal{D} \text{ and } r>0,
 \]
 then $f \in C^{0,1} (\mathcal{D})$ with respect to the metric $d (z_1,z_2) := \max \{ |x_1-x_2|, \sqrt{\ga^{-1} |t_1-t_2| }\}$.
\end{lemma}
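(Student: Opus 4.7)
The plan is to prove this as a Campanato-type characterization of Lipschitz regularity adapted to the parabolic metric $d$, under which the cylinders $Q_{r,\gamma r^2}(z)$ serve precisely as $d$-balls of radius $r$ centred at $z$. Read in this way, the hypothesis is an oscillation bound of order $r$ on each such $d$-ball intersected with $\mathcal{D}$, which is the canonical input for producing a Lipschitz representative via a telescoping of averages.

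First, I would fix $z\in\mathcal{D}$ and consider the nested dyadic cylinders $Q_k:=Q_{2^{-k}r_0,\gamma(2^{-k}r_0)^2}(z)$ for $k\geq 0$. Setting $m_k:=\fint_{Q_k\cap\mathcal{D}} f$, the hypothesis together with the inclusion $Q_{k+1}\cap\mathcal{D}\subset Q_k\cap\mathcal{D}$ yields
\[
|m_k-m_{k+1}|\leq \frac{1}{|Q_{k+1}\cap\mathcal{D}|}\int_{Q_k\cap\mathcal{D}}|f-m_k|\,dz\leq C\,2^{-k}r_0\cdot\frac{|Q_k\cap\mathcal{D}|}{|Q_{k+1}\cap\mathcal{D}|}.
\]
Summing the geometric series would then produce both the convergence of $m_k$ to some limit $\tilde f(z)$ and the quantitative bound $|m_0-\tilde f(z)|\leq C'r_0$. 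A Lebesgue-point argument identifies $\tilde f$ with $f$ almost everywhere on $\mathcal{D}$, yielding the continuous representative.

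Next, for $z_1,z_2\in\mathcal{D}$ with $r:=d(z_1,z_2)$, I would choose an enclosing cylinder $Q^{\ast}$ of radius comparable to $r$ that contains $d$-neighbourhoods of both $z_1$ and $z_2$ (for instance $Q^{\ast}=Q_{4r,\gamma(4r)^{2}}(z_1)$). Splitting
\[
|\tilde f(z_1)-\tilde f(z_2)|\leq |\tilde f(z_1)-m_{Q^{\ast}\cap\mathcal{D}}|+|m_{Q^{\ast}\cap\mathcal{D}}-\tilde f(z_2)|,
\]
each summand is controlled by a telescoping chain from the dyadic cylinders around $z_i$ up to $Q^{\ast}$, together with a single comparison between averages on two cylinders of comparable size that both sit inside $Q^{\ast}$. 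All the contributions are of order $r$, giving $|\tilde f(z_1)-\tilde f(z_2)|\leq C\,d(z_1,z_2)$ as desired.

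The main obstacle is keeping the density ratios $|Q_k\cap\mathcal{D}|/|Q_{k+1}\cap\mathcal{D}|$ uniformly bounded, since the telescoping introduces one such factor at every scale. This is resolved by exploiting the hypothesis at \emph{every} $z\in\mathcal{D}$ and \emph{every} $r>0$: by applying the oscillation bound simultaneously on $Q_k$ and on a translate anchored at a nearby point of $\mathcal{D}$ and then triangulating between the two averages, one bypasses any standalone density estimate on $\mathcal{D}$. This is essentially the mechanism in the cited reference, and together with the Campanato scheme outlined above it produces the claimed Lipschitz estimate with respect to $d$.
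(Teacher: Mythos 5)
The paper does not actually prove this lemma---it recalls it from Da Prato and from G\'orka's metric-measure Campanato theorem (see also B\"ogelein--Duzaar--Mingione), so you are not competing against an in-text argument but against those references. Your overall strategy---a Campanato/Meyers-type telescoping of averages on dyadic $d$-balls, producing a continuous representative and then comparing two points through a common enclosing cylinder---is the standard and correct route, and it matches the cited sources in structure.

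The genuine gap is exactly the one you flag and then try to wave away: the factor $|Q_k\cap\mathcal{D}|/|Q_{k+1}\cap\mathcal{D}|$. Your proposed fix (``apply the oscillation bound on $Q_k$ and on a translate anchored at a nearby point and triangulate'') does not eliminate this factor. Whenever you pass from an average over one subset of $\mathcal{D}$ to an average over a much smaller subset, you pay the measure ratio; triangulating through more cylinders only multiplies such ratios together. The telescoping estimate
\[
|m_k-m_{k+1}|\;\le\;\frac{|Q_k\cap\mathcal{D}|}{|Q_{k+1}\cap\mathcal{D}|}\,
\frac{1}{|Q_k\cap\mathcal{D}|}\iint_{Q_k\cap\mathcal{D}}|f-m_k|\;dz
\]
is simply unusable if the ratio is unbounded, regardless of how you route the chain. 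The correct resolution is that the cited Campanato theorem is stated on a metric measure space satisfying a measure density condition, and the lemma as used here silently assumes $\mathcal{D}$ is such that
\[
|Q_{r,\gamma r^2}(z)\cap\mathcal{D}|\;\ge\;c_0\,|Q_{r,\gamma r^2}(z)|\qquad\text{for all } z\in\mathcal{D},\ r>0,
\]
which immediately caps the offending ratio by $2^{n+2}/c_0$. This is satisfied in every application in the paper ($\mathcal{D}=\RR^n\times[0,T]$, or a half-slab times a ball), which is why the paper can safely invoke the result. You should either assume this density condition explicitly, or observe that it holds for the $\mathcal{D}$'s to which the lemma is applied; absent that, the dyadic telescoping in your first step does not give a convergent series and the argument does not close.

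A secondary, smaller point: in your final comparison of $\tilde f(z_1)$ and $\tilde f(z_2)$ via $Q^{\ast}=Q_{4r,\gamma(4r)^2}(z_1)$, the term $|m_{Q^{\ast}\cap\mathcal{D}}-\tilde f(z_2)|$ needs a chain from $Q^{\ast}$ down to cylinders centred at $z_2$; these are not nested in $Q^{\ast}$ at every scale, so you again invoke the measure density condition to compare an average over $Q^{\ast}\cap\mathcal{D}$ with one over a non-centred sub-cylinder. Once the density condition is in hand this is routine, but it should be said rather than implied.
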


Finally, let us recall the parabolic version of the well known Gehring's lemma (for example, see \cite[Lemma 6.4]{bogelein2009very} for the details).
\begin{lemma}
\label{gehring_lemma}
 Let $\al_0 \geq 1$, $\ka \geq 1$, $\ve_0>0$, $p>1$ and $\be_{gh}>0$ be given. Let $q$ be given such that  $1<p-\varepsilon_0 \leq q < p-2\be<p-\be$ for some $\be \in (0,\be_{gh})$.  Furthermore, for a cylinder $Q_2 = Q_{2\rho,2s^2}$, let  $f \in L^{p-\be}(Q_2)$ and $g \in L^{\tp}(Q_2)$ for some $\tp \geq p$ be given. Suppose for each $\la \geq \al_0$ and almost every $z \in Q_2$ with $f(z) >\la$, there exists a parabolic cylinder $Q = Q_{\rho,s}(z)\subset Q_2$ such that 
 \[
  \frac{\la^{p-\be}}{\ka} \leq \fiint_Q f^{p-\be}(\tz) \lomt \ d\tz \leq \ka \lbr \fiint_Q f^q(\tz) \lomt \ d\tz \rbr^{\frac{p-\be}{q}} + \ka \fiint_Q g^{p-\be}(\tz) \lomt \ d\tz \leq \ka^2 \la^{p-\be},
 \]
then there exists $\de_0 = \de_0(\ka,p,\be,q,\ve_0)$ and $C = C(\ka,p,\be,q,\ve_0)$, such that $f \in L^{p-\be + \de_1}(Q_2)$ with $\de_1 = \min \{ \de_0, \tp-p+\be\}$. This improved higher integrability comes with the following bound:
\[
 \iint_{Q_2} f^{p-\be + \de}(\tz) \lomt \ d\tz \apprle \al_0^{\de} \iint_{Q_2} f^{p-\be}(\tz) \lomt \ d\tz + \iint_{Q_2} g^{p-\be+\de}(\tz) \lomt\ d\tz \quad \text{for all} \ \de \in (0,\de_1]. 
\]
\end{lemma}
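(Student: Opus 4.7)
The plan is to follow the standard Gehring self-improvement machinery, adapted to the parabolic setting. The argument has three steps: (i) a parabolic Vitali covering of super-level sets of $f$; (ii) derivation of a truncated level-set inequality from the two-sided reverse Hölder hypothesis via splitting at an intermediate height $\eta\lambda$; (iii) a layer-cake integration that promotes the level-set inequality to the $L^{p-\beta+\delta_1}$-bound on $f$.

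For (i) and (ii), fix $\lambda\ge\alpha_0$ and set $E(\lambda):=\{z\in Q_2:f(z)>\lambda\}$. For a.e.\ $z\in E(\lambda)$ the hypothesis provides a parabolic cylinder $Q(z)\subset Q_2$ on which the bilateral reverse Hölder inequality holds. I would apply a Vitali-type covering lemma, performed in the parabolic metric $d_\lambda$ from \cref{parabolic_metric} so that the anisotropic cylinders $Q(z)$ behave as metric balls, to extract a countable disjoint subfamily $\{Q_i\}$ with $E(\lambda)\subset\bigcup_i cQ_i\subset Q_2$ for a dimensional constant $c$. On each $Q_i$, split $f^q = f^q\chi_{\{f\le\eta\lambda\}}+f^q\chi_{\{f>\eta\lambda\}}$ (and similarly for $g$), raise to the power $(p-\beta)/q>1$ via $(a+b)^{(p-\beta)/q}\le 2^{(p-\beta)/q-1}(a^{(p-\beta)/q}+b^{(p-\beta)/q})$, and note that the low part contributes at most $C(\eta\lambda)^{p-\beta}$. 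Choosing $\eta=\eta(\kappa,p,\beta,q)$ small enough so that the low part is absorbed into the lower bound $\lambda^{p-\beta}/\kappa$, and then using Hölder with exponents $(p-\beta)/q$ and $(p-\beta)/(p-\beta-q)$ against the upper bound $\iint_{Q_i}f^{p-\beta}\chi_{\Omega_T}\le\kappa^2\lambda^{p-\beta}|Q_i|$, yields on each $Q_i$ the inequality
\[
\lambda^{p-\beta}|Q_i| \le C\,\lambda^{p-\beta-q}\iint_{Q_i\cap\{f>\eta\lambda\}}f^q\chi_{\Omega_T}\,dz + C\iint_{Q_i\cap\{g>\eta\lambda\}}g^{p-\beta}\chi_{\Omega_T}\,dz.
\]
Summing over the disjoint family and using $|cQ_i|\le C|Q_i|$ together with the covering $E(\lambda)\subset\bigcup_i cQ_i$ produces the key distributional inequality
\[
\iint_{E(\lambda)}f^{p-\beta}\chi_{\Omega_T}\,dz \le C_1\,\lambda^{p-\beta-q}\iint_{E(\eta\lambda)}f^q\chi_{\Omega_T}\,dz + C_2\iint_{Q_2\cap\{g>\eta\lambda\}}g^{p-\beta}\chi_{\Omega_T}\,dz.
\]

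For (iii), I would multiply this inequality by $\delta\lambda^{\delta-1}$ and integrate over $\lambda\in[\alpha_0,\Lambda]$, where $\Lambda<\infty$ is a truncation level introduced since $f\in L^{p-\beta+\delta}$ is not yet known. Fubini identifies the left-hand side with (a truncation of) $\iint_{Q_2}f^{p-\beta+\delta}\chi_{\Omega_T}$, minus a boundary contribution controlled by $\alpha_0^\delta\iint_{Q_2}f^{p-\beta}\chi_{\Omega_T}$. On the right-hand side, the first integral transforms by Fubini into $\tfrac{C\delta}{p-\beta-q+\delta}\iint_{Q_2}f^{p-\beta+\delta}\chi_{\Omega_T}$, while the $g$-term is bounded by $C\eta^{-\delta}\iint_{Q_2}g^{p-\beta+\delta}\chi_{\Omega_T}$, which is finite whenever $\delta\le\tilde p-p+\beta$ since $g\in L^{\tilde p}$. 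Because $q<p-2\beta$ gives $p-\beta-q>\beta>0$, choosing $\delta=\delta_0(\kappa,p,\beta,q,\varepsilon_0)$ sufficiently small makes the prefactor $C\delta/(p-\beta-q+\delta)$ strictly less than $1/2$; absorbing the resulting term into the left-hand side and sending $\Lambda\to\infty$ yields the claimed bound with $\delta_1=\min\{\delta_0,\tilde p-p+\beta\}$.

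The main obstacle is the combined truncation–absorption step: because $f\in L^{p-\beta+\delta}$ is what we are trying to prove, the Fubini identity must be carried out against $\min\{f,\Lambda\}$ and the estimate must be shown to be independent of $\Lambda$ before passing to the limit. This forces a careful bookkeeping of $\Lambda$-dependent boundary terms (those at level $\alpha_0$ and at level $\Lambda$) so that they appear only as terms of the form $\alpha_0^\delta\iint_{Q_2}f^{p-\beta}\chi_{\Omega_T}$ on the right, and not as $\Lambda$-dependent quantities requiring a priori control. A subsidiary technical point is verifying that the Vitali enlargement $cQ_i$ remains inside $Q_2$ — this is automatic from the hypothesis $Q(z)\subset Q_2$, provided we only enlarge by the fixed dimensional factor given by the parabolic Vitali lemma in the metric $d_\lambda$, and is the reason why the parabolic Whitney decomposition (\cref{whitney_decomposition}) uses precisely such coupled space–time radii.
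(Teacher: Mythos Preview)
The paper does not supply its own proof of this lemma: it is stated as a known result and referenced to \cite[Lemma~6.4]{bogelein2009very}. There is therefore no in-paper argument to compare against.

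Your sketch is the standard Gehring self-improvement argument and matches the structure of the proof in the cited reference: Vitali-type covering of the super-level set $E(\lambda)$ by the hypothesised cylinders, a splitting at height $\eta\lambda$ to turn the reverse H\"older inequality into a level-set estimate, and then multiplication by $\delta\lambda^{\delta-1}$ and integration (with a truncation at level $\Lambda$ to justify absorption). One small inaccuracy: after summing over the disjoint family $\{Q_i\}$ you obtain a bound on $\lambda^{p-\beta}\sum_i|Q_i|$, hence on $\lambda^{p-\beta}|E(\lambda)|$ via the covering, not directly on $\iint_{E(\lambda)}f^{p-\beta}$. The passage to the latter requires one further splitting of $E(\lambda)$ at a higher level $A\lambda$ (so that on $E(\lambda)\setminus E(A\lambda)$ one has $f\le A\lambda$), which is routine but should be mentioned. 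Also, the enlarged cylinders $cQ_i$ need not remain inside $Q_2$; this is harmless because the right-hand integrals live on the original $Q_i\subset Q_2$ and only the measure bound $|E(\lambda)|\le C\sum_i|Q_i|$ uses the enlargement.
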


\subsection{Crucial lemma}

The lemma concerns very weak solutions of \cref{main-1} which will be used in Section \cref{section5} and Section \cref{section6}.
\begin{lemma}
\label{lemma_crucial_2_app}
 Let $u \in L^2(0,T;L^2(\Om))\cap L^{p-\be} (0,T; w+ W_0^{1,p-\be}(\Om))$  be a very weak solution of \cref{main-1} for some $0 \leq \be \leq \min\{1,p-1\}$ and  $h \in (0,T)$. Let $\phi(x) \in C_c^{\infty}({\Om})$ and $\varphi(t) \in C^{\infty}(\RR)$ be non-negative functions and $[u]_h,[w]_h$ be the Steklov average as defined in \cref{time_average}. Furthermore, let $w$ satisfy the hypothesis in \cref{thm3.1-hyp} (see \cref{imp_rmk}), then   the following estimate holds for any time interval $(t_1,t_2) \subset (0,T)$:
 \begin{equation*}
  \label{lemma_crucial_2_est_app}
  \begin{array}{ll}
  |\avgs{{[u-w]_h\varphi}}{\phi} (t_2) - \avgs{{[u-w]_h\varphi}}{\phi}(t_1)| & \leq  C(\La_1,p)\|\nabla \phi\|_{L^{\infty}{({{\Om}})}} \|\varphi\|_{L^{\infty}(t_1,t_2)} \iint_{{{\Om}} \times (t_1,t_2)} {[(|\nabla u|+|h_0|)^{p-1}]_h} \ dz \\
   &  \qquad + \|\varphi\|_{L^{\infty}(t_1,t_2)} \int_{t_1}^{t_2}  \left|\left[ \int_{{{\Om}}} \iprod{\vec{w}}{\nabla \phi}\ dx \right]_h\right| \ dt \\
   & \qquad + \|\phi\|_{L^{\infty}{({{\Om}})}} \|\varphi'\|_{L^{\infty}(t_1,t_2)} \iint_{{{\Om}} \times (t_1,t_2)} |[u-w]_h| \ dz.
  \end{array}
 \end{equation*}
\end{lemma}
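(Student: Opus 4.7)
The plan is to test the Steklov-averaged formulation of \cref{main-1} against the spatial cut-off $\phi$ at each fixed time, multiply by $\varphi(t)$, and then integrate the resulting identity from $t_1$ to $t_2$ using the product rule and the fundamental theorem of calculus in time. Since $\phi \in C_c^{\infty}({\Om})$ lies in the admissible test function class $W_0^{1,\frac{p-\be}{1-\be}}({\Om}) \cap L^{\infty}({\Om})$ from \cref{very_weak_solution}, inserting $\phi$ gives
\begin{equation*}
\int_{{\Om}} \frac{d[u]_h}{dt}(x,t)\,\phi(x)\,dx = -\int_{{\Om}} \bigl\langle [\aa(x,t,\nabla u)]_h, \nabla\phi\bigr\rangle\,dx \qquad \text{for a.e. } t\in(0,T-h).
\end{equation*}

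I would then rewrite
\begin{equation*}
\avgs{[u-w]_h\varphi}{\phi}(t_2) - \avgs{[u-w]_h\varphi}{\phi}(t_1) = \frac{1}{\|\phi\|_{L^1}}\int_{t_1}^{t_2}\!\int_{{\Om}} \left(\frac{d[u-w]_h}{dt}\,\phi\,\varphi + [u-w]_h\,\phi\,\varphi'\right)dx\,dt
\end{equation*}
by the product rule in $t$. The $u$-portion of the first summand is replaced via the displayed weak identity, producing the $\aa$-term paired against $\nabla\phi$. For the $w$-portion, I invoke the hypothesis from \cref{imp_rmk} (specifically \cref{est3.3}) to interchange the classical derivative $\frac{d}{dt}[w]_h$ with the Steklov average of the distributional time derivative $\frac{dw}{dt}$ when paired with $\phi$ and tested against $\varphi$. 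Applying \cref{lemma_lihe_wang} to $\frac{dw}{dt} \in L^{\pbp}(0,T;W^{-1,\pbp}({\Om}))$ and using that ${\Om}$ is bounded, I represent $\langle \frac{dw}{dt}(\cdot,t),\phi\rangle = \int_{{\Om}} \langle \vec{w},\nabla\phi\rangle\,dx$ pointwise a.e.\ in $t$, which gives exactly the Steklov-averaged $\vec{w}$-pairing appearing in the target estimate.

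Taking absolute values term by term is then routine. The $\aa$-term is bounded using \cref{abounded} together with $|h_2|\le h_0^{p-1}$ (which follows from \cref{bound_b}), yielding the pointwise bound $|[\aa(x,t,\nabla u)]_h|\le C(\La_1,p)\bigl[(|\nabla u|+|h_0|)^{p-1}\bigr]_h$, after which $\|\nabla\phi\|_{L^{\infty}({\Om})}$ and $\|\varphi\|_{L^{\infty}(t_1,t_2)}$ are pulled out. The $\vec{w}$-term is bounded by pulling $\|\varphi\|_{L^{\infty}(t_1,t_2)}$ outside the time integral of $\bigl|[\int_{{\Om}} \langle \vec{w},\nabla\phi\rangle\,dx]_h\bigr|$. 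The last term is controlled by $\|\phi\|_{L^{\infty}({\Om})}\|\varphi'\|_{L^{\infty}(t_1,t_2)}$ times the space-time integral of $|[u-w]_h|$, giving the claimed inequality (with the factor $\|\phi\|_{L^1}$ absorbed into constants by rescaling $\phi$). The only delicate step --- and the sole reason \cref{est3.3} must be assumed --- is the identification of $\frac{d[w]_h}{dt}$ with $\bigl[\frac{dw}{dt}\bigr]_h$ when tested against the smooth pair $(\phi,\varphi)$; once this is available, the rest of the argument is purely mechanical bookkeeping.
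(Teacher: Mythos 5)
Your proposal is correct and follows essentially the same route as the paper: plug $\phi$ into the Steklov-averaged weak formulation, multiply by $\varphi$ and integrate in time using the product rule, invoke the assumption \cref{est3.3} from \cref{imp_rmk} to identify $\ddt{[w]_h}$ with $\bigl[\ddt{w}\bigr]_h$ under the pairing, apply \cref{lemma_lihe_wang} to write $\ddt{w}$ via $\vec{w}$, and bound the $\aa$-term with \cref{abounded}. The only stylistic difference is that the paper absorbs $\varphi$ into the test function from the start rather than multiplying by it afterwards, which is cosmetically distinct but mathematically identical.
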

\begin{proof}
%
%
Let us use $\phi(x)\varphi(t)$ as a test function in  \cref{very_weak_solution} solving \cref{main-1} to get 
 \begin{gather*}
  \int_{\Om\times\{t\}} \ddt{[u]_h} (x,t) \phi(x)\varphi(t) \ dx +  \iprod{[\aa(x,t,\nabla u)]_h}{\nabla \phi}(x,t)\varphi(t) \ dx =0.
 \end{gather*}
Integrating over $(t_1,t_2)$, we get
\begin{align*}
     \int_{t_1}^{t_2} \int_{\Om} \frac{d}{dt} \left( [u-w]_h \phi(x) \varphi(t) \right) \ dx \ dt =& -\int_{t_1}^{t_2} \int_{\Om} \iprod{[\aa(x,t,\nabla u)]_h}{\nabla \phi}(x,t) \varphi(t) \ dx \ dt \\
     & - \int_{t_1}^{t_2} \int_{\Om} \ddt{[w]_h}(x,t)  \phi(x) \ dx \varphi(t)  \ dt\\
     &+ \int_{t_1}^{t_2} \int_{\Om} [u-w]_h(x,t) \phi(x) \ddt{\varphi(t)} \ dx \ dt.
\end{align*}
 We estimate each of the terms as follows:
 \begin{equation*}
 \label{7.7.15_app}
  \begin{array}{rcl}
   |\avgs{{[u-w]_h\varphi}}{\phi} (t_2) - \avgs{{[u-w]_h\varphi}}{\phi}(t_1)|
   &\overset{\redlabel{lemma3.4.1a}{a}}{\leq}&  C(\La_1,p)\|\nabla \phi\|_{L^{\infty}{({{\Om}})}} \|\varphi\|_{L^{\infty}(t_1,t_2)} \iint_{{{\Om}} \times (t_1,t_2)}  [(|\nabla u|+|h_0|)^{p-1}]_h \ dz \\
   & & \quad + \|\varphi\|_{L^{\infty}(t_1,t_2)} \int_{t_1}^{t_2} \left|\left[ \iprod{\ddt{w}}{\phi}_{({W^{-1,\pbp}(\Om)},W_0^{1,\pbo}(\Om))}(t)\right]_h\right|  \ dt\\
   && \quad + \|\phi\|_{L^{\infty}{({{\Om}})}} \|\varphi'\|_{L^{\infty}(t_1,t_2)} \iint_{{{\Om}} \times (t_1,t_2)} |[u-w]_h| \ dz \\
      &\overset{\redlabel{lemma3.4.1b}{b}}{=}&  C(\La_1,p)\|\nabla \phi\|_{L^{\infty}{({{\Om}})}} \|\varphi\|_{L^{\infty}(t_1,t_2)} \iint_{{{\Om}} \times (t_1,t_2)} {[(|\nabla u|+|h_0|)^{p-1}]_h} \ dz \\
   & & \quad + \|\varphi\|_{L^{\infty}(t_1,t_2)} \int_{t_1}^{t_2}  \left|\left[ \int_{{{\Om}}} \iprod{\vec{w}}{\nabla \phi}\ dx \right]_h\right| \ dt \\
   && \quad + \|\phi\|_{L^{\infty}{({{\Om}})}} \|\varphi'\|_{L^{\infty}(t_1,t_2)} \iint_{{{\Om}} \times (t_1,t_2)} |[u-w]_h| \ dz.
  \end{array}
 \end{equation*}
 To obtain \redref{lemma3.4.1a}{a}, we made use of assumptions \cref{abounded} and \cref{est3.3} and to obtain \redref{lemma3.4.1b}{b}, we made use of \cref{lemma_lihe_wang} to obtain the representation $\ddt{w} = \dv \vec{w}$ for some $\vec{w} \in L^{\pbp}(0,T; L^{\pbp}(\Om,\RR^n))$. This completes the proof of the lemma.
\end{proof}

\section{A priori estimates for the homogeneous problem}
\label{section5}

In this section, let us fix an exponent $q$ such that 
\begin{equation}
\label{def_q_1}
1<p-\ve_0 < q \leq p-2\be < p-\be < p,
\end{equation}
where $\ve_0 = \ve_0(n,p, b_0,r_0)$ is the exponent described in \ref{not8} and $\be$ is a constant to be chosen sufficiently small later on.

Let us define the following new maximal function for any $\vartheta \in [1,\infty)$ and any $Q \subset \RR^{n+1}$:
\begin{equation}
\label{7.7.4}
\mmn{\vartheta} (f\lsb{\chi}{Q})(x,t) := \sup_{(a,b)\ni t} \sup_{B \ni x} \frac{1}{(b-a)} \int_{a}^{b}  \frac{1}{|B|^\frac{1}{\vartheta}} \|f\lsb{\chi}{Q}\|_{W^{-1,\vartheta}(B)} \lsb{\chi}{Q} \ dt,
\end{equation}
where $\|\cdot\|_{W^{-1,\vartheta}(B)}$ is defined in \cref{lemma_lihe_wang} and the supremum is taken over all parabolic cylinders of the form $B \times (a,b)$ containing the point $(x,t)$.

Given any $f \in L^{{\frac{q}{p-1}}}(0,T; W^{-1,{\frac{q}{p-1}}}(\Om))$, from \cref{lemma_lihe_wang}, we see that there exists a vector field $\vec{f} \in L^{{\frac{q}{p-1}}}(\Om_T,\RR^n)$ satisfying $\|f\|_{L^{{\frac{q}{p-1}}}(0,T; W^{-1,{\frac{q}{p-1}}}(\Om))} = \|\vec{f}\|_{L^{{\frac{q}{p-1}}}(\Om_T,\RR^n)}$. Using this, we get
\begin{equation}
\label{est5.3}
\begin{array}{rcl}
\left\|\mmn{\frac{q}{p-1}} (f\lsb{\chi}{\Om_T})\right\|_{L^{\pbp}(\RR^{n+1})} & \le & \left\|\sup_{(a,b)\ni t}  \sup_{B \ni x}\fint_a^b\frac{1}{|B|^{\frac{p-1}{q}}} \|\vec{f} \lsb{\chi}{\Om}\|_{L^{\frac{q}{p-1}}(B,\RR^n)}  \lsb{\chi}{[0,T]}\ ds\right\|_{L^{\pbp}(\RR^{n+1})}\\
& \le & \left\|\sup_{(a,b)\ni t}  \sup_{B \ni x}\left(\fint_a^b\frac{1}{|B|} \|\vec{f} \lsb{\chi}{\Om_T}\|_{L^{\frac{q}{p-1}}(B,\RR^n)}^\frac{q}{p-1}  \ ds\right)^\frac{p-1}{q}\right\|_{L^{\pbp}(\RR^{n+1})}\\
& = & \left\|\mm(|\vec{f}|^{\frac{q}{p-1}}\lsb{\chi}{\Om_T})^{\frac{p-1}{q}} \right\|_{L^{\pbp}(\RR^{n+1})} \\
& \overset{\redlabel{eq5.3.a}{a}}{\apprle} & \left\| \vec{f}\lsb{\chi}{\Om_T}\right\|_{L^{\pbp}(\Om_T,\RR^n)}\\
& \overset{\redlabel{eq5.3.b}{b}}{=} & \left\|f\right\|_{L^{\pbp}(0,T;W^{-1,\pbp}(\Om))}.
\end{array}
\end{equation}
To obtain \redref{eq5.3.a}{a}, we made use of the standard strong maximal function bound from \cref{max_bound} along with the observation $\frac{p-\be}{q} > 1$ and to obtain \redref{eq5.3.b}{b}, we made use of \cref{lemma_lihe_wang}.

Let $u$ and $w$ be as in \cref{main_theorem_1}, then define the following functions:
\begin{gather}
v(z) =  u(z) - w(z) \txt{and} \vh(z) = [u-w]_h(z), \txt{for} z \in \RR^{n+1}, \label{def_v_app}
\end{gather}
where $[\cdot]_h(z)$ denotes the usual Steklov average defined in \cref{stek1}. From \cref{main-1} and \cref{time_average},  we see that 
\[\vh \xrightarrow{h \searrow 0} v \txt{and}\  v(z) = 0 \ \ \text{for} \ z \in\pa_p (\Om \times (0,T)).\]
In subsequent calculations, we extend $\vh$ by zero outside $\Om \times [0,\infty)$.

\subsection{Construction of test function} 

Let us define the following function:
\begin{equation}
\label{def_g_app}
\begin{array}{ll}
g(z) & := \max \left\{\mm\lbr\left[ |\nabla v|^q + (|\nabla u|+|h_0|)^q  +|\nabla w|^q\right] \lsb{\chi}{\Om_T} \rbr^{\frac1{q}}(z), \mmn{\frac{q}{p-1}}(w' \lsb{\chi}{\Om_T})^{\frac{1}{p-1}}(z) \right\},
\end{array}	
\end{equation}
where $v$ is defined in \cref{def_v_app}, $u$ and $w$ are as in the hypothesis of \cref{main_theorem_1} and $q$ is from \cref{def_q_1}. We then have the following estimate for $g$:
\begin{equation}
\label{max_g_fde.3}
\begin{array}{rcl}
\| g\|_{L^{p-\be}(\RR^{n+1})}^{p-\be} & \overset{\text{\cref{max_bound}}}{\apprle} &  \|(|\nabla u|+|h_0|)\lsb{\chi}{\Om_T} \|_{L^{p-\be}(\RR^{n+1})}^{p-\be}+ \| |\nabla w|\lsb{\chi}{\Om_T}\|_{L^{p-\be}(\RR^{n+1})}^{p-\be} \\
&& \hfill+ \|\mmn{\frac{q}{p-1}} (w'\lsb{\chi}{\Om_T}) \|_{L^{\pbp}(\RR^{n+1})} \\
& \overset{\cref{est5.3}}{\apprle} & \|(|\nabla u| +|h_0|)\lsb{\chi}{\Om_T} \|_{L^{p-\be}(\RR^{n+1})}^{p-\be}+ \| |\nabla w|\lsb{\chi}{\Om_T}\|_{L^{p-\be}(\RR^{n+1})}^{p-\be} \\
&& \hfill + \|w'\|_{L^{\pbp}(0,T; W^{-1,\pbp}(\Om))}^{\pbp}.
\end{array}
\end{equation}

For a fixed $\la >0$, let us define the \emph{good set} by 
\begin{equation*}
\label{elambda}
\elam := \{ (x,t) \in \RR^{n+1} : g(x,t) \leq \la\},
\end{equation*}
and apply \cref{whitney_decomposition} and \cref{partition_unity} with $\mathbb{E} = \elam^c$ to get a covering of $\elam^c$.  Recall that the intrinsic scaling is of the form 
\begin{equation}
\label{intrinsic_scaling}
\gamma:=\la^{2-p} \txt{and} Q_j(x,t) = B_{r_j}(x) \times (t_j - \gamma r_j^2, t_j + \gamma r_j^2). 
\end{equation}

Now we define the following Lipschitz extension function as follows:
\begin{equation}
\label{lipschitz_extension}
\vlh(z) := v_h(z) - \sum_i \Psi_i(z) (v_h(z) - v_h^i),
\end{equation}
where
\begin{equation}
\label{lipschitz_extension_one}
v_h^i := \left\{ \begin{array}{ll}
                  \frac1{\|\Psi_i\|_{L^1(\frac34Q_i)}}\iint_{\frac34Q_i} v_h(z) \Psi_i (z) \lsb{\chi}{[0,T]} \ dz & \text{if} \ \frac34Q_i \subset \Om \times [0,\infty), \\
                   0 & \text{else}.
                  \end{array}\right.
\end{equation}
Note that $u-w = 0$ on $\pa \Om \times [0,T]$, which enables us to  switch between $\lsb{\chi}{[0,T]}$ and $\lsb{\chi}{\Om_T}$ without affecting the calculations.

Even though $v_h(x,0) \neq 0$ in general, nevertheless the following observations regarding the initial boundary values hold:
\begin{itemize}
\item The initial condition $(u-w)(x,0) = 0$ is to be understood in the sense $[u-w]_h(\cdot,0) \xrightarrow{h \searrow 0} 0\  \text{in}\  L^2 (\Om).$
\item For $(x,0) \in \elam$, we have $\vlh(x,0) =v_h(x,0)$.
\item For $(x,0) \notin \elam$, we have $\vlh(x,0) = 0$ by using \cref{lipschitz_extension_one}.
\item From \cref{time_average}, we see that $\vlh(z) \xrightarrow{h \searrow 0} \vl(z)$ almost everywhere. 
\end{itemize}

For the rest of this section, let us denote \[2\rho := \diam(\Om).\]

\subsection{Bounds of \texorpdfstring{$\vlh$}.}
Let us now prove some estimates on the test function constructed in \cref{lipschitz_extension}.

\begin{lemma}
\label{lemma3.7}
 For any $z \in \elam^c$, we have
 \begin{equation}
  \label{bound_v_l_h}
  |\vlh(z)| \apprle_{(n,p,q,\Lambda_1,b_0,r_0)} \rho \la.
 \end{equation}
\end{lemma}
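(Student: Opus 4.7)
The plan is to reduce the pointwise estimate to bounds on the local averages $v_h^j$, and then to bound each such average by $Cr_j\la$ using the parabolic Poincar\'e inequality, the generalised Sobolev--Poincar\'e inequality exploiting uniform $p$-thickness of $\Om^c$, and \cref{lemma_crucial_2_app} to convert time-oscillations into data bounds. Since $\frac{3}{4}B_j \subset \Om$ will force $r_j \leq C\rho$, the bound $|v_h^j|\leq Cr_j\la$ then gives the desired $|v_h^j|\leq C\rho\la$.

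First, for $z \in \elam^c$, by \descref{W3} I can pick $i$ with $z \in \frac{1}{2}Q_i$, and by \descref{W14} one has $\sum_{j \in A_i} \Psi_j(z) = 1$ there. Substituting into \cref{lipschitz_extension} collapses the $v_h(z)$ terms and yields $\vlh(z) = \sum_{j\in A_i} \Psi_j(z) v_h^j$. Since $\#A_i \leq c(n)$ by \descref{W8} and $\|\Psi_j\|_\infty \leq C$ by \descref{W13}, it suffices to show $|v_h^j|\leq C\rho\la$ for each $j \in A_i$. If $\frac{3}{4}Q_j \not\subset \Om\times[0,\infty)$ then $v_h^j = 0$ directly by \cref{lipschitz_extension_one}, so the only case that requires work is $\frac{3}{4}Q_j \subset \Om\times[0,\infty)$; here $r_j \leq C\rho$ follows from $\frac{3}{4}B_j \subset \Om$ and $\diam(\Om)=2\rho$. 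Using \descref{W12}--\descref{W13}, one gets the elementary bound $|v_h^j| \leq C\fiint_{\frac{3}{4}Q_j}|v_h|\lsb{\chi}{[0,T]}\,dz$, so the task reduces to bounding this average by $Cr_j\la$.

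The next step is to use \descref{W4} to pick $\tilde z \in 16Q_j \cap \elam$; the pointwise inequality $g(\tilde z) \leq \la$ combined with \cref{def_g_app} yields $\fiint_{16Q_j}|\nabla v|^q\lsb{\chi}{\Om_T}\,dz \leq C\la^q$ together with an analogous bound on the negative Sobolev maximal function of $w'$ at the same scale. The time-localised parabolic Poincar\'e inequality \cref{lemma_crucial_1} then reduces the $v_h$-average to this gradient quantity plus a time-oscillation term, which is estimated by \cref{lemma_crucial_2_app}: the representation $w_t = \dv\vec{w}$ from \cref{lemma_lihe_wang} rewrites the oscillation as $\int\iprod{\vec{w}}{\nabla \phi}\,dx$, pointwise controlled by the negative Sobolev maximal function entering $g$, while the residual $|\nabla u|^{p-1}$-type term is controlled directly by $g(\tilde z)\leq \la$. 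To anchor the spatial average, which Poincar\'e leaves determined only up to an additive constant, I would invoke the generalised Sobolev--Poincar\'e inequality \cref{sobolev-poincare}: since $v = u-w$ vanishes on $\Om^c$ in the spatial direction and $p$-thickness self-improves to $q$-thickness by \cref{self_improv_cap} (using $q>p-\ve_0$), the zero set $N(v_h)\cap \frac{3}{4}B_j$ has $q$-capacity comparable to $\cpt(\overline{\frac{3}{4}B_j},\, \frac{3}{2}B_j)$ whenever $\frac{3}{4}B_j$ meets $\Om^c$, converting the $v_h - \bar v_h$-bound into one on $|v_h^j|$ itself.

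The principal obstacle in this plan is the time direction. The spatial Poincar\'e step is essentially classical once the capacity-based inequality is invoked; by contrast, $u_t$ is only distributional and $w_t$ only negative-Sobolev, so the oscillation emerging from \cref{lemma_crucial_1} does not fit the standard framework. This is precisely why $g$ must include the negative-Sobolev maximal function $\mmn{q/(p-1)}$ of $w'$ (whose $L^{(p-\be)/(p-1)}$-boundedness is \cref{max_neg_bound}), and why \cref{lemma_crucial_2_app} is used to trade the oscillation for an $L^1$-type expression involving $|\nabla u|^{p-1}$, $|h_0|^{p-1}$, and $|\vec{w}|$, all of which fall under the single pointwise control $g(\tilde z)\leq \la$.
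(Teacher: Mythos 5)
Your reduction to bounding $|v_h^j|$, the use of \descref{W14} and \descref{W12}--\descref{W13} to collapse the extension, and the recognition that \cref{lemma_crucial_1}, \cref{lemma_crucial_2_app}, and the negative-Sobolev component of $g$ are the right tools for the time direction all match the paper's proof. However, there is a genuine gap in the step you call ``anchoring the spatial average.'' The only case that requires work is $\frac{3}{4}Q_j \subset \Om\times[0,\infty)$, so in particular $\frac{3}{4}B_j \subset \Om$; but your proposed anchoring via \cref{sobolev-poincare} relies explicitly on $\frac{3}{4}B_j$ meeting $\Om^c$, which is precisely excluded here. On $\frac{3}{4}Q_j$ itself, the function $v_h$ has no identifiable zero set in space and the cylinder may sit entirely in $\{t>0\}$, so \cref{lemma_crucial_1} only controls the oscillation around a weighted mean, not the mean itself, and \cref{sobolev-poincare} with trivial zero set is vacuous. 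Thus the claim $\fiint_{\frac34 Q_j}|v_h|\lsb{\chi}{[0,T]}\,dz \apprle r_j\la$ is not established.

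The paper's fix, which your proposal lacks, is a telescoping enlargement: it introduces $k_0$ as in \cref{def_k_0} and writes $|v_h^j|$ as a telescoping sum $\sum_{m} S_1^m + S_2$ over nested cylinders $2^mQ_j$. Each difference $S_1^m$ is an oscillation over a cylinder still inside $\Om\times[0,\infty)$, so \cref{lemma_crucial_1} plus \cref{lemma_crucial_2_app} give $S_1^m \apprle 2^{m+1}r_j\la$. The final term $S_2$ lives on a cylinder $2^{k_0-1}Q_j$ that, by construction, escapes $\Om\times[0,\infty)$; only there is the anchoring available -- either the lateral boundary provides a genuine zero set for a Poincar\'e-type bound, or the cylinder crosses $\{t=0\}$ and a cut-off $\xi$ supported in $\{t<0\}$ gives $\avgs{v_h\lsb{\chi}{[0,T]}}{\xi}=0$, letting \cref{lemma_crucial_1} close the loop. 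Summing the telescope and using $2^{k_0}r_j\apprle\rho$ yields $\rho\la$ (not the sharper $r_j\la$ you target, which isn't available when $\frac34 Q_j$ is well inside $\Om\times[0,\infty)$). Without the telescope, your argument has no mechanism to anchor the mean value, and the estimate does not close.
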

\begin{proof}
By construction of the extension in \cref{lipschitz_extension},  for $z \in \elam^c$, $\vlh(z) = \sum_j \Psi_j(z) v_h^j$ with $v_h^j = 0$ whenever $\frac34Q_j \nsubseteq \Om \times [0,\infty)$. Making use of \descref{W12}, that \cref{bound_v_l_h} follows if there holds
\begin{equation}
\label{claim_bound}
|v_h^j| \apprle_{(n,p,q,\Lambda_1,b_0,r_0)}\rho\la. 
\end{equation}
Note that we only have to consider the case $\frac34Q_j \subset \Om \times [0,\infty)$, which automatically implies $\frac34r_{j} \leq \rho$. Thus we proceed as follows:
 let us define the following constant $k_0 := \min\{ \tk_1,\tk_2\}$ where $\tk_1$ and $\tk_2$ satisfy
   \begin{equation}\label{def_k_0}
   \begin{array}{c}
2^{\tk_1 - 1} r_j < \rho \leq 2^{\tk_1} r_j, \\ 
2^{\tk_2 -1} Q_j \subset  \Om \times [0,\infty)  \ \text{  but  } \ 2^{\tk_2}Q_j \nsubseteq \Om \times [0,\infty).
\end{array}
   \end{equation} 
   The idea is to gradually enlarge $\frac34Q_j$ until it goes outside $\Om \times [0,\infty)$. As a consequence, we consider the following two subcases, first case where $2^{\tk_1}Q_j$ crosses the lateral boundary first, and second case when $2^{\tk_2} Q_j$ crosses the initial boundary first. 
   Note that $k_0$ denotes the first scaling exponent under which either  $2^{k_0}r_j \geq  \rho$ occurs  or $2^{k_0}Q_j$ goes outside $\Om \times [0,\infty)$. 

   Since we only consider the case $\frac34Q_i \subset \Om \times [0,\infty)$, using triangle inequality, we get
   \begin{equation}
   \label{est_2_1} 
   \begin{array}{rcl}
    |v_h^j| & {\apprle} & \sum_{m=0}^{k_0 -2} \lbr[[] \avgs{{[u-w]_h}\lsb{\chi}{Q_{\rho,s}(\mfz)}}{2^m Q_j} - \avgs{{[u-w]_h\lsb{\chi}{Q_{\rho,s}(\mfz)}}}{2^{m+1}Q_j}\rbr[]] + \avgs{{[u-w]_h\lsb{\chi}{Q_{\rho,s}(\mfz)}}}{2^{k_0-1}Q_j} \\ 
    &=: & \sum_{m=0}^{k_0-2}S_1^m + S_2.
   \end{array}
  \end{equation}
    We shall now estimate $S_1^m$ and $S_2$  as follows:
  \begin{description}[leftmargin=*]
  \item[Estimate of $S_1^m$:] Note that  $2^{m+1}Q_j \subset \Om\times [0,\infty)$. Thus applying \cref{lemma_crucial_1} for any $\mu \in C_c^{\infty}(B_{2^{m+1}r_j}(x_j))$ with $| \mu(x)| \leq \frac{C(n)}{(2^{m+1}r_j)^{n}}$ and $|\nabla \mu(x)| \leq \frac{C(n)}{(2^{m+1}r_j)^{n+1}}$, we get
   \begin{equation}
   \label{S_1_1.1}
    \begin{array}{rcl}
     S_1^m   
     & \apprle &  (2^{m+1}r_j) \lbr \fiint_{2^{m+1}Q_j} |\nabla [u-w]_h|^q \lsb{\chi}{\Om_T}\ dz\rbr^{\frac1{q}} \\
     & & \quad + (2^{m+1}r_j) \lbr \sup_{t_1,t_2 \in {2^{m+1}I_j\cap [0,T]}} \left|\frac{\avgs{{[u-w]_h}}{\mu}(t_2)-\avgs{{[u-w]_h}}{\mu}(t_1)}{2^{m+1}r_j} \right|^q\rbr^{\frac1{q}}\\
     & \overset{\text{\descref{W4}}}{\apprle} & (2^{m+1}r_j) \la + (2^{m+1}r_j) \lbr \sup_{t_1,t_2 \in {2^{m+1}I_j\cap [0,T]}} \left|\frac{\avgs{{[u-w]_h}}{\mu}(t_2)-\avgs{{[u-w]_h}}{\mu}(t_1)}{2^{m+1}r_j} \right|^q\rbr^{\frac1{q}}.
    \end{array}
   \end{equation}
   
      Since $B_{2^{m+1}r_j}(x_j) \subset \Om$, we can apply \cref{lemma_crucial_2_app} with the test function $\phi(x)=\mu(x)$ and $\varphi(t) = 1$, which implies that for any $t_1,t_2 \in \frac34 I_j\cap[0,T]$, 
      \begin{equation}
      \label{est5.15}
      \begin{array}{r@{}c@{}l}
      \hspace*{-0.5cm}|\avgs{{[u-w]_h}}{\mu}(t_2)-\avgs{{[u-w]_h}}{\mu}(t_1)| &\  {\apprle} & \   \|\nabla \mu \|_{L^{\infty}} \iint_{2^{m+1}Q_j}   \left[(|\nabla u|+|h_0|)^{p-1}\right]_h   \lsb{\chi}{[0,T]}\ dz \\
      && \ \ \ + \underbrace{\int_{t_j - \gamma (2^{m+1} r_j)^2}^{t_j + \gamma (2^{m+1} r_j)^2} \left|\left[ \int_{B_{2^{m+1}r_j}(x_j)} \iprod {\vec{w}(x,t)}{ \nabla \mu(x)} \ dx \right]_h \right|\ dt}_{J}.
    \end{array}
      \end{equation}
The first term on the right hand side of \cref{est5.15} can be controlled using \descref{W4} and \cref{intrinsic_scaling} as
\begin{equation}
\label{est5.16}
\|\nabla \mu \|_{L^{\infty}} \iint_{2^{m+1}Q_j}   \left[(|\nabla u|+|h_0|)^{p-1}\right]_h   \lsb{\chi}{[0,T]}\ dz \apprle 2^{m+1} r_j \la.
\end{equation}
We now estimate $J$ as follows:
\begin{equation}
\label{S_1_2.1_app}
 \begin{array}{ll}
  J & \overset{\redlabel{7.8.7.a}{a}}{\apprle} \|\nabla \mu\|_{L^{\infty}} |B_{2^{m+1}r_j}|^{\frac{q-p+1}{q}} \int_{t_j - \gamma (2^{m+1} r_j)^2}^{t_j + \gamma (2^{m+1} r_j)^2} \left[ \sum_{i=1}^n\lbr\int_{B_{2^{m+1}r_j}(x_j)} |\vec{w}_i(x,t)|^{\frac{q}{p-1}} \ dx \rbr^{\frac{p-1}{q}}\right]_h\ dt \\
    & \overset{\redlabel{7.8.7.b}{b}}{\apprle}  \|\nabla \mu\|_{L^{\infty}} |B_{2^{m+1}r_j}|^{\frac{q-p+1}{q}} \int_{t_j - \gamma (2^{m+1} r_j)^2}^{t_j + \gamma (2^{m+1} r_j)^2} \left[ \left\| \ddt{\vec{w}(\cdot,t)}\right\|_{W^{-1,\frac{q}{p-1}}(B_{2^{m+1}r_j})}\right]_h\ dt \\
  & \overset{\redlabel{7.8.7.c}{c}}{\apprle}  \gamma 2^{m+1} r_j  \fint_{2^{m+2}I_j} \frac{1}{|B_{2^{m+1}r_j}|^{\frac{p-1}{q}}} \left\| \ddt{\vec{w}(\cdot,t)}\right\|_{W^{-1,\frac{q}{p-1}}(B_{2^{m+1}r_j})}\ dt \\
  & \overset{\redlabel{7.8.7.d}{d}}{\apprle} 2^{m+1} r_j \gamma \la^{p-1} = 2^{m+1} r_j \la.
 \end{array}
\end{equation}
To obtain \redref{7.8.7.a}{a}, we made use of H\"older's inequality, to obtain \redref{7.8.7.b}{b}, we used \cref{lemma_lihe_wang}, to obtain \redref{7.8.7.c}{c}, we used \cref{intrinsic_scaling} along with \cref{time_average} and finally to obtain \redref{7.8.7.d}{d}, we made use of \descref{W4}.

   Thus combining \cref{S_1_2.1_app} and \cref{est5.16} into \cref{est5.15} and finally making use of \cref{S_1_1.1} along with \descref{W4} and $\gamma := \la^{2-p}$, we get
\begin{equation}
 \label{bound_S_1_m}
 S_1^m \apprle 2^{m+1} r_j \lbr \la^q + (\la^{p-1}\gamma)^q   \rbr^{\frac{1}{q}} \apprle 2^{m+1}r_j \la.
\end{equation}

  \item[Estimate of $S_2$:] For this term, we know that $2^{k_0-1}Q_j \notin \Om \times [0,\infty)$, which implies $2^{k_0-1}Q_j$ crosses either the lateral boundary $\pa \Om \times [0,\infty)$ or the initial boundary $\Om \times \{t=0\}$. We will consider both the cases separately and estimate $S_2$ as follows:

  \hspace{\parindent}\emph{In the case $2^{k_0-1}Q_j$ crosses the lateral boundary $\pa \Om \times [0,\infty)$ first}, we can directly apply Poincar\'e's inequality to obtain 
  \begin{equation}
\label{second_case.1}
          \fiint_{2^{k_0-1}Q_j} [u-w]_h\lsb{\chi}{\Om_T} \ dz  \apprle ( 2^{k_0} r_j)\lbr \fiint_{ 2^{k_0}Q_j} |\nabla [u-w]_h|^q \lsb{\chi}{\Om_T} \ dz \rbr^{1/q} 
           \overset{\redlabel{4.20.a}{a}}{\apprle} \rho \la.
\end{equation}
   To obtain \redref{4.20.a}{a}, we made use of \descref{W4} along with $2^{k_0-2}r_j \leq \rho$ given by \cref{def_k_0}.

 \hspace{\parindent} \emph{In the case $2^{k_0} Q_j$ crosses the initial boundary $\Om \times \{t=0\}$ first}, by enlarging the cylinder to $2^{k_1+1}Q_j$, we can find a cut-off function $\xi(x,t)$ such that 
 \[
 \spt\xi(x,t) \subset 2^{k_1+1}Q_j \cap \RR^n \times (-\infty,0) \txt{and} \|\xi \|_{L^{\infty}} \apprle_n \frac{\|\xi\|_{L^1}}{|2^{k_1+1}Q_j|},
 \]
by which and along with the fact that $v_h(z) \lsb{\chi}{[0,T]} = 0$ on $\RR^n \times (-\infty,0)$, we get $\avgs{v_h\lsb{\chi}{[0,T]}}{\xi}=0$. Thus applying \cref{lemma_crucial_1}, we get
\begin{equation}
\label{second_case}
     \begin{array}{rcl}
          \fiint_{2^{k_0+1}Q_j} |v_h (z)|\lsb{\chi}{[0,T]} \ dz\  & = &\fiint_{ 2^{k_0+1}Q_j} \abs{v_h(z)\lsb{\chi}{[0,T]} - \avgs{v_h\lsb{\chi}{[0,T]}}{\xi}} \ dz\\ 
     & \apprle&  (2^{k_0+1}r_j) \lbr \fiint_{2^{k_0+1}Q_j} |[\nabla (u-w)]_h|^q  \lsb{\chi}{[0,T]}\ dz  \rbr^{\frac1{q}} \\
     &  & + (2^{k_0+1}r_j)\lbr  \sup_{t_1,t_2 \in {2^{k_0+1}I_j\cap [0,T]}} \left|\frac{\avgs{{[u-w]_h }}{\mu}(t_2)-\avgs{{[u-w]_h }}{\mu}(t_1)}{2^{k_0+1}r_j} \right|^q\rbr^{\frac1{q}}\\
     & \overset{\redlabel{4.21.a}{a}}{\apprle}& 2^{k_0+1}r_j \la \overset{\redlabel{4.21.b}{b}}{\apprle} \rho \la.
     \end{array}
\end{equation}
To bound the first term in \redref{4.21.a}{a}, we made use of \descref{W1} along with \descref{W4} and to bound the second term in \redref{4.21.a}{a}, we proceed exactly as in \cref{est5.15} and finally to 
 to obtain \redref{4.21.b}{b}, we made use of \cref{def_k_0}. 
 
 Combining \cref{second_case.1} and \cref{second_case}, we get
\begin{equation}
\label{bound_S_2}
S_2 \apprle \rho \la. 
\end{equation}
\end{description}
Thus combining \cref{bound_S_1_m} and \cref{bound_S_2} into \cref{est_2_1}, we get
\begin{equation*}
|v_h^j| \leq \sum_{m=0}^{k_0-2} S_1^m + S_2  \apprle  \la \lbr \sum_{m=0}^{k_0-2} 2^{m+1}r_j + \rho\rbr \overset{\cref{def_k_0}}{\apprle} \rho \la.
\end{equation*}
This completes the proof of the Lemma. 
\end{proof}

Now we prove a sharper estimate. 
\begin{lemma}
\label{lemma3.8}
 For any $j \in A_i$, the following improved estimate holds:
 \begin{equation*}
  \label{3.26}
  |v_h^i - v_h^j| \apprle_{(n,p,q,\Lambda_1,b_0,r_0)} \min\{\rho, r_i\} \la.
 \end{equation*}
\end{lemma}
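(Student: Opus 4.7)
The bound $|v_h^i - v_h^j| \apprle \rho \la$ follows immediately from the triangle inequality and \cref{lemma3.7}, so the entire task is to establish the refinement $|v_h^i - v_h^j| \apprle r_i \la$. Property \descref{W5} gives $r_j \in [r_i/2, 2 r_i]$ and \descref{W11} gives $\tfrac34 Q_j \subset 4Q_i$, so $4Q_i$ is the natural common enlargement on which to compare the two averages. The plan is to introduce an intermediate constant $c \in \RR$ and split $|v_h^i - v_h^j| \leq |v_h^i - c| + |v_h^j - c|$. Whenever $\tfrac34 Q_k \subset \Om \times [0,\infty)$, the $L^\infty$/$L^1$ bounds on $\Psi_k$ from \descref{W13} and \descref{W10} combined with the definition \cref{lipschitz_extension_one} yield $|v_h^k - c| \apprle \fiint_{4Q_i} |v_h - c| \lsb{\chi}{[0,T]} \ dz$, while otherwise $v_h^k = 0$ and $|v_h^k - c| = |c|$.

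In the interior case $4Q_i \subset \Om \times [0,\infty)$, I will pick $\mu \in C_c^{\infty}(B_{4r_i}(x_i))$ satisfying the normalization of \cref{lemma_crucial_1} and set $c := \avgs{v_h \lsb{\chi}{[0,T]}}{\mu}(t_0)$ for any fixed $t_0 \in 4I_i$. Applying \cref{lemma_crucial_1} on the cylinder $4Q_i$ will then bound $\fiint_{4Q_i} |v_h - c| \lsb{\chi}{[0,T]} \ dz$ by a gradient term plus a time-oscillation term. The gradient term is controlled by $r_i \la$ via \descref{W4} and the maximal-function term in the definition \cref{def_g_app} of $g$. The time-oscillation term will be handled by \cref{lemma_crucial_2_app} with $\phi = \mu$ and $\varphi \equiv 1$, reproducing the $S_1^m$ computation of \cref{S_1_2.1_app}, and is again controlled by $r_i \la$.

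In the boundary case $4Q_i \not\subset \Om \times [0,\infty)$, the cylinder must cross either $\{t=0\}$ or $\pa \Om \times [0,\infty)$. If it crosses the initial boundary, I will enlarge slightly (to $8 Q_i$ say) and choose a weight $\xi$ supported in $8Q_i \cap \RR^n \times (-\infty, 0)$ with $\|\xi\|_{L^\infty} \apprle_n |8Q_i|^{-1} \|\xi\|_{L^1}$; since $v_h \lsb{\chi}{[0,T]}$ vanishes for $t<0$, the corresponding $\xi$-average is zero, so I can take $c = 0$. Applying \cref{lemma_crucial_1} exactly as in the $S_2$ step \cref{second_case} of the previous proof yields $\fiint_{8Q_i} |v_h| \lsb{\chi}{[0,T]} \ dz \apprle r_i \la$, which simultaneously controls both $|v_h^k - c|$ terms whether $v_h^k$ is a genuine average or vanishes. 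The lateral-boundary subcase is handled identically, using the vanishing lateral trace of $v$ and the $p$-thickness of $\Om^c$ to apply a zero-mean Poincar\'e inequality on spatial slices.

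The main technical obstacle I anticipate is in the initial-boundary subcase: the time-oscillation piece coming from \cref{lemma_crucial_1} must still be estimated through \cref{lemma_crucial_2_app}, which requires the spatial test function to be compactly supported in $\Om$. This is immediate when $4Q_i$ touches only $\{t=0\}$, but in the mixed scenario where $4Q_i$ simultaneously meets the lateral boundary $\pa \Om$, a two-step comparison---first flattening against the zero initial extension, then against the zero lateral trace---will be required, and bookkeeping of the cut-off normalizations across this split is the most delicate point.
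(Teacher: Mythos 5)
Your proposal is correct and follows essentially the same route as the paper: reduce to $r_i \leq \rho$, compare $v_h^i$ and $v_h^j$ through a common enlarged cylinder, and handle the interior and boundary cases respectively via \cref{lemma_crucial_1} (weighted parabolic Poincar\'e with a time-oscillation correction handled through \cref{lemma_crucial_2_app}) and a zero-average trick at the boundary. The one place where your plan over-complicates matters is the ``main technical obstacle'' you anticipate in the mixed initial-plus-lateral scenario: this is a non-issue because whenever $\frac34Q_i$ meets $\pa\Om$, the paper does not invoke \cref{lemma_crucial_1} at all, but instead applies the slice-wise zero-trace Sobolev--Poincar\'e inequality \cref{sobolev-poincare} together with \descref{W4} to obtain $\fiint_{2Q_i}\left|[u-w]_h\lsb{\chi}{[0,T]}/r_i\right|^q dz \apprle \fiint_{2Q_i}\left|\nabla[u-w]_h\right|^q\lsb{\chi}{[0,T]}\,dz \apprle \la^q$ --- an estimate with no time-oscillation term, no test function in $C_c^\infty(\Om)$, and hence no need for \cref{lemma_crucial_2_app}, so it absorbs any simultaneous crossing of $\{t=0\}$ and the two-step flattening you sketch is unnecessary.
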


\begin{proof}
We only have to consider the case $r_i \leq \rho$ because if $\rho \leq r_i$, we can directly use \cref{lemma3.7} to get the required conclusion.
\begin{description}
\item[We first consider the case that $\frac34Q_i$  intersect the initial or lateral boundary.]

     \emph{Initial Boundary Case $\frac34Q_i \subset \Om \times \RR$:} Without loss of generality, we can assume $2Q_i \subset \Om \times \RR$. We now pick a 
     \[\xi(x,t)\in C_c^{\infty}(\RR^{n+1}) \txt{with} \spt(\xi) \subset 2B_i \times (-\infty,0).\] We extend  $u-w = 0$ on $2B_i \times (-\infty,0)$, which implies $\avgs{{[u-w]_h\lsb{\chi}{[0,T]}}}{\xi}=0$. Thus we get
     \begin{equation*}\label{lemma3.8.0}
          \begin{array}{rcl}
               |v_h^i| & \apprle & \fiint_{2Q_i} \abs{[u-w]_h\lsb{\chi}{[0,T]}- \avgs{{[u-w]_h\lsb{\chi}{[0,T]}}}{\xi}} \ dz \\
               & \overset{\redlabel{3.80.a}{a}}{\apprle} & r_i \lbr \fiint_{2Q_i} |\nabla v_h|^q \lsb{\chi}{[0,T]}\ dz + \sup_{t_1,t_2 \in 2I_i\cap [0,T]} \left| \frac{\avgs{{v_h\lsb{\chi}{[0,T]}}}{\mu}(t_2)-\avgs{{v_h\lsb{\chi}{[0,T]}}}{\mu}(t_1)}{r_i}\right|^q \rbr^{\frac{1}{q}} \\
& \overset{\redlabel{3.80.b}{b}}{\apprle} & r_i \la. 
          \end{array}
     \end{equation*}
To obtain \redref{3.80.a}{a}, we made use of \cref{lemma_crucial_1} and to obtain \redref{3.80.b}{b}, we proceed similarly to how \cref{est5.15} was estimated.

     \emph{Lateral Boundary Case $\frac34Q_i \cap  (\Om\times \RR)^c \neq \emptyset$:} In this case, using  \cref{sobolev-poincare} along with \descref{W4}, we get
     \begin{equation}
\label{lemma3.8.1}
 \begin{array}{ll}
  |v_h^i| & \apprle r_i \lbr \fiint_{2Q_i} \left| \frac{[u-w]_h \lsb{\chi}{[0,T]}}{r_i} \right|^q \ dz \rbr^{\frac{1}{q}} \apprle r_i \lbr \fiint_{2Q_i} \left| {\nabla [u-w]_h} \right|^q\lsb{\chi}{[0,T]} \ dz \rbr^{\frac{1}{q}} \apprle r_i \la. 
 \end{array}
\end{equation}
From  \cref{lemma3.8.1} and \cref{claim_bound}, we see that the lemma is proved  provided either $v_h^j=0$ or $v_h^i=0$.

\item[Now let us consider the case $\frac34Q_i \subset \Om\times [0,\infty)$.]

 From the definition of $v_h^i$ in \cref{lipschitz_extension_one}, triangle inequality and \descref{W10}, we get
\begin{equation}
 \label{3.29}
 \begin{array}{rcl}
 |v_h^i - v_h^j| 
 & \apprle&  \frac{|\frac34Q_i|}{|\frac34Q_i \cap \frac34Q_j|}\fiint_{\frac34Q_i} \abs{v_h(z)\lsb{\chi}{[0,T]} - v_h^i} \ dz + \frac{|\frac34Q_j|}{|\frac34Q_i \cap \frac34Q_j|}\fiint_{ \frac34Q_j}\abs{v_h(z)\lsb{\chi}{[0,T]} - v_h^j} \ dz \\
 & \apprle &\fiint_{\frac34Q_i} \abs{v_h(z)\lsb{\chi}{[0,T]} - v_h^i} \ dz + \fiint_{ \frac34Q_j} \abs{v_h(z)\lsb{\chi}{[0,T]} - v_h^j} \ dz.
 \end{array}
\end{equation}

We  now apply H\"older's inequality followed by \cref{lemma_crucial_1} with $\mu \in C_c^{\infty}\lbr\frac34B_i\rbr$ satisfying $|\mu(x)| \apprle \frac{1}{r_i^n}$ and $|\nabla \mu(x)| \apprle \frac{1}{r_i^{n+1}}$ to estimate \cref{3.29} as follows:
\begin{equation}
 \label{3.30}
 \begin{array}{rcl}
 \fiint_{\frac34Q_i} | v_h(z)\lsb{\chi}{[0,T]} - v_h^i| \ dz 
 & \apprle &  r_i \lbr \fiint_{\frac34Q_i} |\nabla v_h|^q \lsb{\chi}{[0,T]}\ dz \rbr^{\frac{1}{q}} \\
 && +\  r_i \lbr\sup_{t_1,t_2 \in \frac34I_i\cap[0,T]} \left|\frac{\avgs{{[u-w]_h}}{\mu}(t_2) - \avgs{{[u-w]_h}}{\mu}(t_1)}{r_i} \right|^q \rbr^{\frac{1}{q}}.
 \end{array}
\end{equation}
The first term on the right of \cref{3.30} can be controlled using \descref{W4} and the second term can be controlled similarly as \cref{est5.15}. Thus we get
\[
\fiint_{\frac34Q_i} | v_h(z)\lsb{\chi}{[0,T]} - v_h^i| \ dz\apprle r_i \la.
\]
\end{description}
This completes the proof of the Lemma.
\end{proof}

\subsection{Bounds on derivatives of \texorpdfstring{$\vlh$}.}

Using \cref{lemma3.7} and \cref{lemma3.8}, in the same spirit of \cite{adimurthi2018gradient}, we can obtain the following estimates:
\begin{lemma}
\label{lemma3.9}
 Given any  $z \in \elam^c$, we have $z \in \frac34Q_i$ for some $i \in \NN$. Then there holds
 \begin{equation}
  \label{3.34}
  |\nabla \vlh(z)| \leq C_{(n,p,q,\Lambda_1,b_0,r_0)} \la.
 \end{equation}
\end{lemma}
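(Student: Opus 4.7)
The plan is to rewrite $\vlh$ on $\elam^c$ in a form that cancels out the possibly large function $v_h$, so that only the piecewise-constant surrogates $v_h^i$ remain, and then to apply \cref{lemma3.8} locally.

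First I would use the partition of unity from \cref{partition_unity} to simplify the expression \cref{lipschitz_extension}. Since $z \in \elam^c \subset \bigcup_j \tfrac12 Q_j$ and in particular $z \in \tfrac34 Q_i$ for some $i \in \NN$, property \descref{W14} gives $\sum_{j \in A_i} \Psi_j(z) = 1$, and any $\Psi_j$ with $j \notin A_i$ vanishes near $z$ by \descref{W12}. Thus
\begin{equation*}
\vlh(z) = v_h(z)\Bigl(1 - \sum_{j \in A_i} \Psi_j(z)\Bigr) + \sum_{j \in A_i} \Psi_j(z)\, v_h^j = \sum_{j \in A_i} \Psi_j(z)\, v_h^j.
\end{equation*}

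Next, since the sum $\sum_{j \in A_i}\Psi_j \equiv 1$ on $\tfrac34 Q_i$, differentiation yields $\sum_{j \in A_i} \nabla \Psi_j(z) = 0$, so we can insert the constant $v_h^i$ for free:
\begin{equation*}
\nabla \vlh(z) = \sum_{j \in A_i} \nabla \Psi_j(z)\,(v_h^j - v_h^i).
\end{equation*}
At this point the estimate is essentially bookkeeping: \descref{W13} gives $|\nabla \Psi_j(z)| \leq C(n)/r_j$, \descref{W5} gives $r_j \sim r_i$ for $j \in A_i$, the cardinality of $A_i$ is bounded by $c(n)$ via \descref{W8}, and \cref{lemma3.8} provides $|v_h^i - v_h^j| \apprle \min\{\rho, r_i\}\la$. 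Combining these,
\begin{equation*}
|\nabla \vlh(z)| \apprle \sum_{j \in A_i} \frac{1}{r_j} |v_h^j - v_h^i| \apprle \frac{1}{r_i}\cdot \min\{\rho,r_i\}\la \apprle \la,
\end{equation*}
which is the desired bound \cref{3.34}.

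The only place where something could go wrong is the passage $\sum_{j \in A_i} \nabla \Psi_j(z) = 0$, which requires the constancy $\sum_{j \in A_i} \Psi_j \equiv 1$ to hold on an open neighbourhood of $z$ and not just pointwise at $z$; this is a consequence of \descref{W11}--\descref{W14}, since the collection $\{\tfrac34 Q_j\}_{j \in A_i}$ together covers an open neighbourhood of $\tfrac34 Q_i$, and $\Psi_j \equiv 0$ outside $\tfrac34 Q_j$. Everything else is a routine application of the Whitney properties together with the previously established bound \cref{lemma3.8}; no new analytic input is needed.
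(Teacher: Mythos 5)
Your proof is correct and is exactly the standard argument that the paper implicitly invokes: the text does not prove \cref{lemma3.9} explicitly but merely cites \cref{lemma3.7}, \cref{lemma3.8} and prior work for the technique. The core steps you identify — on $\elam^c$ rewriting $\vlh = \sum_{j\in A_i}\Psi_j v_h^j$ via \descref{W14}, inserting the constant $v_h^i$ for free because $\sum_{j\in A_i}\nabla\Psi_j \equiv 0$ on the open set $\tfrac34Q_i$, and then combining $|\nabla\Psi_j|\apprle 1/r_j$, $r_j\sim r_i$ from \descref{W5}, $\#A_i\apprle 1$ from \descref{W8}, and the difference bound $|v_h^i - v_h^j|\apprle \min\{\rho,r_i\}\la$ from \cref{lemma3.8} — are precisely the mechanism behind all such Lipschitz-truncation gradient estimates, and your remark on why $\sum_{j\in A_i}\nabla\Psi_j$ vanishes near $z$ (the identity holding on the open neighbourhood $\tfrac34Q_i$, with no other $\Psi_k$ contributing there) correctly addresses the only point one could conceivably object to.
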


\begin{lemma}
 \label{lemma3.10.1}
 Let $z \in \elam^c$ and $\ve \in (0,1]$ be any number, then $z \in \frac34Q_i$ for some $i \in \NN$ from \descref{W1}. There exists a constant $C =  C_{(n)}$ such that the following holds:
 \begin{gather}
  |\vlh(z)|  \leq C \fiint_{4Q_i}|v_h(\tz)| \lsb{\chi}{[0,T]}\ d\tz \leq  \frac{Cr_i\la}{\varepsilon} + \frac{C\varepsilon}{\la r_i} \fiint_{4Q_i}|v_h(\tz)|^2 \lsb{\chi}{[0,T]}\ d\tz, \nonumber\\ 
  |\nabla \vlh(z)| \leq C \frac{1}{r_i} \fiint_{4Q_i}|v_h(\tz)| \lsb{\chi}{[0,T]}\ d\tz \leq  \frac{C \la}{\varepsilon} + \frac{C\varepsilon}{\la r_i^2} \fiint_{4Q_i}|v_h(\tz)|^2 \lsb{\chi}{[0,T]}\ d\tz, \nonumber \\
  |\vlh(z)| \leq C \lbr \min\{ \rho, r_i\} \la + |v_h^i| \rbr \leq  C \lbr \frac{ r_i\la}{\ve} + \frac{\ve}{r_i \la} |v_h^i|^2 \rbr,  \label{lemma3.10_bound3}\\
  |\nabla \vlh(z)| \leq C \frac{\la}{\ve}.\label{lemma3.10_bound4}
 \end{gather}
\end{lemma}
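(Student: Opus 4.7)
\noindent\textbf{Proof plan for \cref{lemma3.10.1}.} The starting point is the pointwise identity for the Lipschitz truncation that holds on $\elam^c$. For $z \in \frac{3}{4}Q_i$, property \descref{W14} gives $\sum_{j\in A_i}\Psi_j(z) = 1$, so substituting into \cref{lipschitz_extension} yields
\begin{equation*}
\vlh(z) = \sum_{j\in A_i} \Psi_j(z)\, v_h^j, \qquad \nabla\vlh(z) = \sum_{j\in A_i} \nabla\Psi_j(z)\, v_h^j.
\end{equation*}
I will combine this with the bound on $v_h^j$ coming directly from \cref{lipschitz_extension_one}: in both of the branches of the definition (whether $\frac{3}{4}Q_j \subset \Om\times[0,\infty)$ or not) we have $|v_h^j| \leq \fiint_{\frac{3}{4}Q_j}|v_h|\lsb{\chi}{[0,T]}\,d\tz$. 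By \descref{W5} the radii $r_j$ and $r_i$ are comparable whenever $j \in A_i$, and by \descref{W11} we have $\frac{3}{4}Q_j \subset 4Q_i$; hence
\begin{equation*}
|v_h^j| \leq C(n) \fiint_{4Q_i} |v_h(\tz)|\lsb{\chi}{[0,T]}\,d\tz.
\end{equation*}
Summing over $j \in A_i$ (whose cardinality is controlled by \descref{W8}), using $\|\Psi_j\|_\infty \leq C$ and $\|\nabla\Psi_j\|_\infty \leq C/r_j \leq C/r_i$ from \descref{W13}, yields the first inequalities in the bounds for $|\vlh(z)|$ and $|\nabla\vlh(z)|$.

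The second inequality in each of these two bounds is a pointwise Young inequality applied under the integral sign: with $a = \sqrt{\varepsilon/(\la r_i)}|v_h|$ and $b = \sqrt{\la r_i/\varepsilon}$, the estimate $ab \leq \tfrac12 a^2 + \tfrac12 b^2$ gives
\begin{equation*}
|v_h(\tz)| \leq \frac{\varepsilon}{2\la r_i}|v_h(\tz)|^2 + \frac{\la r_i}{2\varepsilon}.
\end{equation*}
Averaging over $4Q_i$ and multiplying by $1$ or $1/r_i$ respectively, and absorbing constants, yields precisely the desired upper bounds involving $\frac{Cr_i\la}{\varepsilon}+\frac{C\varepsilon}{\la r_i}\fiint_{4Q_i}|v_h|^2$ and $\frac{C\la}{\varepsilon}+\frac{C\varepsilon}{\la r_i^2}\fiint_{4Q_i}|v_h|^2$.

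For \cref{lemma3.10_bound3}, I use the same identity $\vlh(z) = \sum_{j\in A_i}\Psi_j(z) v_h^j$ but rewrite it as
\begin{equation*}
\vlh(z) - v_h^i = \sum_{j\in A_i} \Psi_j(z)(v_h^j - v_h^i),
\end{equation*}
so that $|\vlh(z)| \leq |v_h^i| + \sum_{j\in A_i}|v_h^j - v_h^i|$. Now \cref{lemma3.8} gives $|v_h^j - v_h^i| \apprle \min\{\rho,r_i\}\la$ for every $j \in A_i$, and \descref{W8} absorbs the sum into a constant. This produces the first inequality in \cref{lemma3.10_bound3}. The second inequality follows again from Young's inequality: $|v_h^i| \leq \frac{\varepsilon}{2r_i\la}|v_h^i|^2 + \frac{r_i\la}{2\varepsilon}$, together with the trivial estimate $\min\{\rho,r_i\}\la \leq r_i\la \leq r_i\la/\varepsilon$ for $\varepsilon \in (0,1]$. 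Finally, \cref{lemma3.10_bound4} is immediate from \cref{lemma3.9}, since $|\nabla\vlh(z)| \leq C\la \leq C\la/\varepsilon$ whenever $\varepsilon \in (0,1]$.

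No serious technical obstacle is anticipated; the lemma is essentially a quantitative bookkeeping step packaging together \cref{lemma3.7,lemma3.8,lemma3.9} with a suitable Young splitting. The only subtlety worth flagging is the need to distinguish between the two branches in the definition of $v_h^j$ (which is harmless because in both cases the same averaged bound holds), and ensuring that the implicit constants depend only on the allowed parameters so that the estimates can be freely inserted into the subsequent Caccioppoli-type argument.
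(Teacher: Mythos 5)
Your proposal is correct and is exactly the argument the paper has in mind — the paper only states that these bounds follow from Lemma \ref{lemma3.7} and Lemma \ref{lemma3.8} ``in the same spirit of \cite{adimurthi2018gradient}'' without writing it out. Your pointwise identity $\vlh(z) = \sum_{j\in A_i}\Psi_j(z)v_h^j$ on $\elam^c$, the averaged bound for $|v_h^j|$ transferred from $\frac34Q_j$ to $4Q_i$ via \descref{W5} and \descref{W11}, the Young splitting, the rewriting $\vlh - v_h^i = \sum_{j\in A_i}\Psi_j(v_h^j - v_h^i)$ combined with \cref{lemma3.8} and \descref{W8}, and the final appeal to \cref{lemma3.9} together with $\ve\le 1$, are precisely the intended steps. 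The only cosmetic slip is that $|v_h^j|\le\fiint_{\frac34Q_j}|v_h|\lsb{\chi}{[0,T]}\,d\tz$ should carry a dimensional constant $C(n)$ (from $\|\Psi_j\|_{L^1}\apprge|\frac12Q_j|$), but you insert it on the next line so the conclusion is unaffected.
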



\begin{lemma}
\label{lemma3.11}
Let $z \in \elam^c$ and $\ve \in (0,1]$ be any number, then $z \in \frac34Q_i$ for some $i \in \NN$ from \descref{W1}. There exists a constant $C =  C_{(n,p,q,\Lambda_1,b_0,r_0)}$ such that the following estimates for the time derivative of $\vlh$ holds:
 \begin{equation}\label{lemma3.11.bound2}\begin{array}{rcl}
 |\pa_t \vlh(z)| & \leq & C\frac{1}{\la^{2-p} r_i^2} \fiint_{4Q_i} |v_h(\tz)| \lsb{\chi}{[0,T]}\ d\tz, \\
 |\pa_t \vlh(z)| & \leq & C \frac{1}{\la^{2-p} r_i^2} \min\{r_i,\rho\} \la. 
 \end{array}\end{equation}
\end{lemma}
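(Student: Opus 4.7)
\textbf{Proof proposal for \cref{lemma3.11}.} The plan is to exploit the fact that on $\elam^c$ the Lipschitz extension reduces to a finite combination of the Whitney constants $v_h^j$, together with the standard cancellation coming from $\sum_j \Psi_j \equiv 1$ on $\frac34Q_i$.

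First, fix $z \in \elam^c$ and $i$ with $z \in \frac34Q_i$. Using \descref{W14}, on $\frac34Q_i$ we have $\sum_{j \in A_i}\Psi_j \equiv 1$, so $v_h(z) = v_h(z)\sum_{j \in A_i}\Psi_j(z)$, and therefore
\[
\vlh(z) = \sum_{j \in A_i}\Psi_j(z) v_h^j.
\]
Differentiating in $t$ and using that $\sum_{j \in A_i}\pa_t\Psi_j(z) \equiv 0$ on $\frac34Q_i$, we obtain, for any constant $c \in \RR$,
\[
\pa_t\vlh(z) \;=\; \sum_{j \in A_i}\pa_t\Psi_j(z)\,(v_h^j - c).
\]
Next I would combine \descref{W13} (giving $|\pa_t\Psi_j| \le C/(\la^{2-p} r_j^2)$), \descref{W5} ($r_j \approx r_i$) and \descref{W8} ($\#A_i \le C(n)$), which reduce matters to bounding $\max_{j \in A_i}|v_h^j - c|$ suitably.

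For the first inequality in \cref{lemma3.11.bound2} I would choose $c := \fiint_{4Q_i} v_h\,\lsb{\chi}{[0,T]}\,d\tz$. For any $j \in A_i$, by \descref{W11} we have $\frac34Q_j \subset 4Q_i$ and by \descref{W5} $|4Q_i| \apprle |\frac34Q_j|$. If $\frac34Q_j \subset \Om \times [0,\infty)$, the lower bound $\Psi_j \ge \lsb{\chi}{\frac12Q_j}$ gives $\|\Psi_j\|_{L^1} \apprge |Q_j|$, so
\[
|v_h^j| \;\apprle\; \fiint_{\frac34Q_j}|v_h|\lsb{\chi}{[0,T]}\,d\tz \;\apprle\; \fiint_{4Q_i}|v_h|\lsb{\chi}{[0,T]}\,d\tz,
\]
and if $\frac34Q_j \not\subset \Om \times [0,\infty)$, then $v_h^j = 0$ by \cref{lipschitz_extension_one}. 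In both cases $|v_h^j - c| \apprle \fiint_{4Q_i}|v_h|\lsb{\chi}{[0,T]}\,d\tz$, and inserting this into the representation above yields the first estimate.

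For the second inequality I would instead take $c := v_h^i$ and apply \cref{lemma3.8}, which directly gives $|v_h^j - v_h^i| \apprle \min\{\rho, r_i\}\la$ for every $j \in A_i$; plugging this into the representation together with $|\pa_t\Psi_j| \apprle 1/(\la^{2-p}r_i^2)$ and $\#A_i \le C(n)$ gives the claimed bound. No step here looks genuinely hard: the only point to be careful about is the two-case definition of $v_h^j$ near the parabolic boundary, but the vanishing case is actually the easier one since then $|v_h^j - c| = |c|$ is controlled directly by the $4Q_i$-average.
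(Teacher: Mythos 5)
Your proof is correct and is exactly the standard argument that the paper defers to (it cites \cite{adimurthi2018gradient} rather than writing this out): on $\frac34Q_i$ one has $\vlh = \sum_{j\in A_i}\Psi_j v_h^j$, the identity $\sum_{j\in A_i}\pa_t\Psi_j\equiv 0$ from \descref{W14} lets you subtract any constant $c$, and then \descref{W13}, \descref{W5}, \descref{W8} reduce matters to bounding $\max_{j\in A_i}|v_h^j-c|$. Choosing $c = \fiint_{4Q_i}v_h\lsb{\chi}{[0,T]}\,d\tz$ (or, for the first bound, $c=0$ works just as well, the cancellation is only essential for the second bound) together with \descref{W11}--\descref{W12} gives the first estimate, and choosing $c=v_h^i$ with \cref{lemma3.8} gives the second; the two-case definition of $v_h^j$ is handled correctly.
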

\begin{lemma}
\label{lemma3.12}
For any $\vartheta \geq 1$,  we have the following bound:
 \begin{equation*}
  \label{3.56}
  \iint_{\Om_T\setminus\elam } |\vlh(z)|^{\vartheta} \ dz \apprle_{(n,p,q,\Lambda_1,b_0,r_0,\vartheta)} \iint_{\Om_T\setminus\elam } |v_h(z)|^{\vartheta} \lsb{\chi}{[0,T]}\ dz.
 \end{equation*}
\end{lemma}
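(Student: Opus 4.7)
The plan is to exploit the partition-of-unity representation of $\vlh$ on $\elam^c$ together with the bounded overlap and comparability of the Whitney covering recorded in \cref{whitney_decomposition} and \cref{partition_unity}. Since $\Om_T\setminus\elam\subset\elam^c$, every $z$ in the domain of integration lies in some $\frac34 Q_i$, and \descref{W14} gives $\sum_{j\in A_i}\Psi_j(z)=1$ on $\frac34 Q_i$. Substituting this into \cref{lipschitz_extension} yields the key identity $\vlh(z)=\sum_{j\in A_i}\Psi_j(z)\,v_h^j$ on $\frac34 Q_i$, which reduces the analysis of $\vlh$ to a weighted average of the numbers $\{v_h^j\}_{j\in A_i}$.

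From here a routine two-step comparison delivers the estimate. Using $\#A_i\le c(n)$ from \descref{W8} together with the convexity inequality $(\sum_{j\in A_i}a_j)^\vartheta\le c(n,\vartheta)\sum_{j\in A_i}a_j^\vartheta$ and $|\Psi_j|\le 1$ from \descref{W12}, I obtain $|\vlh(z)|^\vartheta\apprle\sum_{j\in A_i}|v_h^j|^\vartheta$. Jensen's inequality applied to the average in \cref{lipschitz_extension_one} gives
\[
|v_h^j|^\vartheta\le\fiint_{\frac34Q_j}|v_h(w)|^\vartheta\lsb{\chi}{[0,T]}\,dw,
\]
and this bound holds trivially with $v_h^j=0$ in the other case $\frac34 Q_j\not\subset\Om\times[0,\infty)$. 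Integrating the pointwise estimate over $\frac34 Q_i\cap(\Om_T\setminus\elam)$ and using \descref{W5} to replace $|Q_i|$ by a uniform multiple of $|Q_j|$ for each $j\in A_i$ yields
\[
\iint_{\frac34 Q_i\cap(\Om_T\setminus\elam)}|\vlh|^\vartheta\,dz\apprle\sum_{j\in A_i}\iint_{\frac34 Q_j}|v_h|^\vartheta\lsb{\chi}{[0,T]}\,dz.
\]

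To conclude, I sum over $i$. Since $j\in A_i\iff i\in A_j$ and $\#A_j\le c(n)$ by \descref{W8}, each index $j$ appears as a neighbor of at most $c(n)$ indices $i$, which collapses the double sum into a single sum over $j$. The finite overlap \descref{W7} combined with $\frac34 Q_j\subset\elam^c$ from \descref{W4} converts this single sum into an integral over $\elam^c$, and since $v_h\lsb{\chi}{[0,T]}$ vanishes outside $\Om\times[0,T]$ by construction the tail $\elam^c\setminus\Om_T$ contributes nothing, so the bound equals $\iint_{\Om_T\setminus\elam}|v_h|^\vartheta\lsb{\chi}{[0,T]}\,dz$. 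The argument presents no serious obstacle; the only point that requires attention is the clean bookkeeping of the Whitney overlap constants and the observation that the boundary cubes on which $v_h^j=0$ simply drop out of the pointwise bound, so no separate treatment of initial or lateral boundary cubes is required.
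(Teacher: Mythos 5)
Your argument is correct, and you have in fact supplied the details that the paper omits (the paper only remarks that Lemmas~\ref{lemma3.9}--\ref{lemma3.14} follow ``in the same spirit of'' \cite{adimurthi2018gradient}, without writing out a proof). The chain of ideas you use — the identity $\vlh=\sum_{j\in A_i}\Psi_j v_h^j$ on $\tfrac34Q_i\cap\elam^c$ via \descref{W14}, Jensen's inequality (twice: once for the convex combination with weights $\Psi_j$ using $\vartheta\geq 1$, once for the weighted average defining $v_h^j$), the comparability $|Q_i|\apprle|Q_j|$ from \descref{W5}, the symmetry of the relation $j\in A_i$, and finally the bounded overlap \descref{W7}/\descref{W8} — is exactly the standard bookkeeping for this kind of estimate, and the zero convention for boundary cubes in \cref{lipschitz_extension_one} makes those cubes drop out harmlessly as you observe. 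One worthwhile remark: your proof uses none of the $\lambda$-bounds from \cref{lemma3.7} or \cref{lemma3.8}; it is a purely combinatorial Whitney-overlap argument and yields a constant depending only on $(n,\vartheta)$, which is in fact slightly sharper than the constant $C(n,p,q,\Lambda_1,b_0,r_0,\vartheta)$ the paper records. This is consistent — the lemma's stated dependence is simply an overestimate carried over from the other lemmas in the same block.
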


\begin{lemma}
\label{lemma3.14}
 For any $1\le \vartheta \leq q$, there holds 
 \begin{equation*}
  \iint_{\Om_T \setminus \elam} |\pa_t \vlh(z)  (\vlh(z) - v_h(z))|^{\vartheta} \ dz \apprle_{(n,p,q,\Lambda_1,b_0,r_0, \vartheta)} \la^{\vartheta p} |\RR^{n+1} \setminus \elam|.
 \end{equation*}
\end{lemma}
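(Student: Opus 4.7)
The plan is to cover $\Om_T\setminus\elam$ by the Whitney cylinders $\{\tfrac34 Q_i\}$ from \cref{whitney_decomposition}, localize the integrand on each piece, and combine the pointwise bound on $|\pa_t \vlh|$ from \cref{lemma3.11} with an $L^\vartheta$-average bound on $|\vlh-v_h|$ of order $r_i\la$ coming from \cref{lemma3.8} together with the parabolic Poincar\'e inequality \cref{lemma_crucial_1}. The finite-overlap property \descref{W7} will then convert the resulting sum of $|\tfrac34 Q_i|$ into $|\RR^{n+1}\setminus\elam|$, yielding exactly the claim.

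First I would observe from \cref{lemma3.11.bound2} that on each $\tfrac34 Q_i$ one has the uniform pointwise bound
\[
|\pa_t \vlh(z)|\ \apprle\ \frac{\min\{r_i,\rho\}\,\la}{\la^{2-p} r_i^2}\ \leq\ \frac{\la^{p-1}}{r_i},
\]
which absorbs the intrinsic scaling $\ga=\la^{2-p}$. Next, I would use \descref{W14} to write, for $z\in \tfrac34 Q_i\cap \elam^c$,
\[
\vlh(z)-v_h(z)\ =\ \sum_{j\in A_i}\Psi_j(z)\bigl(v_h^j - v_h(z)\bigr),
\]
and then apply \cref{lemma3.8} together with \descref{W5} to replace each $v_h^j$ by $v_h^i$ up to an error of size $r_i\la$. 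This reduces the task to the Poincar\'e-type estimate
\[
\fiint_{\tfrac34 Q_i}|v_h-v_h^i|^\vartheta\,dz\ \apprle\ r_i^\vartheta\la^\vartheta,
\]
which is precisely what is produced in the proof of \cref{lemma3.8}, by feeding \cref{lemma_crucial_1} with a suitable cut-off $\mu$ and controlling the time-oscillation of $\avgs{{[u-w]_h}}{\mu}$ via \cref{lemma_crucial_2_app}; since $\vartheta\leq q$, H\"older reduces the $L^\vartheta$ case to the $L^q$ estimate already at hand. Assembling the two ingredients yields
\[
\iint_{\tfrac34 Q_i}\bigl|\pa_t \vlh\,(\vlh-v_h)\bigr|^\vartheta\,dz\ \apprle\ \frac{\la^{(p-1)\vartheta}}{r_i^\vartheta}\cdot r_i^\vartheta\la^\vartheta\,|\tfrac34 Q_i|\ =\ \la^{p\vartheta}\,|\tfrac34 Q_i|,
\]
and summing over $i$ together with \descref{W7} and \descref{W3} closes the argument.

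The main obstacle I foresee lies in the initial-boundary Whitney cylinders, where one has $v_h^i=0$ but $v_h$ is not known to vanish pointwise at $t=0$ (only the integral initial condition $[u-w]_h(\cdot,0)\to 0$ in $L^2(\Om)$ is available). Handling those cylinders requires the same device already used in \cref{second_case}: extend $v_h$ by zero for $t<0$, choose a time-slab cut-off $\xi$ supported in $2^{k_0+1}Q_j\cap\{t<0\}$ so that $\avgs{v_h\lsb{\chi}{[0,T]}}{\xi}=0$, and then apply \cref{lemma_crucial_1}. The crucial point is that this gives oscillation of order $r_i\la$ instead of the cruder $\rho\la$ coming from \cref{lemma3.7}; without this refinement the sum over $i$ would blow up and the bound in terms of $|\RR^{n+1}\setminus\elam|$ would be lost.
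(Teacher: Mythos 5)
Your proposal is correct and takes the route the paper leaves implicit (the lemma is stated without proof, with the remark that it follows from \cref{lemma3.7} and \cref{lemma3.8}): on each Whitney cylinder $\tfrac34 Q_i$ you combine the pointwise estimate $|\pa_t\vlh|\apprle\min\{r_i,\rho\}\,\la^{p-1}/r_i^2$ from \cref{lemma3.11} with the $L^\vartheta$-oscillation bound $\fiint_{\frac34Q_i}|\vlh - v_h|^\vartheta\lsb{\chi}{[0,T]}\,dz\apprle(\min\{r_i,\rho\}\la)^\vartheta$, multiply, and sum using the finite overlap \descref{W7}. The one inaccuracy is a referencing slip in your last paragraph: you attribute the $r_i\la$-sized oscillation at boundary-crossing cylinders to ``the same device already used in \cref{second_case}'' with $\xi$ supported in $2^{k_0+1}Q_j\cap\{t<0\}$. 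That device, from the proof of \cref{lemma3.7}, inflates the cylinder up to scale $2^{k_0}r_j\sim\rho$ and therefore only yields the cruder $\rho\la$. The $r_i\la$-bound you actually need is the one produced in the proof of \cref{lemma3.8}, where for a cylinder $\frac34Q_i$ crossing $\{t=0\}$ the cut-off $\xi$ is kept at the small scale $2Q_i\cap\{t<0\}$ (which is nonempty precisely because $\frac34Q_i$ itself protrudes below $\{t=0\}$); for cylinders crossing the lateral boundary, which you do not discuss, the $r_i\la$-bound comes instead from the capacity Sobolev--Poincar\'e inequality \cref{sobolev-poincare}. Neither of these is an actual gap --- both cases are worked out in the proof of \cref{lemma3.8}, which you reference --- but the devices from \cref{lemma3.7} and \cref{lemma3.8} should not be conflated, since that distinction is exactly the reason the sum over $i$ converges.
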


\subsection{Lipschitz continuity of the test function}

We shall now prove the Lipschitz continuity of $\vlh$ on $\mch := \RR^n \times [0,T]$. 

\begin{lemma}
\label{lemma3.15}
The function $\vlh$ from  \cref{lipschitz_extension} is $C^{0,1}(\mch)$ with respect to the parabolic metric given by 
\[
d_{\la}(z_1,z_2)  := \max \left\{ |x_2-x_1|, \sqrt{\la^{p-2} |t_2-t_1|} \right\}.
\]

\end{lemma}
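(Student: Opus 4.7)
The plan is to invoke Lemma~\ref{metric_lipschitz} with $\ga = \la^{2-p}$ and $\mathcal{D} = \mch$. Hence it suffices to establish the uniform oscillation bound
\[
\fiint_{\tQ \cap \mch} \left| \frac{\vlh(\tilde z) - \avgs{\vlh}{\tQ\cap \mch}}{r}\right| d\tilde z \,\leq\, C \la
\qquad \text{for every } z \in \mch \text{ and every } r>0,
\]
where $\tQ := Q_{r,\la^{2-p}r^2}(z)$ and $C = C(n,p,q,\La_1,b_0,r_0)$. I would carry this out by a case analysis based on the position of $\tQ$ relative to the good set $\elam$ and the Whitney covering of $\elam^c$.

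\textbf{Case 1: $\tQ \cap \elam^c = \emptyset$.} Then $\vlh \equiv v_h$ on $\tQ$. Apply the parabolic Poincar\'e inequality from Lemma~\ref{lemma_crucial_1} with a suitable weight $\mu \in C_c^\infty(B_r)$. The spatial term reduces to $r\bigl(\fiint_{\tQ}|\nabla v_h|^q\,\lomt\bigr)^{1/q}$; since $\tQ$ meets $\elam$, the definition of $g$ in \eqref{def_g_app} yields $\mm\bigl((|\nabla v|^q+\cdots)\lomt\bigr)^{1/q} \leq \la$ at some point of $\tQ$, giving bound $r\la$. The temporal piece is controlled by Lemma~\ref{lemma_crucial_2_app}, whose first term is again $\lesssim r\la$ via the bound on $\mm((|\nabla u|+|h_0|)^{p-1}\lomt)$, and whose $\vec{w}$-term is controlled exactly by the second branch in \eqref{def_g_app} through $\mmn{q/(p-1)}(w'\lomt)^{1/(p-1)}\leq \la$.

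\textbf{Case 2: $\tQ \cap \elam^c \neq \emptyset$.} Fix $z_0 \in \tQ\cap \elam^c$; by \descref{W3}, $z_0 \in \tfrac12 Q_i$ for some Whitney cylinder $Q_i$ of radius $r_i$, and by \descref{W2}, $d_\la(z_0,\elam)=16r_i$.
\emph{Subcase 2a: $r \leq c r_i$} for a suitable $c = c(n)$. Then every Whitney cube meeting $\tQ$ has radius comparable to $r_i$ by \descref{W5}, and on $\tQ$ we may apply the pointwise gradient bound $|\nabla \vlh|\leq C\la$ from Lemma~\ref{lemma3.9} together with the time derivative bound $|\pa_t \vlh| \leq C\la^{p-1}/r_i$ from Lemma~\ref{lemma3.11}. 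A direct parabolic mean-value estimate on $\tQ$ (spatial oscillation $\lesssim r\la$, temporal oscillation $\lesssim \la^{2-p}r^2 \cdot \la^{p-1}/r_i \leq r\la$) yields oscillation $\lesssim r\la$.
\emph{Subcase 2b: $r \geq c r_i$.} Then $\tQ$ is so large compared to $16 r_i = d_\la(z_0,\elam)$ that it already contains a substantial portion of $\elam$. On $\tQ\cap\elam$ the function $\vlh = v_h$ is controlled as in Case~1; for the $\elam^c$-part we compare $\vlh$ to its average on a \emph{larger} Whitney cube using Lemma~\ref{lemma3.8} iteratively, summing a geometric series in the scales $r_j \lesssim r$, and bounding the tail by the sup-norm of Lemma~\ref{lemma3.7}, which is $\leq C\rho\la$. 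Dividing by $r$ and using $r \gtrsim r_i$ together with the integral bound in Lemma~\ref{lemma3.12} closes the estimate.

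The main obstacle is Subcase~2b, because there $\tQ$ may cross the parabolic boundary $\pa_p(\Om\times(0,T))$; across that boundary $\vlh$ is extended by $0$ and $v_h^i$ is set to $0$ for Whitney cubes leaving $\Om\times[0,\infty)$. The crucial point is that these two choices are compatible: the jump between neighboring $v_h^i$'s is controlled by Lemma~\ref{lemma3.8} precisely by $\min\{\rho,r_i\}\la$, which is the sharp scaling needed so that zero-extension does not destroy the Lipschitz bound at the lateral and initial boundaries. Tracking this through the summation over Whitney cubes meeting $\tQ$, using \descref{W7} for finite overlap and \descref{W2} to relate cube radii to the distance from $\elam$, is the main technical step; once it is in place, all estimates combine to yield the oscillation bound $\leq C r\la$ uniformly, and Lemma~\ref{metric_lipschitz} concludes.
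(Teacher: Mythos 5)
Your overall strategy matches the paper's: invoke Lemma~\ref{metric_lipschitz} and verify the oscillation bound by a case analysis. Your Case~1 ($\tQ \cap \elam^c = \emptyset$, so $\vlh \equiv v_h$ on $\tQ$, handled by the parabolic Poincar\'e inequality together with the time-slice estimate from Lemma~\ref{lemma_crucial_2_app}) and Subcase~2a ($r \leq c\, r_i$, which for $c$ small forces $\tQ \subset \elam^c$ and lets the pointwise bounds from Lemmas~\ref{lemma3.9} and~\ref{lemma3.11} apply via a mean-value argument) are both correct and essentially the same as the paper's Case ``$2Q \subset \elam^c$'' and the parts of Case ``$2Q \nsubseteq \elam^c$'' where the cylinder sits entirely in $\elam$.

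Subcase~2b, however, has a genuine gap. You bound the oscillation of $\vlh$ on the $\elam^c$-part by telescoping through Whitney scales and stopping with the sup-bound from Lemma~\ref{lemma3.7}, which gives $|\vlh| \apprle \rho\la$; dividing by $r$ then produces $\rho\la/r$. Since in this subcase $r$ only satisfies $r \gtrsim r_i$ and $r_i$ can be arbitrarily small, $\rho\la/r$ is not uniformly bounded, so Lemma~\ref{metric_lipschitz} cannot be invoked. The source of the trouble is that the telescoping argument as in Lemma~\ref{lemma3.7} goes up to scale $\rho$, whereas here one must stop at scale $r$. The paper avoids the issue by the triangle split $|\vlh - \avgs{\vlh}{\tQ\cap\mch}| \leq |\vlh - v_h| + |v_h - \avgs{v_h}{\tQ\cap\mch}| + |\avgs{v_h}{\tQ\cap\mch} - \avgs{\vlh}{\tQ\cap\mch}|$, abbreviated $2J_1 + J_2$ after integrating. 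The term $J_1$ is handled by writing $\vlh - v_h = -\sum_i\Psi_i(v_h - v_h^i)$, observing that any Whitney cube touching $\tQ$ satisfies $2r_i \leq r$ (whence $\tfrac34 Q_i$ sits inside a fixed multiple of $\tQ$), using finite overlap \descref{W7}, and estimating each summand by the cube-scale Poincar\'e estimate analogous to~\cref{3.30}, which gives $\fiint_{\frac34 Q_i}|v_h - v_h^i|/r_i \apprle \la$. This yields $J_1 \apprle \la$ with no $\rho/r$ factor. $J_2$ is controlled by the parabolic Poincar\'e (interior case) or Maz'ya's inequality (Theorem~\ref{sobolev-poincare}, when $\tQ$ hits $\pa\Om$, which your Case~1 also silently assumes away since $\mu\in C_c^\infty(B_r)$ needs $B_r \Subset \Om$).

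You also do not address the degenerate configuration where $2\tQ$ crosses both $\{t=0\}$ and $\{t=T\}$ with $\gamma r^2 > T$. In that regime $|\tQ \cap \mch|$ can be vanishingly small compared to $|\tQ|$ and the Poincar\'e estimate no longer gives the right scaling; the paper deals with this by foregoing the factor $\la$ altogether and instead using the crude uniform bound $\vlh = v_h$ on $\elam$ plus Lemma~\ref{lemma3.7} on $\elam^c$, landing at a constant of the form $(\gamma/T)^{n/2}T^{-1}\|v_h\|_{L^1(\Om_T)} + \rho\la$ --- uniform in $z$ and $r$, which is all Lemma~\ref{metric_lipschitz} requires. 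This is precisely the one place in the proof where the Lemma~\ref{lemma3.7} sup-bound is the right tool, which explains why you reached for it, but it must be confined to this subcase and not used in the main branch of Subcase~2b.
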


\begin{proof} 
Let us consider a parabolic cylinder $Q_{r}(z) := Q_{r, \gamma r^2} (z) := Q = B \times I$ for some $z \in \mch$ and $r>0$ (recall $\gamma = \la^{2-p}$ which is the intrinsic scaling from \cref{whitney_decomposition}).  To prove the lemma, we make use of \cref{metric_lipschitz} and obtain the following bound:
 \begin{equation*}
 \label{bound_I_r}
I_r(z) :=   \fiint_{Q \cap \mch} \left|\frac{\vlh(\tz) - \avgs{\vlh}{Q\cap\mch}}{r}\right| \ d\tz \leq {o}(1),
 \end{equation*}
where ${o}(1)$ denotes a constant independent of $z \in \mch$ and $r>0$ only. We will split the proof into several subcases and proceed as follows:

\begin{description}
\item[Case $2Q \subset \elam^c$:]  In this case, from \descref{W3}, we see that $z \in \frac34Q_i$ for some $i \in \NN$. From the construction in \cref{lipschitz_extension}, we see that $\vlh\in C^{\infty}(\elam^c)$ which combined with the mean value theorem gives
  \begin{equation*}
   \label{3.71}
I_r(z)  \apprle   \frac{1}{r} \fiint_{Q \cap \mch} \fiint_{Q \cap \mch}  |\vlh(\tz_1) - \vlh(\tz_2)| \ d\tz_1 \ d\tz_2
 \apprle  \sup_{\tz \in Q \cap \mch}\lbr  |\nabla\vlh(\tz)| + \la^{2-p} r |\pa_t \vlh(\tz)|\rbr .
  \end{equation*}
  Let us pick some  $\tz_0 \in Q \subset \elam^c$,  then $\tz_0 \in Q_j$ for some $j \in \NN$. Thus we can make use of \cref{3.34} and \cref{lemma3.11.bound2} to get
  \begin{equation}
  \label{3.72}
   |\nabla\vlh(\tz_0)| + \la^{2-p} r |\pa_t \vlh(\tz_0)| \apprle \la + \la^{2-p} r \frac{1}{\la^{2-p} r_j^2} r_j \la.
  \end{equation}

  In \cref{3.72}, we need to understand the relation between $r_j$ and $r$. To this end, from $2Q \subset \elam^c$, we see that 
\begin{equation}
 \label{3.74}
 r \leq  d_{\la} (\tz_0, \elam) \leq d_{\la} (\tz_0, z_j) + d_{\la} (z_j , \elam) \leq r_j + 16r_j = 17r_j. 
\end{equation}

Combining \cref{3.72} and \cref{3.74}, we get
\begin{equation*}
\label{3.73}
|\nabla\vlh(\tz_0)| + \la^{2-p} r |\pa_t \vlh(\tz_0)| \apprle \la.
\end{equation*}

\item[Case $2Q \nsubseteq \elam^c$:] In this case, we shall split the proof into three subcases:
\begin{description}[leftmargin=*]
\item[Subcase $2Q \subset \RR^n \times {(-\infty,T]}$ or  $2Q \subset \RR^n \times [0,\infty)$:]
In this situation, it is easy to see that the following holds:
\begin{equation}
  \label{3.70}
  |Q \cap \mch| \apprge |Q|. 
 \end{equation}
 We apply   triangle inequality and estimate $I_r(z)$ by 
 \begin{equation}
  \label{3.81}
  \begin{array}{ll}
   I_r(z) & \leq \fiint_{Q \cap \mch} \left| \frac{\vlh(\tz) - v_h(\tz)}{r}\right|+ \left| \frac{v_h(\tz) - \avgs{v_h}{Q\cap \mch}}{r}\right| + \left| \frac{\avgs{v_h}{Q\cap \mch} - \avgs{\vlh}{Q \cap \mch}}{r}\right| \ d\tz \\
   & \leq 2J_1 + J_2,
  \end{array}
 \end{equation}
 where we have set
 \begin{gather}
  J_1:= \fiint_{Q \cap \mch} \left| \frac{\vlh(\tz) - v_h(\tz)}{r}\right| \ d\tz  \txt{and} J_2 := \fiint_{Q \cap \mch} \left| \frac{v_h(\tz) - \avgs{v_h}{Q\cap \mch}}{r}\right|\ d\tz.\label{def_J_1_2}
 \end{gather}
We now estimate each of the terms of \cref{def_J_1_2} as follows:
\begin{description}[leftmargin=*]
\item[Estimate for $J_1$:]  From \cref{lipschitz_extension}, we get
\begin{equation}
\label{3.82}
\begin{array}{ll}
J_1 &\apprle \sum_{i\in \NN} \frac{1}{|Q\cap\mch|} \iint_{Q \cap\mch\cap \frac34Q_i} \left| \frac{v_h(\tz)\lsb{\chi}{[0,T]} - v_h^i}{r}\right| \ d\tz.
\end{array}
\end{equation}
Let us fix an $i \in \NN$ and take two points $\tz_1 \in Q \cap \frac34Q_i$ and $\tz_2 \in \elam \cap 2Q$. Let $z_i$ denote the center of $\frac34Q_i$,  making use of  \descref{W2} along with the trivial bound $ d_{\la}(\tz_1, \tz_2) \leq  4r$ and $d_{\la} (z_i, \tz_1) \leq 2r_i$,  we get
\begin{equation}
\label{3.83.1}
16r_i =d_{\la}(z_i,\elam) \leq d_{\la} (z_i, \tz_1) + d_{\la} (\tz_1, \tz_2) \leq 2r_i + 4r  \ \Longrightarrow \ 2r_i \leq r.
\end{equation}

Note that \cref{3.70} holds and thus summing over all $i \in \NN$ such that  $Q \cap\mch\cap \frac34Q_i \neq \emptyset$ in \cref{3.82} and making use of  \cref{3.83.1}, we get
\begin{equation*}
 \label{bound_J_1}
 J_1 \apprle \sum_{i\in\NN } \frac{|\frac34Q_i|}{|Q\cap\mch|} \fiint_{\frac34Q_i} \left| \frac{v_h(\tz)\lsb{\chi}{[0,T]} - v_h^i}{r}\right| \ d\tz
  \overset{\redlabel{4.53.a}{a}}{\apprle}  \sum_{i\in \NN}  \fiint_{\frac34Q_i} \left| \frac{v_h(\tz)\lsb{\chi}{[0,T]} - v_h^i}{r_i}\right| \ d\tz 
  \overset{\redlabel{4.53.b}{b}}{\apprle}  \la.
\end{equation*}
To obtain \redref{4.53.a}{a}, we made use of \cref{3.70} and \cref{3.83.1}, to obtain \redref{4.53.b}{b}, we follow the calculation from bounding \cref{3.30}.

\item[Estimate for $J_2$:] Note that $Q \cap \mch$ is another cylinder. \emph{In the case $Q \subset \Om \times \RR$,}  choose a cut-off function $\mu \in C_c^{\infty}(\Om)$ and apply \cref{lemma_crucial_1} to get
\begin{equation*}
\begin{array}{rcl}
J_2  
& \apprle & \lbr \fiint_{Q \cap \mch} |\nabla v_h|^q \lsb{\chi}{\Om_T} + \sup_{t_1,t_2 \in  I} \left| \frac{\avgs{v_h\lsb{\chi}{[0,T]}}{\mu}(t_2) - \avgs{v_h\lsb{\chi}{[0,T]}}{\mu}(t_1)}{r} \right|^q\rbr^{\frac{1}{q}}.
\end{array}
\end{equation*}
Recall that we are in the case $2Q \cap \elam \neq \emptyset$ and $2Q \cap \elam^c \neq \emptyset$. We can now proceed as in \cref{est5.15} to get
\begin{equation}
\label{J_2_bound_first_case}
J_2 \apprle \la. 
\end{equation}

\emph{On the other hand, if $Q \nsubseteq \Om \times \RR$,} then we can apply \cref{sobolev-poincare} directly and make use of the fact that $2Q \cap \elam \neq \emptyset$ to get
\begin{equation*}
\label{bound_J_2_second_case}
J_2 
 \apprle \lbr \fiint_{Q \cap \mch} \left| \nabla v_h(\tz)\lsb{\chi}{[0,T]}\right|^q\ d\tz\rbr^{\frac{1}{q}} \apprle \la.
\end{equation*}

\end{description}

\item[Subcase $2Q \cap \RR^n \times {(-\infty,0]} \neq \emptyset$ and $2Q \cap \RR^n \times [T,\infty)\neq \emptyset$ AND $\gamma r^2 \leq T$:] In this case, we see that $$|Q \cap \mch| \ge |B_1|r^n \times \frac{T}{2}.$$
 We apply   triangle inequality and estimate $I_r(z)$ as we did in \cref{3.81} to get
 \begin{equation*}
  \label{3.81_n}
   I_r(z) 
   \leq 2J_1 + J_2,
 \end{equation*}
 where we have set
 \begin{gather*}
  J_1:= \fiint_{Q \cap \mch} \left| \frac{\vlh(\tz) - v_h(\tz)}{r}\right| \ d\tz  \txt{and} J_2 := \fiint_{Q \cap \mch} \left| \frac{v_h(\tz) - \avgs{v_h}{Q\cap \mch}}{r}\right|\ d\tz.\label{def_J_1_2_n}
 \end{gather*}
 
 We estimate $J_1$ as follows
 \begin{equation*}
 \label{3.82.n}
 \begin{array}{rcl}
 J_1 & \apprle &\sum_{i \in \NN} \frac{|\frac34Q_i|}{|Q\cap\mch|} \fiint_{\frac34Q_i} \left| \frac{v_h(\tz)\lsb{\chi}{[0,T]} - v_h^i}{r}\right| \ d\tz \\
 & \apprle& \frac{r_i^{n+2} \gamma}{r^{n} T} \sum_{i\in \NN}  \fiint_{\frac34Q_i} \left| \frac{v_h(\tz)\lsb{\chi}{[0,T]} - v_h^i}{r_i}\right| \ d\tz\\
 & \overset{\cref{3.83.1}}{\apprle}& \frac{r^{n+2} \gamma}{r^{n} T} \sum_{i\in \NN}  \fiint_{\frac34Q_i} \left| \frac{v_h(\tz)\lsb{\chi}{[0,T]} - v_h^i}{r_i}\right| \ d\tz\\
 & \overset{\redlabel{4.58.a}{a}}{\apprle} &\frac{r^{2} \gamma}{ T} \la\\
 & \overset{\redlabel{4.58.b}{b}}{\apprle} & \la.
 \end{array}
 \end{equation*}
To obtain \redref{4.58.a}{a}, we proceeded similarly to \cref{3.30} and to obtain \redref{4.58.b}{b}, we made use of  $\gamma r^2 \leq T$.

 The estimate of $J_2$ is already obtained in \cref{J_2_bound_first_case} which shows
 \begin{equation*}
 \label{bound_J_2_third_case}
 J_2 \apprle \la. 
 \end{equation*}
 
 \item[Subcase $2Q \cap \RR^n \times {(-\infty,0]} \neq \emptyset$ and $2Q \cap \RR^n \times [T,\infty)\neq \emptyset$ AND $\gamma r^2 > T$:] Using triangle inequality, we get
 \begin{equation*}
  \label{3.91}
  \begin{array}{ll}
  \fiint_{Q \cap \mch}  \left| \frac{\vlh(\tz) - \avgs{\vlh}{Q \cap \mch}}{r} \right|\ d\tz & \apprle \frac{1}{|Q \cap \mch|} \iint_{Q \cap \mch} |\vlh(\tz)| \ d\tz \\
  & \apprle \frac{1}{|Q \cap \mch|} \iint_{Q \cap \mch \cap \elam} |\vlh(\tz)| \ d\tz + \frac{1}{|Q \cap \mch|} \iint_{Q \cap \mch \setminus \elam} |\vlh(\tz)| \ d\tz.
  \end{array}
 \end{equation*}
We have $\vlh = v_h$ on $\elam$.  On $\Om_T \setminus \elam$, we can apply Lemma \ref{lemma3.7} to obtain the following bound:
\begin{equation*}
  \begin{array}{ll}
   \fiint_{Q \cap \mch}  \left| \frac{\vlh(\tz) - \avgs{\vlh}{Q \cap \mch}}{r} \right|\ d\tz & \apprle  \frac{1}{r^n T} \iint_{\Om_T} |v_h(\tz)| \ d\tz +   \frac{1}{|Q \cap \mch|} \iint_{Q \cap \mch	 \setminus \elam} \rho \la\ d\tz \\
   & \apprle \lbr \frac{\gamma}{T}\rbr^{\frac{n}{2}} \frac{1}{T} \| v_h \|_{L^1(\Om_T)} + \rho \la.
  \end{array}
 \end{equation*}
\end{description}
\end{description}
This completes the proof of the Lipschitz regularity of $v_{\lambda,h}$.
\end{proof}

\subsection{Two crucial estimates for the test function}
We shall now prove the first crucial estimate which holds on each time slice. 
\begin{lemma}
 \label{pre_crucial_lemma}
 For any $i \in \NN$ and any $0 < \ve \leq 1$, there exists a positive constant $C{(n,p,q, \Lambda_1,b_0,r_0)}$ such that for  almost every $t \in [0,T]$, there holds
 \begin{equation}
 \label{3.120}
  \left| \int_{\Om} (v(x,t) - v^i) \vl(x,t) \Psi_i(x,t) \ dx \right| \leq C \lbr  \frac{\la^p}{\ve} |4Q_i| + \ve |4B_i| |v^i|^2\rbr. 
 \end{equation}
 
\end{lemma}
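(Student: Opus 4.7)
The plan is to work with Steklov averages $v_h = [u-w]_h$ and $\vlh$ (which are $C^1$ in time by \cref{time_average}), establish the bound for
\[
G_h(t) := \int_\Om (v_h(x,t) - v_h^i)\,\vlh(x,t)\,\Psi_i(x,t)\ dx,
\]
and then pass $h \searrow 0$ to obtain the stated estimate for $v$, $v^i$ and $\vl$. Since $\Psi_i \in C_c^\infty(\frac34 Q_i)$ by \descref{W12}, $G_h$ vanishes outside the time cross-section $\frac34 I_i$; choosing $t_\ast \in \partial(\frac34 I_i)$ gives $G_h(t_\ast) = 0$, so for $t \in \frac34 I_i$ it suffices to control $\int_{t_\ast}^t G_h'(s)\, ds$.

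\textbf{Key step via the equation.} Applying the product rule yields
\[
G_h'(s) = \int_\Om \partial_s v_h\cdot \vlh \Psi_i\ dx + \int_\Om (v_h - v_h^i)\,\partial_s(\vlh \Psi_i)\ dx.
\]
To handle the first piece, I would use $\vlh \Psi_i$ as a test function in the Steklov-averaged formulation from \cref{def_weak_solution}; this is admissible because $\vlh$ is Lipschitz (\cref{lemma3.15}) and $\Psi_i$ has compact spatial support inside $\Om$ (since $\frac34 Q_i \subset \elam^c \subset \Om\times\RR$ whenever the integrand is nontrivial). Combined with the representation $\partial_s w = -\dv \vec{w}$ from \cref{lemma_lihe_wang} together with the compatibility \cref{est3.3} of \cref{imp_rmk}, the first integral converts into
\[
-\int_\Om [\aa(x,s,\nabla u)]_h\cdot \nabla(\vlh \Psi_i)\ dx + \int_\Om [\vec{w}]_h\cdot \nabla(\vlh \Psi_i)\ dx.
\]

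\textbf{Bounding the pieces.} I would combine $|\nabla \vlh|\le C\la$ (\cref{lemma3.9}), the decomposition $\vlh = v_h^i + \sum_{j\in A_i}\Psi_j(v_h^j - v_h^i)$ together with \cref{lemma3.8} (giving $|\vlh|\le C(|v_h^i| + r_i\la)$ on $\frac34 Q_i$), and $|\nabla \Psi_i|\le C/r_i$ from \descref{W13}, to obtain $|\nabla(\vlh \Psi_i)|\le C(\la + |v_h^i|/r_i)$. By \descref{W4} the intersection $16Q_i \cap \elam$ is nonempty, so evaluating $g$ at a point $z_0$ there yields the averaged bounds
\[
\fiint_{16Q_i}(|\nabla u|+h_0)^{p-1}\lsb{\chi}{\Om_T}\ dz \le C\la^{p-1}
\quad\text{and}\quad
\frac{1}{|16I_i|}\int_{16I_i}\frac{\|[\vec{w}]_h\lsb{\chi}{\Om_T}\|_{W^{-1,\frac{q}{p-1}}(16B_i)}}{|16B_i|^{\frac{p-1}{q}}}\ ds \le C\la^{p-1}.
\]
Using the identities $|Q_i| = C\la^{2-p}r_i^2|B_i|$ and $|Q_i|/r_i = C\la^{2-p}r_i|B_i|$, both the $\aa$ and $\vec{w}$ contributions are controlled by $C\la^p|Q_i| + C|v_h^i|\la r_i|B_i|$. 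Applying Young's inequality to the cross term $|v_h^i|\la r_i \le \frac{\ve}{2}|v_h^i|^2 + \frac{\la^2 r_i^2}{2\ve}$ yields the target $C\la^p|4Q_i|/\ve + C\ve|4B_i||v_h^i|^2$.

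\textbf{Remaining term and main obstacle.} For the $(v_h - v_h^i)\partial_s(\vlh \Psi_i)$ contribution, the plan is to use $|\partial_s \vlh|\le C\la^{p-1}/r_i$ (\cref{lemma3.11}) and $|\partial_s \Psi_i|\le C\la^{p-2}/r_i^2$ from \descref{W13}, combined with $|\vlh|\le C(|v_h^i| + r_i\la)$ on $\frac34 Q_i$ and the Poincar\'e-type bound $\iint_{\frac34 Q_i}|v_h - v_h^i|\ dz \le Cr_i\la|Q_i|$ which is implicit in the derivation of \cref{lemma3.8}; these produce terms of the same shape after Young's inequality. Summing all contributions and then passing $h \searrow 0$ via \cref{time_average} transfers the bound to $v$ and $\vl$. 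The main technical obstacle is the careful verification that $\vlh\Psi_i$ is admissible in the Steklov formulation---which requires both the Lipschitz regularity from \cref{lemma3.15} and the containment $\frac34 Q_i \subset \Om\times\RR$---together with the correct treatment of the distributional time derivative $\partial_t w$ through \cref{lemma_lihe_wang} and \cref{imp_rmk}.
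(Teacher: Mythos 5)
Your proposal is correct and takes essentially the same route as the paper's proof: test the Steklov--averaged equation with $\vlh\Psi_i$, integrate in time, and bound the resulting terms via (W4), (W13), \cref{lemma3.8}, \cref{lemma3.9}, \cref{lemma3.11}, and Young's inequality, with the cross term $|v_h^i|\la r_i|B_i|$ split as you describe. The only point you gloss over is the choice of the anchor time $t_\ast$: when $\frac34 Q_i$ crosses the initial plane $\{t=0\}$ the left endpoint of $\frac34 I_i$ is negative, so one should integrate from $\max\{t_i - \gamma(\tfrac34 r_i)^2,0\}$ as the paper does; your choice nevertheless works because in that regime $v_h^i=0$ (so $G_h\equiv 0$ on $\{s\le 0\}$) and $\vlh(\cdot,0)=0$ on $\elam^c$, but the case distinction should be stated.
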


\begin{proof}
Let us fix any $t \in [0,T]$,  $i \in \NN$  and take $\Psi_i(y,\tau) \vlh(y,\tau)$ as a test function in \cref{main-1}. Further integrating the resulting expression over $ \left(t_i - \gamma \left(\frac34 r_i\right)^2 , t\right)$ or $(0,t)$ depending on the location of $\frac34Q_i$, along with making use of  the fact that $\Psi_i(y,t_i - \gamma (3r_i/4)^2) = 0$ or $\vlh(y,0) =0$ for $(y,0) \in \elam^c$, we get for  any $a\in \RR$, the equality
\begin{equation}
 \label{3.123}
 \begin{array}{ll}
  \int_{\Om} \lbr[(]  (v_h - a)  \Psi_i \vlh \rbr (y,t) \ dy 
  & = \int_{\max \lbr[\{]t_i - \gamma \left(\frac34 r_i\right)^2,0\rbr[\}]}^t \int_{\Om} \pa_t \left( [u-w]_h  \Psi_i \vlh  - a \Psi_i \vlh \right) (y,\tau) \ dy \ d\tau \\
  & = \int_{\max \lbr[\{]t_i - \gamma \left(\frac34 r_i\right)^2,0\rbr[\}]}^t \int_{\Om} \iprod{[\aa(y,\tau,\nabla u)]_h}{\nabla (\Psi_i \vlh)} \ dy \ d\tau  \\
  &\qquad - \int_{\max \lbr[\{]t_i - \gamma \left(\frac34 r_i\right)^2,0\rbr[\}]}^t \int_{\Om} \ddt{[w]_h} (\Psi_i \vlh) \ dy \ d\tau  \\
  & \qquad +\ \int_{\max \lbr[\{]t_i - \gamma \left(\frac34 r_i\right)^2,0\rbr[\}]}^t \int_{\Om} [u-w]_h \pa_t \lbr \Psi_i \vlh\rbr \ dy \ d\tau  \\
  & \qquad -\  \int_{\max \lbr[\{]t_i - \gamma \left(\frac34 r_i\right)^2,0\rbr[\}]}^t \int_{\Om} a \pa_t \lbr \Psi_i \vlh\rbr \ dy \ d\tau.
 \end{array}
\end{equation}
We can estimate $|\nabla ( \Psi_i \vl)|$ using the chain rule and \descref{W13}, to get
\begin{equation}
 \label{3.126}
 |\nabla ( \Psi_i \vlh)|  \apprle \frac{1}{r_i} |\vl| + |\nabla \vl|.
\end{equation}
Similarly, we can estimate $\left|\pa_t\lbr \Psi_i \vl \right)\right|$ using the chain rule and \descref{W13}, to get
\begin{align}
 \left| \pa_t \lbr \Psi_i \vl\rbr\right| & \apprle  \frac{1}{\gamma r_i^2} |\vl| + |\pa_t \vl|.\label{3.128}
\end{align}
%
%
 Let us take $a=v_h^i$ in the \cref{3.123} followed by letting $h \searrow 0$ and making use of \cref{3.126}, \cref{abounded} and crucially the assumption from \cref{imp_rmk} (more specifically \cref{est3.3}),  we get
 \begin{equation}
  \label{first_1}
  \begin{array}{ll}
   \left| \int_{\Om} \lbr[(] (v - v^i) \Psi_i \vl \rbr (y,t) \ dy \right| & \apprle J_1 + J_2 + J_3,
  \end{array}
 \end{equation}
 where we have set 
 \begin{align}
  J_1& :=  \iint_{\Om_T} (|\nabla u|+|h_0|)^{p-1} \lbr \frac{1}{r_i} |\vl| + |\nabla \vl|\rbr   \lsb{\chi}{\frac34Q_i\cap \Om_T} \ dy \ d\tau, \nonumber \\
  J_2& :=  \int_{0}^T \abs{\iprod{\pa_t w}{\Psi_i \vl}_{(W^{-1,\pbp}(\Om), W_0^{1,\pbo}(\Om))}} d\tau = \int_0^T \left|\int_{\Om} \iprod{\vec{w}}{\nabla(\Psi_i \vl)}\lsb{\chi}{\frac34Q_i\cap \Om_T}\ dx\right| \ d\tau,\nonumber \\
  J_3&:= \iint_{\Om_T} |v-v^i| | \pa_t (\Psi_i \vl)|  \lsb{\chi}{\frac34Q_i\cap \Om_T} \ dy \ d\tau.\label{bound_J_3_1}
 \end{align}

 Let us now estimate each of the terms as follows: 
 \begin{description}
  \item[Bound for $J_1$:] If $\rho \leq r_i$,  we can directly use H\"older's inequality, \cref{lemma3.7}, \cref{lemma3.9} and \descref{W4}, to find that for any $\ve \in (0,1]$, there holds 
  \begin{equation}
   \label{bound_I_1_rho_leq_r_i}
   \begin{array}{ll}
    J_1 & \apprle \lbr \frac{1}{r_i} \rho \la + \la \rbr |Q_i| \lbr \fiint_{16Q_i}  (|\nabla u| + |h_0|)^{q}  \lsb{\chi}{\Om_T}  \ dy \ d\tau\rbr^{\frac{p-1}{q}} \apprle  \frac{\la^p}{\ve} |4Q_i|.
   \end{array}
  \end{equation}
  In the case $r_i \leq \rho$, we make use of \cref{lemma3.10_bound3}, \descref{W4} and \cref{lemma3.9} along with the fact $|Q_i| = |B_i| \times 2\la^{2-p} r_i^2$, to get
  \begin{equation}
  \label{bound_I_1_rho_geq_r_i}
   \begin{array}{ll}
    J_1
    & {\apprle}\left[ \frac{1}{r_i} \lbr \frac{r_i \la}{\ve} + \frac{\ve}{\la r_i} |v^i|^2 \rbr + \la \right] |4Q_i|\lbr \fiint_{4Q_i} (|\nabla u| + |h_0|)^{q}  \lsb{\chi}{\Om_T}\ dy \ d\tau \rbr^{\frac{p-1}{q}} \\
    & {\apprle} \left[ \frac{1}{r_i} \lbr \frac{r_i \la}{\ve} + \frac{\ve}{\la r_i} |v^i|^2 \rbr + \la \right] |4Q_i|\la^{p-1} 
     {\apprle} \frac{\la^p}{\ve} |4Q_i| +  \ve |4B_i| |v^i|^2.	
   \end{array}
  \end{equation}

Thus combining \cref{bound_I_1_rho_geq_r_i} and \cref{bound_I_1_rho_leq_r_i}, we get
\begin{equation}
 \label{bound_I_1}
 J_1 \apprle \frac{\la^p}{\ve} |4Q_i| +  \lsb{\chi}{r_i \leq \rho}\ve |4B_i| |v^i|^2,
\end{equation}
where we have set $\lsb{\chi}{r_i \leq \rho} = 1$ if  $r_i \leq \rho$ and $\lsb{\chi}{r_i \leq \rho}=0$ else.

  \item[Bound for $J_2$:] In this case, we can directly use \cref{lemma3.9} and \descref{W4} to get for any $\ve \in (0,1]$, the bound
  \begin{equation}
   \label{bound_I_22}
   \begin{array}{rcl}
    J_2 & \apprle & \int_{I_i} \int_{Q_i} \sum_{j=1}^n|\vec{w}_j| |\nabla_j (\Psi_i\vl)| \ dx \ dt \\
    & \apprle & \sum_{j=1}^n \int_{I_i} \lbr  \int_{B_i} \sum_{j=1}^n|\vec{w}_j|^{\pbp} \ dx \rbr^{\frac{1}{\pbp}} \lbr \int_{B_i}  |\nabla_j (\Psi_i\vl)|^{\pbo} \ dx\rbr^{\frac{1}{\pbo}}  \ dt \\
    & \apprle & |\nabla (\Psi_i \vl)| |B_i| |I_i|\fint_{I_i} \frac{1}{|B_i|^{\frac{1}{\pbp}}}\left\| \ddt{{w}(\cdot,t)}\right\|_{W^{-1,\pbp}(B_i)}  \ dt \\
    & \apprle & |\nabla (\Psi_i \vl)| |Q_i| \la^{p-1}.
   \end{array}
  \end{equation}
  
  In the case $\rho \leq r_i$, we follow the idea from \cref{bound_I_1_rho_leq_r_i} and in the case $r_i \leq \rho$, we follow the strategy from \cref{bound_I_1_rho_geq_r_i} to get
  \begin{equation*}
  \label{bound_J_2}
  J_2 \apprle \frac{\la^p}{\ve} |4Q_i| +  \lsb{\chi}{r_i \leq \rho}\ve |4B_i| |v^i|^2.
  \end{equation*}

  \item[Bound for $J_3$:] Substituting  \cref{lemma3.10_bound4}, \cref{lemma3.11.bound2} and \descref{W13} into \cref{3.128},  for any $\ve \in (0,1]$, there holds
  \begin{equation}
   \label{bound_3_1}
   \begin{array}{ll}
    |\pa_t(\Psi_i \vl)(z)| 
    & \apprle \frac{1}{\gamma r_i^2} \lbr \frac{r_i \la}{\ve}  + \frac{\ve}{r_i \la} |v^i|^2 \rbr + \frac{1}{\gamma r_i^2} \min\{r_i,\rho\} \la \approx \frac{1}{\gamma r_i^2} \lbr \frac{r_i \la}{\ve}  + \frac{\ve}{r_i \la} |v^i|^2 \rbr. 
   \end{array}
  \end{equation}
Making use of  \cref{bound_3_1} in the expression for $J_3$ in \cref{bound_J_3_1}, we get
  \begin{equation*}
   \label{bound_I_3}
   \begin{array}{ll}
    J_3 & \apprle \frac{1}{\gamma r_i^2} \lbr \frac{r_i \la}{\ve}  + \frac{\ve}{r_i \la} |v^i|^2 \rbr\iint_{\frac34Q_i} |v-v^i|  \lsb{\chi}{\Om_T}  \ dy \ d\tau.
   \end{array}
  \end{equation*}
  We can now proceed similarly to \cref{3.30} to get
\begin{equation}
 \label{bound_I33}
 \begin{array}{ll}
 J_3 & \apprle \frac{1}{\gamma r_i^2} \lbr \frac{r_i \la}{\ve}  + \frac{\ve}{r_i \la} |v^i|^2 \rbr r_i \la |Q_i| 
   \apprle \frac{\la^p }{\ve} |4Q_i| + \ve |4B_i| |v^i|^2.
 \end{array}
\end{equation}

 \end{description}

 Substituting the estimates \cref{bound_I_1}, \cref{bound_I_22} and \cref{bound_I33} into \cref{first_1} completes the proof of the lemma.
\end{proof}

We now come to  essentially the most important estimate which will be needed to prove the difference estimate:
\begin{lemma}
 \label{crucial_lemma}
 There exists a positive constant $C=C_{(n,p,q, \Lambda_1,b_0,r_0)}$ such that the following estimate holds for every $t \in [0,T]$:
 \begin{equation}
 \label{3122}
  \int_{\Om \setminus \elam^t} (|v|^2 - |v - \vl|^2)(x,t) \ dx \geq - C  \la^p |\RR^{n+1} \setminus \elam|.
 \end{equation}
\end{lemma}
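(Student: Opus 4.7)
The plan is to start from the algebraic identity
\[|v|^2 - |v - \vl|^2 = |\vl|^2 + 2\vl(v - \vl),\]
which splits the integrand into a manifestly non-negative piece and a cross term. Integrating over $\Om\setminus\elam^t$ and retaining the first piece as a reservoir for absorption, it suffices to bound the cross term $2\int_{\Om\setminus\elam^t}\vl(v - \vl)\,dx$ from below. Since $\sum_i \Psi_i \equiv 1$ on $\elam^c$ by \descref{W14}, we have $v - \vl = \sum_i \Psi_i(v - v^i)$ pointwise on $\Om\setminus\elam^t$, which turns the cross term into $\sum_{i\in\mathcal{I}(t)} \int_\Om \Psi_i(\cdot,t)(v-v^i)\vl\,dx$, where $\mathcal{I}(t) := \{i : t \in \tfrac34 I_i\}$ collects the indices contributing at time $t$. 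Applying \cref{pre_crucial_lemma} term-by-term with a free parameter $\ve \in (0,1]$ gives
\[\left|\int_{\Om\setminus\elam^t} \vl(v - \vl)\,dx\right| \le C \sum_{i\in\mathcal{I}(t)} \left(\frac{\la^p}{\ve}|4Q_i| + \ve |4B_i||v^i|^2\right).\]

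For the first sum, the finite-overlap property \descref{W7} yields $\sum_i |4Q_i| \le C(n)|\RR^{n+1}\setminus\elam|$, which is already of the desired form (up to the harmless $1/\ve$ factor). For the second sum the crude bound $|v^i| \le C\rho\la$ from \cref{lemma3.7} is insufficient, so I would split $\mathcal{I}(t) = \mathcal{I}_1(t) \sqcup \mathcal{I}_2(t)$ according to whether $|v^i| \le 2C_0 r_i\la$ or $|v^i| > 2C_0 r_i\la$, where $C_0$ is the constant appearing in \cref{lemma3.8}. For $i \in \mathcal{I}_1(t)$, the intrinsic scaling $\gamma = \la^{2-p}$ gives $r_i^2|4B_i| = |4Q_i|/(32\gamma) = \la^{p-2}|4Q_i|/32$, hence $|4B_i||v^i|^2 \le C\la^p|4Q_i|$, which is again summable by \descref{W7}.

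For $i \in \mathcal{I}_2(t)$, observe that the construction of $\vl$ together with \descref{W14} and \cref{lemma3.8} implies $|\vl(x,t) - v^i| \le C_0 r_i\la$ on all of $\tfrac34 Q_i$; since $|v^i| > 2C_0 r_i\la$ by assumption, this forces $|\vl(x,t)| \ge |v^i|/2$, and hence $|\vl|^2 \ge |v^i|^2/4$, on the slice $\tfrac34 B_i \times \{t\}$. The latter is contained in $\Om\setminus\elam^t$ by \descref{W4} together with the convention $v^i = 0$ whenever $\tfrac34 Q_i \not\subset \Om \times [0,\infty)$ (so the troublesome cubes are automatically inside $\Om$). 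Integrating in space and invoking the finite overlap of $\{\tfrac34 B_i\}_{i\in\mathcal{I}(t)}$ at the fixed time $t$ (itself a consequence of \descref{W7}),
\[\sum_{i\in\mathcal{I}_2(t)} |4B_i||v^i|^2 \le C \int_{\Om\setminus\elam^t} |\vl(x,t)|^2\,dx.\]
Putting everything together and choosing $\ve$ small enough (depending only on $n,p,q,\Lambda_1,b_0,r_0$) permits absorption of the $\ve \int|\vl|^2$ contribution into the retained non-negative term $\int_{\Om\setminus\elam^t}|\vl|^2\,dx$, leaving only $-C\la^p|\RR^{n+1}\setminus\elam|$ on the right-hand side, which is \cref{3122}.

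The main obstacle is precisely the case split: \cref{pre_crucial_lemma} produces both an $\ve^{-1}\la^p|4Q_i|$ term and an $\ve|4B_i||v^i|^2$ term, and the second cannot be controlled by scaling alone because $|v^i|$ may be as large as $\rho\la$. The geometric fact that rescues the argument is that wherever $|v^i|$ is not already dominated by $r_i\la$, the closeness of $\vl$ to $v^i$ on $\tfrac34 Q_i$ forces $|\vl|^2$ to be at least a constant multiple of $|v^i|^2$ on a slice of comparable measure inside $\Om\setminus\elam^t$, enabling the absorption into the retained $\int|\vl|^2$ term.
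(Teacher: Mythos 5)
Your proof is correct, and it takes a genuinely different route from the paper's. The paper expands the integrand through the partition of unity as $\sum_i\Psi_i\bigl(|v^i|^2 + 2\vl(v-v^i)\bigr) - \sum_i\Psi_i|\vl - v^i|^2 =: J_1 - J_2$, keeps $\sum_i\int_\Om\Psi_i(\cdot,t)|v^i|^2\,dx$ as the absorbing reservoir, applies \cref{pre_crucial_lemma} to the cross term in $J_1$, and bounds $J_2$ separately using \cref{lemma3.8} together with the intrinsic scaling. You instead use the shorter identity $|v|^2 - |v-\vl|^2 = |\vl|^2 + 2\vl(v-\vl)$, keep $\int_{\Om\setminus\elam^t}|\vl|^2\,dx$ as the reservoir, and avoid $J_2$ entirely; the price is the dichotomy $|v^i|\lessgtr 2C_0 r_i\la$, where intrinsic scaling handles the small regime and the pointwise closeness $|\vl - v^i|\leq C_0 r_i\la$ on $\frac34 Q_i$ (from \descref{W14} and \cref{lemma3.8}) forces $|\vl|\geq|v^i|/2$ on $\frac34 B_i\times\{t\}\subset\Om\setminus\elam^t$ in the large regime, feeding those contributions into $\int|\vl|^2$. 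Your argument also makes explicit a step the paper treats quite tersely: absorbing $\ve\sum_i|4B_i||v^i|^2$ into $\sum_i\int_\Om\Psi_i(\cdot,t)|v^i|^2\,dx$ is delicate because $\int_\Om\Psi_i(\cdot,t)\,dx$ may be very small (or even zero) when $t$ lies near the edge of $\frac34 I_i$; the paper's ``suitable choice of $\ve$'' together with \descref{W7} and \descref{W12} implicitly re-routes through neighbouring cubes $k\in A_i$ with $t\in\frac12 I_k$, whereas your case split resolves the same difficulty directly. One bookkeeping remark: take $C_0$ to dominate the constants in both \cref{lemma3.7} and \cref{lemma3.8}; then $|v^i|>2C_0 r_i\la$ together with $|v^i|\apprle\rho\la$ forces $r_i<\rho$, so $\min\{\rho,r_i\}=r_i$ on $\mathcal{I}_2(t)$, exactly as your estimate requires.
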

\begin{proof}
 Let us fix any $t\in [0,T]$ and any point $x \in \Om \setminus \elam^t$.  Now define
 \begin{equation*}
  \tTh := \left\{  i \in \NN: \spt(\Psi_i) \cap \Om \times \{t\} \neq \emptyset \quad \text{and} \quad |v| + |\vl| \neq 0 \quad  \text{on}\ \spt(\Psi_i)\cap (\Om \times \{t\}) \right\}.
 \end{equation*}
 Hence we only need to consider $i \in \tTh$.  
 Noting that $\sum_{i \in \tTh} \Psi_i(\cdot, t) \equiv 1$ on $\RR^n \cap \elam^t$, we can rewrite the left-hand side of \cref{3122} as 
 \begin{equation*}
  \label{I_1}
  \begin{array}{ll}
  \int_{\Om \setminus \elam^t} (|v|^2 - |v - \vl|^2)(x,t) \ dx & = \sum_{i \in \tTh} \int_{\Om} \Psi_i (|v|^2 - |v - \vl|^2) \ dx \\
  & = \sum_{i \in \tTh} \int_{\Om} \Psi_i(z) \lbr |v^i|^2  + 2 \vl (v - v^i) \rbr \ dx - \sum_{i \in \tTh} \int_{\Om} \Psi_i(z) |\vl - v^i|^2 \ dx\\
  & := J_1 - J_2.
  \end{array}
 \end{equation*}
 
 \begin{description}[leftmargin=*]
  \item[Estimate of $J_1$:] Using \cref{3.120}, we get
  \begin{equation}
   \label{I_1_1}
   J_1 \apprge \sum_{i \in \tTh} \int_{\Om} \Psi_i(z)  |v^i|^2 \ dz - \ve \sum_{i \in \tTh} |4B_i| |v^i|^2 - \sum_{i \in \tTh} \frac{\la^p}{\ve} |4Q_i|.
  \end{equation}
  From \cref{lipschitz_extension_one}, we have $v^i = 0$ whenever $\spt(\Psi_i) \cap \Om^c \neq \emptyset$. Hence we only have to sum over all those $i \in \tTh$ for which $\spt(\Psi_i) \subset \Om\times [0,\infty)$.  In this case, we make use of a suitable choice for $\ve \in (0,1]$, and use \descref{W7} along with \descref{W12} and estimate \cref{I_1_1} from below to get
  \begin{equation}
 \label{bound_I1}
 J_1 \apprge -\la^p |\RR^{n+1} \setminus \elam|. 
\end{equation}

  \item[Estimate of $J_2$:] For any $x \in \Om \setminus \elam^t$, we have from \descref{W14} that $\sum_{j} \Psi_j(x,t) = 1$, which gives
  \begin{equation}
   \label{I_2_1}
    \Psi_i(z) |\vl(z) - v^i|^2 \apprle \Psi_i(z)  \sum_{j \in A_i}   \lbr v^j - v^i\rbr^2  \overset{\redlabel{3.104.a}{a}}{\apprle} \min\{ \rho, r_i\}^2 \la^2.
  \end{equation}
To obtain \redref{3.104.a}{a} above, we made use of \cref{lemma3.8}  along with \descref{W8}.  Substituting \cref{I_2_1} into the expression for $J_2$ and using $|Q_i| = |B_i| \times 2\gamma r_i^2$, we get
\begin{equation}
 \label{bound_I_2}
J_2 \apprle \sum_{i \in \tTh_1} \left|B_i\right| \frac{\gamma r_i^2}{\gamma} \la^2 \apprle \la^p |\RR^{n+1} \setminus \elam|.
\end{equation}
 \end{description}
Substituting \cref{bound_I1} and \cref{bound_I_2} into \cref{I_1}, we get the desired estimate. This completes the proof of the lemma.
\end{proof}

\subsection{A priori estimate - Proof of \texorpdfstring{\cref{main_theorem_1}}.}
Consider the following cut-off function $\zv \in C^{\infty}(0,\infty)$ such that $0 \leq \zv(t) \leq 1$ and 
\begin{equation*}
\label{def_zv.3}
\zv(t) = \left\{ \begin{array}{ll}
                1 & \text{for} \ t \in (0+\ve,T-\ve)\\
                0 & \text{for} \ t \in (-\infty,0)\cup (T,\infty).
                \end{array}\right.
\end{equation*}	
It is easy to see that 
\begin{equation*}
\label{bound_zv.3}
\begin{array}{c}
\zv'(t) = 0 \ \txt{for} \  t \in (-\infty,0) \cup (0+\ve,T-\ve)\cup (T,\infty), \\
|\zv'(t)| \leq \frac{c}{\ve}\  \txt{for} \  t \in (0,0+\ve) \cup (T-\ve,0).
\end{array}
\end{equation*}

Let $h \in (0,T)$ be the Steklov averaging exponent. Without loss of generality, we shall always take $h \ge 2\ve$ since we will take limits in the following order $\lim_{h \rightarrow 0} \lim_{\ve \rightarrow 0}$. 

Let us use $\vlh \zv$ constructed in \cref{lipschitz_extension} as a test function in \cref{main-1} to get
\begin{equation}
\label{5.13.3}
\begin{array}{rcl}
\iint_{\Om_T} \ddt{[u-w]_h} \vlh \zv \ dx \ dt + \iint_{\Om_T} \iprod{[\aa(x,t,\nabla u)]_h}{\nabla \vlh} \zv \ dx \ dt & =& \iint_{\Om_T} \ddt{[w]_h} \vlh \zv \ dx \ dt, \\
\end{array}
\end{equation}
which we express as 
\[
L_1 + L_2 = L_3.
\]
\begin{description}
\item[Estimate of $L_1$:] 
Recalling \cref{def_v_app},  we get
\begin{equation}
   \label{6.38.3}
   \begin{array}{ll}
    L_1     & = \int_{0}^{T} \int_{\Om} \dds{v_h(y,s)}  \vlh(y,s)\zv(s) \ dy\ ds \\
    & = \frac{1}{2}  \int_{0}^{T} \int_{\Om}  \frac{d{\lbr \lbr[[](v_h)^2 - (\vlh - v_h)^2\rbr[]]\zv(s) \rbr }}{ds} \ dy \ ds+ \int_{0}^{T} \int_{\Om\setminus \elam^{s}}  \dds{\vlh}  (\vlh-v_h) \zv(s)\ dy \ ds  \\
    & \qquad - \frac{1}{2}\int_0^{T} \int_{\Om} \dds{\zv} \lbr v_h^2 - (\vlh - v_h)^2 \rbr \ dy \ ds\\
    & := J_1(T) - J_1(0)+J_2  - J_3,
   \end{array}
  \end{equation}
  where we have set 
  \begin{equation*}
  \label{def_i_1.3}J_1(s) := \frac12 \int_{\Om} ( (v_h)^2 - (\vlh - v_h)^2 ) (y,s) \zv(s) \ dy.
  \end{equation*}
 Since $\zv(0) =\zv(T) =  0$, we have 
 \begin{equation}\label{bond_J_1} J_1(0) = J_1(T) =0.\end{equation}
 Form \cref{lemma3.14} applied with $\vt =1$, we have the bound
\begin{equation}
    \label{6.39.3}
    \begin{array}{ll}
     |J_2| & \apprle \iint_{\Om_T\setminus \elam}   \left| \dds{\vlh}  (\vlh-v_h)\right| \ dy \ ds \apprle\la^p |\RR^{n+1} \setminus \elam| .
    \end{array}
   \end{equation}
   
   In order to estimate $- \int_{0}^{T} \int_{\Om} \dds{\zv} \lbr v_h^2 - (\vlh - v_h)^2 \rbr \ dy \ ds$, we take limits first in $\ve \searrow 0$ followed by $h\searrow 0$ to get
\begin{equation}
\label{5.22.3}
\begin{array}{ll}
- \int_{0}^{T} \int_{\Om} \dds{\zv} \lbr v_h^2 - (\vlh - v_h)^2 \rbr \ dy \ ds  \xrightarrow{\lim\limits_{h \searrow 0}\lim\limits_{\ve \searrow 0}} & \int_{\Om} (v^2 - (\vl - v)^2 )(x,T) \ dx \\
& - \int_{\Om} (v^2 - (\vl - v)^2 )(x,0) \ dx.
\end{array}
\end{equation}

For the second term on the right of \cref{5.22.3}, we observe that $\vl = v$ on $\elam$. Note that $\vl(\cdot,0) =0 = v(\cdot,0)$ by the initial condition. Thus, the second term on the right of \cref{5.22.3} vanishes and so
\begin{equation}
\label{5.23.3}
- \int_{0}^{T} \int_{\Om} \dds{\zv} \lbr v_h^2 - (\vlh - v_h)^2 \rbr \ dy \ ds  \xrightarrow{ \lim\limits_{h \searrow 0}\lim\limits_{\ve \searrow 0}}  \int_{\Om} (v^2 - (\vl - v)^2 )(x,T) \ dx.
\end{equation}

\item[Estimate of $L_2$:] We split $L_2$ and make use of the fact that $\vlh(z) = v_h(z)  \overset{\cref{def_v_app}}{=} [u-w]_h(z)$ for all $z\in \elam\cap \Om_T$ to get 
\begin{equation}
\label{5.17.3}
\begin{array}{ll}
L_2 
& = \iint_{\Om_T\cap \elam} \iprod{[\aa(x,t,\nabla u)]_h}{\nabla [u-w]_h} \zv\ dz + \iint_{\Om_T\setminus \elam} \iprod{[\aa(x,t,\nabla u)]_h}{\nabla \vlh} \zv\ dz\\
& =: L_2^1 + L_2^2.
\end{array}
\end{equation}
   
\begin{description}[leftmargin=*]
\item[Estimate of $L_2^1$:] Using ellipticity from \cref{abounded}, we get
\begin{equation}
\label{5.18.3}
\begin{array}{rcl}
L_2^1 
& \apprge & \iint_{\Om_T \cap \elam} \left[|\nabla u|^p  -\lvert h_0 \rvert^p\right]_h \zv \ dz - \iint_{\Om_T \cap \elam} [|\nabla u|^{p-1}+ |h_0|^{p-1}]_h |\nabla w| |\zv| \ dx \ dt \\
& \overset{\lim\limits_{h \searrow 0}\lim\limits_{\ve \searrow 0}}{=} & \iint_{\Om_T \cap \elam} |\nabla u|^p-\lvert h_0\rvert^p \ dz - \iint_{\Om_T \cap \elam}\lbr |\nabla u|^{p-1}+ |h_0|^{p-1}\rbr |\nabla w|  \ dx \ dt.
\end{array}
\end{equation}

\item[Estimate of $L_2^2$:] Using the bound  from \cref{lemma3.9}, we get
\begin{equation}
\label{5.19.3}
\lvert L_2^2\rvert \apprle \la \iint_{\Om_T\setminus \elam} [|\nabla u|^{p-1}+ |h_0|^{p-1}]_h\ dz \overset{\lim\limits_{h \searrow 0}\lim\limits_{\ve \searrow 0}}{=}  \la \iint_{\Om_T\setminus \elam} \lbr|\nabla u|^{p-1}+ |h_0|^{p-1} \rbr \ dz.
\end{equation}

Combining \cref{5.18.3} and \cref{5.19.3} with \cref{5.17.3}, we get
\begin{equation}
\label{est_L_2}
\hspace*{-0.2cm}L_2 \apprge  \iint_{\Om_T \cap \elam} |\nabla u|^p-\lvert h_0\rvert^p \ dz - \iint_{\Om_T \cap \elam}\lbr |\nabla u|^{p-1}+ |h_0|^{p-1}\rbr |\nabla w|  \ dz - \la \iint_{\Om_T\setminus \elam} \lbr|\nabla u|^{p-1}+ |h_0|^{p-1} \rbr \ dz.
\end{equation}

\end{description}

\item[Estimate of $L_3$:] Analogous to estimate for $L_2$, we split $L_3$ into integrals over $\elam$ and $\elam^c$, to find
\begin{equation}
\label{5.20.3}
\begin{array}{rcl}
L_3 & \overset{\redlabel{5.63.a}{a}}{=}  & \int_0^T \left[ \int_{\Om} \iprod{\ddt{w}}{ \vlh}_{(W^{-1,\pbp}(\Om),W_0^{1,\pbo}(\Om))}(x,t) \ dx \right]_h \zv(t) \ dt \\
& \overset{\text{\cref{lemma_lihe_wang}}}{=}& \int_0^T \left[ \int_{\Om} \iprod{\vec{w}}{\nabla \vlh}(x,t) \ dx \right]_h \zv(t) \ dt \\
& \overset{\redlabel{5.63.b}{b}}{\apprle} & \int_0^T \left[ \int_{\Om\cap \elam^t} \iprod{\vec{w}}{\nabla [u-w]_h}(x,t) \ dx \right]_h \zv(t) \ dt  \\
&& \qquad + \la \int_0^T \left[ \int_{\Om\setminus \elam^t} |\vec{w}| \ dx \right]_h \zv(t) \ dt \\
& \overset{\lim\limits_{h \searrow 0}\lim\limits_{\ve \searrow 0}}{\apprle}& \iint_{\Om_T \cap \elam} |\vec{w}| |\nabla (u-w)| \ dz + \la \iint_{\Om_T \setminus \elam} |\vec{w}| \ dz.
\end{array}
\end{equation}
To obtain \redref{5.63.a}{a}, we made use of the weaker assumption \cref{est3.3} (see \cref{imp_rmk} for the details) and to obtain \redref{5.63.b}{b},  we made use of  \cref{lipschitz_extension} and  \cref{lemma3.9}.
\end{description}

Combining \cref{6.38.3}, \cref{bond_J_1}, \cref{6.39.3}, \cref{5.23.3}, \cref{est_L_2} and  \cref{5.20.3} with \cref{5.13.3}, we get
\begin{equation}
\label{combined_1_1.3}
\begin{array}{rcl}
 \int_{\Om} (v^2 - (\vl - v)^2 )(x,T) \ dx +  \iint_{\Om_T \cap \elam} |\nabla u|^p -\lvert h_0\rvert^p \ dz & \apprle & \iint_{\Om_T \cap \elam} \lbr |\nabla u|^{p-1}+|h_0|^{p-1} \rbr |\nabla w| \ dz\\
 && + \la \iint_{\Om_T\setminus \elam} \lbr|\nabla u|^{p-1}+ |h_0|^{p-1} \rbr \ dz\\
& & + \iint_{\Om_T \cap \elam} \abs{{\vec{w}}} |\nabla (u-w)|  \ dz \\ 
& & + \la \iint_{\Om_T\setminus \elam}  \abs{{\vec{w}}}\ dz \\
& &  + \la^p |\RR^{n+1} \setminus \elam|.
\end{array}
\end{equation}

{\color{black}In fact, if we consider a cut-off function $\zv^{t_0} (\cdot)$ for some $t_0 \in (0,T)$, where 
\begin{equation*}
\zv^{t_0}(t) = \left\{ \begin{array}{ll}
                1 & \text{for} \ t \in (0+\ve,t_0-\ve)\\
                0 & \text{for} \ t \in (-\infty,0)\cup (t_0,\infty),
                \end{array}\right.
\end{equation*}
we get the following analogue of \cref{combined_1_1.3}:
\begin{equation}
\label{combined_1_new_11.3}
\begin{array}{rcl}
\int_{\Om} (v^2 - (\vl - v)^2 )(x,t_0) \ dx +  \int_0^{t_0}\int_{\Om \cap \elam^t} |\nabla u|^p-\lvert h_0\rvert^p  \ dx \ dt & \apprle & \iint_{\Om_T \cap \elam} \lbr |\nabla u|^{p-1}+|h_0|^{p-1} \rbr |\nabla w| \ dz\\
 && + \la \iint_{\Om_T\setminus \elam} \lbr|\nabla u|^{p-1}+ |h_0|^{p-1} \rbr \ dz\\
& & + \iint_{\Om_T \cap \elam} \abs{{\vec{w}}} |\nabla (u-w)|  \ dz \\ 
& & + \la \iint_{\Om_T\setminus \elam}  \abs{{\vec{w}}}\ dz \\
& &  + \la^p |\RR^{n+1} \setminus \elam|.
\end{array}
\end{equation}}

Using  \cref{crucial_lemma},  we get for almost every $t \in (0,T)$,
\begin{equation}
 \label{4.13.3}
    \int_{\Om} | (v)^2 - (\vl - v)^2 | (y,t) \ dy  \apprge \int_{\elam^t} | v (x,t)|^2 \ dx   - \la^p |\RR^{n+1} \setminus \elam|.
 \end{equation}
 
 Thus combining \cref{4.13.3} with \cref{combined_1_new_11.3}, we get
 \begin{equation}
\label{fully_combined.3}
\begin{array}{lll}
\sup_{t \in (0,T)}\int_{\elam^t} | v (x,t)|^2 \ dx +  \int_0^{t_0}\int_{\Om \cap \elam^t} |\nabla u|^p-\lvert h_0 \rvert^p  \ dx \ dt & \apprle & \iint_{\Om_T \cap \elam} \lbr |\nabla u|^{p-1}+|h_0|^{p-1} \rbr |\nabla w| \ dz\\
 && + \la \iint_{\Om_T\setminus \elam} \lbr|\nabla u|^{p-1}+ |h_0|^{p-1} \rbr \ dz\\
& & + \iint_{\Om_T \cap \elam} \abs{{\vec{w}}} |\nabla (u-w)|  \ dz \\ 
& & + \la \iint_{\Om_T\setminus \elam}  \abs{{\vec{w}}}\ dz \\
& &  + \la^p |\RR^{n+1} \setminus \elam|.
\end{array}
\end{equation} 
Since $\int_{\elam^t} | v (x,t)|^2 \ dx$ occurs on the left hand side of \cref{fully_combined.3} and is positive, we can ignore this term.  
Let us now multiply \cref{fully_combined.3}  with $\la^{-1-\be}$ and integrating over $(0,\infty)$ with respect to $\la$, we get
 \begin{equation}
 \label{K_expression.3}
K_1 \apprle K_2+ K_3 + K_4 + K_5+ K_6,
 \end{equation}
 where we have set
 \begin{equation*}
  \begin{array}{rcl}
  K_1 \ &:=& \ \int_{0}^{\infty} \la^{-1-\be} \iint_{\Om_T \cap \elam} \lbr |\nabla u|^p -\lvert h_0\rvert^p \rbr \ dz \ d\la,\\
  K_2 \ &:=& \ \int_{0}^{\infty} \la^{-1-\be} \iint_{\Om_T \cap \elam} \lbr |\nabla u|+|h_0|\rbr^{p-1} |\nabla w|  \ dz \ d\la,  \\
  K_3 \ &:=& \ \int_{0}^{\infty} \la^{-\be} \iint_{\Om_T \setminus \elam} \lbr |\nabla u|+|h_0|\rbr^{p-1}   \ dz \ d\la,  \\
  K_4 \ &:=& \ \int_{0}^{\infty} \la^{-1-\be} \iint_{\Om_T \cap \elam} \abs{{\vec{w}}} |\nabla (u-w)|  \ dz \ d\la,  \\
  K_5 \ &:=& \ \int_0^{\infty} \la^{-\be} \iint_{\Om_T\setminus \elam} \abs{{\vec{w}}}\ dz \ d\la, \\
  K_6\  &:=& \ \int_{0}^{\infty} \la^{-1-\be}  \la^p |\RR^{n+1} \setminus \elam| \  d\la.
  \end{array}
 \end{equation*}

 Let us now estimate each of the $\{K_i\}_{i=1}^6$ as follows:
  \begin{description}
 \item[Estimate of $K_1$:] Applying Fubini, we get
 \begin{equation*}
 \label{3.13.3}
 K_1 \apprge \frac{1}{\be} \iint_{\Om_T} g(z)^{-\be}(|\nabla u|^p-\lvert h_0 \rvert^p) \ dz.
 \end{equation*}
 Using Young's inequality along with \cref{max_g_fde.3}, we get for any $\ep_1 >0$,  \begin{equation}
 \label{bound_K_1.3}
 \def\arraystretch{1.3}
 \begin{array}{rcl}
 \iint_{\Om_T} |\nabla u|^{p-\be} \ dz & \apprle&   C(\ep_1) \be K_1  + (\ep_1 +\beta)\iint_{\Om_T} (|\nabla u|+|h_0|)^{p-\be} \ dz  + \ep_1 \iint_{\Om_T} |\nabla w|^{p-\be}\, dz \\
 && \quad + \ep_1\|w' \|_{L^{\pbp}(0,T; W^{-1,\pbp}(\Om))}^{\pbp}.
 \end{array}
 \end{equation}

  \item[Estimate of $K_2$:] Again by Fubini, we get
 \begin{equation*}
 \label{3.18.3}
 K_2  = \frac{1}{\be} \iint_{\Om_T} g(z)^{-\be} \lbr |\nabla u| + |h_0|\rbr^{p-1} \abs{\nabla w} \ dz.
 \end{equation*}
From the definition of $g(z)$ in \cref{def_g_app}, we see that for $z \in \Om_T$, we have $g(z) \geq (|\nabla u|+|h_0|)(z)$ which implies $g(z)^{-\be} \leq (|\nabla u|+|h_0|)^{-\be}(z)$. Applying Young's inequality, for any $\ep_2 >0$, we get
\begin{equation}
\label{3.19.3}
K_2 
 \apprle \frac{C(\ep_2)}{\be} \iint_{\Om_T}  |\nabla w|^{p-\be} \ dz + \frac{\ep_2}{\be} \iint_{\Om_T}\lbr |\nabla u| + |h_0|\rbr^{p-\be} \ dz.
\end{equation}

\item[Estimate of $K_3$:] Again applying Fubini, we get
 \begin{equation}
 \label{3.21.3}
 \begin{array}{rcl}
 K_3 & = &  \frac{1}{1-\be} \iint_{\Om_T} g(z)^{1-\be}\lbr |\nabla u|+|h_0|\rbr^{p-1} \ dz\\
& \apprle & \iint_{\Om_T} g(z)^{p-\be} \ dz + \iint_{\Om_T} \lbr |\nabla u|+|h_0|\rbr^{p-\be} \ dz \\
& \overset{\cref{max_g_fde.3}}{\apprle} & \iint_{\Om_T}\lbr |\nabla u|+|h_0|\rbr^{p-\be} \ dz+ \iint_{\Om_T}|\nabla w|^{p-\be}\ dz  + \|w' \|_{L^{\pbp}(0,T; W^{-1,\pbp}(\Om))}^{\pbp}.
 \end{array}
 \end{equation}
 
\item[Estimate of $K_4$:] Again by Fubini, we get
 \begin{equation*}
 \label{3.18.3.3}
 K_4  = \frac{1}{\be} \iint_{\Om_T} g(z)^{-\be} |\nabla u-\nabla w| \abs{\vec {w}} \ dz.
 \end{equation*}
From the definition of $g(z)$ in \cref{def_g_app}, we see that for $z \in \Om_T$, we have $g(z) \geq |\nabla u-\nabla w|(z)$ which implies $g(z)^{-\be} \leq |\nabla u-\nabla w|^{-\be}(z)$. Applying Young's inequality, for any $\ep_3 >0$, we get
\begin{equation}
\label{3.19.3.3}
\begin{array}{rcl}
K_4 & \apprle &  \frac{1}{\be} \iint_{\Om_T} |\nabla u - \nabla w|^{1-\be} |\vec{w}| \ dz \\
& \apprle & \frac{C(\ep_3)}{\be} \int_0^T \int_{\Om} |\vec{w}|^{\pbp} \ dx \ dt + \frac{\ep_3}{\be} \iint_{\Om_T} |\nabla u- \nabla w|^{p-\be} \ dx \ dt\\
& \overset{\text{\cref{lemma_lihe_wang}}}{\apprle} & \frac{C(\ep_3)}{\be}\|w' \|_{L^{\pbp}(0,T; W^{-1,\pbp}(\Om))}^{\pbp} + \frac{\ep_3}{\be} \iint_{\Om_T} |\nabla w|^{p-\be} \ dz \\
&& \quad + \frac{\ep_3}{\be} \iint_{\Om_T} |\nabla u|^{p-\be} \ dz.
\end{array}
\end{equation}

\item[Estimate of $K_5$:] Again applying Fubini, we get
 \begin{equation}
 \label{3.21.3.3}
 \begin{array}{rcl}
 K_5 & = &  \frac{1}{1-\be} \iint_{\Om_T} g(z)^{1-\be}|\vec{w}| \ dz\\
& \apprle & \iint_{\Om_T} g(z)^{p-\be} \ dz + \iint_{\Om_T} |\vec{w}|^{\frac{p-\be}{p-1}} \ dz \\
& \overset{\redlabel{3.20.3.a}{a}}{\apprle} & \iint_{\Om_T}\lbr |\nabla u|+|h_0|\rbr^{p-\be} \ dz+ \iint_{\Om_T}|\nabla w|^{p-\be}\ dz  + \|w' \|_{L^{\pbp}(0,T; W^{-1,\pbp}(\Om))}^{\pbp}.
 \end{array}
 \end{equation}
 To obtain \redref{3.20.3.a}{a}, we made use of  \cref{max_g_fde.3} and \cref{lemma_lihe_wang}.

\item[Estimate of $K_6$:] Applying the layer cake representation followed by \cref{max_g_fde.3}, we get
 \begin{equation}
 \label{3.22.3}
 \begin{array}{rcl}
 K_6  & = &  \frac{1}{p-\be} \iint_{\RR^{n+1}} g(z)^{p-\be} \ dz  \\
 & \overset{\cref{max_g_fde.3}}{\apprle} & \iint_{\Om_T}\lbr |\nabla u|+|h_0|\rbr^{p-\be} \ dz+ \iint_{\Om_T}|\nabla w|^{p-\be}\ dz  + \|w' \|_{L^{\pbp}(0,T; W^{-1,\pbp}(\Om))}^{\pbp}.
 \end{array}
 \end{equation}
 \end{description}
 
 Combining \cref{bound_K_1.3}, \cref{3.19.3}, \cref{3.21.3}. \cref{3.19.3.3}, \cref{3.21.3.3} and \cref{3.22.3} with \cref{K_expression.3}, we get
\begin{equation*}
 \label{3.23.3}
 \begin{array}{rcl}
  \iint_{\Om_T} |\nabla u|^{p-\be} \ dz & \apprle &  \lbr[[] C(\ep_1) \lbr \ep_2+\be + \ep_3\rbr + \ep_1  \rbr[]] \iint_{\Om_T}  |\nabla u|^{p-\be}  \ dz    \\
  &&\qquad  +\lbr[[]C(\ep_1)(\ep_2+\be + 1) + \ep_1\rbr[]] \iint_{\Om_T} |h_0|^{p-\be}\, dz \\
  &&\qquad  + \lbr[[] C(\ep_1)(C(\ep_2) + \be+\ep_3) + \ep_1\rbr[]] \iint_{\Om_T} |\nabla w|^{p-\be} \ dz  \\
 && \qquad +  C(\ep_1)(C(\ep_3) + \be)  \|w' \|_{L^{\pbp}(0,T; W^{-1,\pbp}(\Om))}^{\pbp}.
 \end{array}
 \end{equation*}
 Choosing $\ep_1$ small followed by choosing $\ep_2,\ep_3,\be$ small, we get the desired estimate. This completes the proof of the theorem. \hfill \qed

\section{Higher integrability at the initial boundary}
\label{section6}

Let us consider a cylinder $Q= Q_{\rho,s}(z_0)$ centered at a point $z_0 = (x_0,t_0) \in \Om \times \RR$ such that \[Q \cap \Om \times [0,\infty) \neq \emptyset\txt{and}\ \  Q \cap \Om \times (-\infty,0] \neq \emptyset.\] In particular, we take a cylinder that crosses the initial time slice $\{t=0\}$. Furthermore, assume that the cylinder $Q$ satisfies
\begin{equation}\label{Q_rest}
Q = B_{\rho} \times I_s \txt{and}\ \ \qquad 16Q \subset \Om \times \RR.
\end{equation}
We shall suppress writing the center of the cylinder $z_0 = (x_0,t_0)$ henceforth unless necessary. Corresponding to this cylinder, let us take the following test functions:
\begin{gather}
\eta(x) \in C_c^{\infty}(B_{8\rho}): \quad \eta(x) \equiv 1 \quad \text{on} \ B_{4\rho}, \quad |\nabla \eta| \leq \frac{C}{\rho}, \label{def_eta}\\
\zeta(t) \in C_c^{\infty}(I_{8s}): \quad \zeta(t) \equiv 1 \quad \text{on} \ I_{4s}, \quad |\zeta'| \leq \frac{C}{s}.\label{def_zeta}
\end{gather}
For any given $(x,t) \in \Om \times [0,T)$, we define the following function and its corresponding Steklov average to be
\begin{equation}
\label{def-v-ini}
v(x,t) :=  (u(x,t) - w(x,t)) \eta(x) \zeta(t) \txt{and} \ \ \vh(x,t) := [u-w]_h(x,t)\eta(x) \zeta(t).
\end{equation}
From \cref{main-2} and \cref{time_average},  we see that 
\[\vh \xrightarrow{h \searrow 0} v \txt{and}\  v(z) = 0 \ \ \text{for} \ z \in \omtz.\]
In subsequent calculations, we extend $v$ by zero to $\Om \times (-\infty,0]$.

\subsection{Construction of test function}
In what follows, we shall use \cref{lemma_lihe_wang} to obtain a representation 
\begin{equation}
\label{def_vec_w}
\ddt{w} = \dv \vec{w} \txt{in} \Om,
\end{equation}
for some $\vec{w} \in L^{\frac{p}{p-1}}(0,T; L^{\frac{p}{p-1}}(\Om))$.

Again, let us fix the following choice of exponents:
\begin{equation}
1< q \leq p-2\be < p-\be < p,
\end{equation}
where $\be$ is a constant to be chosen sufficiently small later on.
Define the following function
\begin{equation}
\label{def_g_ini}
g(z) := \max\left\{G_1(z), G_2(z), G_3(z), G_4(z), G_5(z) \right\}.
\end{equation}
with 
\begin{equation}
\label{eq6.7}
\begin{array}{rcl}
G_1(z) & := & \mm((|\nabla u|+|h_0|)^{q}\lsb{\chi}{\qomt})^{\frac{1}{q}}(z), \\ 
G_2(z) & := & \mm(|\nabla v|^q\lsb{\chi}{\qomt})^\frac{1}{q}(z), \\
G_3(z) & := & \mm\lbr\frac{|u-w|^q}{\rho^q}\lsb{\chi}{\qomt}\rbr^\frac{1}{q}(z), \\
G_4(z) & := &  \mm(|\nabla w|^q \lsb{\chi}{\qomt})^\frac{1}{q}(z),\\
G_5(z) & := & \mmn{1}(w'\lsb{\chi}{16Q\cap\Om_T} )(z).
\end{array}
\end{equation}
where $v$ is defined in \cref{def-v-ini} and $u$ is as defined in the hypothesis of \cref{main_theorem_2} and $G_5$ is defined using \cref{def_vec_w} as follows (see also \cref{7.7.4}):
\[
\mmn{1}(w'\lsb{\chi}{16Q\cap\Om_T} )(x,t) := \sup_{(a,b) \ni t} \sup_{B \ni x} \fint_{a}^b \fint_{B} |\vec{w}|\lsb{\chi}{\qomt} \ dx \ dt.
\]

 Let us now define the good set to be 
\begin{equation*}
\label{elam-ini}
\elam = \{ z \in \RR^{n+1} : g(z) \leq \la\},
\end{equation*}
and apply \cref{whitney_decomposition} and \cref{partition_unity} with $\mathbb{E} = \elam^c$ to get a covering of $\elam^c$.  Recall that the intrinsic scaling is of the form 
\begin{equation*}
\label{intrinsic_scaling-ini}
\gamma:=\la^{2-p} \txt{and} Q_j(x,t) = B_{r_j}(x) \times (t_j - \gamma r_j^2, t_j + \gamma r_j^2). 
\end{equation*}

Now we define the following Lipschitz extension function as follows:
\begin{equation}
\label{lipschitz-extension-ini}
\vlh(z) := v_h(z) - \sum_i \Psi_i(z) (v_h(z) - v_h^i).
\end{equation}
where
\begin{equation*}
\label{lipschitz-extension-one-ini}
v_h^i := \left\{ \begin{array}{ll}
                  \frac1{\|\Psi_i\|_{L^1(\frac34Q_i)}}\iint_{\frac34Q_i} v_h(z) \Psi_i (z) \lsb{\chi}{[0,T]} \ dz & \text{if} \ \frac34Q_i \subset 8B \times [0,\infty), \\
                   0 & \text{else}.
                  \end{array}\right.
\end{equation*}
\emph{Note that $u = 0$ on $\omtz$, which along with \cref{Q_rest} enables us to  switch between $\lsb{\chi}{[0,T]}$ and $\lsb{\chi}{\Om_T}$ without affecting the calculations.}

\begin{assumption}
\label{alpha_0}
Let $\al_0 \in \RR^+$ be such that the following is satisfied:
\begin{equation*}\label{hypothesis_1-ini}
\begin{array}{c}
\al_0^{p-\be} \apprle \fiint_{Q} \lbr|\nabla u|+|h_0|\rbr^{p-\be} \lsb{\chi}{\qomt} \ dz + \fiint_Q |\nabla w|^{p-\be} \lsb{\chi}{\qomt} \ dz + \left\| \lvert\ddt{w}\rvert\frac{\lsb{\chi}{16Q\cap\Om_T}}{|16Q|} \right\|_{L^{\frac{p-\be}{p-1}}(0,T; W^{-1.\frac{p-\be}{p-1}}(\Om))}^{\frac{p-\be}{p-1}},\vspace{1em}\\
\fiint_{16Q} \lbr|\nabla u|+|h_0|\rbr^{p-\be} \lsb{\chi}{\qomt} \ dz
+ \fiint_{16Q} |\nabla w|^{p-\be} \lsb{\chi}{\qomt} \ dz + \left\| \lvert\ddt{w}\rvert\frac{\lsb{\chi}{16Q\cap\Om_T}}{|16Q|} \right\|_{L^{\frac{p-\be}{p-1}}(0,T; W^{-1.\frac{p-\be}{p-1}}(\Om))}^{\frac{p-\be}{p-1}} \apprle \al_0^{p-\be},
\end{array}
\end{equation*}
where $Q =Q_{\rho,s} = Q_{\rho, \al_0^{2-p} \rho^2}$ and constant depends on universal constants. Noting \cref{def_g_ini}, it is easy to see that there exists a universal positive constant $c_e = c_e(n,p,\lamot)$ such that for all $\la \geq c_e \al_0$, we have  $\elam \neq \emptyset.$

Note here that we made use of \cref{def_vec_w} and denoted
\[
\frac{1}{|16Q|}\left\| \lvert\ddt{w}\rvert\lsb{\chi}{16Q\cap\Om_T} \right\|_{L^{\frac{p-\be}{p-1}}(0,T; W^{-1.\frac{p-\be}{p-1}}(\Om))}^{\frac{p-\be}{p-1}} := \fiint_{16Q} |\vec{w}|^{\frac{p-\be}{p-1}} \lsb{\chi}{\qomt} \ dz. 
\]
\end{assumption}

Let us first prove an important bound for $g$ as defined in \cref{def_g_ini}:
\begin{lemma}
\label{bound_g_x_t-ini}
Let \cref{alpha_0} be in force and let $\al_0 \in \RR^+$ and $c_e$ be as in \cref{alpha_0} and $g(z)$ be as in \cref{def_g_ini}, then the following holds for any $q < \vartheta \leq p-\be$:
\[
\iint_{\RR^{n+1}} |g(z)|^\vartheta \ dz \apprle |Q| \al_0^{\vartheta}.
\]
\end{lemma}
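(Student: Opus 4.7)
The strategy is to decompose $g=\max\{G_1,\ldots,G_5\}$ and to establish the bound $\iint |G_i|^\vartheta\,dz \apprle |Q|\al_0^\vartheta$ for each index separately. The common ingredients will be the strong maximal inequality (\cref{max_bound}) applied with exponent $\vartheta/q>1$, \cref{lemma_lihe_wang} which identifies $w'$ with a vector field $\vec w$, and a single application of H\"older's inequality on $\qomt$ to convert $L^\vartheta$-integrals into the $L^{p-\be}$-averages controlled by \cref{alpha_0}.

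For $i\in\{1,2,3,4\}$ each $G_i$ has the form $\mm(f_i^q \lsb{\chi}{\qomt})^{1/q}$ for a non-negative $f_i$ on $\qomt$. The maximal theorem (applicable because $\vartheta/q>1$) delivers $\iint G_i^\vartheta\,dz \apprle \iint_{\qomt} f_i^\vartheta\,dz$, and H\"older against $(p-\be)/\vartheta\geq 1$ then produces
\[
\iint G_i^\vartheta\,dz \apprle |16Q|\,\left(\fiint_{\qomt} f_i^{p-\be}\,dz\right)^{\vartheta/(p-\be)}.
\]
For $G_1$ and $G_4$ this is directly $\apprle |Q|\al_0^\vartheta$ by \cref{alpha_0}. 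For $G_2$, with $f_2=|\nabla v|$ and $v=(u-w)\eta\zeta$, the Leibniz rule reduces the analysis to the $G_1$/$G_4$ case plus a contribution of $G_3$-type. For $G_3$, I would exploit $u-w=0$ on $\omtz$ together with the fact that $\qomt$ crosses the initial slice to invoke a parabolic Poincar\'e-type inequality in the spirit of \cref{lemma_crucial_1}, thereby converting averages of $|u-w|/\rho$ into averages of $|\nabla(u-w)|$ together with a time-slice term that is controlled by $\vec w$ via \cref{lemma_lihe_wang}.

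The subtle term is $G_5=\mmn{1}(w'\lsb{\chi}{\qomt})$. Combining \cref{lemma_lihe_wang} with the equivalent norm \eqref{neg_sob_norm_quiv} delivers the pointwise estimate $G_5(z)\le \mm(|\vec w|\lsb{\chi}{\qomt})(z)$, exactly as in the calculation \eqref{est5.3}. The maximal inequality (valid since $\vartheta>1$) then gives $\iint G_5^\vartheta\,dz \apprle \iint_{\qomt} |\vec w|^\vartheta\,dz$, and H\"older against $(p-\be)/[(p-1)\vartheta]$ yields
\[
\iint G_5^\vartheta\,dz \apprle |16Q|\,\left(\fiint_{\qomt} |\vec w|^{(p-\be)/(p-1)}\,dz\right)^{\vartheta(p-1)/(p-\be)},
\]
which is directly controlled by \cref{alpha_0}. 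The main obstacle will be the book-keeping of H\"older exponents: every $\vartheta\in(q,p-\be]$ must lie in admissible ranges compatible both with the strong maximal bound and with the intrinsic $p$-parabolic scaling built into \cref{alpha_0}, where gradient quantities live in $L^{p-\be}$ while $\vec w$ lives in $L^{(p-\be)/(p-1)}$. A secondary obstacle is the parabolic Poincar\'e step for $G_3$, which must exploit the initial condition delicately since $(u-w)_t$ exists only in a distributional sense and the cylinder $\qomt$ crosses $\{t=0\}$.
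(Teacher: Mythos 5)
Your strategy coincides with the paper's: pass through each $G_i$ via the strong maximal inequality, convert to $L^{p-\be}$-averages by H\"older/Jensen, reduce $G_2$ to $G_1$, $G_3$, $G_4$ via Leibniz, and treat the residual $|u-w|/\rho$ average with a parabolic Poincar\'e step that exploits $u-w=0$ on $\{t=0\}$. Two points need attention.

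First, the $G_5$ bound as you wrote it does not close. From $\iint G_5^\vartheta \apprle \iint_{\qomt}|\vec w|^\vartheta$ and Jensen you obtain at best $|16Q|\bigl(\fiint_{\qomt}|\vec w|^{\frac{p-\be}{p-1}}\,dz\bigr)^{\frac{\vartheta(p-1)}{p-\be}} \apprle |Q|\,\al_0^{\vartheta(p-1)}$, not $|Q|\,\al_0^{\vartheta}$ (and for $p>2$ the Jensen step itself is only legitimate when $\vartheta\leq(p-\be)/(p-1)$, a strict subrange of $(q,p-\be]$). The exponent cannot be reconciled because $\vec w$ has the dimension of a flux $|\nabla u|^{p-1}$, not of a gradient. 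For the conclusion $\iint g^\vartheta\apprle|Q|\al_0^\vartheta$ to hold, $G_5$ must carry the extra power $\frac{1}{p-1}$, matching the analogue in \cref{def_g_app} of Section~5; equivalently, one bounds the $w'$-contribution by $\bigl\|\,|\ddt w|\lsb{\chi}{\qomt}\,\bigr\|_{L^{\vartheta/(p-1)}(0,T;W^{-1,\vartheta/(p-1)}(\Om))}^{\vartheta/(p-1)}=\iint_{\qomt}|\vec w|^{\vartheta/(p-1)}\,dz$ (this is what the paper's step~(a) actually records). Then Jensen against $\frac{p-\be}{p-1}$, automatic from $\vartheta\le p-\be$, and \cref{alpha_0} deliver $|Q|\al_0^\vartheta$. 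Note this requires $\vartheta/(p-1)>1$ for the maximal inequality, i.e.\ $\vartheta>p-1$; the lemma's stated range $(q,p-\be]$ does not guarantee this, but every subsequent application is with $\vartheta=p-\be>p-1$, and the paper implicitly works in that regime.

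Second, on $G_3$: after the Poincar\'e step from \cref{lemma_crucial_1}, the time-slice fluctuation $\sup_{t_1,t_2}|\avgs{u-w}{\mu}(t_2)-\avgs{u-w}{\mu}(t_1)|$ is controlled not by \cref{lemma_lihe_wang} alone but by testing the equation, i.e.\ \cref{lemma_crucial_2_app} with $\phi=\mu$, $\varphi\equiv1$. This produces both a $(|\nabla u|+|h_0|)^{p-1}$ contribution and a $\vec w$ contribution; after invoking the intrinsic scaling $|Q|=\rho^{n+2}\al_0^{2-p}$ and \cref{alpha_0}, each collapses to $\rho\,\al_0$, which is what makes the $|u-w|/\rho$ average come out as $\al_0^\vartheta$. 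Identifying \cref{lemma_crucial_2_app} as the mechanism is the essential structural point your sketch leaves implicit.
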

\begin{proof}
We proceed as follows:
\begin{equation}
\label{lemma62-6.8}
\begin{array}{rcl}
\iint_{\RR^{n+1}} |g(z)|^\vartheta \ dz & \overset{\redlabel{6.2a}{a}}{\apprle} & \iint_{16Q \cap \Om_T} \lbr|\nabla u|+|h_0|\rbr^\vartheta + |\nabla w|^\vartheta + |\nabla v|^\vartheta + \lbr \frac{|v|}{\rho} \rbr^\vartheta \ dz  \\
&& \quad + \left\| \lvert\ddt{w}\rvert\lsb{\chi}{16Q\cap\Om_T} \right\|_{L^{\frac{\vartheta}{p-1}}(0,T; W^{-1.\frac{\vartheta}{p-1}}(\Om))}^{\frac{\vartheta}{p-1}}\\
& \overset{\redlabel{6.2b}{b}}{\apprle} & \iint_{16Q \cap \Om_T} \lbr|\nabla u|+|h_0|+|\nabla w|\rbr^\vartheta \ dz + \iint_{16Q \cap \Om_T} \lbr \frac{|u-w|}{\rho} \rbr^\vartheta \ dz \\
&& \quad + \left\| \lvert\ddt{w}\rvert\lsb{\chi}{16Q\cap\Om_T} \right\|_{L^{\frac{\vartheta}{p-1}}(0,T; W^{-1.\frac{\vartheta}{p-1}}(\Om))}^{\frac{\vartheta}{p-1}}\\
& \overset{\redlabel{6.2c}{c}}{\apprle} & |Q| \al_0^\vartheta + \iint_{16Q \cap \Om_T} \lbr \frac{|u-w|}{\rho} \rbr^\vartheta \ dz.
\end{array}
\end{equation}
To obtain \redref{6.2a}{a}, we applied the standard maximal function estimate along with the bound from \cref{est5.3}, to obtain \redref{6.2b}{b}, we made use of \cref{def-v-ini} which gives $\nabla v = (u-w)\zeta \nabla \eta + \eta\zeta \nabla (u-w)$ and finally to obtain \redref{6.2c}{c}, we made use of the hypothesis from \cref{alpha_0}. 

To control the last term of \cref{lemma62-6.8}, let us use the cut-off function $\xi\in C_c^{\infty}(16Q\cap\Om_T\cap\{t\le0\})$ from \cref{lemma_crucial_1} and note that $\avgs{u-w}{\xi} = 0$. Thus we can apply \cref{lemma_crucial_1} to get
\begin{equation}
\label{lemma62-6.9}
\begin{array}{rcl}
\fiint_{16Q \cap \Om_T} \lbr \frac{|u-w|}{\rho} \rbr^\vartheta \ dz & = & \fiint_{16Q} \lbr \frac{|(u-w) - \avgs{u-w}{\xi}|}{\rho} \rbr^\vartheta \lsb{\chi}{[0,T]}\ dz\\
& \apprle & \fiint_{16Q} |\nabla u-\nabla w|^\vartheta \lsb{\chi}{[0,T]} \ dz \\
&& \quad +  \sup_{t_1,t_2 \in 16I \cap [0,T]} \abs{\frac{\avgs{u-w}{\mu}(t_2) - \avgs{u-w}{\mu}(t_1)}{\rho}}^\vartheta\\
& \overset{\text{\cref{alpha_0}}}{\apprle} & \al_0^\vartheta + \sup_{t_1,t_2 \in 16I \cap [0,T]} \abs{\frac{\avgs{u-w}{\mu}(t_2) - \avgs{u-w}{\mu}(t_1)}{\rho}}^\vartheta.
\end{array}
\end{equation}

To control the last term of \cref{lemma62-6.9}, we apply \cref{lemma_crucial_2_app} with $\varphi \equiv 1$ and $\phi(x) = \mu(x)$ to get for any $t_1,t_2 \in 16I \cap [0,T]$,
\begin{equation}
\label{lemma62-6.10}
\begin{array}{rcl}
|\avgs{u-w}{\mu}(t_2) - \avgs{u-w}{\mu}(t_1)| & \overset{\redlabel{612a}{a}}{\apprle} &  \|\nabla \mu\|_{L^{\infty}(16B_\rho)} \iint_{8Q}\lbr |\nabla u|+ |h_0|\rbr^{p-1} \lsb{\chi}{[0,T]} \ dz  \\
&& + \|\nabla \mu\|_{L^{\infty}(16B_\rho)}  |Q|\fiint_{16Q\cap\Om_T}|\vec{w}| \lsb{\chi}{\qomt} \ dz  \\
&\overset{\redlabel{612b}{b}}{\apprle} &  \frac{|Q|}{\rho^{n+1}} \al_0^{p-1}=\rho\alpha_0.
\end{array}
\end{equation}
To obtain \redref{612a}{a}, we made use of \cref{lemma_crucial_2_app} and to obtain \redref{612b}{b}, we made use of \cref{alpha_0}.

Combining \cref{lemma62-6.8}, \cref{lemma62-6.9} and \cref{lemma62-6.10}, we get the desired estimate.
\end{proof}

%
 By the choice of the cylinder, we see that $t_0-s < 0 < t_0+s$. As a consequence, let us define the followings :
\begin{gather}
\mch := \RR^n \times [0,t_0+s] \cap \Om_T, k\mch := \RR^n \times [0,t_0+k^2s]\cap \Om_T, \label{h-ini}\\
\Theta:= \{ i\in \NN: \frac34Q_i \cap 2\mch \neq \emptyset\},\label{theta-ini}\nonumber\\
\Theta_1:= \{ i\in \NN: 8Q_i \subset   \RR^n \times (-\infty,t_0+16s]\},\label{theta1-ini}\\
\Theta_2 := \Theta \setminus \Theta_1.\label{theta2-ini}\nonumber
\end{gather}
Recall from \cref{def_eta} that on $[t_0-16s,t_0+16s]$,  we have $\zeta(t) = 1$ and in particular on $Q_i$ for any $i \in \Th_1$.

\subsection{Bounds of \texorpdfstring{$\vlh$}.}
The first lemma is a rough bound of $\vlh$:
\begin{lemma}
\label{lemma6.1-ini}
Let $z \in \elam^c$, then 
\begin{equation*}\label{claim_bound-ini}|\vlh(z)| \apprle_{(n)} \rho \la.\end{equation*}
\end{lemma}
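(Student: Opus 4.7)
The proof should parallel that of \cref{lemma3.7}, with the ambient domain $\Om \times [0,\infty)$ replaced by $8B \times [0,\infty)$ and the role of the zero Dirichlet trace on $\pa \Om$ played instead by the spatial cutoff $\eta \in C_c^\infty(B_{8\rho})$. The first step is to observe that on $\elam^c$ the partition of unity satisfies $\sum_j \Psi_j \equiv 1$ (by \descref{W3}, \descref{W12}, \descref{W14}), so that from \cref{lipschitz-extension-ini}
\[
\vlh(z) = \sum_{j \in A_i} \Psi_j(z) v_h^j \txt{whenever} z \in \tfrac{3}{4} Q_i \subset \elam^c.
\]
Since $v_h^j = 0$ whenever $\tfrac{3}{4} Q_j \not\subset 8B \times [0,\infty)$, and $\#A_i \leq c(n)$ by \descref{W8}, the lemma reduces to showing the single-cube bound $|v_h^j| \apprle \rho\la$ for each Whitney cube $\tfrac{3}{4} Q_j \subset 8B \times [0,\infty)$.

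Fix such a $j$ and choose $k_0$ to be the least positive integer for which either $2^{k_0} Q_j \not\subset 8B \times [0,\infty)$ or $2^{k_0} r_j > \rho$. I would then telescope
\[
|v_h^j| \apprle \sum_{m=0}^{k_0-2} \left|\avgs{v_h \lsb{\chi}{[0,T]}}{2^m Q_j} - \avgs{v_h \lsb{\chi}{[0,T]}}{2^{m+1} Q_j}\right| + \avgs{|v_h|\lsb{\chi}{[0,T]}}{2^{k_0-1} Q_j} \ =: \ \sum_{m=0}^{k_0-2} S_1^m + S_2.
\]
Each difference $S_1^m$ lives on $2^{m+1} Q_j \subset 8B \times [0,\infty)$, where the parabolic Poincar\'e inequality \cref{lemma_crucial_1} applies with a spatial cutoff $\mu \in C_c^\infty(B_{2^{m+1} r_j}(x_j))$. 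By \descref{W4} there is a Whitney-nearby point $\tz \in 16 Q_j \cap \elam$, so the gradient contribution is controlled by $2^{m+1} r_j \la$ via $G_2(\tz) \le \la$; the time oscillation is controlled by invoking \cref{lemma_crucial_2_app} with the combined test function $\phi(x) \varphi(t) = (\eta \mu)(x) \zeta(t)$. Its first two right-hand-side terms absorb into $2^{m+1} r_j \la$ by way of $G_1(\tz) \le \la$ and $G_5(\tz) \le \la$ (exactly as in the interior case), while the third term, generated by the time cutoff $\zeta'$, absorbs by way of $G_3(\tz) \le \la$. This is precisely why $G_3 = \mm\bigl(|u-w|^q/\rho^q \lsb{\chi}{16Q \cap \Om_T}\bigr)^{1/q}$ was included in \cref{def_g_ini}.

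For the tail $S_2$, since $|v_h| \le |[u-w]_h|$ pointwise and a parabolic cube of size comparable to $2^{k_0-1} Q_j$ contains both $\tz$ and $2^{k_0-1} Q_j$, the bound $G_3(\tz) \le \la$ together with \cref{time_average}(iii) yields $S_2 \apprle \rho \la$ directly. No case split into a lateral-boundary Sobolev--Poincar\'e-with-capacity step and an initial-boundary Poincar\'e-with-$\xi$-supported-in-$\{t<0\}$ step is needed here, in contrast to the two sub-cases of \cref{lemma3.7}. Summing the geometric series gives
\[
|v_h^j| \apprle \sum_{m=0}^{k_0-2} 2^{m+1} r_j \la + \rho \la \apprle 2^{k_0-1} r_j \la + \rho \la \apprle \rho \la,
\]
by the defining property of $k_0$, which completes the proof.

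The main technical obstacle is the bookkeeping in the application of \cref{lemma_crucial_2_app}: the extra temporal cutoff $\zeta$ produces the term $\|\phi\|_{L^\infty} \|\varphi'\|_{L^\infty} \iint |[u-w]_h|$, whose natural scaling involves the $\al_0$-intrinsic time $s = \al_0^{2-p} \rho^2$, while the Whitney cubes are scaled by the $\la$-intrinsic $\gamma = \la^{2-p}$. Clean absorption of this term into $2^{m+1} r_j \la$--especially in the singular regime $p<2$ where $(\la/\al_0)^{2-p} \geq 1$--rests entirely on the auxiliary maximal function $G_3$ together with the constraint $2^{m+1} r_j \leq \rho$ built into the definition of $k_0$. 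Making this mismatch-of-scales absorption explicit is the one calculation that has to be executed with care.
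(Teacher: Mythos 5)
The paper's proof of this lemma is a one\nobreakdash-liner: from \cref{lipschitz-extension-ini}, $\vlh(z)=\sum_i\Psi_i(z)v_h^i$, and then
\[
|v_h^i|\apprle\fiint_{4Q_i}|v_h(\tz)|\lsb{\chi}{[0,T]}\,d\tz
\ \apprle\ \rho\fiint_{16Q_i}\frac{|u-w|(\tz)}{\rho}\lsb{\chi}{\qomt}\,d\tz
\ \leq\ \rho\,G_3(z_*)\ \leq\ \rho\la,
\]
using \cref{time_average}(iii), the inclusion $|v|\leq|u-w|\lsb{\chi}{\qomt}$, and the fact (\descref{W4}) that $16Q_i\cap\elam\neq\emptyset$, so that some $z_*\in 16Q_i$ has $G_3(z_*)\leq\la$ with $G_3$ from \cref{eq6.7}. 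You correctly identify $G_3$ as the mechanism that makes $\cref{lemma6.1-ini}$ work and even apply this exact argument to your tail term $S_2$ --- but you stop short of noticing that the same estimate bounds every $|v_h^j|$ directly, rendering the whole telescoping decomposition unnecessary. The entire purpose of inserting $G_3$ into \cref{def_g_ini} (it has no analogue in \cref{def_g_app} of \cref{section5}) is to allow precisely this shortcut; the harder telescoping machinery paralleling \cref{lemma3.7} is reserved for the sharper estimate $|v_h^i-v_h^j|\apprle \min\{\rho,r_i\}\la$ in \cref{lemma6.2-ini}.

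Beyond being longer, your telescoping route has a real gap that you flag but do not close: the third term from \cref{lemma_crucial_2_app}, driven by $\|\zeta'\|_\infty\apprle 1/s$, scales like $(2^{m+1}r_j)^2\la^{2-p}s^{-1}\cdot\rho\la$. With $s=\al_0^{2-p}\rho^2$ and $2^{m+1}r_j\leq\rho$ this produces the factor $(\la/\al_0)^{2-p}$, which is benign for $p\geq2$ but is $\geq 1$ and unbounded in $\la$ when $p<2$. Absorbing it genuinely requires the $\Theta_1/\Theta_2$ split and the sub\nobreakdash-quadratic intrinsic geometry used later in \cref{lemma2.5-ini}, not just ``care''; in your proposal it is asserted, not proved. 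The direct $G_3$ argument needs none of this, since $G_3$ is a maximal function over \emph{arbitrary} parabolic cylinders and is therefore insensitive to the $\la$\nobreakdash-versus\nobreakdash-$\al_0$ scale mismatch.
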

\begin{proof}
From \cref{lipschitz-extension-ini}, we see that $\vlh(z) = \sum_i \Psi_i(z) \vh^i$, which along with \cref{time_average} and \descref{W4} gives the following bound:
\begin{equation*}
|\vh^i|  \leq \rho \fiint_{4Q_i} \frac{|\vh(\tz)|}{\rho}\lsb{\chi}{[0,T]} \ d\tz \apprle \rho \fiint_{16Q_i} \frac{|v(\tz)|}{\rho}\lsb{\chi}{[0,T]} \ d\tz \apprle \rho \la.
\end{equation*}
\end{proof}

Let us now prove an improved bound of $\vlh$.
\begin{lemma}
\label{lemma6.2-ini}
For any $i \in \Th_1$ and any $j \in A_i$ where $\Th_1$ is from \cref{theta1-ini} and  $A_i$ is from \cref{whitney_decomposition}, there holds
\[
|\vh^i - \vh^j| \apprle_{(n,p,\Lambda_1)} \min\{\rho,r_i\} \la.
\]
\end{lemma}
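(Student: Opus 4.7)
The plan is to follow closely the strategy of \cref{lemma3.8} from the a priori section, adapting it to the current setting where the truncated function is $v_h = [u-w]_h \eta \zeta$ rather than $[u-w]_h$ itself. The extra cutoff $\eta(x)\zeta(t)$ is harmless because on the Whitney cubes with $r_i \leq \rho$ the cutoffs behave as slowly varying factors on the relevant scales, and the restriction $i \in \Theta_1$ ensures that the enlargement $8Q_i$ stays below $t_0+16s$, so no new boundary issue arises from the temporal cutoff $\zeta$.

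First I would dispose of the trivial case $r_i \geq \rho$ by invoking \cref{lemma6.1-ini}, which gives $|v_h^i - v_h^j| \apprle \rho\la = \min\{\rho, r_i\}\la$. So we may assume $r_i \leq \rho$ and, by \descref{W5}, $r_j$ is comparable to $r_i$. Using \descref{W10} followed by triangle inequality, we reduce the claim to proving
\begin{equation*}
\fiint_{\frac34Q_k} \abs{v_h(z)\lsb{\chi}{[0,T]} - v_h^k}\ dz \apprle r_k \la \qquad \text{for } k \in \{i,j\}.
\end{equation*}

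I would then split into two subcases depending on $\frac34 Q_k$. \emph{If $\frac34 Q_k \not\subset 8B_\rho \times [0,\infty)$,} then $v_h^k = 0$ by the definition of the extension in \cref{lipschitz-extension-ini}, and one further distinguishes: if $\frac34 Q_k$ meets $(8B_\rho)^c$, then $v_h$ already vanishes on this complement because of the spatial cutoff $\eta \in C_c^\infty(B_{8\rho})$, so \cref{sobolev-poincare} together with \descref{W4} and the bound of $G_2$ in \cref{eq6.7} yields $\fiint_{\frac34 Q_k}|v_h|\apprle r_k\la$; if instead $\frac34 Q_k$ crosses $\{t=0\}$, we extend $v_h$ by zero to $\{t<0\}$, choose a cutoff $\xi \in C_c^\infty(2Q_k \cap \{t<0\})$ so that $\avgs{v_h\lsb{\chi}{[0,T]}}{\xi}=0$, and apply \cref{lemma_crucial_1} (the time-localised parabolic Poincaré). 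The second term of \cref{lemma_crucial_1} (the time-oscillation of $\avgs{v_h}{\mu}$) is controlled exactly as in the second subcase below.

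\emph{If $\frac34 Q_k \subset 8B_\rho \times [0,\infty)$,} we pick $\mu \in C_c^\infty(\frac34 B_k)$ with the standard size/derivative bounds and apply \cref{lemma_crucial_1}: the gradient term is bounded by $r_k\la$ using \descref{W4} and the $G_2$ bound. For the time oscillation, we apply \cref{lemma_crucial_2_app} with test functions $\phi(x) = \mu(x)\eta(x) \in C_c^\infty(\Omega)$ and $\varphi(t) = \zeta(t)$, using the representation $\partial_t w = \dv \vec w$ from \cref{def_vec_w}. The resulting three contributions (the $(|\nabla u|+h_0)^{p-1}$ piece, the $\vec w$ piece and the $|u-w|\zeta'$ piece) are controlled respectively by the Maximal bounds on $G_1$, $G_5$ and $G_3$ from \cref{eq6.7}, all combined with \descref{W4} and the scaling $\gamma = \la^{2-p}$; the fact that $i \in \Theta_1$ ensures $8Q_i$ remains inside the region where $\zeta$ is smooth, and the $\zeta'$ contribution is absorbed because $\|\zeta'\|_\infty \apprle 1/s$ while $\gamma r_k^2/s \apprle r_k^2/\rho^2 \leq 1$ under $\la \geq c_e\al_0$.

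The main obstacle I expect is bookkeeping the two extra cutoffs $\eta,\zeta$ when applying \cref{lemma_crucial_2_app}: the product $\mu\eta$ has the same $L^\infty$ and Lipschitz bounds as $\mu$ alone up to universal constants (since $r_k \leq \rho$ implies $\|\nabla(\mu\eta)\|_\infty \apprle r_k^{-(n+1)}$), so this is cosmetic; however the $|u-w|\zeta'$ term is the genuinely new piece that does not appear in \cref{lemma3.8}, and its control requires the bound of $G_3$ and precisely the restriction $i \in \Theta_1$ from \cref{theta1-ini}. This is the subtle technical point identified in the paper's introduction where the initial-boundary problem is treated as a lateral-boundary problem after applying the temporal cutoff.
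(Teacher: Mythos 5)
Your overall scaffold is the right one and matches the paper's: dispose of $r_i\geq\rho$ via \cref{lemma6.1-ini}, reduce via triangle inequality and \descref{W10} to per-cube oscillations, apply \cref{lemma_crucial_1} for the gradient piece, handle the lateral/initial-boundary subcases by \cref{sobolev-poincare} and a temporal cut-off, and use \cref{lemma_crucial_2_app} for the time oscillation. The issue lies in your treatment of the temporal cutoff $\zeta$.

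The paper applies \cref{lemma_crucial_2_app} with $\varphi(t)\equiv\zeta(t)\equiv 1$, using the defining property of $\Theta_1$ precisely so that $\zeta$ is \emph{constant} on $Q_i$ (cf.\ the remark after \cref{theta2-ini}); hence the $\varphi'$ term is \emph{identically zero} and never appears in \cref{6.17-ini}--\cref{6.20-ini}. You instead take $\varphi=\zeta$ non-constant and attempt to bound the resulting $\zeta'$ contribution by scaling, claiming $\gamma r_k^2/s\apprle r_k^2/\rho^2\leq 1$. That chain requires $\gamma=\la^{2-p}\apprle\al_0^{2-p}=s/\rho^2$, which under $\la\geq c_e\al_0$ is true precisely when $p\geq 2$. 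For $\tfrac{2n}{n+2}<p<2$ the exponent $2-p>0$ is positive, $\la$ ranges over $[c_e\al_0,\infty)$, and $(\la/\al_0)^{2-p}$ is unbounded, so the $\zeta'$ term cannot be absorbed into $r_i\la$ by this route. (Indeed, comparing with the $\Theta_2$ computation in \cref{lemma2.5-ini}, the $\zeta'$ piece is exactly what produces the $(\la^{2-p}/\al_0^{2-p})$ degradation in that lemma, and the $\Theta_1$/$\Theta_2$ split exists to prevent that degradation from contaminating the clean bound of \cref{lemma6.2-ini}.) To repair your proof you should replace the ``absorbed by scaling'' claim with the observation that $\zeta'\equiv 0$ on the cubes in question, so the third term of \cref{lemma_crucial_2_app} simply does not arise.
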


\begin{proof}
If $\rho \leq r_i$, then the result follows from \cref{lemma6.1-ini}. Hence, we only have to consider the case $r_i \leq \rho$.

{\bf Suppose } either $\frac34Q_i \cap \{t\leq 0\} \neq \emptyset$ or $\frac34Q_i \cap (8B)^c \times [0,\infty) \neq \emptyset$, i.e., $\frac34Q_i$ either crosses the initial boundary or the lateral boundary.

\begin{itemize}
\item In the case $\frac34Q_i \cap (8B)^c \times [0,\infty) \neq \emptyset$, then from the fact $j \in A_i$, we can assume $\frac34Q_j \subset 8B \times [0,\infty)$, otherwise, $\vh^j=0$ and there is nothing to prove. Thus, we can apply Poincar\'{e}'s inequality after enlarging to $4Q_j$ which gives $\frac34Q_i \subset 4Q_j$ from \descref{W11}
\begin{equation*}
|\vh^j|  \leq  \fiint_{4Q_j} |\vh(\tz)| \lsb{\chi}{[0,T]}\ d\tz = r_j \fiint_{4Q_j} \frac{|\vh(\tz)|}{r_j} \lsb{\chi}{[0,T]}\ d\tz \apprle r_j \fiint_{4Q_j} |\nabla \vh(\tz)| \lsb{\chi}{[0,T]}\ d\tz \overset{\text{\descref{W4}}}{\apprle} r_j \la.
\end{equation*}

\item In the case $\frac34Q_i \cap \{t\leq 0\} \neq \emptyset$, then again we can assume $\frac34Q_j \subset 8B \times [0,\infty)$, otherwise $\vh^j=0$ and there is nothing to prove. From \descref{W11}, we see that $\frac34Q_i \subset 4Q_j$, which along with \cref{lipschitz-extension-ini} and  a cut-off function $\xi \in C_c^{\infty}(4Q_j \cap \{t\leq 0\})$ implies  $\avgs{\vh\lsb{\chi}{[0,T]}}{\xi}=0$. Thus, for any $\mu \in C_c^{\infty}(4B_j)$ with $|\mu| \leq \frac{C}{r_j^n}$ and $|\nabla \mu| \leq \frac{C}{r_j^{n+1}}$, we get
\begin{equation*}
\begin{array}{rcl}
|\vh^j| & \leq & r_j \fiint_{4Q_j} \frac{\abs{\vh(\tz)\lsb{\chi}{[0,T]} - \avgs{\vh\lsb{\chi}{[0,T]}}{\xi}}}{r_j} \ d\tz  \\
& \apprle & r_j\fiint_{4Q_j} |\nabla \vh|\lsb{\chi}{[0,T]} \ d\tz + \sup_{t_1,t_2 \in 4I_j \cap [0,T]} \abs{\avgs{\vh \lsb{\chi}{[0,T]}}{\mu}(t_2) - \avgs{\vh \lsb{\chi}{[0,T]}}{\mu}(t_1)}\\
& \overset{\redlabel{621a}{a}}{\apprle} & r_j \la.
\end{array}
\end{equation*}
To obtain \redref{621a}{a}, we controlled the first term using \descref{W4} and the second term is controlled similar to \cref{6.20-ini}.
\end{itemize}

{\bf Let us come to the case where $\frac{3}{4}Q_i\subset 8B\times[0,\infty)$.} We see that (see \cite[Lemma 3.7]{AdiByun2} for the details)
\begin{equation}
\label{3.30-ini}
\begin{array}{rcl}
|\vh^i - \vh^j| \apprle \fiint_{Q_i} |\vh(z)\lsb{\chi}{[0,T]} - \vh^i| \ dz + \fiint_{Q_j} |\vh(z)\lsb{\chi}{[0,T]} - \vh^j| \ dz.
\end{array}
\end{equation}
Since $i \in \Th_1$, we must have $\zeta \equiv 1$ on $Q_i$, thus applying \cref{lemma_crucial_1}, for any $\mu \in C_c^{\infty}(B_i)$ with $|\mu| \leq \frac{C}{r_i^n}$ and $|\nabla \mu| \leq \frac{C}{r_i^{n+1}}$, we get
\begin{equation}
\label{6.23-ini}
\fiint_{Q_i} |\vh(z)\lsb{\chi}{[0,T]} - \vh^i| \ dz \apprle  r_i \fiint_{Q_i} |\nabla \vh|\lsb{\chi}{[0,T]} \ dz + \sup_{t_1,t_2 \in I_i} \abs{\avgs{v_h\lsb{\chi}{[0,T]}}{\mu}(t_2) - \avgs{v_h\lsb{\chi}{[0,T]}}{\mu}(t_1)}.
\end{equation}
The first term on the right hand side of \cref{6.23-ini} can be controlled easily using \descref{W4} to get
\begin{equation}
\label{6.24-ini}
\fiint_{Q_i} |\nabla \vh|\lsb{\chi}{[0,T]} \ dz \apprle \la.
\end{equation}

To control the second term on the right hand side of \cref{6.23-ini}, let us  apply \cref{lemma_crucial_2_app} with $\phi(x) = \eta(x) \mu(x)$ and $\varphi(t) \equiv \zeta(t) \equiv 1$ along with  to get
\begin{equation*}
\label{6.17-ini}
\begin{array}{rcl}
\sup_{t_1,t_2 \in I_i\cap [0,T]} \abs{\avgs{v_h}{\mu}(t_2) - \avgs{v_h}{\mu}(t_1)} & \apprle & \lbr \frac{1}{\rho r_i^n} + \frac{1}{r_i^{n+1}} \rbr \iint_{Q_i}  [|\nabla u|+|h_0|]_h^{p-1}\lsb{\chi}{[0,T]}\ dz \\
&& + \lbr \frac{1}{\rho r_i^n} + \frac{1}{r_i^{n+1}} \rbr  \int_{I_i} \left[ \int_{B_i} |\vec{w}| \lsb{\chi}{\qomt} \ dx \right]_h \ dt.
\end{array}
\end{equation*}

Since $|B_i| = c(n) r_i^n$, $|I_i| = \la^{2-p} r_i^2$,  after using the fact that $r_i \leq \rho$, we make use of \cref{imp_rmk} to  get
\begin{equation}
\label{6.20-ini}
\begin{array}{r@{}c@{}l}
\sup_{t_1,t_2 \in I_i} \abs{\avgs{v_h}{\mu}(t_2) - \avgs{v_h}{\mu}(t_1)}  & \apprle &    r_i \la^{2-p} \lbr  \fiint_{16Q_i} (|\nabla u|+|h_0|)^{p-1}\lsb{\chi}{[0,T]}\ dz  + \fiint_{16Q_i} |\vec{w}| \lsb{\chi}{\qomt}\ dz\rbr \\
& \overset{\text{\descref{W4}}}{\apprle} &  r_i \la.
\end{array}
\end{equation}
Thus combining \cref{6.20-ini} and \cref{6.24-ini} into \cref{6.23-ini} gives the desired estimate which proves the lemma.
\end{proof}


\subsection{Bounds on derivatives of \texorpdfstring{$\vlh$}.}
Using bounds from \cref{lemma6.1-ini} and \cref{lemma6.2-ini}, following the calculations in \cite{AdiByun2}, we can obtain the following lemmas which estimate the derivatives of $\vlh$:

\begin{lemma}
\label{lemma6.3-ini}
Given any $z \in \elam^c$, from \cref{whitney_decomposition}, we have $z \in \frac34Q_i$ for some $i \in \NN$. If either $i \in \Th_1$ or $i \in \Th_2$ with $\rho \leq r_i$, then 
\begin{equation*}\label{3.34-ini}
|\nabla \vlh| \apprle_{(n,p,\Lambda_1)} \la.
\end{equation*}
\end{lemma}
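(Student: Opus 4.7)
The plan is to exploit the partition of unity structure of the Whitney cover. Since $z \in \tfrac34 Q_i \subset \elam^c$ and by \descref{W14} we have $\sum_{j \in A_i} \Psi_j(z) = 1$ for $z \in \tfrac34 Q_i$, the construction in \cref{lipschitz-extension-ini} reduces to
\[
\vlh(z) = v_h(z) - \sum_{j \in A_i} \Psi_j(z) v_h(z) + \sum_{j \in A_i} \Psi_j(z) v_h^j = \sum_{j \in A_i} \Psi_j(z) v_h^j
\]
on $\tfrac34 Q_i$. Differentiating this identity in space and using the fact that $\sum_{j \in A_i} \nabla \Psi_j(z) \equiv 0$ (from differentiating the partition-of-unity relation), we obtain the key telescoping expression
\[
\nabla \vlh(z) = \sum_{j \in A_i} \nabla \Psi_j(z) \bigl(v_h^j - v_h^i\bigr).
\]
This is the standard device for transferring the trivial bound $|v_h^j| \apprle \rho\la$ into the sharper oscillation bound $|v_h^j - v_h^i|$ that carries the correct $r_i$-scaling.

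Next I would insert the derivative estimate from \descref{W13} giving $|\nabla \Psi_j| \apprle 1/r_j$, combined with the comparability $r_j \approx r_i$ for $j \in A_i$ from \descref{W5}, and the finite overlap $\#A_i \apprle 1$ from \descref{W8}. Thus
\[
|\nabla \vlh(z)| \apprle \frac{1}{r_i} \max_{j \in A_i} |v_h^j - v_h^i|.
\]
In the case $i \in \Theta_1$, \cref{lemma6.2-ini} directly yields $|v_h^j - v_h^i| \apprle \min\{\rho, r_i\}\la \leq r_i \la$ for every $j \in A_i$, which gives $|\nabla \vlh(z)| \apprle \la$. In the remaining case $i \in \Theta_2$ with $\rho \leq r_i$, I would fall back on the rough bound \cref{lemma6.1-ini}, applied both to $v_h^i$ and to each $v_h^j$ with $j \in A_i$, producing $|v_h^j - v_h^i| \apprle \rho \la$; then
\[
|\nabla \vlh(z)| \apprle \frac{\rho}{r_i} \la \leq \la,
\]
which closes this case.

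There is no essential obstacle here; this is a purely algebraic consequence of the Whitney covering once \cref{lemma6.1-ini} and \cref{lemma6.2-ini} are available. The only subtle point is making sure that the oscillation estimate \cref{lemma6.2-ini} can be applied to every pair $(i,j)$ arising in the sum — but since $j \in A_i$ by definition and $r_j \approx r_i$, this is built in. The restriction to $i \in \Theta_1$ or $\rho \leq r_i$ in the hypothesis is exactly what is needed so that one of these two estimates applies; the complementary case ($i \in \Theta_2$ with $r_i < \rho$) is the genuinely delicate situation that must be handled separately in a subsequent lemma.
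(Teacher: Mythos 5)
Your proof is correct and follows exactly the standard route the paper has in mind: it explicitly states that \cref{lemma6.3-ini} follows from \cref{lemma6.1-ini} and \cref{lemma6.2-ini} by "following the calculations in \cite{AdiByun2}," and those calculations are precisely the telescoping device $\nabla \vlh = \sum_{j\in A_i}\nabla\Psi_j\,(v_h^j - v_h^i)$ obtained by differentiating the partition-of-unity relation \descref{W14}, combined with \descref{W13}, \descref{W5}, \descref{W8}, and the two bound lemmas, exactly as you do. The case split at the end (apply \cref{lemma6.2-ini} when $i\in\Th_1$; apply the rough bound from \cref{lemma6.1-ini} together with $\rho\leq r_i$ when $i\in\Th_2$) correctly reflects why the hypothesis is stated as it is.
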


\begin{lemma}
 \label{lemma3.10.1-ini}
 Let $z \in \elam^c$ and $\ve >0$ be any number, then there exists a constant $C(n)$ such that the following holds:
 \begin{align*}
  |\vlh(z)| & \leq C \fiint_{4Q_i}|\vh(\tz)|\lsb{\chi}{[0,T]} \ d\tz \leq  \frac{Cr_i\la}{\varepsilon} + \frac{C\varepsilon}{\la r_i} \fiint_{4Q_i}|\vh(\tz)|^2 \lsb{\chi}{[0,T]}\ d\tz, \\
  |\nabla \vlh(z)| &\leq C \frac{1}{r_i} \fiint_{4Q_i}|\vh(\tz)| \lsb{\chi}{[0,T]}\ d\tz \leq  \frac{C \la}{\varepsilon} + \frac{C\varepsilon}{\la r_i^2} \fiint_{4Q_i}|\vh(\tz)|^2 \lsb{\chi}{[0,T]}\ d\tz. 
 \end{align*}
\end{lemma}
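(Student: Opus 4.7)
The plan is to unpack the construction of $\vlh$ from \cref{lipschitz-extension-ini} and reduce both bounds to an average of $|v_h|$ over $4Q_i$, after which a single application of Young's inequality produces the stated $\ve$-splittings.

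First I would fix $z\in\elam^c$ and invoke \descref{W3} to pick $i\in\NN$ with $z\in\tfrac34Q_i$. Since $\sum_{j\in A_i}\Psi_j\equiv 1$ on $\tfrac34Q_i$ by \descref{W14}, the definition \cref{lipschitz-extension-ini} collapses to
\[
\vlh(z)=\sum_{j\in A_i}\Psi_j(z)\,v_h^j,\qquad \nabla\vlh(z)=\sum_{j\in A_i}\nabla\Psi_j(z)\,(v_h^j-v_h^i),
\]
where the second identity uses $\sum_{j\in A_i}\nabla\Psi_j(z)=0$. Together with \descref{W8}, \descref{W12}, \descref{W13} and the definition of $v_h^j$ (averaging $v_h\lsb{\chi}{[0,T]}$ against $\Psi_j/\|\Psi_j\|_{L^1}$ over $\tfrac34Q_j$, with the convention $v_h^j=0$ when $\tfrac34Q_j\not\subset 8B\times[0,\infty)$), this gives $|\vlh(z)|\apprle \sum_{j\in A_i}|v_h^j|$ and $|\nabla\vlh(z)|\apprle \tfrac{1}{r_i}\sum_{j\in A_i}|v_h^j-v_h^i|$. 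Using \descref{W5} and \descref{W11} to absorb the neighbouring cylinders $\tfrac34Q_j\subset 4Q_i$ and the comparability $|Q_j|\sim|Q_i|$, each $|v_h^j|$ and each $|v_h^j-v_h^i|$ is bounded by $C\fiint_{4Q_i}|v_h(\tz)|\lsb{\chi}{[0,T]}\,d\tz$. This yields the first inequality in each line.

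For the second inequality in each line I would apply Young's inequality in the form $ab\le \tfrac{a^2}{2\delta}+\tfrac{\delta b^2}{2}$. Writing
\[
\fiint_{4Q_i}|v_h|\lsb{\chi}{[0,T]}\,d\tz=\fiint_{4Q_i}\lbr\frac{r_i\la}{\ve}\rbr^{1/2}\lbr\frac{\ve}{r_i\la}|v_h|^2\rbr^{1/2}\lsb{\chi}{[0,T]}\,d\tz
\]
and choosing the splitting so that the factor of $r_i$ combines correctly with the $\tfrac{1}{r_i}$ in front of the gradient estimate (respectively with the overall scaling of $|\vlh|$), one obtains
\[
\fiint_{4Q_i}|v_h|\lsb{\chi}{[0,T]}\,d\tz\le \frac{r_i\la}{\ve}+\frac{\ve}{r_i\la}\fiint_{4Q_i}|v_h|^2\lsb{\chi}{[0,T]}\,d\tz,
\]
up to a universal constant, which is exactly the right-hand side needed for both estimates after multiplying by $1$ or by $1/r_i$.

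I do not expect any real obstacle here: the bookkeeping is entirely parallel to the elliptic-like arguments in \cref{lemma3.10.1} of \cref{section5}, and the only difference is that we work with the truncated test function on $\mch=\RR^n\times[0,t_0+s]\cap\Om_T$ so we must keep the indicator $\lsb{\chi}{[0,T]}$ throughout. The mildly delicate point is making sure that when $v_h^j=0$ (because $\tfrac34Q_j\not\subset 8B\times[0,\infty)$), the telescoping identity $\nabla\vlh=\sum\nabla\Psi_j(v_h^j-v_h^i)$ still absorbs $v_h^i$ cleanly; this is handled automatically by $\sum_{j\in A_i}\nabla\Psi_j\equiv0$, which does not require any $v_h^j$ to be nonzero.
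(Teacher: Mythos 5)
Your proof is correct and is the standard argument for this type of bound: on $\elam^c$ the partition of unity collapses the extension to $\vlh=\sum_{j\in A_i}\Psi_j v_h^j$, the telescoping identity $\sum_{j\in A_i}\nabla\Psi_j\equiv 0$ from \descref{W14} gives $\nabla\vlh=\sum_{j\in A_i}\nabla\Psi_j(v_h^j-v_h^i)$, each $|v_h^j|$ and $|v_h^j-v_h^i|$ is absorbed into $\fiint_{4Q_i}|v_h|\lsb{\chi}{[0,T]}$ via \descref{W5}, \descref{W8}, \descref{W11}, \descref{W12}, \descref{W13}, and a single Young splitting with weight $\ve/(r_i\la)$ finishes both lines. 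The paper defers to the analogous computations in \cite{AdiByun2} without writing them out, and your argument matches that route step for step; note only that this particular lemma does not actually need the $\rho\la$- or $r_i\la$-bounds of \cref{lemma6.1-ini} and \cref{lemma6.2-ini} (those feed into the companion Lemmas \ref{lemma3.10.2-ini} and \ref{lemma3.10.3-ini}), which you correctly avoid using.
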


\begin{lemma}
 \label{lemma3.10.2-ini}
 Let $z \in \elam^c$ and $\ve \in (0,1)$ be given, from \cref{whitney_decomposition}, we have  $z \in \frac34Q_i$ for some $i \in \NN$. Suppose $i \in \Th_1$, then there holds:
 \begin{align*}
  |\vlh(z)|& \leq C_{(n,p,\Lambda_1)} \lbr \min\{ \rho, r_i\} \la + |\vh^i| \rbr \leq  C_{(n,p,b_0,r_0,r_0,\Lambda_1)} \lbr \frac{ r_i\la}{\ve} + \frac{\ve}{r_i \la} |\vh^i|^2 \rbr  \\
  |\nabla \vlh(z)| &\leq C_{(n)} \frac{\la}{\ve}.
 \end{align*}
\end{lemma}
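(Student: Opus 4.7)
\medskip

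\noindent\textbf{Proof proposal for \cref{lemma3.10.2-ini}.} The plan is to imitate the corresponding interior/lateral estimate from the earlier section (cf.\ the proof of \cref{lemma3.10.1}) but crucially exploiting the hypothesis $i\in\Theta_1$, which is what enables us to upgrade the rough bound $\min\{\rho,r_i\}\la$ of \cref{lemma6.1-ini} into the pointwise bound involving $|v_h^i|$. First, since $z\in\frac34Q_i\subset\elam^c$, \descref{W14} gives $\sum_{j\in A_i}\Psi_j(z)=1$, so the construction \cref{lipschitz-extension-ini} simplifies to
\begin{equation*}
\vlh(z)=\sum_{j\in A_i}\Psi_j(z)\,v_h^j = v_h^i+\sum_{j\in A_i}\Psi_j(z)\,(v_h^j-v_h^i).
\end{equation*}
Because $i\in\Theta_1$, the hypothesis of \cref{lemma6.2-ini} is satisfied for every $j\in A_i$, so $|v_h^j-v_h^i|\apprle \min\{\rho,r_i\}\la$. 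Combining this with \descref{W8} and \descref{W12} yields
\begin{equation*}
|\vlh(z)|\le |v_h^i|+C(n)\sum_{j\in A_i}\Psi_j(z)\min\{\rho,r_i\}\la \apprle_{(n,p,\Lambda_1)}\min\{\rho,r_i\}\la+|v_h^i|,
\end{equation*}
which is the first inequality of \cref{lemma3.10_bound3}-analog stated in the lemma.

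To obtain the second inequality, bound $\min\{\rho,r_i\}\la\le r_i\la\le r_i\la/\ve$ (since $\ve\in(0,1]$) and apply the weighted Young inequality $ab\le \tfrac12 a^2+\tfrac12 b^2$ with $a=\sqrt{r_i\la/\ve}$ and $b=\sqrt{\ve/(r_i\la)}\,|v_h^i|$ to get
\begin{equation*}
|v_h^i|=ab\le \frac{r_i\la}{2\ve}+\frac{\ve}{2r_i\la}|v_h^i|^2,
\end{equation*}
which, after combining with the preceding estimate and absorbing constants, gives the desired
\begin{equation*}
|\vlh(z)|\apprle_{(n,p,\Lambda_1)}\frac{r_i\la}{\ve}+\frac{\ve}{r_i\la}|v_h^i|^2.
\end{equation*}

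For the gradient bound, the cleanest route is to invoke \cref{lemma6.3-ini} directly, since its hypothesis ``$i\in\Theta_1$'' is precisely what we are assuming; this already delivers $|\nabla\vlh(z)|\apprle_{(n,p,\Lambda_1)}\la\le C\la/\ve$. For a self-contained proof one differentiates the identity $\vlh(z)=v_h^i+\sum_{j\in A_i}\Psi_j(z)(v_h^j-v_h^i)$, uses that $\sum_{j\in A_i}\nabla\Psi_j(z)=0$ on $\frac34Q_i$ (from $\sum_{j\in A_i}\Psi_j\equiv1$ there), and then applies $|\nabla\Psi_j|\apprle 1/r_j$ from \descref{W13}, $r_j\approx r_i$ from \descref{W5}, and \cref{lemma6.2-ini} again to bound $|v_h^j-v_h^i|\apprle r_i\la$, producing $|\nabla\vlh(z)|\apprle\la$.

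The main (and essentially the only) obstacle here is organizational: the improvement over \cref{lemma6.1-ini}–\cref{lemma3.10.1-ini} hinges on the fact that $i\in\Theta_1$ permits the \cref{lemma6.2-ini} comparison between $v_h^i$ and its Whitney neighbours $v_h^j$, whereas for $i\in\Theta_2$ the time-slice $t=0$ sits inside the cylinder and the oscillation estimate of \cref{lemma6.2-ini} no longer applies. The rest of the argument is a direct partition-of-unity computation plus a weighted Young inequality, with no further analytic input required.
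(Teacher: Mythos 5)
Your proof is correct and takes precisely the approach the paper intends: the paper states these lemmas without explicit proof, deferring to the calculations in the earlier section (\cref{lemma3.10.1}) and in \cite{AdiByun2}, and your argument — rewrite $\vlh(z)=v_h^i+\sum_{j\in A_i}\Psi_j(z)(v_h^j-v_h^i)$ on $\frac34Q_i$ using \descref{W14}, bound the oscillation terms via \cref{lemma6.2-ini} (available exactly because $i\in\Theta_1$), apply Young's inequality, and get the gradient bound from $\sum_{j\in A_i}\nabla\Psi_j\equiv 0$ together with \descref{W13}, \descref{W5}, \descref{W8} (or just cite \cref{lemma6.3-ini}) — is the standard route. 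Your observation that the $i\in\Theta_1$ hypothesis is the single structural ingredient that upgrades \cref{lemma3.10.1-ini} to the sharper $|v_h^i|$-dependent bound is exactly the point of this lemma. One cosmetic remark: your gradient bound actually gives the stronger $|\nabla\vlh|\apprle\lambda$ with constant depending on $(n,p,\Lambda_1)$, which trivially implies the stated $C_{(n)}\lambda/\varepsilon$; the dependence labels in the paper's statement are not carefully tracked here, so this is not a discrepancy worth worrying about.
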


\begin{lemma}
 \label{lemma3.10.3-ini}
 Let $z \in \elam^c$, then from \cref{whitney_decomposition}, we have $z \in \frac34Q_i$ for some $i \in \NN$. Suppose $i \in \Th_2$, then there holds:
  \begin{align*}  
  | \vlh(z)| &\leq C_{(n)}\lbr  r_i \la + \frac{\la^{1-p}r_i}{s} \fiint_{4Q_i}|\vh(\tz)|^2 \lsb{\chi}{[0,T]}\ d\tz \rbr,\\
  |\nabla \vlh(z)| &\leq C_{(n)}\lbr  \la + \frac{\la^{1-p}}{s} \fiint_{4Q_i}|\vh(\tz)|^2 \lsb{\chi}{[0,T]}\ d\tz\rbr.
 \end{align*}
\end{lemma}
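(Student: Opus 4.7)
The plan is to reduce both inequalities to the \emph{generic} $L^1$--$L^2$ bounds already supplied by \cref{lemma3.10.1-ini} (taking $\varepsilon = 1$), and then exploit a single geometric fact that is forced on us by the hypothesis $i \in \Theta_2$: the time radius $\gamma r_i^2$ of $Q_i$ must be comparable to, or larger than, $s$. This comparison is precisely what allows the $r_i$-dependent coefficients $\frac{1}{\la r_i}$ and $\frac{1}{\la r_i^2}$ to be rewritten as the $s$-dependent coefficients $\frac{\la^{1-p} r_i}{s}$ and $\frac{\la^{1-p}}{s}$ in the statement.

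First I would record the two pointwise estimates from \cref{lemma3.10.1-ini} with $\varepsilon = 1$, namely
\[
|\vlh(z)| \leq C r_i \la + \frac{C}{\la r_i} \fiint_{4Q_i} |\vh(\tz)|^2 \lsb{\chi}{[0,T]} \, d\tz,
\qquad
|\nabla \vlh(z)| \leq C \la + \frac{C}{\la r_i^2} \fiint_{4Q_i} |\vh(\tz)|^2 \lsb{\chi}{[0,T]} \, d\tz.
\]
These hold for every $i \in \NN$ and require no special information about $i$.

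Next I would establish the key geometric comparison $s \leq C \gamma r_i^2$ for $i \in \Theta_2$. Since $i \in \Theta$, the definition in \cref{h-ini} gives $\frac{3}{4} Q_i \cap 2\mch \neq \emptyset$, where $2\mch = \RR^n \times [0, t_0+4s] \cap \Om_T$; using the time-radius $\frac{9}{16}\gamma r_i^2$ of $\frac{3}{4} Q_i$, this forces $t_i \leq t_0 + 4s + \frac{9}{16}\gamma r_i^2$. On the other hand, $i \notin \Theta_1$ means $8Q_i$ contains a point whose time-coordinate exceeds $t_0 + 16s$; since $8Q_i$ has time-radius $64 \gamma r_i^2$, this forces $t_i > t_0 + 16s - 64\gamma r_i^2$. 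Subtracting the two inequalities yields $12 s < (64 + \tfrac{9}{16}) \gamma r_i^2$, so $s \leq C \gamma r_i^2 = C \la^{2-p} r_i^2$, which is equivalent to
\[
\frac{1}{\la r_i} \leq C \frac{\la^{1-p} r_i}{s} \qquad \text{and} \qquad \frac{1}{\la r_i^2} \leq C \frac{\la^{1-p}}{s}.
\]

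Finally, plugging these two comparisons into the generic bounds recorded in the first step yields exactly the two estimates claimed in the lemma. There is no real obstacle here; the lemma is essentially a bookkeeping corollary of \cref{lemma3.10.1-ini}, and the only substantive ingredient is the geometric computation above, which converts the characterizing property of $\Theta_2$ (that $8Q_i$ protrudes past the time slice $\{t = t_0 + 16s\}$ while $\frac{3}{4} Q_i$ still meets $2\mch$) into the desired $s$-scaled form of the coefficients.
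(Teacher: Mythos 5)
Your proof is correct and is the natural way to prove this lemma (the paper itself gives no proof, only deferring to the calculations in the corresponding boundary paper, so the intended argument is presumably the same). The two ingredients are exactly what you identify: the $\varepsilon=1$ specialization of \cref{lemma3.10.1-ini}, and the geometric bound $s \lesssim \gamma r_i^2 = \la^{2-p} r_i^2$ which is forced by $i \in \Theta_2$. Your derivation of that bound is sound: $i\in\Theta$ places $t_i \leq t_0 + 4s + \tfrac{9}{16}\gamma r_i^2$, while $i\notin\Theta_1$ forces $t_i > t_0 + 16s - 64\gamma r_i^2$, and combining gives $12 s < (64 + \tfrac{9}{16})\gamma r_i^2$. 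Dividing $\frac{1}{\la r_i^2} = \frac{\la^{1-p}}{\gamma r_i^2} \leq C\frac{\la^{1-p}}{s}$ and multiplying by $r_i$ for the zeroth-order bound then turns the $r_i$-scaled coefficients into the $s$-scaled ones in the statement, exactly as you wrote.
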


\begin{lemma}
\label{lemma3.11-ini}
Let $z \in \elam^c$, then from \cref{whitney_decomposition}, there exists an $i \in \NN$ such that $z \in \frac34Q_i$. Then the following estimates for the time derivative of $\vlh$ holds:
 \begin{align*}
 |\pa_t \vlh(z)| & \leq C_{(n)} \frac{1}{\ga r_i^2} \fiint_{4Q_i} |\vh(\tz)| \lsb{\chi}{[0,T]}\ d\tz. \label{lemma3.11.bound1-ini} 
 \end{align*}
 If $i \in \Th_1$, then we have
 \begin{align*}
   |\pa_t \vlh(z)| & \leq C_{(n,p,\Lambda_1)} \frac{1}{\ga r_i^2} \min\{r_i,\rho\} \la. 
  \end{align*}
  If $i \in \Th_2$, then there holds
  \begin{align*}
   |\pa_t \vlh(z)| & \leq C_{(n)} \frac{\rho \la}{s}.
 \end{align*}

\end{lemma}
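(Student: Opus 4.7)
The plan is to differentiate $v_{\lambda,h}$ from \cref{lipschitz-extension-ini}, exploit the partition-of-unity cancellation to turn absolute values of individual $v_h^j$'s into already-controlled differences $v_h^j - v_h^i$, and then dispose of the three cases by invoking the appropriate earlier bound.

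First I would fix $z \in E_\lambda^c$ and, via \descref{W3}, pick $i \in \NN$ with $z \in \tfrac{3}{4}Q_i$. Since $\{\Psi_j\}$ is a partition of unity of $E_\lambda^c$, the formula \cref{lipschitz-extension-ini} reduces to $v_{\lambda,h}(z) = \sum_j \Psi_j(z)\, v_h^j$ there, and only indices $j \in A_i$ can contribute at $z$ by \descref{W11}. Differentiating in $t$ and using $\sum_{j \in A_i} \Psi_j \equiv 1$ on $\tfrac{3}{4}Q_i$ from \descref{W14} yields the key identity
\[
\partial_t v_{\lambda,h}(z) \;=\; \sum_{j\in A_i} (\partial_t \Psi_j)(z)\,(v_h^j - v_h^i).
\]
Combined with $|\partial_t \Psi_j| \apprle \gamma^{-1} r_j^{-2}$ from \descref{W13}, the comparability $r_j \approx r_i$ from \descref{W5}, and $\#A_i \apprle 1$ from \descref{W8}, this gives the master estimate $|\partial_t v_{\lambda,h}(z)| \apprle (\gamma r_i^2)^{-1} \sum_{j\in A_i} |v_h^j - v_h^i|$.

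The first (unconditional) bound then follows immediately: by \descref{W11} and \descref{W12}, each of $|v_h^i|$ and $|v_h^j|$ is dominated by $\fiint_{4Q_i} |v_h(\tilde z)|\, \chi_{[0,T]}\, d\tilde z$, and the triangle inequality absorbs the finitely many terms. For $i \in \Theta_1$, I would simply plug \cref{lemma6.2-ini} into the master estimate, which controls each $|v_h^j - v_h^i|$ by $\min\{r_i,\rho\}\lambda$.

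The main obstacle is the third case $i \in \Theta_2$: the only pointwise estimate available on $|v_h^j - v_h^i|$ is the crude $\rho\lambda$ coming from \cref{lemma6.1-ini}, and the factor $\gamma r_i^2$ in the master estimate is a priori too small to yield the target $\rho\lambda/s$ on its own. The decisive geometric observation is that for $i \in \Theta_2$ the cylinder $Q_i$ is rigidly sized: $i \notin \Theta_1$ forces $t_i + 64\gamma r_i^2 > t_0 + 16s$, while $\tfrac{3}{4}Q_i \cap 2\mathcal{H} \neq \emptyset$ forces $t_i - \tfrac{9}{16}\gamma r_i^2 < t_0 + 4s$, and subtracting these two inequalities yields $12s \apprle \gamma r_i^2$, equivalently $(\gamma r_i^2)^{-1} \apprle s^{-1}$. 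Inserting this together with $|v_h^j|, |v_h^i| \apprle \rho\lambda$ into the master estimate then produces $|\partial_t v_{\lambda,h}(z)| \apprle \rho\lambda/s$, completing the plan.
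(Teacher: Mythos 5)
Your proof is correct. The paper does not spell this lemma out (it defers to calculations in the reference \cite{AdiByun2}), but your reconstruction supplies exactly the expected argument: the identity $\partial_t v_{\lambda,h}(z)=\sum_{j\in A_i}\partial_t\Psi_j(z)\,(v_h^j-v_h^i)$ coming from $\sum_{j\in A_i}\Psi_j\equiv 1$ on $\tfrac34 Q_i$, the bound $|\partial_t\Psi_j|\apprle (\gamma r_j^2)^{-1}$ with $r_j\approx r_i$ and $\#A_i\apprle 1$, the crude estimate $|v_h^j|\apprle\rho\lambda$ from \cref{lemma6.1-ini} and the sharper $|v_h^i-v_h^j|\apprle\min\{r_i,\rho\}\lambda$ from \cref{lemma6.2-ini} for $i\in\Theta_1$. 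The decisive piece you correctly identified is the geometric fact that $i\in\Theta_2$ (i.e.\ $\tfrac34 Q_i\cap 2\mathcal H\neq\emptyset$ while $8Q_i\not\subset \RR^n\times(-\infty,t_0+16s]$) forces $\gamma r_i^2\apprge s$, which is precisely what converts $(\gamma r_i^2)^{-1}\rho\lambda$ into $\rho\lambda/s$. One small inaccuracy: the reason only $j\in A_i$ contribute at $z\in\tfrac34 Q_i$ is the definition of $A_i$ together with $\spt\Psi_j\subset\tfrac34 Q_j$, not \descref{W11}; and strictly speaking one should choose $i$ with $z\in\tfrac12 Q_i$ (using \descref{W3}) so that $z$ lies in the interior of $\tfrac34 Q_i$ and the differentiation and the identity $\sum_{j\in A_i}\Psi_j\equiv 1$ are valid in a neighborhood of $z$. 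Neither affects the substance of the argument.
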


As a consequence of the above lemmas, we have the following lemma which controls the integral of $\vlh$.
\begin{lemma}
\label{lemma3.12-ini}
From \cref{lipschitz-extension-ini}, we extend $\vlh$ by zero in the region $8Q \setminus \elam \cap \{t \leq 0\}$. From this,  for any $\vartheta \in [1,p-\be]$,  we have the following bound:
 \begin{equation*}
  \iint_{8Q\setminus\elam } |\vlh(z)|^{\vartheta} \ dz \leq C_{(n,p,\Lambda_1)} \iint_{8Q\setminus\elam } |\vh(z)|^{\vartheta} \lsb{\chi}{[0,T]}\ dz.
 \end{equation*}

\end{lemma}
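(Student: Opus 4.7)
\textbf{Proof plan for \cref{lemma3.12-ini}.} The strategy is entirely analogous to the corresponding interior-type bound in \cite{adimurthi2018gradient}, adapted to our current setting with $v_h^i$ possibly being zero on certain Whitney cubes. The plan is to decompose $\vlh$ via the partition of unity from \cref{partition_unity}, apply Jensen's inequality on each Whitney cube, and then sum using the finite overlap property \descref{W7}.

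First I would fix $z\in 8Q\setminus\elam$. By \descref{W3} we have $z\in\frac12Q_i$ for some $i\in\NN$, and by \descref{W14} the partition of unity satisfies $\sum_{j\in A_i}\Psi_j(z)=1$. Together with \descref{W12}, the defining formula \cref{lipschitz-extension-ini} yields
\[
 \vlh(z) = v_h(z) - \sum_{j\in A_i}\Psi_j(z)\bigl(v_h(z) - v_h^j\bigr) = \sum_{j\in A_i}\Psi_j(z)\,v_h^j.
\]
Using $\#A_i\leq c(n)$ from \descref{W8} and the convexity inequality $\bigl(\sum_{j\in A_i}a_j\bigr)^{\vartheta}\leq c(n)\sum_{j\in A_i}a_j^{\vartheta}$ valid for $\vartheta\geq 1$, I obtain the pointwise bound
\[
 |\vlh(z)|^{\vartheta} \leq c(n)\sum_{j\in A_i}\Psi_j(z)\,|v_h^j|^{\vartheta}.
\]
For each $j\in A_i$ with $\frac34Q_j\subset 8B\times[0,\infty)$, Jensen's inequality applied to the probability measure $\Psi_j/\|\Psi_j\|_{L^1}$ gives
\[
 |v_h^j|^{\vartheta} \leq \frac{1}{\|\Psi_j\|_{L^1(\frac34Q_j)}}\iint_{\frac34Q_j}|v_h|^{\vartheta}\Psi_j\lsb{\chi}{[0,T]}\,d\tz \apprle \fiint_{\frac34Q_j} |v_h(\tz)|^{\vartheta}\lsb{\chi}{[0,T]}\,d\tz,
\]
where I used $\|\Psi_j\|_{L^1(\frac34Q_j)}\apprge |\tfrac12 Q_j|\apprge |\tfrac34 Q_j|$ from \descref{W12}; when $\frac34Q_j\not\subset 8B\times[0,\infty)$, the estimate is trivial because $v_h^j=0$.

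Integrating the pointwise inequality over $8Q\setminus\elam$ and collecting indices via \descref{W7} (which bounds $\sum_j\lsb{\chi}{\frac34Q_j}\leq c(n)$), I would obtain
\[
 \iint_{8Q\setminus\elam}|\vlh|^{\vartheta}\,dz \apprle \sum_{j\in\Theta_*}\iint_{\frac34Q_j\cap\elam^c}\Psi_j(z)\,d z\cdot\fiint_{\frac34Q_j}|v_h|^{\vartheta}\lsb{\chi}{[0,T]}\,d\tz \apprle \sum_{j\in\Theta_*}\iint_{\frac34Q_j}|v_h|^{\vartheta}\lsb{\chi}{[0,T]}\,d\tz,
\]
where $\Theta_*$ is the collection of indices $j$ for which $\frac34Q_j$ meets $8Q\setminus\elam$. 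A final application of \descref{W7} (together with $\frac34Q_j\subset \elam^c$ from \descref{W4}) converts the sum over $j\in\Theta_*$ into an integral over $\elam^c$, giving the claimed bound.

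\textbf{Main obstacle.} The only nontrivial point is that the Whitney cubes $\frac34Q_j$ associated to indices meeting $8Q$ can slightly overshoot $8Q$ in space or time, so that the resulting right-hand side integrates $|\vh|^{\vartheta}$ over a region marginally larger than $8Q\setminus\elam$. This is controlled by \descref{W4}: since $d_{\la}(z_j,\elam)=16r_j$ and every $z\in\frac34Q_j$ satisfies $d_{\la}(z,z_j)\leq r_j$, the cube $\frac34Q_j$ lies within bounded parabolic distance of $8Q$ relative to its own size, so enlarging the right-hand side to the full $8Q\setminus\elam$ (or a fixed enlargement thereof absorbed into the constant) is harmless. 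The remark preceding the lemma that $\vlh$ is extended by zero on $8Q\setminus\elam\cap\{t\leq 0\}$ ensures that the pointwise argument is globally consistent with this convention, since in that region the only contributing indices $j$ must already satisfy $v_h^j=0$ by construction.
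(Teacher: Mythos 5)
Your overall strategy is correct and is the standard one for these Whitney-type bounds (partition of unity, Jensen/convexity on each cube, finite overlap). The paper itself does not give a proof of this lemma but refers to the calculations in \cite{AdiByun2}, so there is no text to compare against line by line; however, your reconstruction is the right one and the calculations check out.

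The one place where your writeup is not quite right is the \emph{Main obstacle} paragraph. You correctly observe that the Whitney cubes $\tfrac34 Q_j$ meeting $8Q\setminus\elam$ need not lie inside $8Q$. But the resolution you offer---``enlarging the right-hand side to the full $8Q\setminus\elam$ (or a fixed enlargement thereof absorbed into the constant)''---does not work as stated: the right-hand side of the lemma is an integral over the fixed set $8Q\setminus\elam$, and a domain enlargement cannot be absorbed into a multiplicative constant. The actual reason the overshoot is harmless is the compact support of $v_h$. From \cref{def-v-ini}, $v_h=[u-w]_h\,\eta\,\zeta$ with $\eta\in C_c^\infty(B_{8\rho})$ and $\zeta\in C_c^\infty(I_{8s})$ by \cref{def_eta,def_zeta}, so $\spt v_h\Subset B_{8\rho}\times I_{8s}=8Q$. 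Consequently, for any cube with $\tfrac34 Q_j\cap(8Q\setminus\elam)\neq\emptyset$,
\[
\iint_{\frac34Q_j}|v_h|^{\vartheta}\lsb{\chi}{[0,T]}\ d\tz=\iint_{\frac34Q_j\cap 8Q}|v_h|^{\vartheta}\lsb{\chi}{[0,T]}\ d\tz,
\]
and since $\tfrac34Q_j\subset 8Q_j\subset\elam^c$ by \descref{W4}, the set of integration satisfies $\tfrac34Q_j\cap 8Q\subset 8Q\setminus\elam$. Summing with \descref{W7} then lands you exactly on the stated right-hand side with no leftover region. With this correction in place, the proof is complete.
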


\subsection{Two important intermediate estimates}

In order to prove the Lipschitz continuity of $\vlh$, we need to obtain a suitable control of integrals over $Q_i$, which will be done in the following two lemmas. The first one is an estimate for cylinders in $\Th_1$.
\begin{lemma}
\label{lemma3.13-ini}
 For any  $i \in \Th_1$,  we have the following estimate: 
 \begin{equation*}
 \label{3.59-ini}
  \fiint_{\frac34Q_i} \left|\frac{\vh(z) - \vlh(z)}{r_i}\right|^q \lsb{\chi}{[0,T]}\ dz \leq C_{(n,p,q,\lamot)} \la^q.
 \end{equation*}

\end{lemma}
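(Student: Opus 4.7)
The plan is to exploit the partition of unity property $\sum_{j \in A_i} \Psi_j \equiv 1$ on $\frac{3}{4}Q_i$ from \descref{W14} to reduce the bound to two pieces: (i) the differences $|v_h^j - v_h^i|$ for neighbors $j \in A_i$, which are already controlled by \cref{lemma6.2-ini}, and (ii) the deviation $|v_h(z) - v_h^i|$, which will be estimated by a parabolic Poincar\'e argument together with \cref{lemma_crucial_2_app}. Starting from \cref{lipschitz-extension-ini} and \descref{W14}, for $z \in \frac34 Q_i$,
\[
v_h(z) - v_{\la,h}(z) = \sum_{j \in A_i} \Psi_j(z)\,(v_h(z) - v_h^j),
\]
so the triangle inequality isolates $|v_h(z) - v_h^i|$ and $\max_{j \in A_i}|v_h^i - v_h^j|$. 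The latter is bounded by $C r_i\la$ via \cref{lemma6.2-ini} together with the finiteness of $A_i$ from \descref{W8}, and thus contributes the admissible $C\la^q$ after averaging. The task therefore reduces to proving $\fiint_{\frac{3}{4}Q_i} |v_h - v_h^i|^q/r_i^q\, \lsb{\chi}{[0,T]}\,dz \leq C\la^q$.

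For this main reduction I would split into two cases. In Case~A where $\frac{3}{4}Q_i \subset 8B \times [0,\infty)$, so that $v_h^i$ is the nonzero weighted average from \cref{lipschitz-extension-ini}, I pick $\mu \in C_c^\infty(\frac{3}{4}B_i)$ with $|\mu| \apprle r_i^{-n}$ and $|\nabla \mu| \apprle r_i^{-n-1}$, substitute $v_h^i$ by $\avgs{v_h}{\mu}$ at universally bounded cost, and invoke \cref{lemma_crucial_1}. The resulting spatial gradient term $\fiint_{\frac{3}{4}Q_i}|\nabla v_h|^q\lsb{\chi}{[0,T]}\,dz$ is bounded by $C\la^q$ through \descref{W4} and $G_2$ from \cref{eq6.7}. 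The time oscillation $|\avgs{v_h}{\mu}(t_2) - \avgs{v_h}{\mu}(t_1)|$ is controlled by \cref{lemma_crucial_2_app} applied with $\phi = \eta\mu$ and $\varphi = \zeta$, using crucially the geometric fact that $i \in \Theta_1$ (see \cref{theta1-ini}) forces $\zeta \equiv 1$ on $Q_i$, which eliminates the $\|\varphi'\|_\infty$ contribution; the resulting spatial integrals of $[(|\nabla u|+|h_0|)^{p-1}]_h$ and of $|\vec{w}|$ are then estimated by $G_1$ and $G_5$ through \descref{W4}, yielding $C r_i\la$ in the spirit of \cref{6.20-ini}. In Case~B where $\frac{3}{4}Q_i \not\subset 8B \times [0,\infty)$, one has $v_h^i = 0$ by \cref{lipschitz-extension-ini}. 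If $\frac{3}{4}Q_i \cap (8B)^c \neq \emptyset$ (lateral subcase), the factor $\eta$ forces $v$ to vanish on a set of positive measure in every time slice, allowing \cref{sobolev-poincare} applied slicewise in $x$ followed by time integration to yield $C\la^q$ via \descref{W4} and $G_2$. If instead $\frac{3}{4}Q_i \cap \{t<0\} \neq \emptyset$ (initial subcase), I take $\xi \in C_c^\infty(\frac{3}{4}Q_i \cap \{t<0\})$ with $\int\xi = 1$, so that $\avgs{v_h\lsb{\chi}{[0,T]}}{\xi}=0$ because $v_h\lsb{\chi}{[0,T]} \equiv 0$ for $t<0$, and rerun the scheme of Case~A with this vanishing mean replacing the weighted average $v_h^i$.

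The main obstacle will be the bookkeeping in Case~A's time oscillation estimate: one must simultaneously track $\|\nabla(\eta\mu)\|_{L^\infty} \apprle r_i^{-n-1}$ (dominant when $r_i \leq \rho$; handled separately when $r_i > \rho$), together with $|Q_i| \sim r_i^{n+2}\la^{2-p}$ and the maximal function bounds on $|\vec{w}|$ and on $(|\nabla u|+|h_0|)^{p-1}$ from \cref{eq6.7}, verifying that all contributions collapse to the required $C r_i\la$, exactly as carried out in the proof of \cref{lemma6.2-ini}. A further delicate point is that the application of \cref{lemma_crucial_2_app} with distributional $w_t$ relies on the weaker assumption from \cref{imp_rmk}, which must be checked to be in force for the present choice of $\phi = \eta\mu$. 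Once this scaling analysis is complete, combining Case~A, Case~B, and the \cref{lemma6.2-ini} bound on $|v_h^i - v_h^j|$ yields the desired inequality.
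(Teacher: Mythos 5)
Your proposal is correct and follows essentially the same route as the paper. The paper compresses the argument by writing the triangle-inequality decomposition into $J_1 = \fiint_{\frac34Q_i}|v_h - v_h^i|^q\,dz$ and $J_2 = \sum_{j\in A_i}\fiint_{\frac34Q_i}|v_h^j - v_h^i|^q\,dz$, bounding $J_2$ directly by \cref{lemma6.2-ini} and then stating that $J_1$ is handled "exactly as in \cref{3.30-ini}" (i.e., by the parabolic Poincar\'e/time-oscillation machinery already developed inside the proof of \cref{lemma6.2-ini}, with the subcases for lateral/initial boundary crossings); your proposal simply unfolds that reference into an explicit Case A / Case B analysis, identifying the same key ingredients (partition of unity from \descref{W12}--\descref{W14}, \cref{lemma_crucial_1}, \cref{lemma_crucial_2_app} with $\phi=\eta\mu$ and $\varphi=\zeta\equiv 1$ since $i\in\Theta_1$, $G_1,G_2,G_5$ from \cref{eq6.7} via \descref{W4}, and vanishing weighted averages in the boundary subcases).
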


\begin{proof}
 For any  $z \in \frac34Q_i$, using \cref{lipschitz-extension-ini} along with triangle inequality and  \descref{W13}, we get
 \begin{equation}
 \label{3.60-ini}
    \fiint_{\frac34Q_i}|\vh(z) - \vlh(z)|^q\lsb{\chi}{[0,T]} \ dz 
   \leq \fiint_{\frac34Q_i}|\vh(z)\lsb{\chi}{[0,T]} - \vh^i|^q \ dz +  \sum_{j:j \in A_i}\fiint_{\frac34Q_i} \left|\vh^j - \vh^i \right|^q\  dz := J_1 + J_2.
 \end{equation}

 We shall estimate each of the terms of \cref{3.60-ini} as follows (note that $i \in \Th_1$). Note that $J_1$ is exactly as in \cref{3.30-ini}, which implies 
  \begin{equation}
  \label{3.61-ini}
   J_1 \apprle (r_i \la)^q. 
  \end{equation}
  In order to estimate $J_2$, we can directly use \cref{lemma6.2-ini} to get 
  \begin{equation}
  \label{3.62-ini}
   J_2 \apprle (r_i \la)^q. 
  \end{equation}
  Substituting \cref{3.61-ini} and \cref{3.62-ini} into \cref{3.60-ini} and making use of  \descref{W8}, the lemma follows.
\end{proof}

The second lemma is more involved to prove, as it concerns cylinders in $\Th_2$. 
\begin{lemma}
\label{lemma2.5-ini}
Given any  $i \in \Th_2$ and $8Q_i \subset \RR^n \times \RR^+$, let $\al_0$ and $c_e$ be as in \cref{alpha_0},  then for any $\la \geq c_e \al_0$, there holds
 \[\begin{array}{rcl}
   \fiint_{\frac34Q_i} \left| \frac{\vh(z)\lsb{\chi}{[0,T]} - \vh^i}{r_i} \right|^q \ dz & \apprle_{(n,p,q,\lamot,c_e)}  &\la^q \lbr 1+ \lbr \frac{\la^{2-p}}{\al_0^{2-p}}\rbr+ \lbr \frac{\la^{2-p}}{\al_0^{2-p}}\rbr^{\frac{n+1}{2}}  + \lbr \frac{\la^{2-p}}{\al_0^{2-p}}\rbr^{\frac{n-1}{2}}\rbr^q.
 \end{array}
 \]
\end{lemma}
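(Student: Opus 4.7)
The plan is to apply the parabolic Poincar\'e inequality of \cref{lemma_crucial_1} on $\frac34 Q_i$ with the weight $\xi = \Psi_i$, so that the weighted average coincides (up to the boundary convention in \cref{lipschitz-extension-ini}) with $\vh^i$, and with an auxiliary test function $\mu \in C_c^\infty(B_i)$ satisfying $|\mu| \apprle r_i^{-n}$ and $|\nabla \mu| \apprle r_i^{-n-1}$. This reduces the estimate to bounding (i) $\fiint_{\frac34 Q_i} |\nabla \vh|^q \, dz$ and (ii) the time oscillation of $t \mapsto \int_{B_i} \vh(x,t)\,\mu(x)\, dx$ over $\frac34 I_i \cap [0,T]$, divided by $r_i$.

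For (i), I would write $\nabla \vh = [\nabla(u-w)]_h \eta \zeta + [u-w]_h \zeta \nabla \eta$, use $|\nabla \eta| \le 1/\rho$, and control both pieces by $\la$ through $G_1, G_3, G_4 \le \la$ at any point of $16 Q_i \cap \elam$ (nonempty by \descref{W4}); this yields the leading ``$1$''-contribution in the right-hand side. For (ii), I would first handle the easy subcase $\frac34 Q_i \not\subset 8B \times [0,\infty)$: then $\vh^i = 0$ by definition and $\vh$ vanishes on a set of positive measure inside $\frac34 Q_i$ (either $\eta \equiv 0$ on part of $\frac34 B_i$, or the extension by zero forces $\vh = 0$ on $\frac34 Q_i \cap \{t \le 0\}$), so that \cref{sobolev-poincare} closes the bound at order $r_i \la$. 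Thus the interesting subcase is $\frac34 Q_i \subset 8B \times [0,\infty)$, which automatically yields the spatial restriction $r_i \apprle \rho$.

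In this interesting subcase I would apply \cref{lemma_crucial_2_app} with $\phi = \eta \mu$ and $\varphi = \zeta$, producing three contributions. The $\nabla\phi$-contribution is $\apprle \|\nabla\phi\|_\infty \iint [(|\nabla u| + h_0)^{p-1}]_h \apprle r_i \la$ using $G_1 \le \la$ and $|Q_i| = r_i^{n+2} \la^{2-p}$. The $\vec w$-contribution, through $G_5 \le \la$, contributes $r_i \la^{3-p}$, which has the form $r_i \la \cdot \la^{2-p}$; controlling $\la^{2-p}$ by a universal constant when $p \ge 2$ and by powers of $(\la/\al_0)^{2-p}$ when $p < 2$ (both via $\la \ge c_e \al_0$ from \cref{alpha_0}) accounts for the $(\la/\al_0)^{2-p}$ term. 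The $\zeta'$-contribution $\|\phi\|_\infty \|\zeta'\|_\infty \iint |[u-w]_h|$ is the most delicate: using the identity $|Q_i|/(r_i^n s) = (r_i/\rho)^2 (\la/\al_0)^{2-p}$ together with $G_3 \le \la$ and Jensen to estimate $\fint_{Q_i} |u-w| \apprle \rho \la$, one obtains a contribution of order $r_i \la \cdot (r_i/\rho)(\la/\al_0)^{2-p}$, which on dividing by $r_i$ also matches the $(\la/\al_0)^{2-p}$ term.

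The hardest part, and the source of the dimensional exponents $(n \pm 1)/2$, is when the coarse $G_3$-bound $\fint_{Q_i} |u-w| \apprle \rho \la$ is not tight enough: specifically, when $r_i$ approaches its upper admissible size or the Whitney time scale $\la^{2-p} r_i^2$ outstrips the cutoff scale $s = \al_0^{2-p} \rho^2$. In that regime I plan to apply the Gagliardo--Nirenberg inequality of \cref{gagliardo_nirenberg_lemma} on the spatial ball $B_i$, choosing the interpolation parameters $\sigma, \vartheta, r, \delta$ so that the scaling condition \cref{9.20-ini} is saturated, and trading a portion of the $L^q$-gradient of $\vh$ against a time-slice $L^2$-bound on $\vh$ (available through the energy structure of very weak solutions). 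The mismatch between the Whitney and cutoff time scales then propagates through the interpolation exponents, producing precisely the powers $((\la/\al_0)^{2-p})^{(n+1)/2}$ and $((\la/\al_0)^{2-p})^{(n-1)/2}$. Collecting all four contributions yields the claimed bound.
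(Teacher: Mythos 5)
Your initial reduction---apply the parabolic Poincar\'e \cref{lemma_crucial_1} to replace $\vh^i$ by a weighted average, then invoke \cref{lemma_crucial_2_app} with $\phi=\eta\mu$, $\varphi=\zeta$, giving a $\nabla\phi$-piece, a $\vec w$-piece, and a $\zeta'$-piece---is the paper's route, and your handling of the gradient contribution is fine. The real content of this lemma, however, is the appearance of the exponents $(n\pm 1)/2$, and your proposed mechanism for them is not the one used and, as far as I can see, cannot be made to work. The paper gets these exponents by arithmetic alone. The $\zeta'$-piece from \cref{lemma_crucial_2_app} carries $\|\phi\|_\infty\|\zeta'\|_\infty\iint_{8B\times(t_1,t_2)}|[u-w]_h| \approx \frac{1}{r_i^n}\cdot\frac{\rho |Q|}{s}\,\fiint_{8Q}\frac{|[u-w]_h|}{\rho} = \frac{\rho^{n+1}}{r_i^n}\,\fiint_{8Q}\frac{|[u-w]_h|}{\rho}$, and the space-time average is then bounded (via \cref{lemma_crucial_1}, \cref{lemma_crucial_2_app} again, and the fact that $8Q$ itself crosses $\{t=0\}$) by $\la+\frac{s}{\rho^2}\la^{p-1}$ in \cref{lemma2.5.8}. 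Dividing by $r_i$ and using the two constraints $\la^{2-p}r_i^2\apprge s=\al_0^{2-p}\rho^2$ (forced by $i\in\Th_2$) and $r_i\le\rho$ yields $\frac{\rho^{n+1}}{r_i^{n+1}}\apprle\lbr\frac{\la^{2-p}}{\al_0^{2-p}}\rbr^{\frac{n+1}{2}}$; the $\la$ part then gives the $(n+1)/2$ power and the $\frac{s}{\rho^2}\la^{p-1}$ part the $(n-1)/2$ power. No interpolation appears.

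Your plan instead is Gagliardo--Nirenberg, trading the $L^q$ gradient of $\vh$ against ``a time-slice $L^2$-bound on $\vh$ (available through the energy structure of very weak solutions).'' This is where the proposal breaks. No such time-slice bound is available here: the definition of a very weak solution only supplies $u\in L^2(0,T;L^2_{\loc}(\Om))$ as a space-time integrability assumption, and the $\sup_t\int|v|^2$ control you are implicitly reaching for is precisely what the Caccioppoli estimate (\cref{caccioppoli}) delivers, which is proved \emph{after} this lemma and depends on it through the Lipschitz truncation. Using it here is circular. Even granting the input, a GN interpolation does not produce the exponents $(n\pm1)/2$---they are an artifact of the prefactor $\rho^{n+1}/r_i^n$ and of the mismatch between the intrinsic scale of $Q$ and that of $Q_i$, not of a Sobolev embedding---so you would still have to close the estimate by the bookkeeping above. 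I would also flag that \cref{sobolev-poincare} is a purely spatial inequality and cannot control time oscillation; whenever the time slab of $\frac34 Q_i$ leaves $[0,T]$ one must instead pass through the parabolic Poincar\'e \cref{lemma_crucial_1} with a $\{t\le 0\}$-supported weight, as the paper does.
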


\begin{proof} 
Let us first note that  without loss of generality, we can take $r_i \leq \rho$, otherwise we can directly apply \cref{lemma6.2-ini} in the case of $\rho \leq r_i$ to get
\begin{equation}
\label{6.42-ini}
\fiint_{\frac34Q_i} \left| \frac{\vh(z)\lsb{\chi}{[0,T]} - \vh^i}{r_i} \right|^q \ dz  =  \fiint_{\frac34Q_i} \left| \frac{\vh(z)\lsb{\chi}{[0,T]}}{r_i} \right|^q \ dz  \overset{\text{\cref{lemma6.1-ini}}}{\apprle}  \lbr \frac{\rho}{r_i}\rbr^q \la^q \apprle \la^q.
\end{equation}

$\bullet$\emph{If $\vh^i =0$, then either $\frac34Q_i \cap (8B)^c \times [0,\infty) \neq \emptyset$ or $\frac34Q_i \cap \{t\leq 0\} \neq \emptyset$.} In each of those cases, we can estimate as follows:
\begin{description}
\item[In the case $\frac34Q_i$ crosses the lateral edge,]  we can directly apply \cref{sobolev-poincare} with $\ka =1$ to get
\begin{equation*}
\fiint_{\frac34Q_i} \left| \frac{\vh(z)\lsb{\chi}{[0,T]}}{r_i} \right|^q \ dz  \apprle \fiint_{\frac34Q_i} |\nabla \vh(z)|^q \lsb{\chi}{[0,T]} \ dz \overset{\text{\descref{W4}}}{\apprle} \la^q.
\end{equation*}

\item[In the case $\frac34Q_i$ crosses the initial edge,] we take a cut-off function $\xi \in C_c^{\infty}(Q_j \cap \{t\leq 0\})$ from which we see that $\avgs{\vh}{\xi} =0$. Making use of \cref{lemma_crucial_1}, for any $\mu \in C_c^{\infty}(B_i)$ with $|\mu| \leq \frac{C}{r_i^n}$ and $|\nabla \mu| \leq \frac{C}{r_i^{n+1}}$, we get
\begin{equation}
\label{lemma2.5.8.1-ini.1}
\begin{array}{r@{}c@{}l}
\hspace*{-1.0cm}\fiint_{\frac34Q_i} \left| \frac{\vh(z)\lsb{\chi}{[0,T]}}{r_i} \right|^q \ dz &\ \apprle \ & \fiint_{Q_i} \left| \frac{\vh(z)\lsb{\chi}{[0,T]} - \avgs{\vh}{\xi}}{r_i} \right|^q \ dz\\
& {\apprle}& \fiint_{Q_i} |\nabla \vh|^q\lsb{\chi}{[0,T]} \ dz + \sup_{t_1,t_2 \in I_i} \left| \frac{\avgs{\vh\lsb{\chi}{[0,T]}}{\mu}(t_1)-\avgs{\vh\lsb{\chi}{[0,T]}}{\mu}(t_2)}{r_i} \right|^q .
\end{array}
\end{equation}
The first term on the right hand side of \cref{lemma2.5.8.1-ini.1} can be estimated using \descref{W4},  to estimate the second term on the right hand side of \cref{lemma2.5.8.1-ini.1}, let us apply \cref{lemma_crucial_2_app} with $\phi(x) = \mu(x) \eta(x)$ and $\varphi(t) = \zeta(t)$, which gives for any $t_1,t_2 \in I_i\cap[0,T]$, the following sequence of estimates:
  \begin{equation}
  \label{lemma2.5.8.1}
  \begin{array}{rcl}
  |\avgs{\vh}{\mu}(t_2)- \avgs{\vh}{\mu}(t_1)| & \overset{\redlabel{645a}{a}}{\apprle} & \lbr \frac{1}{\rho r_i^n} + \frac{1}{r_i^{n+1}} \rbr \int_{t_1}^{t_2}\int_{\frac34B_i} [|\nabla u|+|h_0|]_h^{p-1} \ dz \\
&&+\lbr \frac{1}{\rho r_i^n} + \frac{1}{r_i^{n+1}} \rbr \int_{t_1}^{t_2}\left[\int_{\frac34B_i}|\vec{w}| \lsb{\chi}{\qomt} \ dx \right]_h \ dt\\
&&+ \frac{1}{r_i^n} \frac{1}{s} \int_{t_1}^{t_2}\int_{\frac34B_i}|[u-w]_h| \lsb{\chi}{[0,T]}\ dz\\
    & \overset{\redlabel{645b}{b}}{\apprle} & r_i \la^{2-p} \fiint_{4Q_i} [|\nabla u|+|h_0|]_h^{p-1} \lsb{\chi}{[0,T]}\ dz\\
&&+r_i\la^{2-p}\fint_{t_1}^{t_2}\left[\fint_{\frac34B_i}|\vec{w}| \lsb{\chi}{\qomt} \ dx \right]_h \ dt \\
&&+ \frac{r_i^3 \la^{2-p}}{s} \fiint_{4Q_i}|[u-w]_h| \lsb{\chi}{[0,T]}\ dz.
\end{array}
\end{equation}
To obtain \redref{645a}{a} and \redref{645b}{b}, we made use of \cref{def_eta}, \cref{def_zeta} and the bounds for $\mu$ along with \descref{W1} and \cref{imp_rmk}.
The first term and second term on the right hand side of \cref{lemma2.5.8.1} can be controlled using \descref{W4} to get
\begin{equation}
\label{6.46-ini}
r_i \la^{2-p} \fiint_{4Q_i} [|\nabla u|+|h_0|]_h^{p-1} \lsb{\chi}{[0,T]}\ dz+r_i\la^{2-p}\fint_{t_1}^{t_2}\left[\fint_{\frac34B_i}|\vec{w}| \lsb{\chi}{\qomt} \ dx \right]_h \ dt \apprle r_i \la.
\end{equation}
To estimate the third term on the right hand side of \cref{lemma2.5.8.1}, let us take a cut-off function $\xi \in C_c^{\infty}(4Q_i \cap \{t\leq 0\})$, from which we note that for $h$ sufficiently small, using \cref{lemma_crucial_1} and the hypothesis that $Q_i$ crosses the initial boundary, we observe $\avgs{[u-w]_h}{\xi}=0$. Thus we get 
\begin{equation}
\label{6.47-ini}
\begin{array}{rcl}
  \fiint_{4Q_i}\frac{|[u-w]_h|}{r_i} \lsb{\chi}{[0,T]}\ dz  & = &   \fiint_{4Q_i}\frac{|[u-w]_h\lsb{\chi}{[0,T]} - \avgs{[u-w]_h}{\xi}|}{r_i} \ dz\\
    & \overset{\redlabel{6451a}{a}}{\apprle} &   \fiint_{4Q_i}|[\nabla u-\nabla w]_h| \lsb{\chi}{[0,T]} \ dz\vspace{1em}\\
&& +  \sup_{t_1,t_2 \in 4I_i \cap [0,T]}\frac{|\avgs{(u-w)_h}{\mu}(t_1) - \avgs{(u-w)_h}{\mu}(t_2)|}{r_i} \ dz\vspace{1em}\\
    & \overset{\redlabel{6451b}{b}}{\apprle} &  \la + \frac{\|\nabla \mu\|_{L^{\infty}} |4Q_i|}{r_i}\fiint_{4Q_i} [|\nabla u|+|h_0|]_h^{p-1} \lsb{\chi}{[0,T]} \ dz\\
&&\hspace{1em}+\frac{\|\nabla \mu\|_{L^{\infty}} |4Q_i|}{r_i}\fint_{4I_i\cap[0,T]}\left[\fint_{\frac34B_i}|\vec{w}| \lsb{\chi}{\qomt} \ dx \right]_h \ dt\\
    & \overset{\redlabel{6451c}{c}}{\apprle} & \la.
  \end{array}
 \end{equation}
 To obtain \redref{6451a}{a}, we made use of \cref{lemma_crucial_1}, to obtain \redref{6451b}{b}, we made use of \cref{lemma_crucial_2_app} along with \descref{W4} and finally to obtain \redref{6451c}{c}, we used \descref{W1} and \descref{W4} along with \cref{imp_rmk}.

  Combining \cref{6.47-ini}, \cref{6.46-ini} and \cref{lemma2.5.8.1} into \cref{lemma2.5.8.1-ini.1} followed by the restriction $r_i \leq \rho$ and \cref{alpha_0}, we get
  \begin{equation*}
\fiint_{\frac34Q_i} \left| \frac{\vh(z)\lsb{\chi}{[0,T]}}{r_i} \right|^q \ dz  \apprle \la^q + \la^q \lbr \frac{\la^{2-p}}{\al_0^{2-p}} \rbr^q.
  \end{equation*}
\end{description}

$\bullet$\emph{Now we consider the case when $\vh^i \neq 0$.} Again without loss of generality, we can assume $r_i \leq \rho$ because if $\rho \leq r_i$, we can proofed as in \cref{6.42-ini}. Since $i \in \Th_2$, we have $\ga r_i^2 \apprge s$. Now applying \cref{lemma_crucial_1} with $\mu(x) \in C_c^{\infty} (B_i)$ satisfying $|\mu| \leq \frac{C(n)}{r_i^n}$ and $|\nabla \mu| \leq \frac{C(n)}{r_i^{n+1}}$,  we get
 \begin{equation}\label{lemma2.5.8.1-ini}
  \begin{array}{rcl}
   \fiint_{\frac34Q_i} \left| \frac{\vh(z)\lsb{\chi}{[0,T]} - \vh^i}{r_i} \right|^q \ dz 
   & \apprle& \fiint_{Q_i} |\nabla \vh|^q\lsb{\chi}{[0,T]} \ dz + \sup_{t_1,t_2 \in I_i} \left| \frac{\avgs{\vh\lsb{\chi}{[0,T]}}{\mu}(t_1)-\avgs{\vh\lsb{\chi}{[0,T]}}{\mu}(t_2)}{r_i} \right|^q .
  \end{array}
 \end{equation}
 The first term on the right hand side of \cref{lemma2.5.8.1-ini} can be estimated using \descref{W4}. To estimate the second term on the right hand side of \cref{lemma2.5.8.1-ini}, let us apply \cref{lemma_crucial_2_app} with $\phi(x) = \mu(x) \eta(x)$ and $\varphi(t) = \zeta(t)$, which gives for any $t_1,t_2 \in I_i\cap[0,T]$, the following sequence of estimates:
   \begin{equation}
  \label{lemma2.5.8-11}
  \begin{array}{rcl}
  |\avgs{\vh}{\mu}(t_2)- \avgs{\vh}{\mu}(t_1)| & {\apprle}& \|\nabla (\eta \mu)\|_{L^{\infty}(\frac34B_i)} \|\zeta\|_{L^{\infty}(t_1,t_2)}\int_{t_1}^{t_2}\int_{\frac34B_i}[|\nabla u|+|h_0|]_h^{p-1} \lsb{\chi}{[0,T]}\ dz\\
&&+\|\nabla (\eta \mu)\|_{L^{\infty}(\frac34B_i)} \|\zeta\|_{L^{\infty}(t_1,t_2)}\int_{t_1}^{t_2}\left[\int_{\frac34B_i}\lvert \vec{w}\rvert\ dx \right]_h\ dt\\
  && + \|(\eta\mu)\|_{L^{\infty}(\frac34B_i)}\|\zeta'\|_{L^{\infty}(t_1,t_2)}\int_{t_1}^{t_2}\int_{8B}|[u-w]_h| \lsb{\chi}{[0,T]}\ dz\\
  & \overset{\redlabel{650a}{a}}{\apprle} & \lbr \frac{1}{\rho r_i^n} + \frac{1}{r_i^{n+1}} \rbr \int_{t_1}^{t_2}\int_{\frac34B_i} [|\nabla u|+|h_0|]_h^{p-1} \ dz\\
  &&+ \lbr \frac{1}{\rho r_i^n} + \frac{1}{r_i^{n+1}}\rbr\int_{t_1}^{t_2}\left[\int_{\frac34B_i}\lvert \vec{w}\rvert\ dx \right]_h\ dt+ \frac{1}{r_i^n} \frac{1}{s} \int_{t_1}^{t_2}\int_{8B}|[u-w]_h| \lsb{\chi}{[0,T]}\ dz\\
    & \overset{\redlabel{650b}{b}}{\apprle} & r_i \la^{2-p} \fiint_{4Q_i} [|\nabla u|+|h_0|]_h^{p-1} \lsb{\chi}{[0,T]}\ dz \\
&&+r_i\la^{2-p}\fint_{t_1}^{t_2}\left[\fint_{\frac34B_i}\lvert \vec{w}\rvert\ dx \right]_h\ dt+ \frac{1}{r_i^n} \frac{\rho |Q|}{s} \fiint_{8Q}\frac{|[u-w]_h|}{\rho} \lsb{\chi}{[0,T]}\ dz.
\end{array}
\end{equation}
To obtain \redref{650a}{a} and \redref{650b}{b}, we made use of \cref{def_eta}, \cref{def_zeta} and the bounds for $\mu$ along with \descref{W1}. The first term on the right hand side of \cref{lemma2.5.8-11} can be controlled by \descref{W4}  to get an estimate analogous to \cref{6.46-ini}. To control the second term on the right hand side of \cref{lemma2.5.8-11}, we note that for $h$ sufficiently small,  \cref{lemma_crucial_1} and the hypothesis that $8Q$ crosses the initial boundary imply $\avgs{[u-w]_h}{\xi}=0$. Then 
  \begin{equation}
  \label{lemma2.5.8}
  \begin{array}{rcl}
  \fiint_{8Q}\frac{|[u-w]_h|}{\rho} \lsb{\chi}{[0,T]}\ dz & = &  \fiint_{8Q}\frac{|[u-w]_h \lsb{\chi}{[0,T]} - \avgs{[u-w]_h}{\xi}|}{\rho}\ dz\\
  & \overset{\redlabel{651a}{a}}{\apprle} & \fiint_{8Q}|[\nabla u-\nabla w]_h| \lsb{\chi}{[0,T]}\  dz\\
&& +  \sup_{t_1,t_2 \in 8I \cap [0,T]} \frac{|\avgs{[u-w]_h}{\mu}(t_2) - \avgs{[u-w]_h}{\mu}(t_1)|}{\rho}\\
    & \overset{\redlabel{651b}{b}}{\apprle} &  \la   +  \frac{\|\nabla \mu\|_{L^{\infty}(8B)}|Q|}{\rho} \fiint_{8Q} [|\nabla u|+|h_0|]_h^{p-1} \lsb{\chi}{[0,T]} \ dz\\
&&\hspace{1em}+\frac{\|\nabla \mu\|_{L^{\infty}(8B)}|Q|}{\rho}\fint_{8I\cap[0,T]}\left[\fint_{8B}\lvert \vec{w}\rvert\ dx\right]_h\ dt\\
    &\overset{\redlabel{651c}{c}}{\apprle} & \la + \frac{s}{\rho^2} \la^{p-1}.
  \end{array}
 \end{equation}
 To obtain \redref{651a}{a}, we made use of \cref{lemma_crucial_1}, to obtain \redref{651b}{b}, we made use of \cref{lemma_crucial_2_app} along with \descref{W4} and finally to obtain \redref{651c}{c}, we made use of \descref{W4} along with \cref{def_eta}.
 
 Combining \cref{lemma2.5.8}, \cref{lemma2.5.8-11} and \cref{lemma2.5.8.1-ini} along with  the bound $\la^{2-p} r_i^2 \apprge s$ and \cref{hypothesis_1-ini}, we get
 \begin{equation*}
  \fiint_{\frac34Q_i} \left| \frac{\vh(z)\lsb{\chi}{[0,T]} - \vh^i}{r_i} \right|^q \ dz \apprle \la^q \lbr 1+  \lbr \frac{\la^{2-p}}{\al_0^{2-p}}\rbr^{\frac{n+1}{2}}  + \lbr \frac{\la^{2-p}}{\al_0^{2-p}}\rbr^{\frac{n-1}{2}}\rbr^q.
 \end{equation*}
 This completes the proof of the Lemma. 
 \end{proof}

 \begin{remark}
 In the case $p \geq 2$, the estimate in \cref{lemma2.5-ini} takes the form
 \[
 \fiint_{\frac34Q_i} \left| \frac{\vh(z)\lsb{\chi}{[0,T]} - \vh^i}{r_i} \right|^q \ dz\apprle_{(n,p,q,\lamot,c_e)} \la^q.
 \]
To obtain analogous cleaner estimate  in the case $p< 2$, we can use the unified intrinsic scaling approach developed in \cite{adimurthi2018interior}. This cleaner estimate will not be needed in this paper and hence we leave the details to the interested reader.
 \end{remark}

\subsection{Lipschitz continuity of the test function}
\begin{lemma}
\label{lemma3.15-ini}
The extension $\vlh$ from \cref{lipschitz-extension-ini} is $C^{0,1}(2\mch)$ with respect to the parabolic metric \cref{parabolic_metric}. Here $\mch$ is as defined in \cref{h-ini}.
\end{lemma}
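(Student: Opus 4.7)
The plan is to invoke Lemma \ref{metric_lipschitz} with $\mathcal{D} = 2\mch$ and $\gamma = \la^{2-p}$, so it suffices to show that for every $z \in 2\mch$ and every radius $r > 0$, the parabolic cylinder $Q := Q_{r, \gamma r^2}(z) = B \times I$ satisfies
\begin{equation*}
I_r(z) := \fiint_{Q \cap 2\mch} \left| \frac{\vlh(\tz) - \avgs{\vlh}{Q \cap 2\mch}}{r} \right| \, d\tz \apprle \la,
\end{equation*}
with a constant that depends only on the universal parameters. I will split the argument into the same two broad cases as in Lemma \ref{lemma3.15}, but with additional bookkeeping coming from the class $\Th_2$ of Whitney cylinders.

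First, if $2Q \subset \elam^c$, then by \descref{W3} the point $z$ lies in some $\frac34 Q_i$ and $\vlh \in C^\infty(\elam^c)$. The mean value theorem bounds $I_r(z)$ by $\sup_{\tz \in Q}(|\nabla \vlh(\tz)| + \la^{2-p} r\, |\pa_t \vlh(\tz)|)$. Picking $\tz_0 \in Q$ lying in some $Q_j$, the triangle inequality $d_\la(\tz_0, \elam) \le d_\la(\tz_0, z_j) + d_\la(z_j, \elam) \le 17 r_j$ combined with $2Q \subset \elam^c$ gives $r \apprle r_j$. Then Lemma \ref{lemma6.3-ini} (or the derivative bounds from Lemma \ref{lemma3.10.1-ini} for $j \in \Th_2$ combined with $Q_j$ being far from the boundary) and Lemma \ref{lemma3.11-ini} yield the desired $\apprle \la$.

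Second, if $2Q \cap \elam \neq \emptyset$, I split $I_r(z)$ via the triangle inequality as in \cref{3.81} into $2 J_1 + J_2$ where
\[
J_1 = \fiint_{Q \cap 2\mch} \left|\frac{\vlh - v_h}{r}\right| d\tz, \qquad J_2 = \fiint_{Q \cap 2\mch} \left|\frac{v_h - \avgs{v_h}{Q \cap 2\mch}}{r}\right| d\tz.
\]
For $J_1$, I decompose according to the Whitney covering and note that only indices with $\frac34 Q_i \cap Q \neq \emptyset$ contribute. For each such $i$, the same argument leading to \cref{3.83.1} shows $2r_i \le r$ so that $|\frac34 Q_i|/|Q \cap 2\mch| \apprle 1$. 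I then apply Lemma \ref{lemma3.13-ini} when $i \in \Th_1$ and Lemma \ref{lemma2.5-ini} when $i \in \Th_2$ to obtain $J_1 \apprle \la$ (using $\la \ge c_e \al_0$ to absorb the extra factors in Lemma \ref{lemma2.5-ini}, since the existence of a Whitney cylinder forces $\elam \ne \emptyset$). For $J_2$ I invoke the Sobolev--Poincaré inequality if $Q$ meets $(8B)^c$, and otherwise the time-localized Poincaré inequality \cref{lemma_crucial_1} combined with Lemma \ref{lemma_crucial_2_app} applied to a smooth test pair $(\mu(x), \zeta(t))$ exactly as in \cref{est5.15}; the resulting time-oscillation term is bounded using the Whitney property \descref{W4} and \cref{alpha_0}, yielding $J_2 \apprle \la$.

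The main obstacle is the subcase when $Q$ is very long in time and crosses both the initial boundary $\{t=0\}$ and the slice $\{t = t_0 + 4s\}$ (where the cut-off $\zeta$ vanishes). In this regime $|Q \cap 2\mch|$ is no longer comparable to $|Q|$, and one must mimic the third subcase of Lemma \ref{lemma3.15}: bound the oscillation by the full average $|Q \cap 2\mch|^{-1} \iint |\vlh|$, split into integrals over $\elam$ (where $\vlh = v_h$ is controlled by the pointwise bound $|v_h| \apprle \rho \la$ coming from $G_3 \le \la$ in \cref{def_g_ini}) and over $\elam^c$ (where Lemma \ref{lemma6.1-ini} gives $|\vlh| \apprle \rho \la$); comparing $\rho$ with $r$ via $\gamma r^2 \apprge s$ and the relation $s = \al_0^{2-p} \rho^2$ from \cref{alpha_0}, together with $\la \ge c_e \al_0$, closes the estimate. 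Patching these cases completes the proof.
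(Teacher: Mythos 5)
The paper omits the proof of this lemma entirely, as it does for several of the Section~\ref{section6} lemmas whose lateral-boundary counterparts are in \cite{AdiByun2}, so there is no in-paper argument to compare against. Your sketch correctly identifies the expected blueprint: apply \cref{metric_lipschitz} with the intrinsic metric $d_\la$, split on whether $2Q\subset\elam^c$, and in the second case decompose $I_r(z)\le 2J_1+J_2$, controlling $J_1$ cube by cube through \cref{lemma3.13-ini} and \cref{lemma2.5-ini} and $J_2$ via the (time-localised) Poincar\'e inequality together with \cref{lemma_crucial_2_app}, then patch in the long-in-time subcase. As a strategy this is what one would expect the omitted proof to be.

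That said, the uniform bound $I_r(z)\apprle\la$ you aim at is neither achieved nor needed, and claiming it papers over the one genuinely delicate point. In Case~1, if $\tz_0\in\frac34 Q_j$ with $j\in\Th_2$ and $r_j<\rho$, then \cref{lemma6.3-ini} does not apply; using \cref{lemma3.10.1-ini} together with $G_3\le\la$ at a point of $\elam\cap 16Q_j$ gives $\fiint_{4Q_j}|\vh|\lsb{\chi}{[0,T]}\apprle\rho\la$, hence $|\nabla\vlh|\apprle\rho\la/r_j$, and the $\Th_2$-constraint $\la^{2-p}r_j^2\apprge s=\al_0^{2-p}\rho^2$ only yields $|\nabla\vlh|\apprle\la(\la/\al_0)^{(2-p)/2}$, which exceeds $\la$ when $p<2$. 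Your phrase ``combined with $Q_j$ being far from the boundary'' does not supply the missing estimate; the calculation above is what is meant, and it carries an $\al_0$-dependence. The same phenomenon appears in the $J_1$ estimate: the factors $(\la^{2-p}/\al_0^{2-p})^{(n\pm1)/2}$ in \cref{lemma2.5-ini} are $\ge 1$ for $p<2$ on the range $\la\ge c_e\al_0$ and are \emph{not} absorbed — the remark after \cref{lemma2.5-ini} explicitly flags this. None of this breaks the proof, because the lemma asserts only $C^{0,1}$ membership and the proof of \cref{lemma3.15} has the same feature (its final subcase produces a constant depending also on $\rho$, $T$, $\|v_h\|_{L^1(\Om_T)}$); but the ``$\apprle\la$'' claim should be replaced by ``bounded by a finite quantity depending on $\la$, $\al_0$ and the data.'' Finally, a small slip: by \cref{def_zeta} the cut-off $\zeta$ equals $1$ on $I_{4s}$ and only vanishes outside $I_{8s}$, so it does not vanish at $\{t=t_0+4s\}$; the actual obstruction in the long-cylinder subcase is, as you correctly say next, that $|Q\cap 2\mch|$ ceases to be comparable to $|Q|$.
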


\subsection{Two crucial estimates for the test function}
Before we state the two crucial lemmas, let us collect a few consequences of the estimates proved in the previous subsections. The first estimate is very similar to \cite[Lemma 3.16]{AdiByun2}.
\begin{lemma}
\label{lemma3.14-ini}
For any $1\le\vartheta\le q$, there exists a positive constant $C(n,p,\Lambda_1,\vartheta)>0$ such that the following holds:
 \begin{equation*}
  \iint_{2\mch \setminus \elam} |\pa_t \vlh(z)  (\vlh(z) - \vh(z))|^\vartheta \ dz \apprle_C  \la^{\vartheta p} |\RR^{n+1} \setminus \elam| + \frac{1}{s} \iint_{8Q} |\vh(z)|^{2\vartheta} \lsb{\chi}{[0,T]}\ dz. 
 \end{equation*}

\end{lemma}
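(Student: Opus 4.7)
The plan is to decompose the integration domain $2\mch\setminus\elam$ via the Whitney covering $\bigcup_{i\in\Theta}\frac{3}{4}Q_i$ from \cref{whitney_decomposition}, split the sum according to whether $i\in\Theta_1$ (cylinders entirely below $t_0+16s$) or $i\in\Theta_2$ (cylinders that cross the upper time boundary), and apply the estimates for $\vlh$, $\nabla\vlh$ and $\pa_t\vlh$ that are already in place. The finite overlap property \descref{W7} then converts the sum into an integral over $\elam^c$.

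For $i\in\Theta_1$, I would combine the sharp time-derivative bound from \cref{lemma3.11-ini}, namely $|\pa_t\vlh|\apprle \min\{r_i,\rho\}\la/(\gamma r_i^2)$, with the difference bound from \cref{lemma3.13-ini}, which gives $\fiint_{\frac{3}{4}Q_i}|\vlh-\vh|^q\lsb{\chi}{[0,T]}\,dz \apprle (r_i\la)^q$ and hence, by Jensen's inequality (since $\vartheta\le q$), the same bound with $q$ replaced by $\vartheta$. Multiplying these two and using $\gamma=\la^{2-p}$, both the case $r_i\le\rho$ and $r_i>\rho$ yield $\iint_{\frac{3}{4}Q_i}|\pa_t\vlh(\vlh-\vh)|^{\vartheta}\,dz\apprle \la^{p\vartheta}|Q_i|$; summing over $i\in\Theta_1$ and invoking \descref{W7} produces $\la^{p\vartheta}|\RR^{n+1}\setminus\elam|$, which is the first claimed term.

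For $i\in\Theta_2$, the definition of $\Theta_2$ in \cref{theta2-ini} forces $\gamma r_i^2\apprge s$, so the averaged time-derivative bound from \cref{lemma3.11-ini} becomes $|\pa_t\vlh|\apprle s^{-1}\fiint_{4Q_i}|\vh|\lsb{\chi}{[0,T]}\,d\tz$. Writing $|\vlh-\vh|\le|\vlh|+|\vh|$ and applying H\"older's/Cauchy--Schwarz inequality to replace the $L^1$-average of $|\vh|$ by the $L^2$-average, one obtains
\[
|\pa_t\vlh(\vlh-\vh)|^{\vartheta}\apprle s^{-\vartheta}\Bigl(\fiint_{4Q_i}|\vh|^{2}\lsb{\chi}{[0,T]}\Bigr)^{\vartheta/2}\bigl(|\vlh|^{\vartheta}+|\vh|^{\vartheta}\bigr).
\]
A further application of Young's inequality in the form $a^{\vartheta/2}b^{\vartheta}\apprle a^{\vartheta}+b^{2\vartheta}$ (and of Jensen on the average) turns the right-hand side into a sum of $s^{-\vartheta}$ times integrals of $|\vh|^{2\vartheta}$ and $|\vlh|^{2\vartheta}$ on $4Q_i$ and $\frac{3}{4}Q_i$. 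Summing over $i\in\Theta_2$ with \descref{W7} and invoking \cref{lemma3.12-ini} (applied with exponent $2\vartheta$, which is admissible in the relevant range of $\be$) to transfer $\iint_{8Q\setminus\elam}|\vlh|^{2\vartheta}\,dz$ into $\iint_{8Q\setminus\elam}|\vh|^{2\vartheta}\lsb{\chi}{[0,T]}\,dz$ yields the second claimed term $s^{-1}\iint_{8Q}|\vh|^{2\vartheta}\lsb{\chi}{[0,T]}\,dz$ (absorbing the polynomial powers of $s$ into a single $s^{-1}$ via the coupling $\gamma r_i^2\apprge s$).

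The main obstacle is the $\Theta_2$ case: the naive bound $|\pa_t\vlh|\apprle \rho\la/s$ cannot be combined with an $L^{\vartheta}$ bound on $|\vlh-\vh|$ to give the correct dimensional factor $s^{-1}$ appearing on the right-hand side. The key device is to use instead the \emph{averaged} form of $\pa_t\vlh$ from \cref{lemma3.11-ini}, pair it with $|\vlh|+|\vh|$, and then lift the linear average of $|\vh|$ to a quadratic one via Cauchy--Schwarz; this produces the $|\vh|^{2\vartheta}$ integrand naturally. A secondary subtlety is ensuring $2\vartheta$ remains in the admissible range of \cref{lemma3.12-ini}, which holds for the range of $\be$ fixed in \cref{rmk_be_0}.
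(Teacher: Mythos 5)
Your decomposition by Whitney cylinders, split into $\Theta_1$ and $\Theta_2$, is the right architecture and mirrors what the paper intends (it gives no proof here, only a pointer to \cite[Lemma 3.16]{AdiByun2}). The $\Theta_1$ piece is handled correctly: the pointwise bound $|\pa_t\vlh|\apprle\min\{r_i,\rho\}\la/(\gamma r_i^2)$ together with \cref{lemma3.13-ini} and Jensen (valid since $\vartheta\le q$) gives $\la^{p\vartheta}|Q_i|$, and \descref{W7} yields the first term. The real test is the $\Theta_2$ piece, and here you correctly identify the key move — use the averaged form of $\pa_t\vlh$, pair it with the bound $|\vlh|\apprle\fiint_{4Q_i}|\vh|\lsb{\chi}{[0,T]}$ from \cref{lemma3.10.1-ini}, and lift to an $L^2$-average — which for $\vartheta=1$ produces exactly $s^{-1}\iint_{4Q_i}|\vh|^2\lsb{\chi}{[0,T]}\,dz$ and, after \descref{W7}, the second claimed term. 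That is the case the paper actually uses (see \cref{6.39-ini}).

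The gap is the final sentence of your $\Theta_2$ argument. Raising your inequality to the $\vartheta$-th power and applying Jensen/Young as you indicate produces $s^{-\vartheta}\iint_{4Q_i}|\vh|^{2\vartheta}\lsb{\chi}{[0,T]}\,dz$, not $s^{-1}\iint_{4Q_i}|\vh|^{2\vartheta}\lsb{\chi}{[0,T]}\,dz$. The remark about ``absorbing the polynomial powers of $s$ into a single $s^{-1}$ via $\gamma r_i^2\apprge s$'' is not a valid step: that coupling was already consumed to turn $(\gamma r_i^2)^{-\vartheta}$ into $s^{-\vartheta}$, and there is no remaining mechanism to shave $\vartheta-1$ powers of $s$ without paying in some other scale-dependent factor. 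Indeed, the two sides of the asserted inequality do not scale consistently for $\vartheta>1$ unless the $s^{-1}$ is replaced by $s^{-\vartheta}$; this appears to be an imprecision in the statement of \cref{lemma3.14-ini} (inherited from copying the $\vartheta$-general form of \cref{lemma3.14}, whose right-hand side has no $s$-dependent term), not something your proof should be able to recover. A secondary concern: you invoke \cref{lemma3.12-ini} with exponent $2\vartheta$, but that lemma is stated only for exponents in $[1,p-\be]$, and $2\vartheta$ can exceed $p-\be$ when $\vartheta$ is close to $q$; while the underlying pointwise bound would extend, as written your appeal outruns the stated range. Both issues disappear at $\vartheta=1$, so your argument does establish the estimate in the form in which the paper uses it, but you should either restrict to $\vartheta=1$ or state the bound with $s^{-\vartheta}$.
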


Analogous to \cite[Lemma 3.19]{AdiByun2}, we have the following lemma:
\begin{lemma}
 \label{lemma3.18-ini}
 For any $i \in \Th$ and $k \in \{0,1\}$, there exists a positive  constant $C{(n,p,q,\lamot)}$ such that there holds:
 \begin{equation*}
 \label{lemma3.18.bound_1-ini}
  \iint_{\frac34Q_i} [|\nabla u|+|h_0|]_h^{p-1} |\nabla^k \vlh|\lsb{\chi}{[0,T]} \ dz \leq C \rho^{1-k} \lbr \la^p |4Q_i| + \frac{\lsb{\chi}{i \in \Th_2}}{s} \iint_{4Q_i} |\vh|^2\lsb{\chi}{[0,T]} \ dz \rbr. 
 \end{equation*}
Here we have used the notation  $\lsb{\chi}{i\in \Th_2} = 1$ if $i \in \Th_2$ and $\lsb{\chi}{i\in \Th_2} = 0$ if $i \in \Th_1$ and $\nabla^0 \vlh := \vlh$. 

We also have the estimate
\[
\iint_{\frac34Q_i} [|\vec{w}|]_h |\nabla^k \vlh|\lsb{\chi}{[0,T]} \ dz \leq C \rho^{1-k} \lbr \la^p |4Q_i| + \frac{\lsb{\chi}{i \in \Th_2}}{s} \iint_{4Q_i} |\vh|^2\lsb{\chi}{[0,T]} \ dz \rbr. 
\]
\end{lemma}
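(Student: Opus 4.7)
The plan is to prove both estimates by applying H\"older's inequality to pull out an $L^\infty$ bound on $|\nabla^k \vlh|$ on $\frac34Q_i$ and then controlling the integral of the Steklov-averaged term by the corresponding maximal function bound encoded in $g$. The split into the cases $i\in \Th_1$ and $i\in \Th_2$ will produce exactly the indicator function $\lsb{\chi}{i\in\Th_2}$ that appears on the right hand side.

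\textbf{Step 1: Control of the averaged coefficient terms.} By Jensen's inequality applied to the Steklov average (\cref{time_average}) followed by H\"older's inequality with exponent $\frac{q}{p-1} > 1$ (recall $q>p-2\be>p-1$), we have
\[
 \iint_{\frac34 Q_i} [(|\nabla u|+|h_0|)^{p-1}]_h \lsb{\chi}{[0,T]} \ dz  \apprle  |4Q_i| \lbr \fiint_{4Q_i} (|\nabla u|+|h_0|)^{q} \lsb{\chi}{\Om_T} \ dz \rbr^{\frac{p-1}{q}}\apprle |4Q_i|\la^{p-1},
\]
where the final inequality uses $G_1(z)\le \la$ for some $z \in 16Q_i \cap \elam$ (which exists by \descref{W4}). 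An entirely analogous computation, now using the bound $G_5 \leq \la$ on $16Q_i \cap \elam$ given by \descref{W4} and \cref{eq6.7}, yields
\[
 \iint_{\frac34 Q_i} [|\vec w|]_h \lsb{\chi}{[0,T]} \ dz \apprle |4Q_i|\la^{p-1}.
\]

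\textbf{Step 2: $L^\infty$ bounds on $\vlh$ and $\nabla \vlh$ over $\frac34Q_i$.} If $i \in \Th_1$, the estimates in \cref{lemma6.3-ini} and \cref{lemma3.10.2-ini} (applied with $\ve=1$) give
\[
 \|\vlh\|_{L^\infty(\frac34Q_i)} \apprle \min\{\rho, r_i\}\la + |v_h^i| \apprle \rho \la, \qquad \|\nabla \vlh\|_{L^\infty(\frac34Q_i)} \apprle \la,
\]
so that $\|\nabla^k\vlh\|_{L^\infty(\frac34Q_i)} \apprle \rho^{1-k}\la$ for $k\in\{0,1\}$. If instead $i \in \Th_2$, \cref{lemma3.10.3-ini} yields
\[
 \|\nabla^k \vlh\|_{L^\infty(\frac34Q_i)} \apprle \rho^{1-k}\la + \rho^{1-k}\,\frac{\la^{1-p}}{s}\fiint_{4Q_i}|\vh|^2\lsb{\chi}{[0,T]}\ dz,
\]
after estimating $r_i\apprle \rho$ when necessary (the complementary range $r_i>\rho$ reduces to the same bound via \cref{lemma6.1-ini}).

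\textbf{Step 3: Assembly.} Multiplying the pointwise bound from Step 2 against the integral bound from Step 1 gives, for either coefficient term,
\[
 \iint_{\frac34Q_i} [\,\cdot\,]_h\,|\nabla^k \vlh|\lsb{\chi}{[0,T]}\ dz \apprle \rho^{1-k}\la\cdot |4Q_i|\la^{p-1} + \lsb{\chi}{i\in\Th_2}\,\rho^{1-k}\,\frac{\la^{1-p}}{s}\lbr \fiint_{4Q_i}|\vh|^2\lsb{\chi}{[0,T]}\ dz\rbr|4Q_i|\la^{p-1}.
\]
Since the last term simplifies to $\rho^{1-k} s^{-1} \iint_{4Q_i}|\vh|^2\lsb{\chi}{[0,T]}\ dz$, this is exactly the bound we claimed.

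The only nontrivial point is ensuring that the pointwise $L^\infty$-bounds of Step 2 are in fact valid at \emph{every} point of $\frac34Q_i$ and not just almost everywhere — this is where one must recall that \descref{W8} bounds the number of Whitney cylinders meeting $\frac34Q_i$ uniformly, so summing the contributions of $|v_h^j - v_h^i|$ for $j\in A_i$ (via \cref{lemma6.2-ini}) introduces only a universal constant. The case distinction between $\Th_1$ and $\Th_2$ is the cleanest place for things to go wrong: the $\Th_2$ estimate must use the $s$-scaled bound from \cref{lemma3.10.3-ini}, which is precisely what produces the $\lsb{\chi}{i \in \Th_2}\,s^{-1}\iint|\vh|^2$ term on the right hand side of the lemma and explains why no such term appears when $i \in \Th_1$.
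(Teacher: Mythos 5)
Your strategy---a pointwise $L^\infty$ bound on $\nabla^k\vlh$ over $\frac34Q_i$ multiplied against an $L^1$ bound on the Steklov-averaged coefficient, with the $\Th_1$/$\Th_2$ dichotomy producing the indicator on the right---is exactly the standard Whitney-cylinder argument that the paper points to in \cite{AdiByun2}, and the arithmetic in Step 3 closes correctly once the two ingredient bounds are in hand. Two remarks, one cosmetic and one more substantive.

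First, the parenthetical justification ``$q>p-2\be>p-1$'' in Step~1 is written backwards: Section~6 fixes $q\le p-2\be$, not $q>p-2\be$. The inequality you actually need for the Jensen/H\"older step is $q>p-1$, which does hold (trivially for $p\le 2$ since $q>1$; for $p>2$ one takes $q\in(p-1,p-2\be]$, nonempty since $\be<\tfrac12$), but the chain you wrote does not establish it.

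Second, and this is the genuine gap, in the $[|\vec w|]_h$ estimate you assert that $G_5\le\la$ at some $z^*\in16Q_i\cap\elam$ (which exists by \descref{W4}) delivers $\iint_{\frac34Q_i}[|\vec w|]_h\lsb{\chi}{[0,T]}\,dz\apprle|4Q_i|\la^{p-1}$. With $G_5$ as written in \cref{eq6.7}, namely $\mmn{1}(w'\lsb{\chi}{16Q\cap\Om_T})$ with no $\tfrac{1}{p-1}$ power, $G_5(z^*)\le\la$ yields only $\fiint_{16Q_i}|\vec w|\lsb{\chi}{\qomt}\,dz\le\la$; that is one power of $\la$, not $p-1$ powers. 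For $p\ge2$ and $\la\gg1$ this is harmless since $\la\le\la^{p-1}$, but for $\tfrac{2n}{n+2}<p<2$ the inequality runs the wrong way and the product in Step~3 lands at $\rho^{1-k}\la^2|4Q_i|$ rather than $\rho^{1-k}\la^p|4Q_i|$. To recover the stated bound one must read $G_5$ as $\mmn{1}(w'\lsb{\chi}{16Q\cap\Om_T})^{1/(p-1)}$, consistent with the $\mmn{q/(p-1)}(w')^{1/(p-1)}$ term in Section~5's definition of $g$ in \cref{def_g_app}; you should make that correction explicit rather than treat the $\la^{p-1}$ bound as following immediately from \cref{eq6.7}. (A minor further note: the reference for the $i\in\Th_2$, $r_i>\rho$, $k=1$ case should be \cref{lemma6.3-ini}, which controls $|\nabla\vlh|$ in precisely that range; \cref{lemma6.1-ini} only controls $|\vlh|$.)
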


\begin{corollary}
 \label{corollary3.20-ini}
There exists a positive constant $C{(n,p,q,\lamot)}$ such that  the following estimate holds for any $k \in \{0,1\}$:
 \begin{equation*}
  \label{corollary3.20.1-ini}
  \iint_{8B \times 2\tm \setminus \elam} [|\nabla u|+|h_0|]_h^{p-1} |\nabla^k \vlh| \lsb{\chi}{[0,T]}\ dz \leq C \rho^{1-k} \lbr \la^p |\RR^{n+1} \setminus \elam| + \frac{1}{s} \iint_{8Q} |\vh|^2 \lsb{\chi}{[0,T]}\ dz \rbr. 
 \end{equation*}
 We also have the estimate
 \[
 \iint_{8B \times 2\tm \setminus \elam} [|\vec{w}|]_h |\nabla^k \vlh| \lsb{\chi}{[0,T]}\ dz \leq C \rho^{1-k} \lbr \la^p |\RR^{n+1} \setminus \elam| + \frac{1}{s} \iint_{8Q} |\vh|^2 \lsb{\chi}{[0,T]}\ dz \rbr. 
 \]
\end{corollary}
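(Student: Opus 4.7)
The plan is to reduce the global estimate to the pointwise (per-cylinder) estimate in \cref{lemma3.18-ini} via the Whitney covering properties, and then carefully sum the two types of contributions.

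First, I would use \descref{W3} and \descref{W12} to see that on $\elam^c$ we have $\sum_i \lsb{\chi}{\frac34 Q_i} \ge 1$, so the integral over $8B \times 2\tm \setminus \elam$ (which, since $\vh \equiv 0$ outside $\Om_T \cap \{t \ge 0\}$, is effectively an integral over a subset of $2\mch \setminus \elam$) can be estimated from above by $\sum_{i \in \Theta} \iint_{\frac34 Q_i} (\cdots) \lsb{\chi}{[0,T]}\, dz$ with $\Theta$ as defined in \cref{theta-ini}. Applying \cref{lemma3.18-ini} term-by-term yields the bound
\[
\iint_{8B \times 2\tm \setminus \elam} [|\nabla u|+|h_0|]_h^{p-1}|\nabla^k \vlh| \lsb{\chi}{[0,T]}\,dz \apprle \rho^{1-k} \sum_{i \in \Theta} \la^p |4Q_i| + \rho^{1-k} \sum_{i \in \Theta_2} \frac{1}{s} \iint_{4Q_i} |\vh|^2 \lsb{\chi}{[0,T]}\, dz.
\]
The first sum is controlled by $\la^p |\RR^{n+1} \setminus \elam|$ via \descref{W4} (which says $4Q_i \subset \elam^c$) combined with the finite overlap property \descref{W7}, which gives $\sum_i |4Q_i| = \sum_i \iint \lsb{\chi}{4Q_i}\, dz \apprle_n |\bigcup_i 4Q_i| \le |\RR^{n+1} \setminus \elam|$.

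The main obstacle, and the second key step, is the second sum: one must show $\bigcup_{i \in \Theta_2} 4Q_i \subset 8Q$ with uniformly bounded overlap. For this I would argue as follows. If $i \in \Theta_2$, then $\frac34 Q_i$ meets $2\mch$ (so in particular $\frac34 Q_i \cap 8Q \ne \emptyset$) while $8Q_i \not\subset \RR^n \times (-\infty, t_0 + 16s]$; the latter forces $t_i + 64 \gamma r_i^2 > t_0 + 16s$, which combined with $\frac34 Q_i \cap 2\mch \ne \emptyset$ (so $t_i < t_0 + 4s + \gamma r_i^2$) gives a lower bound $\gamma r_i^2 \apprge s$, i.e.\ $r_i^2 \apprge \la^{p-2} s$. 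Moreover, since $4Q_i \subset \elam^c$ and $\elam \ne \emptyset$ by \cref{alpha_0} (applied with $\la \ge c_e \al_0$), the parabolic distance from $z_i$ to $\elam$ is $16 r_i$ and so $r_i$ is bounded above in terms of $\rho$ via the Whitney bound on distances between points of $2\mch$ and $\elam$; combining these shows $4Q_i \subset 8Q$. With $4Q_i \subset 8Q$ and \descref{W7} providing bounded overlap, the second sum collapses to $\frac{1}{s} \iint_{8Q} |\vh|^2 \lsb{\chi}{[0,T]}\, dz$.

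Finally, the analogous bound with $[|\vec{w}|]_h$ in place of $[|\nabla u| + |h_0|]_h^{p-1}$ follows by exactly the same reduction, since the second estimate in \cref{lemma3.18-ini} has identical structure. The whole argument is essentially a bookkeeping exercise once the containment $\bigcup_{i\in\Theta_2} 4Q_i \subset 8Q$ is established, which is the only nontrivial point.
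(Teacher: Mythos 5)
Your overall strategy—decompose $8B\times 2\tm\setminus\elam\cap\{t\geq 0\}$ into the Whitney cylinders, apply \cref{lemma3.18-ini} per cylinder, and sum using the Whitney covering properties—is the right approach, and the treatment of the first sum $\sum_i\la^p|4Q_i|$ via \descref{W4} and \descref{W7} is correct. However, the way you dispose of the second sum has a genuine flaw: the claimed containment $\bigcup_{i\in\Theta_2}4Q_i\subset 8Q$ is false in general. For $i\in\Theta_2$ the constraint $8Q_i\not\subset\RR^n\times(-\infty,t_0+16s]$ together with $\frac34Q_i\cap 2\mch\neq\emptyset$ forces $\gamma r_i^2\apprge s$, as you compute, but this is a \emph{lower} bound on $r_i$, not an upper bound. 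For $p>2$ and $\la\gg\al_0$ one can have $r_i\approx\rho(\la/\al_0)^{(p-2)/2}\gg\rho$, and for $p<2$ even if $r_i\apprle\rho$ the temporal extent $\gamma r_i^2$ can exceed $64s$; so $4Q_i\not\subset 8Q$. Your appeal to ``$\elam\neq\emptyset$, so $r_i$ is bounded above in terms of $\rho$'' does not actually give a uniform bound because there is no a priori control on how far $z_i$ lies from the nonempty part of $\elam$ unless one first restricts to cylinders that actually meet $\spt\vh$.

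The good news is that the containment is not needed. Since $\vh=[u-w]_h\eta\zeta$ with $\eta\in C_c^\infty(B_{8\rho})$ and $\zeta\in C_c^\infty(I_{8s})$, the integrand $|\vh|^2\lsb{\chi}{[0,T]}$ is supported in $8Q$. Hence
\[
\sum_{i\in\Theta_2}\frac1s\iint_{4Q_i}|\vh|^2\lsb{\chi}{[0,T]}\,dz
=\frac1s\iint_{8Q}|\vh|^2\lsb{\chi}{[0,T]}\sum_{i\in\Theta_2}\lsb{\chi}{4Q_i}\,dz
\apprle_{n}\frac1s\iint_{8Q}|\vh|^2\lsb{\chi}{[0,T]}\,dz,
\]
where the last step uses only the finite overlap property \descref{W7}. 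This replaces the incorrect containment argument and gives exactly the bound stated. The $[|\vec w|]_h$ case is identical. So the idea is right, but you should delete the ``$\bigcup 4Q_i\subset 8Q$'' claim (it is both false and unnecessary) and replace it with the support-of-$\vh$ plus finite-overlap step above.
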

The first crucial estimate on each time slice follows analogous to \cite[Lemma 3.21]{AdiByun2} and takes the form.
\begin{lemma}
 \label{pre_crucial_lemma-ini}
 For any $i \in \Th_1$ and any $0 < \ve \leq 1$, for  almost every $t \in (0,t_0+4s)=: 2I \cap [0,T]$, there holds
 \begin{equation*}
 \label{3.120-ini}
  \left| \int_{16B} (v(x,t) - v^i) \vl(x,t) \Psi_i(x,t) \ dx \right| \apprle_{(n,p,\Lambda_1)}  \lbr  \frac{\la^p}{\ve} |4Q_i| + \ve |4B_i| |v^i|^2\rbr. 
 \end{equation*}
 
 In the case $i \in \Th_2$,  for  almost every $t \in (t_0+0,4s)$, there holds 
 \begin{equation*}
 \label{3.121-ini}
  \left| \int_{16B} v(x,t) \vl(x,t) \Psi_i(x,t) \ dx \right| \apprle_{(n,p,\Lambda_1)} \lbr  {\la^p} |4Q_i| + \frac{1}{s}\iint_{\frac34Q_i} |u-w|^2 \lsb{\chi}{[0,T]}\ dz\rbr. 
 \end{equation*}

\end{lemma}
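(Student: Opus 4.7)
The plan is to mimic the strategy of Lemma~\ref{pre_crucial_lemma}, testing the very weak formulation of \cref{main-2} with $\Psi_i(y,\tau)\vlh(y,\tau)$ and integrating over an appropriate time interval whose lower endpoint is chosen so that the boundary term vanishes. Concretely, for $i\in\Th_1$ the spatial support $\frac34Q_i$ lies inside $8B\times[0,\infty)$ and in the time direction satisfies $8Q_i\subset\RR^n\times(-\infty,t_0+16s]$, so $\zeta\equiv 1$ on $\frac34Q_i$. I will integrate from $\tau_0:=\max\{t_i-\gamma(3r_i/4)^2,0\}$ up to the given $t\in 2I\cap[0,T]$; at the lower endpoint either $\Psi_i$ vanishes (if $\tau_0=t_i-\gamma(3r_i/4)^2$) or $\vlh(\cdot,0)=0$ on $\elam^c$ (if $\tau_0=0$), and in both cases the contribution vanishes. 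I then take $a=v_h^i$ and use the identity
\[
\int_\Om\bigl((v_h-a)\Psi_i\vlh\bigr)(y,t)\,dy=\int_{\tau_0}^t\int_\Om\pa_\tau\bigl([u-w]_h\Psi_i\vlh-a\Psi_i\vlh\bigr)(y,\tau)\,dy\,d\tau,
\]
and expand the right-hand side using \cref{main-2} (with $\Psi_i\vlh$ as test function) exactly as in \cref{3.123}, so that the claim reduces to bounding three integrals $J_1,J_2,J_3$ of the same form as those appearing after \cref{first_1}, except that all estimates take place on $\frac34Q_i$.

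The bounds for $J_1$ (involving $[|\nabla u|+|h_0|]_h^{p-1}|\nabla(\Psi_i\vlh)|$) and $J_2$ (involving $|\vec{w}|\,|\nabla(\Psi_i\vlh)|$) follow directly from \cref{lemma3.18-ini}, after splitting $|\nabla(\Psi_i\vlh)|\apprle \frac{1}{r_i}|\vlh|+|\nabla\vlh|$ and invoking \cref{lemma3.10.2-ini} in the regime $r_i\le\rho$ and \cref{lemma6.1-ini}, \cref{lemma6.3-ini} in the regime $\rho\le r_i$; together with Young's inequality with parameter $\ve$ they give a bound of the form $\tfrac{\la^p}{\ve}|4Q_i|+\ve|4B_i||v^i|^2$. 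For $J_3$, I use \cref{lemma3.11-ini} for $i\in\Th_1$ (which gives $|\pa_t\vlh|\apprle \gamma^{-1}r_i^{-2}\min\{r_i,\rho\}\la$) and bound $\fiint_{\frac34Q_i}|v-v^i|\,dz\apprle r_i\la$ as in \cref{3.30-ini}, again combined with Young's inequality to produce the same upper bound. Finally, I pass $h\searrow 0$ using \cref{imp_rmk} (so that $\ddt{[w]_h}$ tests against $\Psi_i\vlh$ through its $\vec{w}$ representation), and the first estimate follows.

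For $i\in\Th_2$, the strategy is the same except that the cylinder $8Q_i$ extends beyond $t_0+16s$, so $\zeta$ is no longer identically one on $\frac34Q_i$ and the analogue of $v^i$ is unhelpful; I therefore take $a=0$ and integrate from $\tau_0$ as above up to $t\in 2I\cap[0,T]$. The left-hand side becomes $|\int_{16B}v\,\vl\,\Psi_i(\cdot,t)\,dx|$ and the right-hand side again splits into three pieces controlled by \cref{lemma3.18-ini} and \cref{corollary3.20-ini}, which have the desired form: the diffusion and distributional terms produce $\la^p|4Q_i|+\tfrac{1}{s}\iint_{\frac34Q_i}|u-w|^2\lsb{\chi}{[0,T]}dz$, while the time-derivative term $J_3$ uses the $\Th_2$ bound $|\pa_t\vlh|\apprle \rho\la/s$ from \cref{lemma3.11-ini} combined with a Cauchy--Schwarz step to reproduce the $\tfrac1s\iint_{\frac34Q_i}|u-w|^2$ contribution. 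The main obstacle is bookkeeping the different time-slice configurations (whether $\tau_0$ is the bottom of $I_i$ or equals $0$, and whether $\frac34Q_i$ meets the top or initial portion of $2\mch$) so that the boundary term at $\tau_0$ really drops out; this is where the explicit construction in \cref{lipschitz-extension-ini} which forces $v_h^i=0$ whenever $\frac34Q_i\not\subset 8B\times[0,\infty)$, together with the cut-off $\zeta$, is used in an essential way.
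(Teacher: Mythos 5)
Your overall strategy is the right one, and it matches what the paper gestures at when it says the result "follows analogous to [AdiByun2, Lemma 3.21]": test the Steklov-averaged formulation with a truncated test function supported on $\frac34Q_i$, integrate from an adapted lower endpoint $\tau_0$ chosen so the boundary contribution drops out (using $\Psi_i(\cdot,\tau_0)=0$ or $\vlh(\cdot,0)=0$ on $\elam^c$ together with the definition forcing $v_h^i=0$ when $\frac34Q_i\nsubseteq 8B\times[0,\infty)$), and bound the three resulting pieces via \cref{lemma3.18-ini}, \cref{corollary3.20-ini}, \cref{lemma3.10.2-ini} and \cref{lemma3.11-ini}. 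Taking $a=0$ for $i\in\Th_2$ and $a=v_h^i$ for $i\in\Th_1$ is also correct.

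There is, however, a genuine gap in the central identity you write down. In Section 6, unlike Section 5, one has $v_h=[u-w]_h\,\eta\,\zeta$ by \cref{def-v-ini}, and $\eta\equiv 1$ only on $B_{4\rho}$, whereas $v_h^i\neq 0$ requires only $\frac34Q_i\subset B_{8\rho}\times[0,\infty)$, so $\Psi_i$ can be supported where $\eta<1$. Consequently, the fundamental theorem of calculus applied to the right-hand side of your displayed identity yields
\[
\int_\Om\bigl([u-w]_h-a\bigr)\Psi_i\vlh(\cdot,t)\,dx,
\]
which is \emph{not} the left-hand side $\int_\Om\bigl(v_h-a\bigr)\Psi_i\vlh(\cdot,t)\,dx$; they differ by the non-constant weight $\eta$. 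Testing \cref{main-2} with $\Psi_i\vlh$ alone cannot produce the quantity the lemma estimates. The correct test function is $\eta\,\Psi_i\,\vlh$: since $\zeta\equiv 1$ on $(0,t_0+4s)$, on the relevant time range $\pa_\tau[u-w]_h\cdot\eta\Psi_i\vlh=\pa_\tau v_h\cdot\Psi_i\vlh$, and the fundamental theorem of calculus applied to $v_h\Psi_i\vlh$ produces the stated left-hand side. The price is an extra $\vlh\nabla\eta$ contribution in the diffusion and $\vec{w}$ integrals, with $|\nabla\eta|\apprle\rho^{-1}$; this is harmless --- it is exactly what the $k=0$ scaling factor $\rho^{1-k}$ in \cref{lemma3.18-ini} and \cref{corollary3.20-ini} is designed to absorb --- but it must be tracked. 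Once the test function is fixed, the rest of your outline goes through.
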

We now come to  essentially the most important estimate which will be used to obtain the Caccioppoli inequality. The proof is very similar to \cite[Lemma 3.22]{AdiByun2} and will be omitted.
\begin{lemma}
 \label{crucial_lemma-ini}
 There exists a positive constant $C{(n,p,q, \lamot)}$ such that the following estimate holds for every $t \in [0,t_0+4s] =: 2I \cap [0,T]$:
 \begin{equation*}
 \label{3122-ini}
  \int_{8B \setminus \elam^t} (|\vh|^2 - |\vh - \vl|^2)(x,t) \ dx \geq C \lbr - \la^p |\RR^{n+1} \setminus \elam|  - \frac{1}{s} \iint_{8Q} |u-w|^2 \lsb{\chi}{[0,T]}\ dz \rbr.
 \end{equation*}
\end{lemma}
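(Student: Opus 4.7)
\textbf{Proof proposal for \cref{crucial_lemma-ini}.}

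The plan is to follow the strategy from the proof of \cref{crucial_lemma}, but since the Whitney cylinders at the initial boundary split into the two families $\Theta_1$ and $\Theta_2$ with qualitatively different behaviour, the decomposition must be refined. Fix $t \in [0,t_0+4s]$ and let
\[
\tTh := \{ i \in \NN : \spt(\Psi_i) \cap \{t\} \neq \emptyset \ \text{and}\ |v_h| + |\vlh| \not\equiv 0 \ \text{on}\ \spt(\Psi_i) \cap \{t\}\},
\]
so that only indices in $\tTh$ contribute. Since $\sum_{i\in\tTh}\Psi_i(\cdot,t) \equiv 1$ on $8B \setminus \elam^t$, write
\[
\int_{8B \setminus \elam^t} \bigl(|v_h|^2 - |v_h - \vl|^2\bigr)(x,t)\,dx \;=\; \sum_{i \in \tTh \cap \Theta_1} \int_{8B} \Psi_i \bigl[\,\cdots\,\bigr]\,dx \;+\; \sum_{i \in \tTh \cap \Theta_2} \int_{8B} \Psi_i \bigl[\,\cdots\,\bigr]\,dx.
\]

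For the cylinders $i \in \tTh \cap \Theta_1$, I would use the algebraic identity $|v_h|^2 - |v_h - \vl|^2 = |v_h^i|^2 + 2\vl(v_h - v_h^i) - |\vl - v_h^i|^2$ exactly as in \cref{crucial_lemma}. The middle term is controlled on the time slice by the first estimate of \cref{pre_crucial_lemma-ini}, which gives a contribution of the form $-C\la^p |4Q_i|/\ve - C\ve|4B_i||v_h^i|^2$. The last term is controlled using \cref{lemma6.2-ini} which gives $|\vl - v_h^i|^2 \leq C \sum_{j\in A_i}|v_h^j - v_h^i|^2 \leq C\min\{\rho,r_i\}^2\la^2$, whose integral is bounded by $C\la^p|4Q_i|$ after using $|Q_i| = 2|B_i|\gamma r_i^2$ and $\gamma = \la^{2-p}$. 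Choosing $\ve$ sufficiently small and using $\int \Psi_i(x,t)\,dx \geq c|B_i|$ (from \descref{W12}), the positive term $|v_h^i|^2 \int\Psi_i\,dx$ absorbs the $\ve |4B_i||v_h^i|^2$ loss.

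For cylinders $i \in \tTh \cap \Theta_2$, the bounds of \cref{lemma6.2-ini} are no longer available and $v_h^i$ need not vanish, so I would instead use the simpler algebraic identity $|v_h|^2 - |v_h - \vl|^2 = 2v_h\vl - \vl^2$ and apply the second estimate in \cref{pre_crucial_lemma-ini} directly to $\int \Psi_i v_h \vl\,dx$. This already produces the desired two-part bound $-C\la^p |4Q_i| - C\frac{1}{s}\iint_{\frac34 Q_i}|u-w|^2\,\lsb{\chi}{[0,T]}\,dz$. The remaining term $\int \Psi_i \vl^2\,dx$ is controlled by the $L^\infty$ bound of \cref{lemma3.10.3-ini} on $\Theta_2$, which yields $\|\vl\|_{L^\infty(\frac34Q_i)}^2 \leq C r_i^2\la^2 + C\frac{\la^{2(1-p)}r_i^2}{s^2}\bigl(\fiint_{4Q_i}|v_h|^2\bigr)^2$; the first piece integrates to $C\la^p|Q_i|$ (again using $|Q_i|/|B_i| = 2\gamma r_i^2$), while the second, after noting that $\gamma r_i^2 \apprge s$ on $\Theta_2$ so that $r_i^2/s \leq \gamma r_i^2 r_i^2/s^2$ with the scaling matching, collapses into the $\frac{1}{s}\iint_{8Q}|u-w|^2$ term.

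Finally, summing over $i$ and using the bounded-overlap property \descref{W7} together with $\bigcup_i Q_i \subset \RR^{n+1}\setminus \elam$ and $Q_i \subset 16Q$ for all contributing $i$, the terms aggregate to the claimed bound
\[
-C\la^p|\RR^{n+1}\setminus\elam| - \frac{C}{s}\iint_{8Q}|u-w|^2\,\lsb{\chi}{[0,T]}\,dz.
\]
The main obstacle will be the $\Theta_2$ contribution: unlike the lateral case handled in \cite{AdiByun2}, the function $v_h^i$ does not vanish on $\Theta_2$ here, so I must avoid invoking \cref{lemma6.2-ini} and instead rely entirely on the second bound of \cref{pre_crucial_lemma-ini} plus the $L^\infty$ estimate of \cref{lemma3.10.3-ini}; verifying that all the auxiliary $\Theta_2$ terms indeed collapse into the two allowed error terms (rather than producing new, worse ones such as $\frac{1}{s^2}(\fiint|u-w|^2)^2$) is the delicate bookkeeping step, and is where the intrinsic relation $\gamma r_i^2 \apprge s$ on $\Theta_2$ must be used carefully.
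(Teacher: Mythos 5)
You correctly decompose into the $\Theta_1$ and $\Theta_2$ families and lean on \cref{pre_crucial_lemma-ini}; this is the right skeleton, and since the paper itself only refers the reader to \cite[Lemma~3.22]{AdiByun2} and omits the proof, the comparison is necessarily with what the argument ought to look like. The $\Theta_1$ portion follows \cref{crucial_lemma} faithfully and is fine.

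The $\Theta_2$ bound on $\int\Psi_i\vlh^2$, however, has a genuine gap. Squaring the $L^\infty$ bound of \cref{lemma3.10.3-ini} produces a term proportional to $\bigl(\fiint_{4Q_i}|\vh|^2\lsb{\chi}{[0,T]}\,d\tz\bigr)^{2}$, and this does \emph{not} ``collapse'' into $\frac{1}{s}\iint_{8Q}|u-w|^2\lsb{\chi}{[0,T]}\,dz$ from $\gamma r_i^2\apprge s$ alone. Tracing the scaling with $|4Q_i|\approx|B_i|\gamma r_i^2$, the offending term is of size $\frac{1}{|B_i|r_i^2\la^2 s^2}\bigl(\iint_{4Q_i}|\vh|^2\lsb{\chi}{[0,T]}\,d\tz\bigr)^2$, which is quadratic in the $L^2$ mass; nothing in the hypotheses makes this $\apprle \la^p|Q_i|+\frac{1}{s}\iint_{4Q_i}|u-w|^2\lsb{\chi}{[0,T]}\,d\tz$. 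The sentence ``$r_i^2/s\le\gamma r_i^2\,r_i^2/s^2$ with the scaling matching'' does not bridge the gap — it rewrites the inequality $1\le\gamma r_i^2/s$ but leaves the square untouched.

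The fix is to avoid \cref{lemma3.10.3-ini} here and instead use the first (plain average) inequality of \cref{lemma3.10.1-ini} together with Jensen:
\[
|\vlh(z)|\ \apprle\ \fiint_{4Q_i}|\vh(\tz)|\lsb{\chi}{[0,T]}\,d\tz\ \le\ \Bigl(\fiint_{4Q_i}|\vh(\tz)|^2\lsb{\chi}{[0,T]}\,d\tz\Bigr)^{\!1/2},
\]
so that $\sup_{\frac34Q_i}|\vlh|^2\apprle\fiint_{4Q_i}|\vh|^2\lsb{\chi}{[0,T]}\,d\tz$. Then on each time slice,
\[
\int_{8B}\Psi_i\,\vlh^2(x,t)\,dx\ \apprle\ |B_i|\,\fiint_{4Q_i}|\vh|^2\lsb{\chi}{[0,T]}\,d\tz\ \apprle\ \frac{1}{\gamma r_i^2}\iint_{4Q_i}|\vh|^2\lsb{\chi}{[0,T]}\,d\tz\ \apprle\ \frac{1}{s}\iint_{4Q_i}|u-w|^2\lsb{\chi}{[0,T]}\,d\tz,
\]
the final step using $\gamma r_i^2\apprge s$ on $\Theta_2$ and $|\vh|\le|u-w|$. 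This is linear in the $L^2$ mass, and summing over $i\in\tTh\cap\Theta_2$ (with $\spt\vh\subset 8Q$ and the finite overlap \descref{W7}) yields the claimed $\frac{1}{s}\iint_{8Q}|u-w|^2\lsb{\chi}{[0,T]}\,dz$. With this substitution your argument closes.
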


\subsection{Caccioppoli type inequality}

We shall prove the Caccioppoli inequality in this subsection.
\begin{lemma}
 \label{caccioppoli}
 Let $\al_0$ and  $c_e$ be as in \cref{alpha_0}, then there exists   constants $C = C(n,p,q,\La_0,\La_1)$ and $\be_0 = \be_0(n,p,\lamot) \in (0,1)$ small  such that the following holds.
For some $\be \in (0,\be_0)$, suppose that  $u \in L^2(0,T;L^2(\Om)) \cap L^{p-\be}(0,T; W_{\loc}^{1,p-\be}(\Om))$  is  any very weak solution of \cref{main-2} in the sense of \cref{very_weak_solution}, then there holds
\begin{equation*}
\label{conclusion_1}
 \begin{array}{ll}
        &\al_0^{p-\be} + \sup_{t \in \tm \cap \{t \geq 0\}} \al_0^{p-2} \hint_{B} \mathcal{M}(x,t)^{-\be} \left| \frac{ u-w}{\rho}\right|^2(x,t) \ dx   \vspace{1em}\\
&\hspace*{4cm}\apprle  \fiint_{8Q}\left[ \al_0^{p-2-\be}   \lbr \frac{|u-w|}{\rho}\rbr^2 +  \lbr \frac{|u-w|}{\rho} \rbr^{p-\be} \right]\lsb{\chi}{[0,T]}\ dz\vspace{1em}\\
        &\hspace*{4.5cm} + \fiint_{8Q} |h_0|^{p-\be}\lsb{\chi}{[0,T]} \ dz +  \fiint_{8Q}|\nabla w|^{p-\be} \lsb{\chi}{[0,T]}\ dz  +  \fiint_{8Q}   |\vec{w}|^{\frac{p-\be}{p-1}} \lsb{\chi}{\qomt}\ dz,
 \end{array}
\end{equation*} 
 where we have set $\mathcal{M}(x,t):=\max\{g(x,t), \al_0\}$.
\end{lemma}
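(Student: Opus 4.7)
The plan is to mimic the strategy of \cref{main_theorem_1} but now with the localised cut-offs $\eta,\zeta$ from \cref{def_eta}--\cref{def_zeta} baked into $v$ as in \cref{def-v-ini}, and using the modified Lipschitz truncation $v_{\lambda,h}$ built in \cref{lipschitz-extension-ini}. First I will insert the test function $v_{\lambda,h}$ into the Steklov-averaged formulation of \cref{main-2} and integrate over $\Omega \times (0,t_*)$ for each $t_* \in \tm \cap\{t\ge 0\}$. The time derivative produces a boundary term of the form $\frac12\int_{\Omega} (v_h^2 - (v_{\lambda,h}-v_h)^2)(x,t_*)\,dx$ together with an error controlled by \cref{lemma3.14-ini}, the elliptic term gives $\iint |\nabla u|^p$ on $E_\lambda$ plus a Lipschitz-extension error on $E_\lambda^c$ (handled by \cref{corollary3.20-ini}), and the $w_t$ term is converted via \cref{lemma_lihe_wang} and \cref{imp_rmk} into an integral against $\vec w$ on $E_\lambda$ and on $E_\lambda^c$, again controlled by \cref{corollary3.20-ini}. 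Taking $\lim_{h\searrow 0}$ and applying \cref{crucial_lemma-ini} to the initial slice $\{t=0\}$, I arrive at an inequality of the schematic shape
\begin{equation*}
\int_{E_\lambda^{t_*}} |v|^2(x,t_*)\,dx + \iint_{(8Q)\cap E_\lambda} |\nabla u|^p \chi_{[0,T]}\,dz \apprle R_\lambda + \lambda^p|\RR^{n+1}\setminus E_\lambda| + \frac{1}{s}\iint_{8Q} |u-w|^2\chi_{[0,T]}\,dz,
\end{equation*}
where $R_\lambda$ bundles the $|\nabla w|$, $|h_0|$, $|\vec w|$ cross-terms arising on $E_\lambda$.

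Next I will multiply this inequality by $\lambda^{-1-\beta}$ and integrate $\lambda$ over $(c_e\alpha_0,\infty)$. For the spatial supremum I will instead multiply by $\lambda^{p-2-\beta}\cdot\lambda^{-1}$ and integrate analogously — this is what produces the weight $\alpha_0^{p-2}\mathcal{M}(x,t)^{-\beta}$ in front of $|v|^2$ after taking supremum in $t_*$, since $\int_{g(z)}^{\infty}\lambda^{p-3-\beta}\,d\lambda \approx g(z)^{p-2-\beta}$ and we truncate $\lambda\ge\alpha_0$. Then Fubini converts $\int_{E_\lambda}$ integrals into $\int g^{-\beta}(\cdots)\,dz$ (using the definition of $g$ in \cref{def_g_ini}--\cref{eq6.7}, which makes $g$ dominate each integrand pointwise on $\Omega_T$) and converts $\int_{E_\lambda^c}$ integrals into $\int g^{1-\beta}\cdots\,dz$ via the layer-cake formula. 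The key input at this step is \cref{bound_g_x_t-ini}, which gives $\iint g^{p-\beta}\,dz \apprle |Q|\alpha_0^{p-\beta}$, so the remainder $\int_{c_e\alpha_0}^\infty \lambda^{p-1-\beta}|E_\lambda^c|\,d\lambda$ is exactly of the desired order.

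After applying Young's inequality with small parameter to all cross terms (so that every factor of $|\nabla u|^{p-\beta}$ or $|v|^{p-\beta}$ on the right can be absorbed into the $g^{p-\beta}$ integral and hence into $\alpha_0^{p-\beta}|Q|$ via \cref{bound_g_x_t-ini}), and choosing $\beta_0$ small enough that the absorption constant stays bounded, the $|\nabla w|$, $|h_0|$ and $|\vec w|$ terms on the right of the lemma emerge directly from the cross terms in $R_\lambda$, while the $\left(\frac{|u-w|}{\rho}\right)^{p-\beta}$ and $\alpha_0^{p-2-\beta}\left(\frac{|u-w|}{\rho}\right)^2$ terms on the right arise respectively from $G_3$ in \cref{eq6.7} appearing inside $\iint g^{p-\beta}\,dz$ and from $\frac1s\iint|u-w|^2\chi_{[0,T]}\,dz$ after using $s=\alpha_0^{2-p}\rho^2$. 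The term $\alpha_0^{p-\beta}$ on the left is then just $\alpha_0^{p-\beta}$ by the lower bound side of \cref{alpha_0}.

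The main obstacle will be the supremum estimate with the weight $\mathcal{M}^{-\beta}$: controlling $\sup_{t}\int_{E_\lambda^t}|v|^2\,dx$ uniformly in $\lambda$ requires one to run the $\lambda$-integration \emph{before} taking the supremum in $t$, and this is only possible because the Lipschitz-truncation error terms produced by \cref{lemma3.14-ini} and \cref{corollary3.20-ini} are uniform in $t_*$. A second subtlety is the coupling between $\Theta_1$ and $\Theta_2$ cylinders (cf.\ \cref{lemma2.5-ini}), where for $i\in\Theta_2$ the intrinsic scaling degenerates and one must use the $\frac{1}{s}\iint |v|^2$ term to close the estimate — this is precisely what forces the appearance of the $\alpha_0^{p-2-\beta}\left(\frac{|u-w|}{\rho}\right)^2$ term on the right-hand side, and it is the reason the result is not sharp at the initial boundary as remarked in the discussion preceding \cref{main_theorem_2}.
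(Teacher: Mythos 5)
Your overall architecture matches the paper's: insert the truncated test function $\vlh\,\eta\,\chi_{0,t_1}^{\ve}$, split into $\elam$ / $\elam^c$ parts, invoke \cref{lemma3.14-ini}, \cref{corollary3.20-ini}, \cref{crucial_lemma-ini}, then multiply by a power of $\la$ and integrate. Your reading of where the $\alpha_0^{p-2-\beta}\left(\frac{|u-w|}{\rho}\right)^2$ term comes from (the $\Theta_2$ cylinders through the $\frac{1}{s}\iint|u-w|^2$ error, combined with $s=\rho^2\alpha_0^{2-p}$) is also right.

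However, your mechanism for producing the weight $\alpha_0^{p-2}\mathcal{M}^{-\beta}$ is wrong. You propose to multiply the supremum term by a \emph{different} $\la$-weight, $\la^{p-3-\be}$, and to get $\alpha_0^{p-2}\mathcal{M}^{-\beta}$ from $\int_{g(z)}^{\infty}\la^{p-3-\be}\,d\la$. This fails on two counts. First, you cannot multiply different terms of a single inequality by different $\la$-weights: the energy inequality for fixed $\la$ is one inequality, and multiplying it by $\la^{p-3-\be}$ weights every term on the right-hand side the same way, so the $\la^p|\RR^{n+1}\setminus\elam|$ term becomes $\int\la^{2p-3-\be}|\elam^c|\,d\la \approx \iint g^{2p-2-\be}$, which is outside the range $\vartheta\le p-\be$ covered by \cref{bound_g_x_t-ini} whenever $p>2$. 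Second, $\int_{g(z)}^{\infty}\la^{p-3-\be}\,d\la$ diverges whenever $p\ge 2+\be$, which covers most of the admissible range. The paper never uses a second weight: $K_1 = \frac12\int_{c_e\al_0}^{\infty}\la^{-1-\be}\int_{\elam^t}|v|^2\,dy\,d\la$ is treated with the same $\la^{-1-\be}$ weight as all the other $K_i$, and Fubini gives $K_1\apprge\frac{1}{\be c_e^{\be}}\int_{B}\mathcal{M}(y,t)^{-\be}|v(y,t)|^2\,dy$. The factor $\al_0^{p-2}$ then emerges only at the very end, when the inequality is multiplied by $\be$ and divided by $|Q|\approx|B_\rho|\,s$ with $s=\rho^2\al_0^{2-p}$ from the intrinsic scaling in \cref{alpha_0}: $\frac{1}{|Q|}\int_B\mathcal{M}^{-\be}|v|^2 = \frac{\al_0^{p-2}}{2}\,\hint_B\mathcal{M}^{-\be}\left|\frac{v}{\rho}\right|^2$. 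So the supremum term and the $\alpha_0^{p-2-\beta}$ term on the right-hand side both get their $\alpha_0$-powers from the same $|Q|$ normalization, not from any bespoke $\la$-weighting. A related but smaller inaccuracy: the term $\left(\frac{|u-w|}{\rho}\right)^{p-\be}$ on the right-hand side does not arise from $G_3$ inside $\iint g^{p-\be}\,dz$ (which \cref{bound_g_x_t-ini} collapses entirely into $|Q|\al_0^{p-\be}$); it comes from Young's inequality applied to the cross term $\lbr|\nabla u|+|h_0|\rbr^{p-1}\frac{|u-w|}{\rho}$ generated by $\nabla\eta^2$ hitting $u-w$, after using the pointwise inequality $g\ge|\nabla u|+|h_0|$ on $8Q\cap\Om_T$ to absorb the $g^{-\be}$ factor.
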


\begin{proof}
Pick any $t_1 \in (0,t_0+s)$ and consider the cut-off function $\chi_{0,t_1}^{\ve} \in C_c^{\infty}(0,t_1)$ such that
\begin{equation}
\label{zeta_t_1}
\chi_{0,t_1}^{\ve}(t) = \left\{ \begin{array}{ll}
                1 & \text{for} \ t \in (0+\ve,t_1-\ve)\\
                0 & \text{for} \ t \in (-\infty,0)\cup (t_1,\infty).
                \end{array}\right.
\end{equation}
Let us use $ \vlh(x,t)\eta(x) \chi_{0,t_1}^{\ve}$ as a test function in \cref{main-2} where $\vlh$ is from \cref{lipschitz-extension-ini} and $\eta$ is from \cref{def_eta}. Integrating over $(0,t_1)$,  we get
\begin{equation}
 \label{cac1-ini}
  L_1 + L_2:=\int_{0}^{t_1} \left[ \int_{16B} \frac{d{[u]_h}}{dt} \eta(x) \vlh(x,t) + \iprod{[\aa(x,t,\nabla u)]_h}{\nabla (\eta \vlh)} \ dx\right] \chi_{0,t_1}^{\ve}(t)\ dt = 0.
 \end{equation}
\begin{description}
\item[Estimate of $L_1$:] Note $\zeta \equiv 1$ on $(0,t_1)$, from which we get
 \begin{equation}
 \label{6.51-ini}
 \begin{array}{rcl}
 \int_0^{t_1} \int_{16B} \ddt{[u]_h} \vlh \eta \chi_{0,t_1}^{\ve}(t)\ dz 
 & = & \int_0^{t_1} \int_{16B} \ddt{\vlh} (\vlh - v_h)  \chi_{0,t_1}^{\ve}(t)\ dz  \\
 && +\frac{1}{2} \int_0^{t_1} \int_{16B} \ddt{\lbr (v_h^2) - (\vlh-v_h)^2\rbr\chi_{0,t_1}^{\ve}(t)}   \ dz \\
 && - \frac{1}{2}\int_0^{t_1} \int_{16B} {\lbr (v_h^2) - (\vlh-v_h)^2\rbr}   \ddt{\chi_{0,t_1}^{\ve}(t)}\ dz \\
 && + \int_0^{t_1} \int_{16B} \ddt{[w]_h} \eta \vlh \chi_{0,t_1}^{\ve}(t)   \ dz.
 \end{array}
 \end{equation}
 From \cref{zeta_t_1}, we see that  $\int_0^{t_1} \int_{16B} \ddt{\lbr (v_h^2) - (\vlh-v_h)^2\rbr\chi_{0,t_1}^{\ve}(t)}   \ dx \ dt = 0$ since $\chi_{0,t_1}^{\ve}(0) = 0$ and $\chi_{0,t_1}^{\ve}(t_1) = 0$.
 
Letting $\ve \rightarrow 0$ in \cref{6.51-ini}, we  get
\begin{equation}\label{6.38-ini}
 \begin{array}{rcl}
 \int_0^{t_1} \int_{16B} \ddt{[u]_h} \vlh \eta \ dz & = &\int_0^{t_1} \int_{16B} \ddt{\vlh} (\vlh - \vh)  \ dz  \\
 && -  \frac{1}{2}\int_{16B} {\lbr (\vh^2) - (\vlh-\vh)^2\rbr} (x,0)  \ dx  + \\
 && +  \frac{1}{2}\int_{16B} {\lbr (\vh^2) - (\vlh-\vh)^2\rbr} (x,t_1)  \ dx  \\
 && + \int_0^{t_1} \int_{16B} \ddt{[w]_h} \eta(x) \vlh(z)    \ dz \\
 & = & J_2 - J_1(0) + J_1(t_1) + J_3.
 \end{array}
 \end{equation}
Let us now estimate each of the terms as follows:
\begin{description} 
   \item[Estimate of $J_2$:] Taking absolute values and making use of \cref{lemma3.14-ini}, we get
   \begin{equation}
    \label{6.39-ini}
    \begin{array}{r@{}c@{}l}
    \hspace*{-1cm}|J_2| 
      \apprle \iint_{2\mathcal{H}\setminus \elam}   \left| \ddt{\vlh}  (\vlh-\vh)\right| \lsb{\chi}{[0,T]} \ dz 
       &\apprle & \la^p |\RR^{n+1} \setminus \elam| + \frac{1}{s} \iint_{8Q} |[u-w]_h|^2 \lsb{\chi}{[0,T]} \ dz\\
       &\overset{\lim_{h \searrow 0}}{=} & \la^p |\RR^{n+1} \setminus \elam| + \frac{1}{s} \iint_{8Q} |u-w|^2 \lsb{\chi}{[0,T]} \ dz.
    \end{array}
   \end{equation}
   
   \item[Estimate of $J_1(0)$:] Since we have $v=0$ on $\{t=0\}$, we see that $v = \vl = 0$ on $\elam \cap \{t=0\}$ and on $\elam^c\cap\{t=0\}$, we have $\vl = 0$ from \cref{lipschitz-extension-ini} and hence 
   \begin{equation}
   \label{est_J_1-ini}
   \lim_{h \searrow 0} J_1(0) = 0.
   \end{equation}
 \item[Estimate for $J_1(t_1)$:] We can take $h \searrow 0$ followed by  making use of \cref{crucial_lemma-ini},  we get
 \begin{equation}
 \label{4.13-ini}
  \begin{array}{rcl}
    \lim_{h \searrow 0} J_1(t_0) & = &\frac{1}{2} \int_{16B} | (v)^2 - (\vl - v)^2 | (y,t) \ dy   \\
    & \apprge &\int_{\elam^t} | v (x,t)|^2 \ dx   - \la^p |\RR^{n+1} \setminus \elam| - \frac{1}{s} \iint_{8Q} |u-w|^2 \lsb{\chi}{[0,T]}\ dz. 
  \end{array}
 \end{equation}
 \item[Estimate of $J_3$:] Using \cref{imp_rmk}, we can take $\lim_{h \searrow 0}$ and thus get the following sequence of estimates:
 \begin{equation*}
 \hspace*{-2.3cm}\begin{array}{r@{}c@{}l}
  J_3 & \overset{\text{\cref{lemma_lihe_wang}}}{\leq} &   \int_0^{t_1} \left[\int_{16B\cap \elam^t} \iprod{\vec{w}}{\nabla (\eta v)} \ \lsb{\chi}{[0,T]} \ dx \right]_h\ dt + \int_0^{t_1} \left[\int_{16B\setminus \elam^t} |\vec{w}||\nabla(\eta \vl)| \lsb{\chi}{[0,T]} \ dx\right]_h\ dt \\
  & \overset{\text{\cref{corollary3.20-ini}}}{\apprle} &\int_0^{t_1} \left[\int_{16B\cap \elam^t} \iprod{\vec{w}}{\nabla (\eta v)} \ \lsb{\chi}{[0,T]} \ dx \right]_h\ dt  + \la^p |\RR^{n+1} \setminus \elam| + \frac{1}{s} \iint_{8Q} |[u-w]_h|^2 \lsb{\chi}{[0,T]} \ dz \\
  & \overset{\lim_{h\searrow 0}}{=} & \int_0^{t_1}\int_{16B\cap \elam^t} \iprod{\vec{w}}{\nabla (\eta^2 (u-w))} \ \lsb{\chi}{[0,T]} \ dz  + \la^p |\RR^{n+1} \setminus \elam| + \frac{1}{s} \iint_{8Q} |u-w|^2 \lsb{\chi}{[0,T]} \ dz.
 \end{array}
 \end{equation*}

\end{description}

\item[Estimate of $L_2$:] 
   We decompose the expression as 
   \begin{equation}\label{4.9-ini}
    \begin{array}{ll}
     L_2 & = \left[ \int_0^{t_1} \int_{\elam^{t}} + \int_0^{t_1} \int_{16B \setminus \elam^t} \right] \iprod{[\aa(x,t,\nabla u)]_h}{\nabla (\eta \vlh)} \lsb{\chi}{[0,T]}\ dz  \\
     & := L_2^1 + L_2^2.
    \end{array}
   \end{equation}

   \begin{description}
    \item[Estimate of $L_2^2$:] Using the chain rule, \cref{abounded}, \cref{def_eta} along with \cref{corollary3.20-ini}, we get
    \begin{equation}
    \label{4.10-ini}
     \begin{array}{rcl}
      L^2_2 
      & \leq &\int_{0}^{t_1} \int_{\Om_{8\rho} \setminus E^{\tau}_\la} {[|\nabla u|^{p-1}+|h_0|^{p-1}]_h}{|\nabla (\eta \vlh)|} \ dz \\
      & \apprle &  \sum_{k=0}^1  \rho^{k-1}\iint_{(8B\times 2\tm) \setminus \elam} {[|\nabla u|^{p-1}+|h_0|^{p-1}]_h}{|\nabla^k  \vlh|} \lsb{\chi}{[0,T]} \ dz  \\
&\apprle &  \la^p |\RR^{n+1} \setminus \elam| + \frac{1}{s} \iint_{8Q} |[u-w]_h|^2 \lsb{\chi}{[0,T]} \ dz \\
& \overset{\lim_{h \searrow 0}}{=} & \la^p |\RR^{n+1} \setminus \elam| + \frac{1}{s} \iint_{8Q} |u-w|^2 \lsb{\chi}{[0,T]} \ dz.
     \end{array}
    \end{equation}
\end{description} 
In the above estimate, we made use of the bound $|\vh| \leq |[u-w]_h|$ which follows from \cref{def-v-ini}.
\end{description}

Noting that  $\lim_{h \searrow 0}L_2^1 =\int_0^{t_1} \int_{E_{\tau}(\la)} \iprod{\aa(y,\tau,\nabla u)}{\nabla (\eta \vl)}\lsb{\chi}{[0,T]} \ dz$, we combine \cref{4.10-ini}, \cref{4.9-ini}, \cref{4.13-ini}, \cref{est_J_1-ini}, \cref{6.39-ini} and  \cref{6.38-ini}, followed by making use of \cref{cac1-ini}, we get
 \begin{equation}
 \label{6.45-ini}
  \begin{array}{l}
\int_{\elam^t} |v(x,t)|^2 \ dx  +  \int_0^{t_1} \int_{E^{\tau}_\la} \iprod{\aa(y,\tau,\nabla u)}{\nabla (\eta \vl)}\lsb{\chi}{[0,T]} \ dz  \\
   \hspace*{1cm} \apprle \la^p |\RR^{n+1} \setminus \elam| + \frac{1}{s} \iint_{8Q} |u-w|^2 \lsb{\chi}{[0,T]}\ dz + \int_0^{t_1}\int_{16B\cap \elam^t} \iprod{\vec{w}}{\nabla (\eta^2 (u-w))} \ \lsb{\chi}{[0,T]} \ dz.
  \end{array}
 \end{equation}
Multiplying \cref{6.45-ini} by  $\la^{-1-\be}$ and integrating from $(c_e\al_0,\infty)$ (recall that  $c_e$ is as in \cref{alpha_0}), for almost every $t \in (t_0+ s,t_0+4s)$ (actually holds for any $t \in (0,t_0+4s)$), we get 
 \begin{equation}
 \label{K_expression-ini}
K_1 + K_2 \apprle K_3 + K_4 + K_5,
 \end{equation}
 where we have set
 \begin{equation*}
  \begin{array}{@{}r@{}c@{}l@{}}
   K_1 \ &:=&\  \frac12\int_{c_e \al_0}^{\infty} \la^{-1-\be} \int_{\elam^t} | v(y,t)|^2 \ dy \ d\la, \\
  K_2 \ &:=& \ \int_{c_e \al_0}^{\infty} \la^{-1-\be}\int_{0}^t \int_{\elam^{\tau}} \iprod{\aa(y,\tau,\nabla u)}{\nabla (\eta^2 (u-w))} \lsb{\chi}{[0,T]}\ dy \ d\tau \ d\la, \\
  K_3\  &:=& \ \int_{c_e \al_0}^{\infty} \la^{-1-\be}  \la^p |\RR^{n+1} \setminus \elam| \  d\la, \\
  K_4 \ &:=& \ \frac{1}{s} \int_{c_e \al_0}^{\infty} \la^{-1-\be}   \iint_{8Q} |u-w|^2(y,\tau) \lsb{\chi}{[0,T]} \ dy  \ d\tau \ d\la, \\
  K_5 \ & :=& \ \int_{c_e \al_0}^{\infty} \la^{-1-\be}\int_{0}^t \int_{\elam^{\tau}} \iprod{\vec{w}}{\nabla (\eta^2 (u-w))} \lsb{\chi}{[0,T]}\ dy \ d\tau \ d\la. \\
  \end{array}
 \end{equation*}
 We now define the truncated Maximal function $\mathcal{M}(z) := \max \{ g(z), \al_0\}$ and then estimate each of the $K_i$ for $i \in \{1,2,3,4\}$ as follows:
 \begin{description}
  \item[Estimate of $K_1$:]  By applying Fubini, we get
  \begin{equation}
  \label{4.19-ini}
    K_1 \apprge \frac{1}{\be c_e^{\be}}  \int_{8B} \mathcal{M}(y,t)^{-\be} | v(y,t)|^2 \ dy .
  \end{equation}

  \item[Estimate of $K_2$:] Again applying Fubini, we get
  \begin{equation*}
  \label{4.20-ini}
   \begin{array}{ll}
    K_2 & = \frac{1}{\be c_e^{\be}} \int_{0}^t \int_{8B} \mathcal{M}(y,\tau)^{-\be} \iprod{\aa(y,\tau,\nabla u)}{\nabla (\eta^2 (u-w))} \ dy \ d\tau. 
   \end{array}
  \end{equation*}
  Applying chain rule  along with \cref{abounded}, \cref{bound_b} and the fact that $t \geq t_0+s$ which implies $Q \subset 8B \times (-\infty,t]$, we  get
  \begin{equation}
\label{4.25-ini}
 \begin{array}{ll}
\beta C_e^\beta  K_2 &= \int_{0}^t \int_{8B} \mathcal{M}(y,\tau)^{-\be} \iprod{\aa(y,\tau,\nabla u)}{\nabla u} \eta^2  \ dy \ d\tau   \\
  & \qquad + \int_{0}^t \int_{8B} \mathcal{M}(y,\tau)^{-\be} \iprod{\aa(y,\tau,\nabla u)}{\nabla \eta^2 } (u-w) \ dy \ d\tau  \\
  & \qquad - \int_{0}^t \int_{8B} \mathcal{M}(y,\tau)^{-\be} \iprod{\aa(y,\tau,\nabla u)}{\nabla w} \eta^2  \ dy \ d\tau   \\
  & \apprge  \iint_{Q} \mathcal{M}(y,\tau)^{-\be} |\nabla u|^p \eta^2  \lsb{\chi}{[0,T]} \ dy \ d\tau  - \iint_{8Q} \mathcal{M}(y,\tau)^{-\be} |h_0|^p   \lsb{\chi}{[0,T]} \ dy \ d\tau \\
  & \qquad - \iint_{8Q} \mathcal{M}(y,\tau)^{-\be} \lbr |\nabla u|^{p-1} + |h_0|^{p-1} \rbr \frac{|u-w|}{\rho} \lsb{\chi}{[0,T]} \ dy \ d\tau  \\
  & \qquad - \iint_{8Q}\mathcal{M}(y,\tau)^{-\be} \lbr |\nabla u|^{p-1} + |h_0|^{p-1} \rbr |\nabla w| \lsb{\chi}{[0,T]} \ dy \ d\tau  \\
  & := \aa_1 + \aa_2 + \aa_3 + \aa_4.
 \end{array}
\end{equation}

\begin{description}
 \item[Estimate of $\aa_1$:] Note that $\eta \equiv 1$ on $B$.  Let $S :=\{ z \in Q \cap {\{t\geq 0\}}: |\nabla u(z)| \geq \be g(z)\},$ then we get
 \begin{equation}
 \label{4.26-ini}
  \begin{array}{@{}l@{}c@{}l@{}}
   \iint_Q |\nabla u|^{p-\be}  \lsb{\chi}{[0,T]}\ dz  & =& \iint_S |\nabla u|^{p-\be} \ dz + \iint_{Q\setminus S} |\nabla u|^{p-\be}  \lsb{\chi}{[0,T]} \ dz \\
   & \leq &\be^{-\be} \iint_Q \mathcal{M}(z)^{-\be} |\nabla u|^p  \lsb{\chi}{[0,T]}\ dz + \be^{p-\be} \iint_{Q\setminus S} \mathcal{M}(z)^{p-\be}  \lsb{\chi}{[0,T]} \ dz \\
            & \overset{\text{\cref{bound_g_x_t-ini}}}{\apprle} & \iint_Q \mathcal{M}(z)^{-\be} |\nabla u|^p \lsb{\chi}{[0,T]} \ dz 
             + \be^{p-\be} |Q| \al_0^{p-\be}\\
&\apprle& \iint_Q\lvert\nabla u\rvert^{p-\beta}\lsb{\chi}{[0,T]}\ dz+\beta^{p-\beta}\lvert Q\rvert \alpha_0^{p-\beta}.
      \end{array}
 \end{equation}
  \item[Estimate of $\aa_2$:]  From \cref{def_g_ini}, we see that  $\lsb{\chi}{8Q \cap \{t \geq 0\}}( |\nabla u(z)| + |h_0(z)|)  \leq \mathcal{M}(z)$ for a.e $z \in \RR^n$, which gives
  \begin{equation}
   \label{4.26.1-ini}
 \aa_2 =  \iint_{8Q} \mathcal{M}(z)^{-\be} |h_0|^p  \lsb{\chi}{[0,T]} \ dz \apprle  \iint_{8Q}  |h_0|^{p-\be}  \lsb{\chi}{[0,T]} \ dz.
  \end{equation}

 \item[Estimate of $\aa_3$:]  We use the bound  $\lsb{\chi}{8Q}( |\nabla u(z)| + |h_0(z)|)  \leq \mathcal{M}(z)$ for a.e $z \in \RR^n$, along with Young's inequality and  \cref{alpha_0}, to get
 \begin{equation}
 \label{4.27-ini}
   \aa_3  \apprle \iint_{8Q}  (|\nabla u|+ |h_0|) ^{p-1-\be} \frac{|u-w|}{\rho} \lsb{\chi}{[0,T]}\ dz
   \apprle  \varepsilon |Q| \al_0^{p-\be} + C(\varepsilon) \iint_{8Q} \left| \frac{u-w}{\rho}\right|^{p-\be} \lsb{\chi}{[0,T]}\ dz.
 \end{equation}

 \item[Estimate of $\aa_4$:] Similar to the calculations for $\aa_3$, we get
 \begin{equation*}
 \label{bnd_aa_4}
 \aa_4 \apprle \ve |Q|\al_0^{p-\be} + C(\ve) \iint_{8Q} |\nabla w|^{p-\be} \lsb{\chi}{[0,T]} \ dz.
 \end{equation*}

\end{description}

  \item[Estimate of $K_3$:] Applying the layer-cake representation (see for example \cite[Chapter 1]{Grafakos}), we get
  \begin{equation}
  \label{4.21-ini}
   \begin{array}{rcl}
    K_3  & \le& \frac{1}{p-\be} \iint_{\RR^{n+1}} \mathcal{M}(z)^{p-\be} \ dz
     \overset{\text{\cref{bound_g_x_t-ini}}}{\apprle} |Q| \al_0^{p-\be}.
   \end{array}
  \end{equation}
  \item[Estimate of $K_4$:] Again applying Fubini, we get 
  \begin{equation}
  \label{4.22-ini}
   \begin{array}{ll}
    K_4 & =\frac{1}{s} \int_{{c_e \al_0}}^{\infty} \la^{-1-\be}   \iint_{8Q} |u-w|^2 \lsb{\chi}{[0,T]} \ dz \ d\la  = \frac{1}{\be} \iint_{8Q}  (\al_0)^{-\be}  \frac{|u-w|^2}{s} \lsb{\chi}{[0,T]} \ dz.
   \end{array}
  \end{equation}
\item[Estimate of $K_5$:] Applying Fubini, we get
\[
\begin{array}{rcl}
K_5 & =&  \frac{1}{\be c_e^{\be}} \int_0^t \int_{8B} \mm(y,\tau)^{-\be} \iprod{\vec{w}}{\nabla (\eta^2(u-w))} \lsb{\chi}{[0,T]} \ dz \\
& \apprle&  \frac{1}{\be c_e^{\be}}  \iint_{8Q} \mm(y,\tau)^{-\be} |\vec{w}| \lbr \frac{|u-w|}{\rho} + |\nabla (u-w)| \rbr  \lsb{\chi}{[0,T]} \ dz.
\end{array}
\]
From \cref{eq6.7}, we have $\mm(y,\tau) \geq \frac{|u-w|}{\rho}$ as well as $\mm(y,\tau) \geq |\nabla u|$ and $\mm(y,\tau) \geq |\nabla w|$  which combined with H\"older's inequality and \cref{alpha_0} gives
\begin{equation}
\label{est_K_5}
\begin{array}{rcl}
K_5 &\apprle & \frac{1}{\be c_e^{\be}}  \iint_{8Q}  |\vec{w}| \lbr \abs{\frac{u-w}{\rho}}^{1-\be} + |\nabla u|^{1-\be} + |\nabla w|^{1-\be} \rbr  \lsb{\chi}{[0,T]} \ dz \\
& \apprle & \frac{c(\ve)}{\be c_e^{\be}}  \iint_{8Q}  |\vec{w}|^{\frac{p-\be}{p-1}} \lsb{\chi}{[0,T]}\ dz +  \frac{\ve}{\be c_e^{\be}} \iint_{8Q}  \abs{\frac{u-w}{\rho}}^{p-\be}  \lsb{\chi}{[0,T]} \ dz + \frac{\ve}{\be c_e^{\be}} \al_0^{p-\be}|Q|.
\end{array}
\end{equation}

 \end{description}
%
%
Substituting \cref{4.26-ini}, \cref{4.26.1-ini} and \cref{4.27-ini} into \cref{4.25-ini} followed by combining  \cref{4.19-ini}, \cref{4.21-ini}, \cref{4.22-ini} and \cref{est_K_5} into \cref{K_expression-ini}, we get
\begin{equation*}
\label{4.28-ini}
 \begin{array}{ll}
& \frac{1}{2\be}  \int_{B} \mathcal{M}(y,t)^{-\be} | u-w|^2(y,t) \ dy + \frac{1}{\be} \iint_Q |\nabla u|^{p-\be}\lsb{\chi}{[0,T]} \ dz \\
&\apprle \frac{1}{\be} \be^{p-\be} \iint_{8Q} |\nabla u|^{p-\be} \lsb{\chi}{[0,T]}\ dz +\frac{1}{\be}\iint_{8Q} |h_0|^{p-\be}\lsb{\chi}{[0,T]} \ dz\\
    &\hspace*{5mm}+ \frac{1}{\be} \varepsilon |Q| \al_0^{p-\be} + \frac{1}{\be} C(\varepsilon) \iint_{8Q} \left| \frac{u-w}{\rho}\right|^{p-\be}\lsb{\chi}{[0,T]}\ dz+  \al_0^{p-\be} |Q| + \frac{1}{\be} \iint_{8Q} \al_0^{-\be}  \frac{|u-w|^2}{s}\lsb{\chi}{[0,T]} \ dz \\
    &\hspace*{5mm} + \frac{\ve}{\be c_e^{\be}} \iint_{8Q} \left| \frac{u-w}{\rho}\right|^{p-\be}\lsb{\chi}{[0,T]}\ dz + \frac{C(\ve)}{\be c_e^{\be}}  \iint_{8Q}  |\vec{w}|^{\frac{p-\be}{p-1}} \lsb{\chi}{\qomt}\ dz + \frac{C(\ve)}{\be} \iint_{8Q} |\nabla w|^{p-\be} \lsb{\chi}{[0,T]} \ dz.
 \end{array}
\end{equation*}
Multiplying the above expression  by $\be$ followed by choosing $\be \in (0,\be_0)$  and $\varepsilon \in (0,1)$ small and then using the intrinsic scaling $s = \rho^2 \al_0^{2-p}$  along with \cref{alpha_0}, we get 
\begin{equation*}
 \begin{array}{ll}
        \int_{B} \mathcal{M}(y,t)^{-\be} | u-w|^2(y,t)| \ dy + |Q| \al_0^{p-\be}  \apprle \iint_{8Q} |h_0|^{p-\be}\lsb{\chi}{[0,T]} \ dz \\
    \hspace*{2cm}  +  \iint_{8Q} \lbr \frac{|u-w|}{\rho} \rbr^{p-\be} \lsb{\chi}{[0,T]}\ dz  +  \iint_{8Q} \al_0^{p-2-\be}   \lbr \frac{|u-w|}{\rho}\rbr^2 \lsb{\chi}{[0,T]}\ dz\\
    \hspace*{2cm}  +  \iint_{8Q}|\nabla w|^{p-\be} \lsb{\chi}{[0,T]}\ dz  +  \iint_{8Q}   |\vec{w}|^{\frac{p-\be}{p-1}} \lsb{\chi}{\qomt}\ dz.\\
 \end{array}
\end{equation*}

Rearranging the above expression and dividing throughout by $|Q|$, we get
\begin{equation*}
\label{4.31}
 \begin{array}{ll}
        &\sup_{t \in \tm \cap \{t \geq 0\}} \al_0^{p-2} \hint_{B} \mathcal{M}(y,t)^{-\be} \left| \frac{ u-w}{\rho}\right|^2(y,t) \ dy \ + \  \al_0^{p-\be}\vspace{1em}\\
&   \apprle    \fiint_{8Q}\left[ \al_0^{p-2-\be}   \lbr \frac{|u-w|}{\rho}\rbr^2 +  \lbr \frac{|u-w|}{\rho} \rbr^{p-\be} \right]\lsb{\chi}{[0,T]}\ dz\\
        &\hspace*{2cm} + \fiint_{8Q} |h_0|^{p-\be}\lsb{\chi}{[0,T]} \ dz +  \fiint_{8Q}|\nabla w|^{p-\be} \lsb{\chi}{[0,T]}\ dz  +  \fiint_{8Q}   |\vec{w}|^{\frac{p-\be}{p-1}} \lsb{\chi}{\qomt}\ dz.
 \end{array}
\end{equation*} 
This completes the proof of the Lemma. 
\end{proof}

\subsection{Some consequences of Caccioppoli inequality}

\begin{lemma}
\label{lemma7.4-ini}
Let $\ka \geq 1$, then there exists $\be_0(n,p,q,\lamot,\ka)$ such that for any   $\be \in (0,\be_0)$ and  any very weak solution $u \in L^2(0,T;L^2(\Om)) \cap L^{p-\be}(0,T; W_0^{1,p-\be}(\Om))$ of \cref{main-2}, the following holds: Let $Q_{\rho,s}(x_0,t_0) = B_{\rho}(x_0) \times \tm_s(t_0)$ be the parabolic cylinder with $t_0 - s \leq 0 < t_0+s$ and $s = \rho^2 \al_0^{2-p}$ for some $\al_0 >0$ as in \cref{alpha_0}. Let $\al Q$ be a rescaled parabolic cylinder for some $\al \in (1,8]$ and also suppose that	
 \begin{equation}\label{hypothesis_2-ini}
\fiint_{\al Q} \lbr|\nabla u|+|h_0|\rbr^{p-\be} \lsb{\chi}{\al Q \cap \Om_T} \ dz + \fiint_{\al Q} |\nabla w|^{p-\be} \lsb{\chi}{\al Q \cap \Om_T} \ dz + \frac{1}{\lvert\alpha Q \rvert}\left\| \lvert\ddt{w}\rvert \lsb{\chi}{\alpha Q\cap\Om_T}\right\|_{L^{\frac{p-\be}{p-1}}(0,T; W^{-1.\frac{p-\be}{p-1}}(\Om))}^{\frac{p-\be}{p-1}} \le\kappa \al_0^{p-\be}.
 \end{equation}
 Let us define
\begin{equation*}
 \label{def_J-ini}
 J:= \sup_{t\in \tm\cap [0,T]} \hint_{ B} \lbr \frac{|u-w|}{\rho}\rbr^2 \mathcal{M}(x,t)^{-\be} \ dx,
\end{equation*}
where $\mathcal{M}(z) := \max \{ g(z), \al_0\}$ is the same as in the proof of \cref{caccioppoli} but with $\qomt$ replaced by $\al Q \cap \Om_T$ in \cref{eq6.7}.

 For any $1 \leq \sigma \leq \max \{2,p-\be\}$, with $r = \frac{2(p-\be)}{p}$ and $\vt = \max \left\{ 1, \frac{n\sigma}{n+r}\right\}$,  there exists a universal positive constant $C = C(n,p,\sigma,\kappa)$ such that the following holds:
\begin{equation*}
\label{lemma4.3.1-ini}
 \fiint_{ Q} \lbr \frac{|u-w|}{\rho} \rbr^{\sigma}\lsb{\chi}{[0,T]} \ dz \leq C (\al_0^{\be} J)^{\frac{\sigma -\vt}{2}}  \lbr[[] \fiint_{ Q} \lbr\abs{\frac{u-w}{\rho}}^{\vt \frac{rp}{rp-\be(\sigma-\vt)}} +|\nabla u-\nabla w|^{\vt \frac{rp}{rp-\be(\sigma-\vt)}}\rbr \lsb{\chi}{[0,T]} \ dz\rbr[]]^{\frac{rp-\be(\sigma-\vt)}{rp}}  .
\end{equation*}
\end{lemma}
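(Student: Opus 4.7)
The plan is to prove this estimate as a parabolic Gagliardo--Nirenberg interpolation with a weight $\mathcal{M}^{-\beta}$ inserted via H\"older's inequality. The structure mirrors the classical case $\beta = 0$ (where the conclusion reduces to the standard parabolic interpolation between $L^{\infty}_t L^2_x$ and $L^{\vartheta}_t W^{1,\vartheta}_x$), with the modifications needed to convert the $L^2$-type control $J$ into the weighted form that emerges naturally from the Caccioppoli inequality of \cref{caccioppoli}.

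First I would fix $f := u - w$ (extended by zero to $t \leq 0$) and work at an arbitrary time slice $t \in \tm \cap [0,T]$. Applying \cref{gagliardo_nirenberg_lemma} on $B$ with exponents $\sigma$, $\vartheta$, $r$ and interpolation parameter $\delta := \vartheta/\sigma$, a short algebraic manipulation shows that the admissibility condition \cref{9.20-ini} reduces exactly to $\vartheta \geq n\sigma/(n+r)$, which holds by the definition $\vartheta = \max\{1, n\sigma/(n+r)\}$. Since $\delta\sigma/\vartheta = 1$, this yields
\[
 \fint_B \left|\frac{f}{\rho}\right|^{\sigma} dx \leq C \fint_B \lbr \left|\frac{f}{\rho}\right|^{\vartheta} + |\nabla f|^{\vartheta} \rbr dx \cdot \lbr \fint_B \left|\frac{f}{\rho}\right|^{r} dx \rbr^{(\sigma-\vartheta)/r}.
\]

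Second, I would introduce the weight $\mathcal{M}^{-\beta}$ in the low-exponent factor. Using $r/2 = (p-\beta)/p$, the identity
\[
 \left(\frac{|f|}{\rho}\right)^{r} = \lbr \frac{|f|^{2}}{\rho^{2}} \mathcal{M}^{-\beta}\rbr^{(p-\beta)/p} \mathcal{M}^{\beta(p-\beta)/p}
\]
together with H\"older's inequality in $x$ with conjugate exponents $p/(p-\beta)$ and $p/\beta$ gives
\[
 \fint_B \left|\frac{f}{\rho}\right|^{r} dx \leq \lbr \fint_B \frac{|f|^{2}}{\rho^{2}} \mathcal{M}^{-\beta} dx \rbr^{(p-\beta)/p} \lbr \fint_B \mathcal{M}^{p-\beta} dx\rbr^{\beta/p}.
\]
Raising to the power $(\sigma-\vartheta)/r$, using $(p-\beta)/(rp) = 1/2$, and taking the supremum in $t$ of the first factor (which contributes $J^{(\sigma-\vartheta)/2}$), the time-slice inequality becomes $\fint_B |f/\rho|^{\sigma} dx \leq C J^{(\sigma-\vartheta)/2} \cdot A(t) \cdot B(t)$, where $A(t) := \fint_B (|f/\rho|^{\vartheta} + |\nabla f|^{\vartheta}) dx$ and $B(t) := (\fint_B \mathcal{M}^{p-\beta} dx)^{\beta(\sigma-\vartheta)/(rp)}$.

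Third, I would average in $t \in \tm$ and apply H\"older's inequality in time with conjugate exponents $rp/(rp - \beta(\sigma-\vartheta))$ and $rp/(\beta(\sigma-\vartheta))$. The $B$-factor then becomes $(\fiint_Q \mathcal{M}^{p-\beta} dz)^{\beta(\sigma-\vartheta)/(rp)}$, which by the analog of \cref{bound_g_x_t-ini} adapted to $\alpha Q$ (using the hypothesis \cref{hypothesis_2-ini}) is bounded by $C \alpha_0^{(p-\beta)\beta(\sigma-\vartheta)/(rp)} = C \alpha_0^{\beta(\sigma-\vartheta)/2}$, which combines with $J^{(\sigma-\vartheta)/2}$ to produce the desired factor $(\alpha_0^{\beta} J)^{(\sigma-\vartheta)/2}$. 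Setting $\vartheta^{\ast} := \vartheta \cdot rp/(rp - \beta(\sigma-\vartheta))$ so that $\vartheta/\vartheta^{\ast} = (rp - \beta(\sigma-\vartheta))/(rp)$, Jensen's inequality applied to $x \mapsto x^{\vartheta^{\ast}/\vartheta}$ together with the elementary bound $(a+b)^{\vartheta^{\ast}/\vartheta} \apprle a^{\vartheta^{\ast}/\vartheta} + b^{\vartheta^{\ast}/\vartheta}$ controls the $A$-factor by $(\fiint_Q (|f/\rho|^{\vartheta^{\ast}} + |\nabla f|^{\vartheta^{\ast}}) dz)^{\vartheta/\vartheta^{\ast}}$, which matches the exponent in the conclusion. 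The main obstacle I anticipate is simply the careful bookkeeping of exponents (and handling the borderline case $\vartheta = 1$, where the Gagliardo--Nirenberg condition \cref{9.20-ini} is satisfied with strict inequality rather than equality), but each step is a direct application of well-known inequalities.
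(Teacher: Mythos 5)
Your proposal is correct and follows essentially the same route as the paper: apply the Gagliardo--Nirenberg lemma (\cref{gagliardo_nirenberg_lemma}) slicewise with $\delta = \vartheta/\sigma$, insert the weight $\mathcal{M}^{-\beta}$ into the $L^r$ factor via the splitting $|f/\rho|^{r} = (|f|^2\rho^{-2}\mathcal{M}^{-\beta})^{r/2}\mathcal{M}^{\beta r/2}$ and H\"older in $x$ (noting $\tfrac{\beta r}{2-r} = p-\beta$), pull out $J^{(\sigma-\vartheta)/2}$ by taking the supremum in time, then apply H\"older in time with exponents $\tfrac{rp}{rp-\beta(\sigma-\vartheta)}$ and $\tfrac{rp}{\beta(\sigma-\vartheta)}$ together with Jensen in $x$, and finally bound $\fiint_Q\mathcal{M}^{p-\beta}\,dz\apprle\alpha_0^{p-\beta}$ via the analogue of \cref{bound_g_x_t-ini} and simplify using $\tfrac{\beta(\sigma-\vartheta)(p-\beta)}{rp}=\tfrac{\beta(\sigma-\vartheta)}{2}$. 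The only cosmetic difference is that you spell out the Jensen step to pass from $(\fint_B\cdot)^{\vartheta^\ast/\vartheta}$ to $\fint_B(\cdot)^{\vartheta^\ast/\vartheta}$, which the paper folds into a single displayed inequality.
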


\begin{proof}
 In order to prove the lemma, we want to make use of \cref{gagliardo_nirenberg_lemma}. First, we note that the choice of $\sigma, \vt, r$ with $\delta = \frac{\vt}{\sigma}$ satisfies \cref{9.20-ini}. Applying \cref{gagliardo_nirenberg_lemma}, we get
 \begin{equation}
 \label{9.25-ini}
\fiint_{Q} \abs{\frac{u-w}{\rho}}^{\sigma} \lsb{\chi}{[0,T]}\ dz \leq C \fint_I \lbr \fint_{B_{\rho}} \abs{\frac{u-w}{\rho}}^{\vt} + |\nabla u-\nabla w|^{\vt} \ dx \rbr \lbr \fint_{B_{\rho}} \abs{\frac{u-w}{\rho}}^r \ dx \rbr^{\frac{\sigma-\vt}{r}}\lsb{\chi}{[0,T]} \ dt.
\end{equation}

With $g(z)$ as in the hypothesis, we can apply H\"older's inequality following by taking the supremum over $t \in I\cap[0,T]$, to get
\begin{equation}
\label{9.26-ini}
\begin{array}{rcl}
 \hint_B  \lbr \frac{|u-w|}{\rho} \rbr^{r} (x,t) \ dx  & = & \hint_B  \lbr \abs{\frac{u-w}{\rho}}^2(x,t)\ \mathcal{M}(x,t)^{-\be} \rbr^{\frac{r}{2}} \mathcal{M}(x,t)^{\frac{\be r}{2}}\ dx  \\
 & \leq & J^{\frac{r}{2}} \lbr \hint_B \mathcal{M}(x,t)^{p-\be} \ dx \rbr^{\frac{2-r}{2}}.
\end{array}
\end{equation}

If $\be_0$ is chosen sufficiently small, then for any $\be \in (0,\be_0]$, we can get $\frac{rp}{\be(\sigma-\vt)} >0$, which along with \cref{9.25-ini} and \cref{9.26-ini} gives
\begin{equation*}
\begin{array}{rcl}
\fiint_{Q} \abs{\frac{u-w}{\rho}}^{\sigma} \lsb{\chi}{[0,T]}\ dz & \apprle &  J^{\frac{\sigma -\vt}{2}} \fint_I \lbr \fint_{B_{\rho}} \abs{\frac{u-w}{\rho}}^{\vt} + |\nabla u-\nabla w|^{\vt} \ dx \rbr \lbr \fint_B \mathcal{M}(z)^{p-\be} \ dx \rbr^{\frac{\be(\sigma-\vt)}{rp}} \lsb{\chi}{[0,T]}\ dt\\
& \apprle & J^{\frac{\sigma -\vt}{2}} \lbr \fiint_{Q} \abs{\frac{u-w}{\rho}}^{\vt \frac{rp}{rp-\beta (\sigma-\vt)}} + |\nabla u-\nabla w|^{\vt\frac{rp}{rp-\beta (\sigma-\vt)}} \lsb{\chi}{[0,T]}\ dz \rbr^{\frac{rp-\beta (\sigma-\vt)}{rp}} \\
&& \times \lbr \fiint_Q \mathcal{M}(z)^{p-\be}  \ dz \rbr^{\frac{\be(\sigma-\vt)}{rp}}.
\end{array}
\end{equation*}
Making use of  \cref{hypothesis_2-ini}, we can follow the proof of \cref{bound_g_x_t-ini} to obtain the bound
\begin{equation*}
\fiint_Q \mathcal{M}(z)^{p-\be} \ dz \apprle \al_0^{p-\be}.
\end{equation*}
Using the identity $\frac{\be (\sigma -\vt)(p-\be)}{rp} = \frac{\be(\sigma-\vt)}{2}$, we get the desired estimate.
 \end{proof}

 \begin{lemma}
\label{lemma9.5-ini}
Let $\frac{2n}{n+2}< p < 2+\be$, then under the assumptions of \cref{caccioppoli}, there holds
\begin{equation*}
\fiint_Q |u-w|^2 \lsb{\chi}{[0,T]}\ dz \apprle_{(n,p,\Lambda_1,\Lambda_0,b_0,r_0)} \rho^2 \al_0^2.
\end{equation*}
\end{lemma}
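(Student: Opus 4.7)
The plan is to combine the Caccioppoli estimate in \cref{caccioppoli} with the Gagliardo-Nirenberg interpolation \cref{lemma7.4-ini}, and then to absorb the resulting self-referential term by iteration. First I massage Caccioppoli into a sup-bound on the quantity
\[
J := \sup_{t\in \tm\cap[0,T]} \hint_B \mm(x,t)^{-\be}\abs{\frac{u-w}{\rho}}^2(x,t)\, dx.
\]
All terms on the right-hand side of \cref{caccioppoli} not involving $u-w$ are $\apprle \al_0^{p-\be}$ by \cref{hypothesis_2-ini}, and $\fiint_{8Q}\abs{(u-w)/\rho}^{p-\be}\,dz$ is bounded by $\fiint_{8Q}\mm^{p-\be}\,dz\apprle\al_0^{p-\be}$ using the pointwise inequality $|u-w|/\rho\le G_3\le g \le \mm$ coming from \cref{eq6.7} and an integrability bound for $\mm$ analogous to \cref{bound_g_x_t-ini}. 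Dividing through by $\al_0^{p-2}$ then gives
\[
J \;\apprle\; \al_0^{2-\be} + \al_0^{-\be}\,\fiint_{8Q}\abs{\frac{u-w}{\rho}}^2 \lsb{\chi}{[0,T]}\, dz.
\]

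Since the hypothesis $p<2+\be$ yields $p-\be<2$, taking $\sigma = 2$ in \cref{lemma7.4-ini} is permissible. With $r = 2(p-\be)/p$, $\vt = \max\{1, 2n/(n+r)\}$ and $\mu = rp/(rp - \be(2-\vt))$, the lemma provides
\[
\fiint_Q \abs{\frac{u-w}{\rho}}^2\lsb{\chi}{[0,T]}\,dz \;\le\; C(\al_0^\be J)^{(2-\vt)/2}\lbr \fiint_Q \lbr \abs{\frac{u-w}{\rho}}^{\vt\mu} + |\nabla u - \nabla w|^{\vt\mu}\rbr \lsb{\chi}{[0,T]}\,dz\rbr^{1/\mu}.
\]
A direct calculation with these formulae gives $\vt\mu = 2np/((n+2)p - 4\be)$, which is at most $p-\be$ for $\be$ small; this inequality degenerates to equality at $\be=0$ precisely when $p = 2n/(n+2)$, so the strict lower bound $p > 2n/(n+2)$ is exactly what makes the interpolation work. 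Jensen's inequality, the pointwise bound $|u-w|/\rho\le \mm$, and \cref{hypothesis_2-ini} then bound the bracketed factor by $C\al_0^\vt$. Substituting the $J$-bound and applying Young's inequality with exponents $2/(2-\vt)$ and $2/\vt$ yields, for any $\ep>0$,
\[
\fiint_Q \abs{\frac{u-w}{\rho}}^2\,dz \;\le\; \ep\,\fiint_{8Q}\abs{\frac{u-w}{\rho}}^2\,dz + C(\ep)\,\al_0^2.
\]

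The main obstacle is the mismatch between the inner cylinder $Q$ on the left and the outer cylinder $8Q$ on the right, which blocks a direct absorption. I would resolve this by re-establishing both the Caccioppoli and the interpolation estimate on arbitrary nested intrinsic subcylinders $(Q_{\rho_1}, Q_{\rho_2})$ with $\rho\le \rho_1 < \rho_2 \le 2\rho$, carefully tracking the dependence of the constants on $(\rho_2-\rho_1)^{-1}$ through the cut-off functions used in the proof of \cref{caccioppoli}. An application of the iteration lemma \cref{iteration_lemma} then absorbs the self-referential term and produces $\fiint_Q\abs{(u-w)/\rho}^2\,dz \apprle \al_0^2$, which after multiplying through by $\rho^2$ is precisely the claimed bound.
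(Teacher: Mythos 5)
Your proposal is correct and follows essentially the same route as the paper's proof: you bound the sup-quantity $J$ via the Caccioppoli inequality, control $\fiint|(u-w)/\rho|^2$ via \cref{lemma7.4-ini} with $\sigma=2$, apply Young's inequality, and observe that the $Q$-vs-$8Q$ mismatch must be repaired by re-running both estimates on nested cylinders $\al_1 Q\subset\al_2 Q$ and invoking \cref{iteration_lemma}. The paper does precisely this, applying Caccioppoli and the interpolation over a pair $\al_1 Q, \al_2 Q$ with $1\le\al_1<\al_2\le 16$ and tracking the $(\al_2-\al_1)^{-1}$ dependence before absorbing with the iteration lemma, and your computation $\vt\mu = 2np/((n+2)p-4\be)\le p-\be$ for small $\be$ matches the paper's \cref{q-ini}.
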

\begin{proof}
Let us choose $1 \leq \al_1 < \al_2 \leq 16$. Making use of  \cref{lemma7.4-ini} with $\sigma=2$ gives
\begin{equation}
\label{9.30-ini}
\fiint_{\al_1 Q} \abs{\frac{u-w}{\rho}}^2 \lsb{\chi}{[0,T]}\ dz \apprle (\al_0^{\be} J)^{\frac{2-\vt}{2}} \lbr[[] \fiint_{\al_1 Q} \lbr \abs{\frac{u-w}{\rho}}^{\vt \frac{rp}{rp-\be(2-\vt)}} +|\nabla u-\nabla w|^{\vt \frac{rp}{rp-\be(2-\vt)}} \rbr \lsb{\chi}{[0,T]}dz\rbr[]]^{\frac{rp-\be(2-\vt)}{rp}},
\end{equation}
where $r = \frac{2(p-\be)}{p}$, $\vt = \max\left\{1,\frac{2n}{n+r}\right\}$ and 
\begin{equation*}
J = \sup_{t \in \al_1 I \cap [0,T]} \fint_{\al_1 B} \abs{\frac{u-w}{\rho}}^2 \mm(\cdot, t)^{-\be} \ dx.
\end{equation*}
From a calculation similar to  \cref{caccioppoli} applied over $\al_1 Q$ and $\al_2 Q$ for $1\leq \al_1 < \al_2 \leq 16$ and corresponding cut-off functions 
\[
\eta \in C_c^{\infty}(\al_2 B)\ \ \ \text{with} \ \eta \equiv 1 \ \text{on}\  \al_1B \txt{and} \zeta\in C_c^{\infty}(\al_2I)\ \ \   \text{with} \ \zeta \equiv 1 \ \text{on}\  \al_1I,
\]
along with an application of Young's inequality and \cref{alpha_0} (note we have $p-\be <2$), we get
\begin{equation}
\label{9.32-ini}
\begin{array}{rcl}
\al_0^{\be} J & \apprle & \fiint_{\al_2 Q} \abs{\frac{u-w}{\al_2\rho-\al_1\rho}}^2 \lsb{\chi}{[0,T]}\ dz + \al_0^{2-p+\be} \fiint_{\al_2 Q} \abs{\frac{u-w}{\al_2\rho-\al_1\rho}}^{p-\be}  \lsb{\chi}{[0,T]} \ dz \\
&& + \al_0^{2-p+\be}\lbr[[]\fiint_{\al_2 Q} |h_0|^{p-\be}\lsb{\chi}{[0,T]} \ dz +  \fiint_{\al_2 Q}|\nabla w|^{p-\be} \lsb{\chi}{[0,T]}\ dz  +  \fiint_{\al_2 Q}   |\vec{w}|^{\frac{p-\be}{p-1}} \lsb{\chi}{\al_2 Q \cap \Om_T}\ dz\rbr[]]\\
& \apprle & \fiint_{\al_2 Q} \abs{\frac{u-w}{\al_2\rho-\al_1\rho}}^2  \lsb{\chi}{[0,T]} \ dz + \al_0^2.
\end{array}
\end{equation}

Combining \cref{9.30-ini} and \cref{9.32-ini}, along with applying Young's inequality, we get
\begin{equation}
\label{9.33-ini}
\begin{array}{rcl}
\fiint_{\al_1 Q} \abs{{u-w}}^2 \lsb{\chi}{[0,T]}\ dz & \leq &  C \rho^{\vt} \al_0^{\vt} \lbr \fiint_{\al_2Q} \abs{\frac{u-w}{(\al_2-\al_1)\rho}}^2  \lsb{\chi}{[0,T]} \ dz + \al_0^2 \rbr^{\frac{2-\vt}{2}}\\
& \leq & \frac12  \fiint_{\al_2 Q} |u-w|^2  \lsb{\chi}{[0,T]}\ dz  + C (\al_2-\al_1)^{-2 ( \frac{2}{\vt} - 1)} \rho^2 \al_0^2.
\end{array}
\end{equation}
We can now use \cref{iteration_lemma} to absorb the first term on the right of \cref{9.33-ini} which proves the lemma.
\end{proof}

\subsection{Reverse H\"older inequality}
\begin{lemma}
\label{lemma10.1-ini}
Suppose that \cref{alpha_0} holds over $\{Q, 16^2Q\}$ instead of $\{Q,16Q\}$
where $Q = Q_{\rho, \al_0^{2-p} \rho^2}(x_0,t_0)$, then there holds 
\begin{equation*}
\al_0^{p-\be} \apprle \lbr \fiint_{16Q} |\nabla u|^{q_0} \lsb{\chi}{[0,T]} \ dz\rbr^{\frac{p-\be}{q_0}} + \fiint_{16Q} |\Xi|^{p-\be} \lsb{\chi}{[0,T]} \ dz,
\end{equation*}
where 
\begin{equation}
\label{q-ini}
q_0 := \left\{
\begin{array}{lll}
\max\{ q, \overline{q} \}, & \overline{q} = \frac{np(p-\be)}{p(n+2)- \be (2+p-\be)} & \text{if} \ p-\be \geq 2\\
\max\{ q, \overline{q} \}, & \overline{q} = \frac{2np}{p(n+2)- 4\be} & \text{if} \ \frac{2n}{n+2}<p-\be <2,
\end{array}\right.
\end{equation}
and 
\begin{equation}
\label{def_xi}
\Xi := |h_0| + |\nabla w| + |\vec{w}|^{\frac{1}{p-1}}.
\end{equation}

\end{lemma}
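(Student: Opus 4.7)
\textbf{Proof plan for Lemma \ref{lemma10.1-ini}.}

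The plan is to couple the Caccioppoli inequality (Lemma \ref{caccioppoli}) with the Gagliardo--Nirenberg interpolation (Lemma \ref{lemma7.4-ini}), and then dispose of the residual lower-order $(|u-w|/\rho)$ term via a parabolic Poincar\'e argument that exploits the fact that $16Q$ crosses $\{t=0\}$. First I would apply Lemma \ref{caccioppoli} on $Q$ with enlargement $8Q$ to obtain
\[
\alpha_0^{p-\beta} + \sup_{t}\alpha_0^{p-2}\fint_B \mathcal{M}^{-\beta}\left|\tfrac{u-w}{\rho}\right|^{2}\,dx \lesssim \fiint_{8Q}\left[\alpha_0^{p-2-\beta}\left(\tfrac{|u-w|}{\rho}\right)^{\!2} + \left(\tfrac{|u-w|}{\rho}\right)^{\!p-\beta}\right]\chi_{[0,T]}\,dz + \fiint_{8Q}|\Xi|^{p-\beta}\chi_{[0,T]}\,dz,
\]
absorbing $|h_0|^{p-\beta}+|\nabla w|^{p-\beta}+|\vec w|^{(p-\beta)/(p-1)}$ into $|\Xi|^{p-\beta}$. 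The second hypothesis on $16^2 Q$ is then used to repeat Caccioppoli on $8Q$ with enlargement $64Q \subset 16^2Q$, yielding the sup control $\alpha_0^{\beta}J \lesssim \alpha_0^{2}$, where $J$ is the quantity of Lemma \ref{lemma7.4-ini}.

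Next I would split by the size of $p-\beta$. \emph{Case $p-\beta\ge 2$.} Young's inequality bounds $\alpha_0^{p-2-\beta}(|u-w|/\rho)^{2}\le \epsilon\alpha_0^{p-\beta}+C_\epsilon(|u-w|/\rho)^{p-\beta}$, with the $\epsilon$-term absorbed. I apply Lemma \ref{lemma7.4-ini} with $\sigma=p-\beta$, giving $\vartheta=\tfrac{np(p-\beta)}{np+2(p-\beta)}$ and, after straightforward algebra using $r=2(p-\beta)/p$,
\[
\overline{q}=\vartheta\cdot\frac{rp}{rp-\beta(\sigma-\vartheta)}=\frac{np(p-\beta)}{p(n+2)-\beta(2+p-\beta)}.
\]
Substituting $\alpha_0^{\beta}J\lesssim \alpha_0^{2}$ yields $\fiint_{8Q}(|u-w|/\rho)^{p-\beta}\lesssim \alpha_0^{p-\beta-\vartheta}\big(\fiint_{8Q}[|(u-w)/\rho|^{\overline q}+|\nabla(u-w)|^{\overline q}]\big)^{\vartheta/\overline q}$, and a Young's inequality with conjugate pair $\big(\tfrac{p-\beta}{p-\beta-\vartheta},\tfrac{p-\beta}{\vartheta}\big)$ separates a factor $\epsilon\alpha_0^{p-\beta}$ (absorbed) from the bracket raised to the desired power $(p-\beta)/\overline q$. \emph{Case $\tfrac{2n}{n+2}<p-\beta<2$.} Lemma \ref{lemma9.5-ini} directly gives $\alpha_0^{p-2-\beta}\fiint|u-w|^{2}/\rho^{2}\lesssim \alpha_0^{p-\beta}$, and H\"older promotes this to control of $\fiint(|u-w|/\rho)^{p-\beta}$. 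I then apply Lemma \ref{lemma7.4-ini} with $\sigma=2$; the analogous computation gives $\vartheta=\tfrac{2np}{np+2(p-\beta)}$ and
\[
\overline{q}=\vartheta\cdot\frac{rp}{rp-\beta(2-\vartheta)}=\frac{2np}{p(n+2)-4\beta}.
\]
Combining with $\alpha_0^{\beta}J\lesssim\alpha_0^{2}$ and multiplying by $\alpha_0^{p-2-\beta}$ before applying Young's recovers the same structure as in the first case.

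The remaining step is to eliminate the $\fiint(|u-w|/\rho)^{\overline q}$ on the right. Since $16Q$ crosses $\{t=0\}$ and $u\equiv 0$ there, extending $u-w$ to $\{t\le 0\}$ by the obvious choice and selecting a cut-off $\xi\in C_c^\infty(16Q\cap\{t\le 0\})$ as in the proofs of Lemmas \ref{lemma3.7} and \ref{lemma2.5-ini}, one has $\avgs{u-w}{\xi}=0$. Then Lemma \ref{lemma_crucial_1} delivers
\[
\fiint_{8Q}\left(\tfrac{|u-w|}{\rho}\right)^{\overline q}\chi_{[0,T]}\,dz \lesssim \fiint_{16Q}|\nabla(u-w)|^{\overline q}\chi_{[0,T]}\,dz + \sup_{t_1,t_2}\Big|\tfrac{\avgs{u-w}{\mu}(t_{2})-\avgs{u-w}{\mu}(t_{1})}{\rho}\Big|^{\overline q},
\]
and Lemma \ref{lemma_crucial_2_app} expresses the sup correction via $(|\nabla u|+|h_0|)^{p-1}$ and $|\vec w|$, both of which are dominated by the reverse-H\"older RHS (after Jensen, using $\overline q \le p-\beta$ and that $|h_0|,|\vec w|^{1/(p-1)}\le \Xi$). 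Finally, the triangle inequality $|\nabla(u-w)|\le|\nabla u|+|\nabla w|$ moves $|\nabla w|^{\overline q}$ into $|\Xi|^{p-\beta}$ (via Jensen's inequality, since $\overline q\le p-\beta$), and setting $q_0:=\max\{q,\overline q\}$ and one last application of Jensen yields the stated bound.

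\textbf{Main obstacle.} The bookkeeping is non-trivial: one must verify carefully that the algebraic identity $\overline{q}\cdot (rp-\beta(\sigma-\vartheta))/(rp) = \vartheta$ produces \emph{exactly} the exponent in the two regimes of \cref{q-ini}, and that the Young's--inequality constants do not spoil the absorption. The truly subtle step, however, is the parabolic Poincar\'e in the final paragraph: the cancellation $\avgs{u-w}{\xi}=0$ is purely a consequence of $16Q$ crossing the initial boundary with $u(\cdot,0)=0$ after the extension, and the control of the time-oscillation of the $w$-part through Lemma \ref{lemma_crucial_2_app} is precisely what makes the Caccioppoli estimate usable as a genuine self-improving reverse H\"older inequality in the initial-boundary setting.
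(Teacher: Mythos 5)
Your plan matches the paper's proof in its main architecture: invoke the Caccioppoli inequality (\cref{caccioppoli}) to reduce the bound on $\al_0^{p-\be}$ to the quantities $I_\sigma := \al_0^{p-\be-\sigma}\fiint_{16Q}(|u-w|/\rho)^\sigma$, control those via the Gagliardo--Nirenberg interpolation of \cref{lemma7.4-ini} together with a $J$-bound (obtained from a second Caccioppoli on a larger cylinder, using \cref{lemma9.5-ini} when $p-\be\le 2$), and finally dispose of the residual $\fiint(|u-w|/\rho)^{q_0}$ by the parabolic Poincar\'e argument that exploits $\avgs{u-w}{\xi}=0$ for a cutoff $\xi$ supported in $16Q\cap\{t\le 0\}$, with \cref{lemma_crucial_2_app} handling the time-oscillation. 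Your verification that $\bar q = \vartheta\,\tfrac{rp}{rp-\be(\sigma-\vartheta)}$ reduces to the two expressions in \cref{q-ini} is correct in both regimes.

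However, there is a genuine gap in the very last step. The time-oscillation correction produced by \cref{lemma_crucial_2_app} contributes, after normalizing by $|16Q|\approx\rho^{n+2}\al_0^{2-p}$, a term of the form $\big(\al_0^{2-p}\fiint_{16Q}(|\nabla u|+|h_0|)^{p-1}\lsb{\chi}{[0,T]}\,dz\big)^{q_0}$ inside the bracket of (10.12). You propose to control this ``after Jensen, using $\bar q \le p-\be$.'' But Jensen combined with \cref{alpha_0} gives only $\fiint|\nabla u|^{p-1}\apprle\al_0^{p-1}$, and hence the whole term, raised to $(p-\be)/q_0$, becomes $(\al_0^{2-p}\cdot\al_0^{p-1})^{p-\be}=\al_0^{p-\be}$ with an \emph{unabsorbable} constant --- the argument is circular. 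The paper instead uses the interpolation (see (10.14) in the proof)
\[
\fiint_{16Q}|\nabla u|^{p-1}\lsb{\chi}{[0,T]}\,dz\ \apprle\ \lbr\fiint_{16Q}|\nabla u|^{p-\be}\lsb{\chi}{[0,T]}\,dz\rbr^{\frac{p-2}{p-\be}}\lbr\fiint_{16Q}|\nabla u|^{q_0}\lsb{\chi}{[0,T]}\,dz\rbr^{\frac{1}{q_0}}\ \apprle\ \al_0^{p-2}\lbr\fiint_{16Q}|\nabla u|^{q_0}\lsb{\chi}{[0,T]}\,dz\rbr^{\frac{1}{q_0}},
\]
which keeps an honest $q_0$-norm of $|\nabla u|$ --- exactly the target quantity in the reverse H\"older estimate --- while the remaining factor, dominated by $\al_0^{p-2}$, cancels the $\al_0^{2-p}$ from the measure normalization. (For $p<2$ a Young-type variant of this splitting is needed, since the exponent $(p-2)/(p-\be)$ is then negative; the mechanism is the same.) Without some such interpolation the reverse-H\"older bound cannot be closed, and this is the step your proposal leaves unresolved.
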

\begin{proof}
Since \cref{alpha_0} is  satisfied over $\{Q,16^2Q\}$, but with a different universal constant, we can apply the Caccioppoli inequality from \cref{caccioppoli} to get
\begin{equation}
\label{10.4-ini}
\begin{array}{rcl}
\al_0^{p-\be} & \apprle &  \al_0^{p-2-\be} \fiint_{16Q} \abs{\frac{u-w}{\rho}}^{2}\lsb{\chi}{[0,T]} \ dz + \fiint_{16Q} \abs{\frac{u-w}{\rho}}^{p-\be}\lsb{\chi}{[0,T]} \ dz  + \fiint_{16Q} |\Xi|^{p-\be} \lsb{\chi}{\qomt} \ dz\\
& = & C_{cac} \lbr I_2 + I_{p-\be} + \fiint_{16Q} |\Xi|^{p-\be} \lsb{\chi}{\qomt} \ dz\rbr,
\end{array}
\end{equation}
where we have set $I_{\sigma} := \al_0^{p-\be-\sigma} \fiint_{16Q} \abs{\frac{u-w}{\rho}}^{\sigma} \lsb{\chi}{[0,T]} \ dz$ for $\sigma = 2$ or $\sigma = p-\be$ and $\Xi$ is from \cref{def_xi}.  Thus, we can apply \cref{lemma7.4-ini} to get
\begin{equation}
\label{691-ini}
\begin{array}{rcl}
I_{\sigma} & = & \al_0^{p-\be-\sigma} \fiint_{16Q} \abs{\frac{u-w}{\rho}}^{\sigma} \lsb{\chi}{[0,T]} \ dz \\
& \apprle & \al_0^{p-\be-\sigma} \lbr\al_0^{\be} J\rbr^{\frac{\sigma-\vt}{2}}  \lbr[[] \fiint_{16Q} \lbr \abs{\frac{u-w}{\rho}}^{\vt \frac{rp}{rp-\be(\sigma-\vt)}} +|\nabla u-\nabla w|^{\vt \frac{rp}{rp-\be(\sigma-\vt)}} \rbr \lsb{\chi}{[0,T]} \ dz\rbr[]]^{\frac{rp-\be(\sigma-\vt)}{rp}},
\end{array}
\end{equation}
where $r = \frac{2(p-\be)}{p}$, $\vt = \max\{1, \frac{n\sigma}{n+r} \}$ and 
\begin{equation*}
J := \sup_{t \in 16I \cap [0,T]} \fint_{16B} \abs{\frac{u-w}{\rho}}^2 \mm(\cdot,t)^{-\be} \ dz.
\end{equation*}
Again, we apply \cref{caccioppoli} to estimate $J$ to get
\begin{equation}
\label{10.7-ini}
\al_0^{\be} J \leq C_{cac} \lbr \fiint_{16^2Q} \abs{\frac{u-w}{\rho}}^2 \lsb{\chi}{[0,T]} \ dz + \al_0^{2-p+\be}\fiint_{16^2Q} \abs{\frac{u-w}{\rho}}^{p-\be} \lsb{\chi}{[0,T]} \ dz  + \al_0^{2-p+\be}\fiint_{16^2Q} |\Xi|^{p-\be} \lsb{\chi}{[0,T]} \ dz \rbr.
\end{equation}

To estimate the first term on the right of \cref{10.7-ini}, we split into two cases:
\begin{description}
\item[In the case $p-\be \leq 2$,]  we directly apply \cref{lemma9.5-ini} to get
\begin{equation}
\label{10.8-ini}
\fiint_{16^2Q} \abs{\frac{u-w}{\rho}}^2 \lsb{\chi}{[0,T]} \ dz \apprle \al_0^2.
\end{equation}
\item[In the case $p-\be > 2$,] we get the following sequence of estimates. Firstly, since $u-w=0$ for $\{t \leq 0\}$, which gives $\avgs{u-w}{\xi}=0$ where $\xi$ is defined analogous to \cref{lemma_crucial_1} but on $16^2Q \cap \{t\leq 0\}$. Thus applying H\"older's inequality, we get
\begin{equation}
\label{10.9-ini}
\begin{array}{rcl}
\fiint_{16^2Q} \abs{\frac{u-w}{\rho}}^2 \lsb{\chi}{[0,T]} \ dz 
& {\leq} & \lbr \fiint_{16^2Q} \abs{\frac{(u-w) - \avgs{u-w}{\xi}}{\rho}}^{p-\be} \lsb{\chi}{[0,T]} \ dz\rbr^{\frac{2}{p-\be}} =: H^{\frac{2}{p-\be}}.
\end{array}
\end{equation}

To estimate the term $H$ occurring on the right hand side of \cref{10.9-ini}, we proceed as follows:
\begin{equation}
\label{10.10-ini}
\begin{array}{rcl}
H  & \overset{\redlabel{10.9.b}{a}}{\apprle} &  \fiint_{16^2 Q} |\nabla u-\nabla w|^{p-\be} \lsb{\chi}{[0,T]} \ dz + \sup_{t_1,t_2 \in 16^2I \cap \{t\geq0\}} \abs{\frac{\avgs{u-w}{\mu}(t_2) -\avgs{u-w}{\mu}(t_1)}{\rho}}^{p-\be} \\
& \overset{\redlabel{10.9.c}{b}}{\apprle} &   \fiint_{16^2 Q} |\nabla u-\nabla w|^{p-\be} \lsb{\chi}{[0,T]} \ dz + \abs{\frac{ \|\nabla \mu\|_{L^{\infty}(16^2B)}}{\rho} \iint_{16^2Q} (|\nabla u|+|h_0|)^{p-1} \lsb{\chi}{[0,T]} \ dz}^{p-\be} \\
&& + \abs{\frac{ \|\nabla \mu\|_{L^{\infty}(16^2B)}}{\rho} \iint_{16^2Q} |\vec{w}| \lsb{\chi}{16^2Q \cap \Om_T} \ dz}^{p-\be} \\
& \overset{\redlabel{10.9.d}{c}}{\apprle} &   \fiint_{16^2 Q} |\nabla u-\nabla w|^{p-\be} \lsb{\chi}{[0,T]} \ dz + \lbr \al_0^{2-p} \fiint_{16^2Q} (|\nabla u|+|h_0|)^{p-1} \lsb{\chi}{[0,T]} \ dz\rbr^{p-\be} \\
&& + \lbr \al_0^{2-p} \fiint_{16^2Q} |\vec{w}| \lsb{\chi}{16^2Q \cap \Om_T} \ dz\rbr^{p-\be} \\
& \overset{\redlabel{10.9.e}{d}}{\apprle} &   \al_0^{p-\be} + \lbr \al_0^{2-p} \al_0^{p-1}\rbr^{p-\be}  = \al_0^{p-\be}.
\end{array}
\end{equation}
 To obtain \redlabel{10.9.b}{a}, we apply \cref{lemma_crucial_1} with $\mu\in C_c^{\infty}(16^2B)$ (which is just a rescaled version of \cref{def_eta}). To obtain \redlabel{10.9.c}{b}, we apply \cref{lemma_crucial_2_app} with $\phi = \mu$ and $\varphi = 1$.  To obtain \redlabel{10.9.d}{c}, we note that $|16^2 Q| \equiv \rho^{n+2} \al_0^{2-p}$ and finally to obtain \redlabel{10.9.e}{d}, we make use of the hypothesis of the lemma (more specifically \cref{alpha_0}).
\end{description}
Thus combining \cref{10.8-ini}, \cref{10.9-ini}, \cref{10.10-ini} with \cref{10.7-ini}, we get
\begin{equation}
\label{697-ini}
\al_0^{\be} J \apprle \al_0^2.
\end{equation}

From the choice $\vt = \max\left\{1, \frac{n\sigma}{n+r} \right\}$ along with $r = \frac{2(p-\be)}{p}$, \cref{q-ini} and $\sigma = \max\{2,p-\be\}$, we see that 
\begin{equation}
\label{698-ini}
\vt \frac{rp}{rp-\be(\sigma -\vt)} \leq q_0,
\end{equation}
provided $\be\ll 1$ is sufficiently small(see \cite{bogelein2009very} 192p for the details.)

Making use of \cref{697-ini} in \cref{691-ini} and the observation \cref{698-ini} along with Young's inequality, for any $\ve >0$, we get 
\begin{equation}
\label{10.12-ini}
I_{\sigma} \apprle \ve \al_0^{p-\be} + C_{\ve} \lbr[[] \fiint_{16Q} \lbr  \abs{\frac{u-w}{\rho}}^{q_0} + |\nabla u-\nabla w|^{q_0} \rbr\lsb{\chi}{[0,T]} \ dz \rbr[]]^{\frac{p-\be}{q_0}}.
\end{equation}

To estimate the second term on the right of \cref{10.12-ini}, we can proceed similarly to \cref{10.9-ini} and \cref{10.10-ini} to get
\begin{equation}
\label{10.13-ini}
\begin{array}{l}
\fiint_{16Q} \lbr \abs{\frac{u-w}{\rho}}^{q_0} + |\nabla u-\nabla w|^{q_0}\rbr \lsb{\chi}{[0,T]} \ dz \apprle \fiint_{16Q} |\nabla u-\nabla w|^{q_0} \lsb{\chi}{[0,T]}\ dz  \\
\hfill + \lbr \al_0^{2-p} \fiint_{16Q} (|\nabla u|+\lvert h_0|)^{p-1} \lsb{\chi}{[0,T]} \ dz \rbr^{q_0} + \lbr \al_0^{2-p} \fiint_{16Q} |\vec{w}| \lsb{\chi}{\qomt} \ dz \rbr^{q_0}. 
\end{array}
\end{equation}

To estimate second term of right hand side, we use H\"older's inequality and \cref{alpha_0} to discover
\begin{equation}
\label{10.14-ini}
\begin{array}{rcl}
\fiint_{16Q} |\nabla u|^{p-1} \lsb{\chi}{[0,T]} \ dz 
& \leq & \lbr \fiint_{16Q} |\nabla u|^{p-\be} \lsb{\chi}{[0,T]} \ dz \rbr^{\frac{p-2}{p-\be}} \lbr \fiint_{16Q} |\nabla u|^{q_0} \lsb{\chi}{[0,T]} \ dz\rbr^{\frac{1}{q_0}} \\
& \apprle & \al_0^{p-2} \lbr \fiint_{16Q} |\nabla u|^{q_0} \lsb{\chi}{[0,T]} \ dz\rbr^{\frac{1}{q_0}}.
\end{array}
\end{equation}

Combining \cref{10.14-ini} into \cref{10.13-ini} and making use of \cref{10.12-ini}, we get
\begin{equation}
\label{10.15-ini}
I_{\sigma} \leq CC_{cac}\ve \al_0^{p-\be}  + C_{\ve} \lbr \fiint_{16Q} |\nabla u|^{q_0} \lsb{\chi}{[0,T]} \ dz\rbr^{\frac{p-\be}{q_0}} + C_{\ve} \fiint_{16Q} |\Xi|^{p-\beta} \lsb{\chi}{\qomt} \ dz.
\end{equation}

Combining \cref{10.15-ini} and \cref{10.4-ini}, we get
\begin{equation*}
\al_0^{p-\be} \leq C C_{cac} \ve \al_0^{p-\be} + C_{\ve} \lbr \fiint_{16Q} |\nabla u|^{q_0} \lsb{\chi}{[0,T]} \ dz\rbr^{\frac{p-\be}{q_0}} + \fiint_{16Q} |\Xi|^{p-\be} \lsb{\chi}{\qomt} \ dz.
\end{equation*}
Choosing $\ve$ small, the lemma follows.
\end{proof}

\subsection{Higher integrability at initial boundary - Proof of \texorpdfstring{\cref{main_theorem_2}}.}

We are now ready to prove \cref{main_theorem_2}. The calculations follows very similar to \cite[Theorem 2.1]{bogelein2009very} with a few modifications. For the sake of completeness, we provide the rough sketch below.

Without loss of generality, we can assume $\rho = 1$ and $z_0 = (0,0) \in \pa \Om \times [0,T]$. We take $Q_1 := Q_{(\rho,s)}(0,0) $ and  $Q_2 := Q_{(2\rho,2s^2)}(0,0)$ and for any  $z \in Q_2$, define the parabolic distance of $z$ to $\pa Q_2$ by 
\begin{gather*}
d_p(z):= \inf_{ \tilde{z} \in \RR^{n+1} \setminus Q_2} \min \{ |x-\tilde{x}| , \sqrt{|t-\tilde{t}|} \} \label{par_dist_boundary-ini}. 
\end{gather*}

Furthermore, let $\be_0$ be the constant such that \cref{lemma10.1-ini} holds for any $\be \in (0,\be_0)$ and $p-\be > \frac{2n}{n+2}$. For $z \in Q_2$, let us define the following function:
\begin{gather}
  \psi(z) := \lbr |\nabla u(z)| + |\Xi(z)|\rbr\lsb{\chi}{[0,T]} \quad \text{and} \quad f(z) := d_p^{\al} (z) \psi(z) \quad \text{with} \ \al:= \frac{n+2}{d}, \label{def_g_f} 
\end{gather}
where $d$ is as defined in \cref{def_d-ini} and $\Xi$ is defined in \cref{def_xi}. 
Finally  we define $\al_0$ to be
\begin{gather}
 \al_0^d := \fiint_{Q_2} \psi(z)^{p-\be} \ dz + 1. \label{def_al_0-ini}
\end{gather}
Let $\la_0$ be any number such that 
\begin{gather}
 \la_0 \geq b^{\frac{1}{d}} \al_0 \qquad \text{where} \ b:= 2^{10(n+2)}.\label{5.5-ini}
\end{gather}
Now suppose that $\mfz \in Q_2$ with $f(\mfz)>\lambda_0$, then let us denote the parabolic distance of $\mfz$ to $\pa Q_2$ by $r_{\mfz} := d_p(\mfz) $
and define the intrinsic scaling factor as
\begin{gather}
 \ga = \ga(\mfz) := (r_{\mfz}^{-\al} \la_0 )^{2-p} = (d_p(\mfz)^{-\al} \la_0)^{2-p} \label{intrinsic_scaling-inii}.
\end{gather}

In order to prove higher integrability, we want to apply \cref{gehring_lemma}. So the rest of the proof is devoted to ensuring that all the hypotheses of \cref{gehring_lemma} are satisfied.
\begin{description}[leftmargin=*]
 \item[Case $p \geq 2$:]  Let us note that $r_{\mfz}^{\al} \leq 2^{\al} \leq b^{\frac1{d}} \al_0 \leq \la_0$, which implies $\ga = (r_{\mfz}^{-\al} \la_0)^{2-p} \leq 1$. Hence we shall consider intrinsic cylinders of the type $Q_{\mfz}(R, \ga R^2)$ with $0 < R \leq r_{\mfz}$.

 In order to apply \cref{gehring_lemma}, we need to find an appropriate intrinsic parabolic cylinder around $\mfz$ on which all the hypotheses of \cref{lemma10.1-ini} are satisfied. 
 In order to do this, let us first take $R$ such that $r_{\mfz} \leq 2^9 R < 2^9 r_{\mfz}$. In this case, there holds:
 \begin{equation}
  \label{5.8-ini}
  \begin{array}{ll}
   \fiint_{Q_{\mfz}(R,\ga R^2)} \psi(z)^{p-\be} \ dz & \leq \frac{|Q_2|}{|{Q_{\mfz}(R,\ga R^2)}|} \fiint_{Q_{2}} \psi(z)^{p-\be} \ dz \\
   &\overset{\text{\cref{def_al_0-ini}}}{\le} \frac{2^{n+2}}{R^{n+2} \ga}\al_0^d
   \overset{\text{\cref{5.5-ini}}}{\leq} \frac{2^{10(n+2)}}{r_{\mfz}^{n+2}} \frac{\la_0^d}{b} 
    \overset{\text{\cref{intrinsic_scaling-inii}}}{=} (r_{\mfz}^{-\al} \la_0)^{p-\be}.
  \end{array}
 \end{equation}

 Furthermore, by Lebesgue differentiation theorem, for every $\mfz \in Q_2$ with $f(\mfz) > \la_0$, there holds
 \begin{equation}
  \label{5.9-ini}
  \lim_{r\searrow 0} \fiint_{Q_{\mfz}(r,\ga r^2)} \psi(z)^{p-\be} \ dz = \psi(\mfz)^{p-\be} \overset{\text{\cref{def_g_f}\cref{intrinsic_scaling-inii}}}{=} (r_{\mfz}^{-\al} f(\mfz))^{p-\be} >  (r_{\mfz}^{-\al} \la_0)^{p-\be} .
 \end{equation}

 Thus from \cref{5.8-ini} and \cref{5.9-ini}, we observe that there should exist  $\rho \in \lbr0,\frac{r_{\mfz}}{2^9}\rbr$, such that 
 \begin{equation*}
  \label{def_rho-ini}
  \begin{array}{c}
  \fiint_{Q_{\mfz}(\rho, \ga \rho^2)} \psi(z)^{p-\be} \ dz = (r_{\mfz}^{-\al} \la_0)^{p-\be}, \\
  \fiint_{Q_{\mfz}(R, \ga R^2)} \psi(z)^{p-\be} \ dz \leq (r_{\mfz}^{-\al} \la_0)^{p-\be},  \quad \ \forall\   R \in [\rho, r_{\mfz}].
  \end{array}
 \end{equation*}

  We now set $Q:= Q_{\mfz}(\rho, \ga \rho^2)$, then $2^9Q \subset Q_2$, thus all the hypotheses of \cref{lemma10.1-ini} are satisfied with $(r_{\mfz}^{-\al} \la_0, 1)$ instead of $(\al_0,\kappa)$, i.e., the following holds:
  \begin{equation}
  \label{5.11-ini}
   (r_{\mfz}^{-\al} \la_0)^{p-\be}  = \fiint_Q \psi(z)^{p-\be} \ dz \qquad \text{and} \qquad \fiint_{2^8Q} \psi(z)^{p-\be} \ dz \apprle (r_{\mfz}^{-\al} \la_0)^{p-\be}.
  \end{equation}

In the case $Q \cap \{t\leq 0\} \neq \emptyset$,  we can apply  \cref{lemma10.1-ini} and in the case $2^8Q \subset \Om \times [0,T]$, we are in the interior case and can apply \cite[Lemma 6.3]{bogelein2009very} with $\th$ replaced by $\Xi$ since $|\th| \leq |\Xi|$ (which was first proved in \cite{KL}) to get
\begin{equation}
 \label{5.12-ini}
 (r_{\mfz}^{-\al} \la_0)^{p-\be} \apprle \lbr \fiint_{2^8Q} |\nabla u|^q\lsb{\chi}{[0,T]} \ dz \rbr^{\frac{p-\be}{q}} + \fiint_{2^8Q} |\Xi|^{p-\be}\lsb{\chi}{[0,T]} \ dz.
\end{equation}

 Since $2^9 \rho \leq r_{\mfz}$ and $\ga \leq 1$, we also have for all $z \in 2^8Q$ that 
 \begin{gather}
  d_p(z) \leq \min\{ r_{\mfz} + 2^8 \rho, \sqrt{r_{\mfz}^2 + \ga (2^8 \rho)^2} \} \leq \frac32 r_{\mfz}, \label{up_bound_dpz-ini} \\
  d_p(z) \geq \min\{ r_{\mfz} - 2^8 \rho, \sqrt{r_{\mfz}^2 - \ga (2^8 \rho)^2} \} \geq \frac12 r_{\mfz}. \label{low_bound_dpz-ini}
 \end{gather}

 Now substituting \cref{up_bound_dpz-ini} and \cref{low_bound_dpz-ini} into \cref{def_g_f}, we find
 \begin{equation}
  \label{5.15-ini}
  c^{-1} f(z) \leq r_{\mfz}^{\al} \psi(z) \leq c f(z), \quad \forall \ z \in 2^8Q \ \text{ with  } \ c = c(n)>1.
 \end{equation}

 We now claim the following estimate holds:
 \begin{equation}
  \label{5.16-ini}
  \la_0^{p-\be} \overset{\text{\redlabel{5.16.a}{a}}}{\apprle} \fiint_{2^8Q} f(z)^{p-\be} \ dz \overset{\text{\redlabel{5.16.b}{b}}}{\apprle} \lbr \fiint_{2^8Q} f(z)^q \ dz \rbr^{\frac{p-\be}{q}} + \fiint_{2^8Q} (r_{\mfz}^{\al} |\Xi(z)|)^{p-\be} \ dz \overset{\text{\redlabel{5.16.c}{c}}}{\apprle} \la_0^{p-\be}.
 \end{equation}
 \begin{description}[leftmargin=*]
  \item[Estimate \redref{5.16.a}{a}:] This follows easily by making use of \cref{5.11-ini} and \cref{5.15-ini} and subsequently enlarging the parabolic cylinder $Q$. 
  \item[Estimate \redref{5.16.b}{b}:] This is obtained by the following chain of estimates:
  \begin{equation*}
   \begin{array}{lll}
    \fiint_{2^8Q} f(z)^{p-\be} \ dz & \overset{\text{\cref{5.15-ini}}}{\apprle} & r_{\mfz}^{\al(p-\be)} \fiint_{2^8Q} \psi(z)^{p-\be} \ dz \overset{\text{\cref{5.11-ini}}}{\apprle} \la_0^{p-\be} \\
    & \overset{\text{\cref{5.12-ini}}}{\apprle} &  r_{\mfz}^{\al(p-\be)} \lbr \fiint_{2^8Q} |\nabla u|^q \lsb{\chi}{[0,T]} \ dz \rbr^{\frac{p-\be}{q}}  + r_{\mfz}^{\al(p-\be)} \fiint_{2^8Q} |\Xi(z)|^{p-\be}\lsb{\chi}{[0,T]} \ dz \\
    & \overset{\text{\cref{5.15-ini}}}{\apprle} & \lbr \fiint_{2^8Q} f(z)^q \ dz \rbr^{\frac{p-\be}{q}} + \fiint_{2^8Q} (r_{\mfz}^{\al} |\Xi(z)|)^{p-\be}\lsb{\chi}{[0,T]} \ dz.
   \end{array}
  \end{equation*}

  \item[Estimate \redref{5.16.c}{c}:] This follows by applying Jensen's inequality (since $q < p-\be$) along with the bound \cref{5.11-ini} to get:
  \begin{equation*}
   \lbr \fiint_{2^8Q} f(z)^q \ dz \rbr^{\frac{p-\be}{q}} + \fiint_{2^8Q} (r_{\mfz}^{\al} |\Xi(z)|)^{p-\be} \lsb{\chi}{[0,T]}\ dz \overset{\text{\cref{5.15-ini}}}{\apprle}  \fiint_{2^8Q} f(z)^{p-\be} \ dz \overset{\text{\cref{5.11-ini}}}{\apprle} \la_0^{p-\be}.
  \end{equation*}
 \end{description}

 Thus  \cref{5.16-ini} holds and  as a consequence, we can apply \cref{gehring_lemma} over $2^8Q$ to see that  for any $\be \in (0,\be_0]$, there exists  $\de_0 = \de_0(n,p,b_0,r_0,\varepsilon_0,\lamot,\beta)>0$ such that $f \in L_{loc}^{p-\be+\de_1}(Q_2)$ with $\de_1 = \min\{ \de_0, \tilde{p}-p+\be\}$. This is quantified by the estimate:
\begin{equation*}
 \label{5.19-ini}
 \iint_{Q_2} f(z)^{p-\be+\de} \ dz \apprle \al_0^{\de} \iint_{Q_2} f(z)^{p-\be} \ dz + \iint_{Q_2} (r_{\mfz}^{\al} |\Xi(z)|)^{p-\be+\de} \lsb{\chi}{[0,T]}\ dz\qquad \forall \de \in (0,\de_1].
\end{equation*}

By iterating the previous arguments, for any $\be \in (0,\varepsilon_{\text{geh}}]$ where $\varepsilon_{\text{geh}}>0$ is the gain in higher integrability coming from \cref{gehring_lemma}, we obtain the bound
\begin{equation}
 \label{5.20-ini}
 \iint_{Q_2} f(z)^{p} \ dz \apprle \al_0^{\be} \iint_{Q_2} f(z)^{p-\be} \ dz + \fiint_{Q_2} |\Xi(z)|^p\lsb{\chi}{[0,T]} \ dz.
\end{equation}

For any $z \in Q_1$, we have $d_p(z) \geq \min \{ 1 , \sqrt{3}\} \geq 1$, $\frac{|Q_2|}{|Q_1|} = C(n)$, which implies the following bounds  hold:
\begin{gather}
 |\nabla u (z)| \leq \psi(z) \leq f(z) \qquad \forall\  z \in Q_1 \cap \RR^n \times [0,T], \label{5.21-ini}\\
 \fiint_{Q_1} |\nabla u|^{p+\be} \lsb{\chi}{[0,T]}\ dz \apprle_n \fiint_{Q_2} f(z)^p \ dz ,\label{5.22-ini}\\
 f(z) \leq 2^{\al} \psi(z) \qquad \forall z \in Q_2,  \quad \text{since} \ d_p(z) \leq 2.\label{5.23-ini}
\end{gather}

Using \cref{5.21-ini}, \cref{5.22-ini}, \cref{5.23-ini} along with \cref{5.20-ini} and making use of \cref{def_g_f} and \cref{def_al_0-ini}, we get
\begin{equation*}
\begin{array}{rcl}
 \fiint_{Q_1} |\nabla u|^p \lsb{\chi}{[0,T]}\ dz & \apprle & \al_0^{\be} \fiint_{Q_2} \psi(z)^{p-\be} \ dz  + \fiint_{Q_2} \Xi^p \lsb{\chi}{[0,T]} \ dz \\
 & \apprle &  \lbr \fiint_{Q_2} \lbr |\nabla u| + |\Xi| \rbr^{p-\be}\lsb{\chi}{[0,T]} \ dz \rbr^{1+\frac{\be}{d}} + \fiint_{Q_2} (1+\Xi^p)\lsb{\chi}{[0,T]} \ dz.
 \end{array}
\end{equation*}
Substituting the expression for $\Xi$ from \cref{def_xi}, we get
\begin{equation*}
\begin{array}{rcl}
 \fiint_{Q_1} |\nabla u|^p \lsb{\chi}{[0,T]}\ dz & \apprle &  \lbr \fiint_{Q_2} \lbr |\nabla u| + |h_0| \rbr^{p-\be}\lsb{\chi}{[0,T]} \ dz \rbr^{1+\frac{\be}{d}} + \fiint_{Q_2} (1+h_0^p)\lsb{\chi}{[0,T]} \ dz \\
 && + \lbr \fiint_{Q_2} |\nabla w|^{p-\be}  \lsb{\chi}{[0,T]} \ dz \rbr^{1 + \frac{\be}{d}} + \lbr \fiint_{Q_2} |\vec{w}|^{\frac{p-\be}{p-1}} \lsb{\chi}{[0,T]}  \ dz \rbr^{1 + \frac{\be}{d}}\\
 && + \fiint_{Q_2} |h_0|^{p-\be} \lsb{\chi}{[0,T]}  \ dz  + \fiint_{Q_2} |\nabla w|^p \lsb{\chi}{[0,T]} \ dz+ \fiint_{Q_2} |\vec{w}|^{\frac{p}{p-1}} \lsb{\chi}{[0,T]}  \ dz. 
 \end{array}
\end{equation*}
This proves the asserted estimate.

 \item[Case $\frac{2n}{n+2} <p < 2$:] The basic change with respect to the case $p\geq 2$ is that, we now switch to the sub-quadratic scaling, i.e., we consider intrinsic cylinders of the type $Q_{\mfz}(\ga^{-\frac12}R,R^2)$. 
 
 The parameter $\al_0$ is still given by \cref{def_al_0-ini} and $\la_0$ is chosen as in \cref{5.5-ini} and $\ga$ is again given as in \cref{intrinsic_scaling-inii}, where $\mfz \in Q_2$ with $f(z) > \la_0$. But in contrast to $p\geq 2$ case, we have $\ga = (r_{\mfz}^{-\al} \la_0 )^{2-p} \geq 1$. Hence for $R \in (0,r_{\mfz})$, we have $Q_{\mfz}(\ga^{-\frac12}R,R^2)\subset Q_2$. Now once again, in order to apply \cref{gehring_lemma}, we need to find a suitable intrinsic parabolic cylinder around $\mfz$, which enables us to apply \cref{lemma10.1-ini} or \cite[Lemma 6.3]{bogelein2009very}.  We observe, from the definition \cref{def_g_f} that $n+2 = \al d$ (recall $d$ is defined as in \cref{def_d-ini}) and $(2-p)\frac{n}{2} + d = p-\be$, which gives
 \begin{equation*}
  \begin{array}{ll}
   \fiint_{Q_{\mfz}(\ga^{-\frac12}R, R^2)} \psi(z)^{p-\be} \ dz & \apprle \frac{|Q_2|}{|Q_{\mfz}(\ga^{-\frac12}R, R^2)|}\fiint_{Q_2} \psi(z)^{p-\be} \ dz  = \frac{2^{n+2}}{R^{n+2} \ga^{-\frac{n}{2}}} \al_0^d  
   \apprle (r_{\mfz}^{-\al} \la_0 )^{p-\be}. 
  \end{array}
 \end{equation*}
 Now we can continue as in the $p \geq 2$ case to obtain the desired conclusion. 
\end{description}
This completes the  proof of the theorem.\hfill \qed

\section{Proof of \texorpdfstring{\cref{main_existence}}.}
\label{section7}

Let us first use the approximation $u_0^k \in C^2(0,T; L^2(\Om)) \cap L^p(0,T;W^{1,p}(\Om))$ given in the hypothesises of \cref{main_existence} satisfying \cref{assumption1} and \cref{assumption2}. Subordinate to this sequence, there exists a unique weak solution $u^k \in C^0(0,T;L^2(\Om)) \cap L^p(0,T; u_0^k + W_0^{1,p}(\Om))$ solving 
\begin{equation}
 \label{main-existence-1}
\left\{ \begin{array}{ll}
        u^k_t - \dv \aa(x,t,\nabla u^k) = 0 & \txt{on} \Om \times (0,T),\\
        u^k = u^k_0 & \txt{on} \pa \Om \times (0,T), \\
        u^k = 0 & \txt{on} \Om \times \{t=0\}.
        \end{array}\right.
\end{equation}
in the sense of \cref{def_weak_solution}, i.e.,  the following holds for any $\varphi\in W^{1,2}(0,T;L^2(\Om))\cap L^{p}(0,T;W^{1,p}(\Om))$,
\begin{equation}
\label{12.5}
\int_{\Om} u^k \varphi (x,t) \ dx + \iint_{\Om \times (0,t)} \{ -u^k \varphi_t + \iprod{\aa(x,t,\nabla u^k)}{\nabla \varphi} \ dz = \int_{\Om} u^k_0(x,0) \varphi (x,0) \ dx = 0.
\end{equation}

In order to pass through the limit in \cref{12.5}, it would suffice to show the following convergence results:
\begin{itemize}
\item $u^k \rightarrow u$ in $C^0(0,T;L^2_{\loc}(\Om)).$
\item $\{u^k\}$ is precompact in $L^p(0,T;W^{1,p}_{\loc}(\Om))$.
\end{itemize}
Note that we only require $\{u^k\}$ is precompact in $L^p(0,T;W^{1,p}_{\loc}(\Om))$, which is away from the lateral boundary.

Let $i, j \in\NN$ be any two exponents, since $u^i$ and $u^j$ are regular weak solutions,  then from standard energy estimates obtained locally in space,   for any $4B \subset \Om$ and any non negative function $\varphi \in C_c^{\infty}(2B)$ with $\varphi \equiv 1$ on $B$, using $(u^i - u^j) \varphi^p$ as a test function in \cref{12.5}, we get the estimate
\begin{equation}
\label{12.7}
\begin{array}{l}
\hspace*{-1cm}\sup_{t \in [0,s]} \int_B (u^i - u^j)^2  \ dx + \iint_{B \times [0,s]} \iprod{\aa(x,t,\nabla u^i) - \aa(x,t,\nabla u^j)}{\nabla u^i - \nabla u^j}  \ dz  \\
\hspace*{2cm}{\apprle}  \iint_{2B \times [0,s]} \lbr[[] (|\nabla u^i|+|h_0|)^{p-1}+(|\nabla u^j|+|h_0|)^{p-1}\rbr[]] |u^i - u^j| |\nabla \varphi^p| \ dz.\\
\end{array}
\end{equation}
In the case $p \geq 2$, making use of  \cref{3.7-1}, we get
\[\begin{array}{rcl}
\iint_{B \times [0,s]} \lvert \nabla u^i-\nabla u^j\rvert^p\ dz & \apprle & \iint_{B \times [0,s]} (\lvert \nabla u^i\rvert^2+\lvert \nabla u^j\rvert^2)^\frac{p-2}{2}\lvert \nabla u^i-\nabla u^j\rvert\ dz\\
&\apprle& \iint_{B \times [0,s]} \iprod{\aa(x,t,\nabla u^i) - \aa(x,t,\nabla u^j)}{\nabla u^i - \nabla u^j}  \ dz,\\
\end{array}
\]
and for $p\leq 2$, we have
\[
\begin{array}{ll}
&\hspace*{-1cm}\iint_{B\times[0,s]}\lvert \nabla u^i-\nabla u^j\rvert^p\ dz\vspace{1em}\\
&=\iint_{B\times[0,s]}(\lvert \nabla u^i\rvert^2+\lvert \nabla u^j\rvert^2)^{\frac{p(2-p)}{4}}(\lvert \nabla u^i\rvert^2+\lvert \nabla u^j\rvert^2)^\frac{p(p-2)}{4}\lvert \nabla u^i-\nabla u^j\rvert^p\ dz\vspace{1em}\\
&\apprle \ep\iint_{B\times[0,s]}\lvert \nabla u^i\rvert^p+\lvert \nabla u^j\rvert^p\ dz+C(\ep)\iint_{B\times[0,s]}(\lvert \nabla u^i\rvert+\lvert \nabla u^j\rvert)^\frac{p-2}{2}\lvert \nabla u^i-\nabla u^j\rvert^2\ dz\vspace{1em}\\
&\apprle\ep\iint_{B\times[0,s]} \lvert\nabla u^i\rvert^p+\lvert \nabla u^j\rvert^p\ dz+C(\ep)\iint_{B \times [0,s]} \iprod{\aa(x,t,\nabla u^i) - \aa(x,t,\nabla u^j)}{\nabla u^i - \nabla u^j}  \ dz.

\end{array}
\]
 Using \cref{abounded} and H\"older's inequality in \cref{12.7}, we get
\begin{equation}
\label{eq7.4}
\begin{array}{l}
\sup_{t \in [0,s]} \int_B (u^i - u^j)^2  \ dx+\iint_{B \times [0,s]} |\nabla u^i - \nabla u^j|^p  \ dz  \apprle \ep\iint_{B\times[0,s]}\lvert \nabla u^i\rvert^p+\lvert \nabla u^j\rvert^p\ dz\\
\hspace*{0.8cm}+C(\ep)\|\nabla \varphi^p\|_{L^{\infty}(2B)}\lbr \iint_{2B \times [0,s]} |\nabla u^i|^{p}+|\nabla u^j|^{p} + |h_0|^p\ dz \rbr^{\frac{p-1}{p}} \lbr \iint_{2B \times [0,s]} |u^i - u^j|^p \ dz \rbr^{\frac{1}{p}}.
\end{array}
\end{equation}
In particular, the standard energy estimate takes the form
\begin{equation*}
\begin{array}{l}
\sup_{t \in [0,s]} \int_B (u^i)^2  \ dx + \iint_{B \times [0,s]} |\nabla u^i|^p  \ dz  
\apprle \|\nabla \varphi^p\|_{L^{\infty}(2B)}\lbr \iint_{2B \times [0,s]} |\nabla u^i|^{p} + |h_0|^p\ dz \rbr^{\frac{p-1}{p}} \lbr \iint_{2B \times [0,s]} |u^i|^p \ dz \rbr^{\frac{1}{p}}.
\end{array}
\end{equation*}

Using the interior higher integrability from \cite{KL} and  the initial boundary higher integrability from \cref{main_corollary_2}, we get
\begin{equation}
\label{eq7.5}
\fiint_{2B \times [0,s]} |\nabla u^i|^p \ dz \apprle \lbr \fiint_{4B \times [0,4s]} ( |\nabla u^i| + |h_0|)^{p-\be} \ dz \rbr^{1+\frac{\be}{d}} + \fiint_{4B \times [0,4s]} (1 + |h_0|^p) \ dz.
\end{equation}
Now we make use of  \cref{corollary_main_theorem_1} along with  the hypothesises \cref{assumption1} and \cref{assumption2}, we can bound the right hand side of \cref{eq7.5} uniformly by a term depending only on $u_0$ and $h_0$, in particular, there holds
\begin{equation}
\label{eq7.6}
\begin{array}{rcl}
\hspace*{-0.5cm}\fiint_{2B \times [0,s]} |\nabla u^i|^{p} \ dz & \apprle &  \lbr \frac{1}{s|B|}\rbr^{\frac{\be}{d}}  \lbr \iint_{\Om_T} \lbr |\nabla u_0|+|h_0|\rbr^{p-\be}\ dz + \left\|\ddt{u_0} \right\|_{L^{\pbp}(0,T; W^{-1,\pbp}(\Om))}^{\pbp}\rbr^{1+\frac{\be}{d}} \\
&& + \fiint_{4B \times [0,4s]} (1 + |h_0|^p) \ dz \\
& =: &  \mathbf{R}.
\end{array}
\end{equation}
Denoting the term appearing on the right hand side of \cref{eq7.6} by $\mathbf{R}$, we use the above estimate in  \cref{eq7.4} to get
\begin{equation}
\label{eq7.7}
\begin{array}{ll}
&\sup_{t \in [0,s]} \int_B (u^i - u^j)^2  \ dx + \iint_{B \times [0,s]} |\nabla u^i - \nabla u^j|^p  \ dz   \\
&\hspace*{3cm}\apprle \ep\mathbf{R}+ C(\epsilon)\lbr[[] s \abs{B}  \mathbf{R}\rbr[]]^{\frac{p-1}{p}}\lbr \iint_{2B \times [0,s]} |u^i - u^j|^p \ dz \rbr^{\frac{1}{p}}.

\end{array}
\end{equation}

We shall now show how to obtain the necessary convergence results that allows us to pass through the limit in \cref{12.5}. To do this, we follow the structure from \cite[Proof of Theorem 2]{zhou} (see also \cite{bocmur} for the details).

\begin{description}
\steplabel{Step 1:}{tep1} First, we want to obtain a limit for $u^j$. In order to do this, we use the higher integrability for very weak solutions from \cref{eq7.6} to see that $\nabla u^i$ is uniformly bounded in $L^p(0,T; L^{p}_{\loc}(\Om))$.  Also from \cref{main_theorem_1}, we see that $u^i-u^i_0$ is uniformly bounded in $L^{p-\beta}(0,T;W_0^{1,p-\beta})$ so that $u^i$ is bounded in $L^p(0,T;W_{\loc}^{1,p}(\Om))$. Thus there exists a subsequence (still denoted by $\{u^i\}$) such that 
\begin{gather*}
u^i \rightharpoonup u\ \  \text{  weakly in } L^p(0,T; W^{1,p}_{\loc}(\Om)).
\end{gather*}

\steplabel{Step 2:}{tep2}  From \cref{main-existence-1} and \stepref{tep1}{Step 1},  we see that $u^i_t $ is uniformly bounded in $L^\frac{p}{p-1}(0,T;W^{-1,\frac{p}{p-1}}_{\loc}(\Om))$. Applying Lions-Aubin Lemma (see \cite[Proposition 1.3 on page 106]{showalter2013monotone}), there exists a subsequence such that
\begin{equation*}
\begin{cases}
u^i_t\rightharpoonup u_t&\text{ weakly in }L^p(0,T;W^{-1,\frac{p}{p-1}}_{\loc}(\Om)),\\
u^i \rightarrow u\ \  &\text{  strongly in } L^p(0,T; L^{p}_{\loc}(\Om)).
\end{cases}
\end{equation*}

\steplabel{Step 3:}{tep3} As a direct consequence of \stepref{tep2}{Step 2} and \cref{eq7.7}, we take  $\ep$ sufficiently small followed by  taking $i$ and $j$ large enough to obtain
\begin{equation*}
\nabla u^i \rightarrow \nabla u \ \  \text{  a.e in } L^p(0,T;W_{\loc}^{1,p}(B)) \txt{for any} B \subset \Om.
\end{equation*}

\steplabel{Step 4}{tep4} From \cref{abounded}, we see that $\aa(x,t,\nabla u^i)$ is bounded in $L^{\frac{p}{p-1}}(0,T;W^{-1,\frac{p}{p-1}}_{\loc}(\Om))$. Thus, as a consequence of \stepref{tep3}{Step 3}, we find that
\[
\aa(x,t,\nabla u^i) \rightharpoonup \aa(x,t,\nabla u) \ \ \text{  weakly in } L^{\frac{p}{p-1}}(0,T; W_{\loc}^{-1,\frac{p}{p-1}}(\Om)).
\]

\end{description}

From the above convergence estimates, we can now take $\lim_{k \rightarrow \infty}$ in \cref{12.5} to obtain the existence of a  very weak solution 
$
u \in C^0(0,T; L_{\loc}^2(\Om)) \cap L^{p-\be}(0,T; u_0 + W_0^{1,p-\be}(\Om))
$
of \cref{main-2}. This completes the proof of the theorem.\hfill \qed

\section*{References}

\end{document}